\providecommand{\U}[1]{\protect\rule{.1in}{.1in}}
\newtheorem{theorem}{Theorem}
\newtheorem{acknowledgement}[theorem]{Acknowledgement}
\newtheorem{corollary}[theorem]{Corollary}
\newtheorem{definition}[theorem]{Definition}
\newtheorem{example}[theorem]{Example}
\newtheorem{lemma}[theorem]{Lemma}
\newtheorem{proposition}[theorem]{Proposition}
\newenvironment{proof}[1][Proof]{\noindent\textbf{#1.} }{\ \rule{0.5em}{0.5em}}
\numberwithin{equation}{section}
\begin{document}

\title{PBZ*-Lattices: \\Structure Theory and Subvarieties}
\author{Roberto Giuntini, Claudia Mure\c{s}an, Francesco Paoli
\and Dept. of Pedagogy, Psychology, Philosophy, University of Cagliari}
\maketitle

\begin{abstract}
We investigate the structure theory of the variety of \emph{PBZ*-lattices} and
some of its proper subvarieties. These lattices with additional structure
originate in the foundations of quantum mechanics and can be viewed as a
common generalisation of orthomodular lattices and Kleene algebras expanded by
an extra unary operation. We lay down the basics of the theories of ideals and
of central elements in PBZ*-lattices, we prove some structure theorems, and we
explore some connections with the theories of subtractive and binary
discriminator varieties.

\end{abstract}

\section{Introduction}

The papers \cite{GLP1+} and \cite{PBZ2} contain the beginnings of an algebraic
investigation of a variety of lattices with additional structure, the variety
{$\mathbb{PBZL}^{\ast}$ of }\emph{PBZ$^{\ast}$ --lattices}. The key motivation
for the introduction of this class of algebras comes from the foundations of
quantum mechanics. Consider the structure%
\[
\mathbf{E}\left(  \mathbf{H}\right)  =\left(  \mathcal{E}\left(
\mathbf{H}\right)  ,\wedge_{s},\vee_{s},^{\prime},^{\sim},\mathbb{O}%
,\mathbb{I}\right)  \text{,}%
\]
where:

\begin{itemize}
\item $\mathcal{E}\left(  \mathbf{H}\right)  $ is the set of all effects of a
given complex separable Hilbert space $\mathbf{H}$, i.e., positive linear
operators of $\mathbf{H}$ that are bounded by the identity operator
$\mathbb{I}$;

\item $\wedge_{s}$ and $\vee_{s}$ are the meet and the join, respectively, of
the \emph{spectral ordering} $\leq_{s}$ so defined for all $E,F\in
\mathcal{E}\left(  \mathbf{H}\right)  $:
\[
E\leq_{s}F\,\,\,\text{iff}\,\,\,\forall\lambda\in\mathbb{R}:\,\,M^{F}%
(\lambda)\leq M^{E}(\lambda),
\]
where for any effect $E$, $M^{E}$ is the unique spectral family \cite[Ch.
7]{Kr} such that $E=\int_{-\infty}^{\infty}\lambda\,dM^{E}(\lambda)$ (the
integral is here meant in the sense of norm-converging Riemann-Stieltjes sums
\cite[Ch. 1]{Strocco});

\item $\mathbb{O}$ and $\mathbb{I}$ are the null and identity operators, respectively;

\item $E^{\prime}=\mathbb{I}-E$ and $E^{\sim}=P_{\ker\left(  E\right)  }$ (the
projection onto the kernel of $E$).
\end{itemize}

The operations in $\mathbf{E}\left(  \mathbf{H}\right)  $ are well-defined.
The spectral ordering is indeed a lattice ordering \cite{Ols, deG} that
coincides with the usual ordering of effects induced via the trace functional
when both orderings are restricted to the set of projection operators of the
same Hilbert space.

A PBZ$^{\ast}$ --lattice{ can be viewed as an abstraction from this }concrete
physical model, much in the same way as an orthomodular lattice can be viewed
as an abstraction from a certain structure of projection operators in a
complex separable Hilbert space. The faithfulness of PBZ$^{\ast}$ --lattices
to the physical model whence they stem is further underscored by the fact that
they reproduce at an abstract level the \textquotedblleft
collapse\textquotedblright\ of several notions of \emph{sharp physical
property} that can be observed in $\mathbf{E}\left(  \mathbf{H}\right)  $.

Further motivation for the study of {$\mathbb{PBZL}^{\ast}$ comes from its
emerging relationships with many related algebraic structures (orthomodular
lattices, Kleene algebras, Stone algebras). In particular, PBZ$^{\ast}$
--lattices can be seen as a common generalisation of orthomodular lattices and
of Kleene algebras with an additional unary operation.}

This paper is devoted to laying down the basics of the structure theory of the
variety {$\mathbb{PBZL}^{\ast}$ and of some of its subvarieties; l}et us
briefly summarise its contents. In Section \ref{grosw} we dispatch a number of
preliminaries in order to keep the paper reasonably self-contained, including
a short r\'{e}sum\'{e} of the results in \cite{GLP1+} and \cite{PBZ2}. In
Section \ref{centros}, we study decompositions of PBZ$^{\ast}$ --lattices. As
it happens for orthomodular lattices, and more generally for members of all
Church varieties \cite{Sal}, direct decompositions in a PBZ$^{\ast}$ --lattice
$\mathbf{L}$ are induced by certain members of $L$ (the so-called
\emph{central elements}) that form a Boolean algebra and that can be
conveniently described. In particular, we show that the central elements in a
PBZ$^{\ast}$ --lattice $\mathbf{L}$ are those elements that \textquotedblleft
commute\textquotedblright\ with any $a\in L$, and that this \textquotedblleft
commuting\textquotedblright\ relation generalises the analogous relation of
decisive importance in the context of orthomodular lattices. In Section
\ref{Scipio}, w{e introduce the notion of a p-ideal (ideal closed under
perspectivity), mimicking the corresponding definition available for
orthomodular lattices. Although in the general case p-ideals lack many of the
strong properties one would expect from a reasonable notion of an ideal, as
soon as we zoom in on }the subvariety {$\mathbb{SDM}$ satisfying the strong De
Morgan law }$\left(  x\wedge y\right)  ^{\sim}\approx x^{\sim}\vee y^{\sim}${,
we can show that such ideals coincide with the $\mathbb{SDM}$-ideals in the
sense of Ursini (whence also with }$0$-classes of congruences, since
{$\mathbb{PBZL}^{\ast}$ and all its subvarieties are }$0$-subtractive). We
also prove that the $0$-assertional logic of {$\mathbb{SDM}$ is strongly
algebraisable and we characterise its equivalent variety semantics. Finally,
}we observe that the variety $V\left(  {\mathbb{AOL}}\right)  $ generated by
antiortholattices --- that is, {PBZ$^{\ast}$ --lattices with no nontrivial
sharp element} --- is a binary discriminator variety and we further simplify
the description of ideals in that case. In the concluding Section \ref{antis},
after streamlining the known equational basis for $V\left(  {\mathbb{AOL}%
}\right)  $, we axiomatise the varietal join of orthomodular lattices and the
variety generated by antiortholattices in the lattice of subvarieties of
{$\mathbb{PBZL}^{\ast}.$}

\section{Preliminaries\label{grosw}}

\subsection{Universal Algebra and Lattice Theory\label{ualglat}}

For basic information on universal algebra, the reader is referred to
\cite{bur, gralgu}.

Throughout this paper, all algebras will be nonempty; by a \emph{trivial
algebra} we will mean a one--element algebra, and a \emph{trivial variety}
will be a variety consisting solely of trivial algebras. If ${\mathbf{A}}$ is
an algebra, then $A$ will be the universe of ${\mathbf{A}}$; in some cases,
such as those of congruence lattices, lattices will be designated by their set
reducts. If ${\mathbb{V}}$ is a variety of algebras of similarity type $\nu$
and ${\mathbf{A}}$ or a reduct of ${\mathbf{A}}$ is a member of ${\mathbb{V}}%
$, then $(\mathrm{Con}_{{\mathbb{V}}}({\mathbf{A}}),\cap,\vee,\Delta
_{A},\nabla_{A})$ will be the bounded lattice of the congruences of
${\mathbf{A}}$ with respect to $\nu$; when ${\mathbb{V}}$ is the variety of
lattices, $\mathrm{Con}_{{\mathbb{V}}}({\mathbf{A}})$ will be denoted by
$\mathrm{Con}({\mathbf{A}})$. With ${\mathbb{V}}$ assumed implicit, the
congruence of $\mathbf{A}$ generated by an $S\subseteq A\times A$ will be
denoted by $Cg(S)$; for all $a,b\in A$, the principal congruence
$Cg(\{(a,b)\})$ will be denoted by $Cg(a,b)$.

For any lattice $\mathbf{L}$ and any $x,y\in L$, the principal filter (resp.
ideal) of $\mathbf{L}$ generated by $x$ will be denoted by $[x)$ (resp.
$(x]$), and, if $x\leq y$, then $[x,y]=[x)\cap(y]$ will be the interval of
$\mathbf{L}$ bounded by $x$ and $y$. The dual of any (bounded) lattice
$\mathbf{M}$ will be denoted by $\mathbf{M}^{d}$. If $\mathbf{A}$ is an
algebra with a bounded lattice reduct, then such a reduct will be indicated by
$\mathbf{A}_{l}$. In this case, a congruence $\theta$ of $\mathbf{A}$ (or any
of its reducts) is said to be \emph{pseudo-identical} iff $0^{\mathbf{A}%
}/\theta=\left\{  0^{\mathbf{A}}\right\}  $ and $1^{\mathbf{A}}/\theta
=\left\{  1^{\mathbf{A}}\right\}  $.

\subsection{PBZ$^{\ast}$ --lattices\label{pbz}}

We recap in this section some definitions and results on PBZ$^{\ast}$
--lattices (the latter mostly from \cite{GLP1+} and \cite{PBZ2}, except when
explicitly noted) that will be needed in the following.

\begin{definition}
\label{defbi}A \emph{bounded involution lattice} is an algebra $\mathbf{L}%
=(L,\wedge,\vee,^{\prime},0,1)$ of type $(2,2,1,0,0)$ such that $(L,\wedge
,\vee,0,1)$ is a bounded lattice with partial order $\leq$ and the following
conditions are satisfied for all $a,b\in L$:

\begin{itemize}
\item $a^{\prime\prime}=a$;

\item $a\leq b$ implies $b^{\prime}\leq a^{\prime}$.
\end{itemize}
\end{definition}

Note that, for any bounded involution lattice $\mathbf{L}$, the involution
$^{\prime}:L\rightarrow L$ is a dual lattice isomorphism of $\mathbf{L}_{l}$.

\begin{definition}
\label{de:kleene}A bounded involution lattice $\mathbf{L}=\left(
L,\wedge,\vee,^{\prime},0,1\right)  $ is a \emph{pseudo-Kleene algebra} in
case it satisfies any of the following two equivalent conditions:

\begin{enumerate}
\item for all $a,b\in L$, $\,$if$\,\,a\leq a^{\prime}\,\,$and$\,\,b\leq
b^{\prime},\,\,$then $\,a\leq b^{\prime}\,$;

\item for all $a,b\in L$, $a\wedge a^{\prime}\leq b\vee b^{\prime}$.
\end{enumerate}
\end{definition}

The class of bounded involution lattices is a variety, here denoted by
$\mathbb{BI}$. The involution of a pseudo-Kleene algebra is called
\emph{Kleene complement}. The variety of pseudo-Kleene algebras, for which see
e.g. \cite{PSK}, is denoted by $\mathbb{PKA}$. Distributive pseudo-Kleene
algebras are variously called \emph{Kleene lattices} or \emph{Kleene algebras}
in the literature. Observe that in \cite{GLP1+}, embracing the terminological
usage from \cite[p. 12]{RQT}, pseudo-Kleene algebras were referred to as
\textquotedblleft Kleene lattices\textquotedblright. In \cite{PBZ2}, however,
the authors switched to the less ambiguous \textquotedblleft pseudo-Kleene
algebras\textquotedblright.

In unsharp quantum logic, there are several competing purely algebraic
characterisations of sharp effects \cite[Ch. 7]{RQT}. A quantum effect or
property is usually called \emph{sharp} if it satisfies the noncontradiction principle:

\begin{definition}
\label{de:kleenesharp}Let $\mathbf{L}$ be a bounded involution lattice.

\begin{enumerate}
\item An element $a\in L$ is said to be \emph{Kleene-sharp} iff $a\wedge
a^{\prime}=0$. $S_{K}(\mathbf{L})\,\,$denotes the class of Kleene-sharp
elements of$\,\,\mathbf{L}$.

\item $\mathbf{L}$ is an \emph{ortholattice\/} iff $S_{K}(\mathbf{L})=L$.

\item $\mathbf{L}$ is an \emph{orthomodular lattice} iff $\mathbf{L}$ is an
ortholattice and, for all $a,b\in L$, $\,$if$\,\,\,a\leq b,\,\,\,$%
then$\,\,b=(b\wedge a^{\prime})\vee a$.
\end{enumerate}
\end{definition}

The variety of ortholattices is denoted by $\mathbb{OL}$. Among ortholattices,
orthomodular lattices\emph{ }play a crucial role in the standard (sharp)
approach to quantum logic. The class of orthomodular lattices is actually a
variety, hereafter denoted by $\mathbb{OML}$.

It is well-known that an ortholattice $\mathbf{L}$ is orthomodular if and only
if, for all $a,b\in L$, $\,$if$\,\,a\leq b\,\,\,$and$\,\,a^{\prime}\wedge
b=0,\,\,$then$\,\,a=b$. In the wider setting of bounded involution lattices,
the previous condition does not imply the stronger condition of
orthomodularity above. We will call this weaker condition
\emph{paraorthomodularity}.

\begin{definition}
\label{de:paraorthomodular}An algebra $\mathbf{L}$ with a bounded involution
lattice reduct is said to be \emph{paraorthomodular} iff, for all $a,b\in L$:
\[
\,\text{if}\,\,a\leq b\,\,\,\text{and}\,\,a^{\prime}\wedge b=0,\,\,\text{then}%
\,\,a=b.
\]

\end{definition}

It turns out that the class of paraorthomodular pseudo-Kleene algebras is a
proper quasivariety, whence we cannot help ourselves to the strong universal
algebraic properties that characterise varieties. It is then natural to wonder
whether there exists an \emph{expansion} of the language of bounded involution
lattices where the paraorthomodular condition can be equationally recovered.
The appropriate language expansion is provided by including an additional
unary operation and moving to the type $\left(  2,2,1,1,0,0\right)  $,
familiar in unsharp quantum logic from the investigation of
\emph{Brouwer-Zadeh lattices} (see \cite{CN} or \cite[Ch. 4.2]{RQT}).

\begin{definition}
\label{de:bazar}\noindent A \emph{Brouwer-Zadeh lattice\/} (or
\emph{BZ-lattice\/}) is an algebra%
\[
\mathbf{L}=\left(  {L,\,\wedge,\vee\,,\,}^{\prime}{,\,^{\sim},0\,,1}\right)
\]
of type $\left(  2,2,1,1,0,0\right)  $, such that:

\begin{enumerate}
\item $\left(  {L,\,\wedge,\vee\,,\,}^{\prime}{,0\,,1}\right)  $ is a
pseudo-Kleene algebra;

\item for all $a,b\in L$, the following conditions are satisfied:
\[%
\begin{array}
[c]{ll}%
\text{(1) }a\wedge a^{\sim}={0}\text{;} & \text{(2) }a\leq a^{\sim\sim
}\text{;}\\
\text{(3) }a\leq b\ \,\text{implies}\,\ b^{\sim}\leq a^{\sim}\text{;} &
\text{(4) }a{^{\sim\prime}=}a^{\sim\sim}\text{.}%
\end{array}
\]

\end{enumerate}
\end{definition}

The operation $^{\sim}$ is called the \emph{Brouwer complement} of the
BZ--lattice. The class of all BZ-lattices is a variety, denoted by
$\mathbb{BZL}$; $\mathbb{OL}$ can be identified with the subvariety of
$\mathbb{BZL}$ whose relative equational basis w.r.t. $\mathbb{BZL}$ is given
by the equation $x{\,^{\sim}=x}^{\prime}$. In any BZ-lattice, we set $\Diamond
x=x^{\sim\sim}$ and $\square x=x^{\prime\sim}$. The following arithmetical
lemma, the proof of which is variously scattered in the above-mentioned
literature and elsewhere \cite{Cat1, Cat2}, will be used without being
referenced in what follows.

\begin{lemma}
\label{basics}Let $\mathbf{L}$ be a BZ-lattice. For all $a,b\in L$, the
following conditions hold:%

\begin{tabular}
[c]{clcl}%
\textbf{(i)} & $a^{\sim\sim\sim}=a^{\sim}$; & \textbf{(vi)} & $\Box(a\wedge
b)=\Box a\wedge\Box b$;\\
\textbf{(ii)} & $a^{\sim}\leq a^{\prime}$; & \textbf{(vii)} & $\Diamond(a\vee
b)=\Diamond a\vee\Diamond b$;\\
\textbf{(iii)} & $(a\vee b)^{\sim}=a^{\sim}\wedge b^{\sim}$; & \textbf{(viii)}
& $\Diamond(a\wedge b)\leq\Diamond a\wedge\Diamond b$;\\
\textbf{(iv)} & $a^{\sim}\vee b^{\sim}\leq(a\wedge b)^{\sim}$; & \textbf{(ix)}
& if $a^{\prime}\leq a$, then $a^{\sim}=0$.\\
\textbf{(v)} & $(\Box(a^{\prime}))^{\prime}=\Diamond a$; &  &
\end{tabular}

\end{lemma}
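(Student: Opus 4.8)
The statement to prove is Lemma~\ref{basics}, a list of nine arithmetical identities and inequalities holding in every BZ-lattice. My plan is to derive them in a carefully chosen order so that each item is available to shorten the proof of later ones, and to exploit the duality between $^{\prime}$, $\wedge$, $\vee$ as much as possible. The only axioms available are the defining conditions (1)--(4) of Definition~\ref{de:bazar}, the pseudo-Kleene axioms, and the fact that $^{\prime}$ is a dual lattice isomorphism; so every step must bottom out in these.

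First I would prove \textbf{(ii)}, $a^{\sim}\leq a^{\prime}$: from $a\wedge a^{\sim}=0$ (axiom (1)) and the fact that $^{\prime}$ is a dual isomorphism we get $1 = 0^{\prime} = (a\wedge a^{\sim})^{\prime} = a^{\prime}\vee a^{\sim\prime}$, and since (axiom (4)) $a^{\sim\prime}=a^{\sim\sim}$ and (axiom (2)) $a^{\sim}\leq a^{\sim\sim}$, a short join/meet manipulation yields the claim — actually the cleanest route is: $a\leq a^{\sim\sim}=a^{\sim\prime}$ gives, applying $^{\prime}$, $a^{\sim}\leq a^{\prime}$. Next, \textbf{(i)}, $a^{\sim\sim\sim}=a^{\sim}$: one inequality is axiom (2) applied to $a^{\sim}$; for the reverse, apply antitonicity (axiom (3)) to $a\leq a^{\sim\sim}$ to get $a^{\sim\sim\sim}\leq a^{\sim}$. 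For \textbf{(iii)}, $(a\vee b)^{\sim}=a^{\sim}\wedge b^{\sim}$: the inequality $\leq$ follows from antitonicity applied to $a,b\leq a\vee b$; for $\geq$, note $a^{\sim}\wedge b^{\sim}\leq a^{\sim}$ and use that $^{\sim}$ of a meet behaves well — more precisely I would pass through $\Diamond$ and $\square$, or simply observe that $a\vee b\leq (a^{\sim}\wedge b^{\sim})^{\sim}$ (which needs $a\leq (a^{\sim}\wedge b^{\sim})^{\sim}$, got from $a^{\sim}\wedge b^{\sim}\leq a^{\sim}$ by antitonicity and $a\leq a^{\sim\sim}$), then apply antitonicity again. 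Item \textbf{(iv)}, $a^{\sim}\vee b^{\sim}\leq (a\wedge b)^{\sim}$, is immediate from antitonicity applied to $a\wedge b\leq a,b$.

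For the $\square,\Diamond$ items I would first record the elementary facts $\square x = x^{\prime\sim}$, $\Diamond x = x^{\sim\sim}$, and note $\Diamond x = (\square(x^{\prime}))^{\prime}$, i.e.\ \textbf{(v)}: by definition $\square(a^{\prime}) = a^{\prime\prime\sim} = a^{\sim}$, so $(\square(a^{\prime}))^{\prime} = a^{\sim\prime} = a^{\sim\sim} = \Diamond a$ using axiom (4). Then \textbf{(vii)}, $\Diamond(a\vee b) = \Diamond a\vee \Diamond b$, follows by applying $^{\sim}$ twice to \textbf{(iii)} (using \textbf{(iii)} again on the inner level, i.e.\ $(a\vee b)^{\sim\sim} = (a^{\sim}\wedge b^{\sim})^{\sim}$, and then a De Morgan step $= a^{\sim\sim}\vee b^{\sim\sim}$ which itself needs the dual of \textbf{(iii)} — here I would use \textbf{(ii)}-style reasoning or note that $(x\wedge y)^{\sim} \geq x^{\sim}\vee y^{\sim}$ by \textbf{(iv)} and the reverse via monotonicity, being careful). \textbf{(vi)}, $\square(a\wedge b) = \square a\wedge\square b$, is the $^{\prime}$-dual of \textbf{(vii)}: replace $a,b$ by $a^{\prime},b^{\prime}$ in \textbf{(vii)}, apply $^{\prime}$, and use \textbf{(v)} together with $(a\wedge b)^{\prime} = a^{\prime}\vee b^{\prime}$. \textbf{(viii)}, $\Diamond(a\wedge b) \leq \Diamond a\wedge\Diamond b$, follows from monotonicity of $\Diamond$ (which in turn follows from axiom (3) applied twice) applied to $a\wedge b\leq a, b$. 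Finally \textbf{(ix)}: if $a^{\prime}\leq a$ then by antitonicity $a^{\prime\prime}\leq a^{\prime}$, i.e.\ $a\leq a^{\prime}$, so $a = a\wedge a^{\prime}$; but also $a^{\sim}\leq a^{\prime}\leq a$ by \textbf{(ii)}, hence $a^{\sim} = a^{\sim}\wedge a \leq a\wedge a^{\sim}= 0$ by axiom (1).

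The proof is almost entirely routine lattice arithmetic, so there is no single ``hard part'' in the usual sense; the only place demanding care is the De Morgan-type step inside \textbf{(vii)} (and dually \textbf{(vi)}), namely establishing $(a^{\sim}\wedge b^{\sim})^{\sim} = a^{\sim\sim}\vee b^{\sim\sim}$ rather than merely $\geq$. The subtlety is that $^{\sim}$ does \emph{not} in general turn meets into joins (only the inequality \textbf{(iv)} holds), so one cannot simply invoke a De Morgan law; instead one must use that $a^{\sim}, b^{\sim}$ already lie in the image of $^{\sim}$, where \textbf{(i)} forces $(a^{\sim})^{\sim\sim} = a^{\sim}$, and combine this with \textbf{(iii)} applied at the right level. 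I would isolate this as the one genuinely non-mechanical observation and present it carefully; everything else is bookkeeping with antitonicity and the four axioms.
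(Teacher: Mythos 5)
The paper does not prove this lemma at all --- it explicitly defers to the literature (\cite{Cat1, Cat2}), so there is no in-paper argument to compare against; your proof has to stand on its own, and it essentially does. Items (i)--(vi), (viii) are handled correctly, and you rightly isolate the only non-mechanical point: the reverse inequality $(a^{\sim}\wedge b^{\sim})^{\sim}\leq a^{\sim\sim}\vee b^{\sim\sim}$ needed for (vii). Your stated recipe for closing it, however, is a little off target: ``the reverse via monotonicity'' does not work, and ``(iii) applied at the right level'' only reproduces the $\geq$ direction you already have from (iv). The correct mechanism is the other option you mention in passing, the ``(ii)-style'' one: any element $x=a^{\sim}$ in the image of $^{\sim}$ satisfies $x^{\prime}=a^{\sim\prime}=a^{\sim\sim}=x^{\sim}$ by axiom (4) and item (i), so
\[
(a^{\sim}\wedge b^{\sim})^{\sim}\ \leq\ (a^{\sim}\wedge b^{\sim})^{\prime}\ =\ a^{\sim\prime}\vee b^{\sim\prime}\ =\ a^{\sim\sim}\vee b^{\sim\sim},
\]
using (ii) and the Kleene--De Morgan law; together with (iv) this gives equality, and (vi) then follows either by your dualisation or, more directly, from (iii) via $\square(a\wedge b)=((a\wedge b)^{\prime})^{\sim}=(a^{\prime}\vee b^{\prime})^{\sim}=\square a\wedge\square b$.

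One genuine (though harmless) error: in (ix) you claim that $a^{\prime}\leq a$ yields $a\leq a^{\prime}$ ``by antitonicity''. It does not --- applying $^{\prime}$ to $a^{\prime}\leq a$ just returns $a^{\prime}\leq a^{\prime\prime}=a$, and the claim is false (take $a=1$). Fortunately you never use it: the chain $a^{\sim}\leq a^{\prime}\leq a$ from (ii) and the hypothesis, together with $a\wedge a^{\sim}=0$, already gives $a^{\sim}=a^{\sim}\wedge a=0$. Delete the spurious first sentence of (ix) and make the image-of-$^{\sim}$ argument in (vii) explicit, and the proof is complete.
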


We remarked above that Kleene-sharpness is not the unique purely algebraic
characterisation of a sharp quantum property. Two noteworthy alternatives now
become available in our expanded language of BZ-lattices.

\begin{definition}
\label{de:modalbrouwer} Let $\mathbf{L}$ be a BZ-lattice.

\begin{enumerate}
\item An element $a\in L$ is said to be \emph{$\Diamond$-sharp} iff
$a=\Diamond a$; the class of all $\Diamond$-sharp elements of $\mathbf{L}$
will be denoted by $S_{\Diamond}(\mathbf{L})$.

\item An element $a\in L$ is said to be \emph{Brouwer-sharp} iff $a\vee
a^{\sim}=1$; the class of all Brouwer-sharp elements of $\mathbf{L}$ will be
denoted by $S_{B}(\mathbf{L})$.
\end{enumerate}
\end{definition}

It is easy to derive from the previous lemma that, in any BZ-lattice
$\mathbf{L}$, $S_{\Diamond}(\mathbf{L})=\{a^{\sim}:a\in L\}=\{a\in
L:a^{\prime}=a^{\sim}\}$. For any BZ-lattice $\mathbf{L}$, we have that
$S_{\Diamond}(\mathbf{L})\subseteq S_{B}(\mathbf{L})\subseteq S_{K}%
(\mathbf{L})$. However, in any BZ-lattice of effects of a Hilbert space (under
the meet and join operation induced by the spectral ordering) these three
classes coincide. Consequently, it makes sense to investigate whether there is
a class of BZ-lattices for which this collapse result can be recovered at a
purely abstract level. The next definition and theorem answer this question in
the affirmative.

\begin{definition}
\label{de:bzlstar}A \emph{BZ$^{\ast}$-lattice} is a BZ-lattice $\mathbf{L}$
that satisfies, for all $a\in L$, the condition%
\[
(\ast)\noindent\qquad(a\wedge a^{\prime})^{\sim}\leq a^{\sim}\vee\square a.
\]

\end{definition}

\begin{theorem}
\label{co:collassone} Let $\mathbf{L}$ be a paraorthomodular BZ$^{\ast}%
$-lattice. Then,
\[
S_{\Diamond}(\mathbf{L})=S_{B}(\mathbf{L})=S_{K}(\mathbf{L}).
\]

\end{theorem}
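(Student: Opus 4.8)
The plan is to establish the two nontrivial inclusions $S_K(\mathbf{L}) \subseteq S_\Diamond(\mathbf{L})$ and $S_K(\mathbf{L}) \subseteq S_B(\mathbf{L})$, since the reverse chain $S_\Diamond(\mathbf{L}) \subseteq S_B(\mathbf{L}) \subseteq S_K(\mathbf{L})$ holds in every BZ-lattice as already noted. In fact it suffices to show $S_K(\mathbf{L}) \subseteq S_\Diamond(\mathbf{L})$, because then all three classes are squeezed equal. So I would fix $a \in S_K(\mathbf{L})$, i.e.\ $a \wedge a' = 0$, and aim to prove $a = \Diamond a = a^{\sim\sim}$; since $a \leq a^{\sim\sim}$ always holds by axiom~(2), the real work is the inequality $a^{\sim\sim} \leq a$.

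First I would feed $a \wedge a' = 0$ into the BZ$^\ast$ condition $(\ast)$: it gives $0^{\sim} \leq a^{\sim} \vee \square a$, which is the trivial statement $1 \leq a^\sim \vee \square a$, hence $a^\sim \vee \square a = 1$. Applying the Brouwer complement and using Lemma~\ref{basics}(iii) (which turns joins into meets under $^\sim$), we get $(a^\sim \vee \square a)^\sim = a^{\sim\sim} \wedge (\square a)^\sim = 1^\sim = 0$. Now $(\square a)^\sim = (a^{\prime\sim})^\sim = a^{\prime\sim\sim} = \Diamond(a')$, and by Lemma~\ref{basics}(v), $(\square(a'))' = \Diamond a$, equivalently $\square(a') = (\Diamond a)'= a^{\sim\sim\prime}$; applying $^\sim$ and the De Morgan identity once more should let me rewrite $(\square a)^\sim$ in a form involving $a'$ or $a^{\sim\sim}$. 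The upshot I am aiming for is an identity of the shape $a^{\sim\sim} \wedge (\text{something} \geq a') = 0$, so that $a^{\sim\sim} \leq (\text{that something})' \leq a$, closing the loop. The paraorthomodularity hypothesis is the natural tool to convert ``$a \leq a^{\sim\sim}$ and $a' \wedge a^{\sim\sim} = 0$'' into ``$a = a^{\sim\sim}$'': indeed $a' \wedge a^{\sim\sim} = a' \wedge \Diamond a$, and using $a \leq a^{\sim\sim}$ together with antitonicity this meet equals $(\text{upper bound of } a')\wedge a^{\sim\sim}$, which I expect to collapse to $0$ using $a \wedge a' = 0$ and the computations above.

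Concretely, the cleanest route is probably: from $a^\sim \vee \square a = 1$, take $^\prime$ of both sides to get $a^{\sim\prime} \wedge (\square a)' = 0$, i.e.\ $\Diamond a \wedge \Diamond a' = 0$ after applying Lemma~\ref{basics}(v) to the second conjunct (since $(\square a)' = (\square(a))' $; write $\square a = a^{\prime\sim}$, so $(\square a)' = a^{\prime\sim\prime} = a^{\prime\sim\sim} = \Diamond(a')$). Since $a' \leq \Diamond a' = a^{\prime\sim\sim}$ by axiom~(2) applied to $a'$, we obtain $\Diamond a \wedge a' \leq \Diamond a \wedge \Diamond a' = 0$, so $a^{\sim\sim} \wedge a' = 0$. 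Combined with $a \leq a^{\sim\sim}$, paraorthomodularity (with the roles $a \rightsquigarrow a$, $b \rightsquigarrow a^{\sim\sim}$) yields $a = a^{\sim\sim} = \Diamond a$, so $a \in S_\Diamond(\mathbf{L})$.

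The main obstacle I anticipate is bookkeeping with the interplay of the two complements --- making sure each De Morgan-type rewrite in Lemma~\ref{basics} is applied with the correct operation and in the correct direction (e.g.\ distinguishing $(a \vee b)^\sim = a^\sim \wedge b^\sim$, which is an equality, from the merely one-sided $a^\sim \vee b^\sim \leq (a \wedge b)^\sim$), and correctly simplifying nested strings like $a^{\prime\sim\sim\prime}$ using $^{\sim\prime} = {}^{\sim\sim}$ and $a^{\sim\sim\sim} = a^\sim$. There is also a small subtlety in checking that $\square a$ and $\Diamond a'$ really coincide up to the right complement; once that identity is pinned down, the rest is a short chain of inequalities, and the appeal to paraorthomodularity at the end is completely mechanical.
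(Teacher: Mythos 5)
Your proof is correct and follows what is essentially the intended argument (the paper only recalls this theorem from the earlier literature without reproving it): substituting $a\wedge a'=0$ into $(\ast)$ to get $a^{\sim}\vee\square a=1$, applying the involution to obtain $\Diamond a\wedge\Diamond(a')=0$, shrinking to $a'\wedge\Diamond a=0$ via $a'\leq\Diamond(a')$, and invoking paraorthomodularity on $a\leq\Diamond a$ is exactly the right chain, and every De Morgan/involution step you use ($0^{\sim}=1$, $x^{\sim\prime}=x^{\sim\sim}$, $(x\vee y)'=x'\wedge y'$) is available. The only remark is that your first exploratory paragraph is redundant: applying $^{\sim}$ instead of $'$ to $a^{\sim}\vee\square a=1$ yields the same identity $\Diamond a\wedge\Diamond(a')=0$, so the ``concrete'' paragraph alone constitutes the complete proof.
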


As pleasing as this result may be, the class of paraorthomodular BZ$^{\ast}%
$-lattices still suffers from a major shortcoming: the paraorthomodularity
condition is quasiequational. However, the next result shows that it can be
replaced by an equation, so that paraorthomodular BZ$^{\ast}$-lattices form a
variety, which we will denote by $\mathbb{PBZL}^{\ast}$ and whose members will
be called, in brief, \emph{PBZ}$^{\ast}$\emph{-lattices}.

\begin{theorem}
\label{th:paradia} Let $\mathbf{L}$ be a BZ$^{\ast}$-lattice. The following
conditions are equivalent:

\begin{enumerate}
\item[(1)] $\mathbf{L}$ is paraorthomodular;

\item[(2)] $\mathbf{L}$ satisfies the following \emph{$\Diamond$%
-orthomodularity} condition for all $a,b\in L$:
\[
\,(a^{\sim}\vee(\Diamond a\wedge\Diamond b))\wedge\Diamond a\leq\Diamond b.
\]

\end{enumerate}
\end{theorem}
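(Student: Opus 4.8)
The plan is to prove the equivalence by exploiting the translation between the order‐theoretic paraorthomodularity condition and the displayed $\Diamond$-orthomodularity inequality. The natural strategy is to show that each condition, when applied to suitable elements, yields the other; the key observation is that $\Diamond a = a^{\sim\sim}$ and $\square a = a^{\prime\sim}$ are themselves $\Diamond$-sharp (Kleene-sharp, in fact), and that on $\Diamond$-sharp elements the Brouwer complement behaves like an orthocomplementation, so that paraorthomodularity on such elements is exactly orthomodularity in the ortholattice $S_{\Diamond}(\mathbf{L})$. I would first record the needed arithmetic from Lemma~\ref{basics}: that $a^{\sim\sim\prime}=a^{\sim}$ so the Kleene complement of $\Diamond a$ is $a^\sim$; that $\Diamond a \wedge a^{\sim} = 0$; and that $\Diamond$ is monotone and idempotent. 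I would also note that for $\Diamond$-sharp $x,y$ we have $x\vee y$ and $x\wedge y$ computed in $\mathbf{L}$ agree with the lattice structure one needs, using (vi)--(viii).

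First, assume (1), i.e. $\mathbf{L}$ is paraorthomodular, and derive (2). Fix $a,b\in L$ and set $c = a^{\sim}\vee(\Diamond a\wedge\Diamond b)$ and $d = \Diamond a$; one checks $c \le d$ using $a^\sim \le \Diamond a$ and $\Diamond a\wedge\Diamond b \le \Diamond a$. The element $c$ need not be sharp, but $c\wedge d$ sits below $\Diamond a$; to apply paraorthomodularity I would take the pair $(e,f)$ with $e = a^\sim \vee (c\wedge d)^{??}$ — more cleanly, work inside the interval $[\,a^\sim, \Diamond a\,]$ where $a^\sim$ is the Kleene complement of $\Diamond a$. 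Within that interval paraorthomodularity gives: if $u\le \Diamond a$ and $u' \wedge \Diamond a = 0$ with $a^\sim \le u$ then $u = \Diamond a$; combined with the definition of $c$ and the fact that $c\vee a^\sim = c$, one manipulates to get $c\wedge \Diamond a \le \Diamond b$, which is exactly the inequality in (2). The precise bookkeeping here — choosing the right $u$ so that the hypothesis $u'\wedge\Diamond a = 0$ is available — is the delicate part.

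Conversely, assume (2) and prove paraorthomodularity (1). Suppose $a\le b$ and $a^\prime\wedge b = 0$; I must show $a = b$. The trick is to apply the $\Diamond$-orthomodularity inequality with cleverly chosen arguments so that the $\Diamond$'s disappear. Since $a \le b$ we have $a\le \Diamond a \wedge b \le \Diamond b$, and the hypothesis $a^\prime \wedge b = 0$ should be leveraged — via the BZ$^\ast$ axiom $(\ast)$ and Theorem~\ref{co:collassone}'s style of reasoning, or more directly via Lemma~\ref{basics}(ix) — to force $a$ and $b$ to be close to sharp, or to transfer the problem to sharp elements. I expect one substitutes $b^\prime$ for $b$ and $a$ itself (or $a^{\sim\sim}$) into (2), uses $\Diamond a \wedge \Diamond b^\prime$ and the arithmetic $(a\vee b)^\sim = a^\sim\wedge b^\sim$, and eventually extracts $b \le a$, which with $a\le b$ gives equality.

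The main obstacle will be the reverse direction, specifically bridging the gap between the purely order‐theoretic hypothesis $a^\prime\wedge b = 0$ (which mentions the Kleene complement $^\prime$) and the $\Diamond$-flavored inequality in (2) (which is phrased entirely via $^{\sim}$ and $\Diamond$). This is exactly where the BZ$^\ast$ condition $(\ast)$ must be used — it is the only axiom linking $(a\wedge a^\prime)^\sim$ to $a^\sim \vee \square a$ — and getting the substitution right so that $(\ast)$ converts the Kleene‐complement hypothesis into usable $\Diamond$-data is the crux. The forward direction, by contrast, should be a fairly mechanical specialization of paraorthomodularity to the interval $[a^\sim, \Diamond a]$ together with routine applications of Lemma~\ref{basics}.
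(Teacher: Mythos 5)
Your proposal is a plan rather than a proof, and as written it contains both explicit gaps (the ``$(c\wedge d)^{??}$'', the ``I expect one substitutes\dots'') and a genuine error. In the forward direction you claim $c=a^{\sim}\vee(\Diamond a\wedge\Diamond b)\leq\Diamond a$ ``using $a^{\sim}\leq\Diamond a$''; but $a^{\sim}\wedge\Diamond a=a^{\sim}\wedge a^{\sim\sim}=0$ by axiom (1) applied to $a^{\sim}$, so $a^{\sim}\leq\Diamond a$ would force $a^{\sim}=0$. You also assert that $c$ ``need not be sharp'', whereas $c$ is a join of $a^{\sim}$ and $(a^{\sim}\vee b^{\sim})^{\sim}$ and hence lies in $S_{\Diamond}(\mathbf{L})$, which is closed under $\vee$ and $\wedge$ by Lemma~\ref{basics}(iii),(vii). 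The clean route, which you half-announce in your opening paragraph and then abandon, is: since $x'=x^{\sim}$ for every $x\in S_{\Diamond}(\mathbf{L})$, paraorthomodularity of $\mathbf{L}$ restricted to the ortholattice $\mathbf{S}_{\Diamond}(\mathbf{L})$ is literally the condition ``$x\leq y$ and $x^{\sim}\wedge y=0$ imply $x=y$'', which for ortholattices is equivalent to orthomodularity; condition (2) is then nothing but the orthomodular inequality $(x^{\sim}\vee(x\wedge y))\wedge x\leq y$ instantiated at $x=\Diamond a$, $y=\Diamond b$ (with $(\Diamond a)^{\sim}=a^{\sim}$). No interval trick is needed, and $(\ast)$ plays no role in this direction.

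The converse is where your sketch is genuinely missing the idea. From $a\leq b$ and $a'\wedge b=0$ you get $a\wedge a'\leq b\wedge a'=0$ and $b\wedge b'\leq b\wedge a'=0$, so both $a$ and $b$ are Kleene-sharp; condition $(\ast)$ then gives $1=(a\wedge a')^{\sim}\leq a^{\sim}\vee\square a$, hence $a^{\sim}\vee\square a=1$, and applying $^{\sim}$ yields $\Diamond a\wedge(\square a)^{\sim}=0$. Since $\square a\leq a\leq\Diamond a$ and $\mathbf{S}_{\Diamond}(\mathbf{L})$ is orthomodular by (2) (same translation as above), the pair $\square a\leq\Diamond a$ with $(\square a)^{\sim}\wedge\Diamond a=0$ forces $\square a=\Diamond a$, i.e.\ $a\in S_{\Diamond}(\mathbf{L})$; likewise $b\in S_{\Diamond}(\mathbf{L})$. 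Then $a'=a^{\sim}$, so the hypothesis becomes $a^{\sim}\wedge b=0$ with $a\leq b$ inside the orthomodular ortholattice $\mathbf{S}_{\Diamond}(\mathbf{L})$, whence $a=b$. Your proposed move of ``substituting $b'$ for $b$ into (2)'' does not by itself bridge the $'$/$^{\sim}$ gap; the essential step is using $(\ast)$ to prove that the elements involved are $\Diamond$-sharp. (For the record, this theorem is quoted in the paper from \cite{GLP1+} without proof, so there is no in-paper argument to compare against; the above is the standard argument.)
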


Every bounded lattice can be embedded as a sublattice into a PBZ$^{\ast}$
--lattice \cite[Lm. 5.3]{GLP1+}. Consequently, $\mathbb{PBZL}^{\ast}$
satisfies no nontrivial identity in the language of lattices.

The naturalness of the concept of a PBZ$^{\ast}$-lattice is further reinforced
by the circumstance that BZ-lattices of effects of a Hilbert space, under the
spectral ordering, qualify as instances of PBZ$^{\ast}$-lattices:

\begin{theorem}
{\ \label{concreto}Let $\mathbf{H}$ be a complex separable Hilbert space. The
algebra%
\[
\mathbf{E}\left(  \mathbf{H}\right)  =\left\langle \mathcal{E}\left(
\mathbf{H}\right)  ,\wedge_{s},\vee_{s},^{\prime},^{\sim},\mathbb{O}%
,\mathbb{I}\right\rangle ,
\]
(see the introduction for the notation) is a PBZ$^{\ast}$ --lattice.
Moreover,}%
\[
{S_{K}(\mathbf{E}\left(  \mathbf{H}\right)  )=}S_{\Diamond}(\mathbf{E}\left(
\mathbf{H}\right)  )=S_{B}(\mathbf{E}\left(  \mathbf{H}\right)  ){\ }%
\]
{is an orthomodular subuniverse of $\mathbf{E}\left(  \mathbf{H}\right)  $
consisting of all the projection operators of $\mathbf{H}$. }
\end{theorem}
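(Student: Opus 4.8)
The plan is to verify each of the claimed properties in turn, relying on known facts about the structure of effects on a Hilbert space rather than re-deriving them from scratch. First I would recall the basic analytic setup: every effect $E\in\mathcal{E}(\mathbf{H})$ has a spectral family $M^E$ supported in $[0,1]$, the spectral order $\leq_s$ is a lattice order (citing \cite{Ols,deG}), and the meet $\wedge_s$ and join $\vee_s$ are computed pointwise on the spectral resolutions by $M^{E\wedge_s F}(\lambda)=M^E(\lambda)\vee M^F(\lambda)$ and $M^{E\vee_s F}(\lambda)=M^E(\lambda)\wedge M^F(\lambda)$ (infimum/supremum taken in the complete lattice of projections). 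From this one reads off that $\mathbb{O}$ and $\mathbb{I}$ are bottom and top, that $(L,\wedge_s,\vee_s,\mathbb{O},\mathbb{I})$ is a bounded lattice, and that on projections $\leq_s$ agrees with the usual operator order, so the projection lattice $\mathcal{P}(\mathbf{H})$ sits inside as a sublattice — indeed as a complete orthomodular sublattice.

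Next I would check the involution and Brouwer-complement axioms. For $E^{\prime}=\mathbb{I}-E$: since $M^{\mathbb{I}-E}(\lambda)$ is essentially $\mathbb{I}-M^{E}((1-\lambda)^-)$, one gets $E^{\prime\prime}=E$ and that $\leq_s$ is reversed by $'$, so $\mathbf{E}(\mathbf{H})$ is a bounded involution lattice; the pseudo-Kleene inequality $E\wedge_s E^{\prime}\leq_s F\vee_s F^{\prime}$ follows because $E\wedge_s E^{\prime}$ is always $\leq_s$ the ``central'' region and in fact one checks $E\wedge_s E'$ lies below $\tfrac12\mathbb{I}$-type bounds while $F\vee_s F'$ lies above — more carefully, this is part of what is already established in \cite{GLP1+,PBZ2} for the BZ-lattice of effects, so I would simply cite that $\mathbf{E}(\mathbf{H})$ is a BZ-lattice (the four axioms for $^{\sim}$ with $E^{\sim}=P_{\ker E}$ being classical: $E\wedge E^{\sim}=\mathbb{O}$ since $E$ is injective on $(\ker E)^\perp$; $E\leq_s\Diamond E$; antitonicity; and $E^{\sim\prime}=E^{\sim\sim}$ because $\ker E$ is a closed subspace so its kernel-projection complement equals its double-kernel). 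The genuinely new content to argue is the condition $(\ast)$, $(E\wedge_s E')^{\sim}\leq_s E^{\sim}\vee_s\square E$, and then paraorthomodularity; by Theorem \ref{co:collassone}/\ref{th:paradia} it suffices to verify $(\ast)$ plus the $\Diamond$-orthomodularity inequality, or alternatively to verify paraorthomodularity directly and $(\ast)$ directly.

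For $(\ast)$ and paraorthomodularity I would compute kernels. The key computation is $\ker(E\wedge_s E')$: using the spectral-family formula, $E\wedge_s E'$ has spectral family $M^E(\lambda)\vee M^{E'}(\lambda)$, and its kernel is the range of its spectral projection at $0$, which works out to $\ker E\wedge\ker E'$... no — rather one finds $\ker(E\wedge_s E')=\mathrm{ran}\,M^{E\wedge_s E'}(0^-)^\perp$, and a short argument with the spectral families shows $(E\wedge_s E')^{\sim}=E^{\sim}\vee_s E^{\prime\sim}=E^{\sim}\vee_s\square E$, giving $(\ast)$ with equality. Paraorthomodularity: if $E\leq_s F$ and $E'\wedge_s F=\mathbb{O}$, then at the level of spectral families $M^F\leq M^E$ pointwise and the second condition forces $M^{E'}(\lambda)\vee M^F(\lambda)=\mathbb{O}$ only at... — here the cleanest route is to note $E'\wedge_s F=\mathbb{O}$ means $\ker(E'\wedge_s F)=\mathbf{H}$, i.e. the spectral projection at $0$ is the identity, which by the join formula forces $M^{E'}(0)\vee M^{F}(0)=\mathbb{O}$, hence $M^F(0)=\mathbb{O}$ (so $F$ injective, $F\geq_s$ some positive bound) and $M^{E'}(0)=\mathbb{O}$, i.e. $E^\prime$ injective, i.e. $\mathrm{ran}(E)$ dense combined with $E\leq_s F$; pushing this through the spectral-order definition yields $M^E(\lambda)\leq M^F(\lambda)$ and $\geq$, hence $E=F$. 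The main obstacle is precisely this last computation: translating the lattice-theoretic hypotheses $E\leq_s F$, $E'\wedge_s F=\mathbb{O}$ into statements about spectral families and concluding equality requires care with left-continuity conventions and with the fact that $\wedge_s,\vee_s$ are defined via pointwise projection-lattice operations that need not be norm-continuous in $\lambda$; I would isolate this as a lemma about spectral families. Finally, for the last sentence, $S_K(\mathbf{E}(\mathbf{H}))=\{E: E\wedge_s E'=\mathbb{O}\}$; one shows $E\wedge_s E'=\mathbb{O}$ forces $M^E$ to be a two-valued (projection-valued-at-one-point) family, i.e. $E$ is a projection, and conversely every projection is Kleene-sharp; combined with the general inclusions $S_\Diamond\subseteq S_B\subseteq S_K$ and Theorem \ref{co:collassone} (which applies since we will have shown $\mathbf{E}(\mathbf{H})$ is a paraorthomodular BZ$^{\ast}$-lattice) all three coincide with $\mathcal{P}(\mathbf{H})$, which is a (complete, hence) orthomodular sublattice because on projections $\wedge_s,\vee_s,{}'$ restrict to the usual orthocomplemented-lattice operations and $^{\sim}$ restricts to $'$.
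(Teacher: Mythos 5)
The paper states this theorem in its preliminaries without proof (it is imported from \cite{GLP1+} and \cite{PBZ2}), so there is no internal argument to compare against; judged on its own merits, your outline is sound for the bounded-involution and BZ-lattice axioms, for $(\ast)$ (the kernel computation $(E\wedge_{s}E')^{\sim}=E^{\sim}\vee_{s}\square E$ is correct and even gives equality), and for the identification of $S_{K}(\mathbf{E}(\mathbf{H}))$ with the projections. The genuine gap is the paraorthomodularity step. From $E'\wedge_{s}F=\mathbb{O}$ the meet formula yields $M^{E'}(\lambda)\vee M^{F}(\lambda)=\mathbb{I}$ for all $\lambda\geq 0$ (not $=\mathbb{O}$ as you wrote), and from a join of two projections being the identity you cannot conclude that either joinand vanishes: taking $E=F=P$ a nontrivial projection, one has $E'\wedge_{s}F=P^{\perp}\wedge P=\mathbb{O}$ while $M^{F}(0)=P^{\perp}\neq\mathbb{O}$, so your intermediate claims that $F$ and $E'$ are injective are false. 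Even granting them, injectivity of $F$ and $E'$ together with $E\leq_{s}F$ would not force $E=F$ (consider $\tfrac12\mathbb{I}\leq_{s}\tfrac34\mathbb{I}$). Structurally, the hypothesis at level $\lambda$ couples $M^{E}$ at $(1-\lambda)^{-}$ with $M^{F}$ at $\lambda$, two different points, so no pointwise-in-$\lambda$ manipulation of the kind you sketch will deliver $M^{E}(\lambda)\leq M^{F}(\lambda)$.

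The repair is precisely the route you mention and then abandon. Once the BZ$^{\ast}$-lattice axioms (including $(\ast)$) are in place, Theorem \ref{th:paradia} lets you replace paraorthomodularity by the $\Diamond$-orthomodularity inequality $(E^{\sim}\vee(\Diamond E\wedge\Diamond F))\wedge\Diamond E\leq\Diamond F$. All of $E^{\sim}=P_{\ker E}$, $\Diamond E=P_{(\ker E)^{\perp}}$ and $\Diamond F$ are projections, and on projections the operations $\wedge_{s},\vee_{s},{}'$ restrict to the usual operations of the orthomodular lattice of projections of $\mathbf{H}$; writing $P=\Diamond E$ and $Q=\Diamond F$, the inequality becomes $(P^{\perp}\vee(P\wedge Q))\wedge P\leq Q$, which follows from Theorem \ref{fuli} because $P^{\perp}$ and $P\wedge Q$ both commute with $P$, giving $(P^{\perp}\wedge P)\vee(P\wedge Q)=P\wedge Q\leq Q$. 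With that substitution your outline closes; the remaining loose ends (the pseudo-Kleene inequality via $E\wedge_{s}E'\leq_{s}\tfrac12\mathbb{I}\leq_{s}F\vee_{s}F'$, and the aside that the projection lattice is ``complete, hence orthomodular'', which is not a valid implication but is harmless since orthomodularity of the projection lattice is classical) are cosmetic.
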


All orthomodular lattices become, of course, {PBZ$^{\ast}$ --lattices when
endowed with a Brouwer complement that equals their Kleene complement. In
every PBZ$^{\ast}$ --lattice }${\mathbf{L}}$, {$S_{K}(\mathbf{L})$ is always
the universe of the largest orthomodular subalgebra $\mathbf{S}_{K}%
(\mathbf{L})$} of $\mathbf{L}$, so that $\mathbf{L}$ is orthomodular iff it
satisfies $x^{\sim}\approx x^{\prime}$. Further examples of {PBZ$^{\ast}$
--lattices are given by those algebras in this class that are
\textquotedblleft as far apart as possible\textquotedblright\ from
orthomodular lattices. In any orthomodular lattice }${\mathbf{L}}${,
$S_{K}({\mathbf{L}})=L$; on the other hand, by definition, a PBZ$^{\ast}$
--lattice $\mathbf{L}$ is an \emph{antiortholattice} iff $S_{K}({\mathbf{L}%
})=\left\{  0,1\right\}  $. We denote by $\mathbb{AOL}$ the class of
antiortholattices.}

\begin{lemma}
\label{barabba}{\ }

\begin{enumerate}
\item A PBZ$^{\ast}$ --lattice {$\mathbf{L}$ belongs to $\mathbb{AOL}$ iff
$0^{\sim}=1$ and, for all }$a\in L\setminus\{0\}$, $a^{\sim}=0$.

\item {Every $\mathbf{L}\in$ $\mathbb{AOL}$ is directly indecomposable.}

\item {$\mathbb{AOL}$ is a proper universal class. }
\end{enumerate}
\end{lemma}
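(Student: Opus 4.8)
The plan is to prove the three claims in order, exploiting Lemma~\ref{barabba}(i) as a working characterisation of antiortholattices and the $\Diamond$-sharp description $S_{\Diamond}(\mathbf{L}) = \{a \in L : a^{\prime} = a^{\sim}\}$ together with the collapse $S_{\Diamond} = S_B = S_K$ from Theorem~\ref{co:collassone}.

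For part (i), the forward direction is immediate: if $S_K(\mathbf{L}) = \{0,1\}$ then $0,1 \in S_K(\mathbf{L}) = S_{\Diamond}(\mathbf{L})$ forces $0^{\sim} = 0^{\prime} = 1$, and for any $a \neq 0$ we have $a \notin S_K(\mathbf{L})$, so $a$ is not $\Diamond$-sharp either; I would then argue that $a^{\sim}$ is always $\Diamond$-sharp (since $a^{\sim} = a^{\sim\sim\sim}$ by Lemma~\ref{basics}(i), i.e.\ $a^{\sim} = \Diamond(a^{\sim})$), hence $a^{\sim} \in S_K(\mathbf{L}) = \{0,1\}$; and $a^{\sim} = 1$ would give $a \leq a^{\sim\sim} = 1^{\sim} = 0^{\prime\sim} \leq 0^{\prime} = $ actually let me instead note $a^{\sim}=1$ forces $a \wedge a^{\sim} = a \wedge 1 = a$, but $a \wedge a^{\sim} = 0$ by the BZ-axiom, so $a = 0$, a contradiction; hence $a^{\sim} = 0$. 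Conversely, if $0^{\sim} = 1$ and $a^{\sim} = 0$ for all $a \neq 0$, I must show $S_K(\mathbf{L}) = \{0,1\}$. Clearly $0,1 \in S_K(\mathbf{L})$. If $a \in S_K(\mathbf{L})$ with $a \neq 0$, then by the collapse $a \in S_{\Diamond}(\mathbf{L})$, so $a = \Diamond a = a^{\sim\sim} = 0^{\sim} = 1$ (using $a^{\sim} = 0$ and the hypothesis), giving $a = 1$.

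For part (ii), direct decomposability is governed by central elements; by the announced characterisation (Section~\ref{centros}) the central elements form a Boolean algebra, and a bounded lattice is directly indecomposable iff the only central elements are $0$ and $1$. The cleanest route avoiding forward references is: a nontrivial direct decomposition $\mathbf{L} \cong \mathbf{L}_1 \times \mathbf{L}_2$ would produce an element $e = (1,0)$ which is sharp — indeed in a product the sharp elements are computed componentwise, and $(1,0)$ is Kleene-sharp since $(1,0) \wedge (1,0)^{\prime} = (1,0) \wedge (0,1) = (0,0)$ — with $e \notin \{0,1\}$, contradicting part (i). (Since $\mathbb{PBZL}^{\ast}$ is congruence-distributive, or at least has the relevant factor-congruence structure, this suffices; alternatively cite that antiortholattices have no nontrivial central elements because central elements are sharp.)

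For part (iii), I would first note $\mathbb{AOL}$ is closed under subalgebras: a subalgebra of an antiortholattice inherits $0^{\sim} = 1$ and $a^{\sim} = 0$ for $a \neq 0$, so by part (i) it is again an antiortholattice — giving that $\mathbb{AOL}$ is a universal class (closed under $\mathbf{S}$ and $\mathbf{P}_u$; the ultraproduct closure follows since the defining conditions of part (i) are first-order universal). Then I would exhibit a witness showing $\mathbb{AOL}$ is not closed under products, hence not a variety: take any nontrivial $\mathbf{L} \in \mathbb{AOL}$ (e.g.\ a three-element chain with the natural Kleene and Brouwer structure, or the standard $\mathbb{AOL}$ example from \cite{GLP1+}), and observe $\mathbf{L} \times \mathbf{L}$ contains the sharp element $(0,1) \notin \{(0,0),(1,1)\}$, so $\mathbf{L} \times \mathbf{L} \notin \mathbb{AOL}$ by part (i); this also re-confirms "proper." The main obstacle is marshalling the right characterisation of central/sharp elements in products without circularity: I expect part (ii) to be where care is needed, since it leans on the central-element machinery introduced only in Section~\ref{centros}, so I would phrase the argument purely in terms of sharpness and factor congruences, using part (i) as the key lever, and defer any central-element language.
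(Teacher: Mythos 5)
Your argument is essentially correct; note that the paper states Lemma \ref{barabba} without proof, as part of its recap of results from \cite{GLP1+} and \cite{PBZ2}, so there is no in-paper proof to compare against. Your route through the collapse $S_{K}(\mathbf{L})=S_{\Diamond}(\mathbf{L})$ of Theorem \ref{co:collassone} is a clean way to get part (i): the key observations that $a^{\sim}$ is always $\Diamond$-sharp (Lemma \ref{basics}(i)) and that $a^{\sim}=1$ forces $a=a\wedge a^{\sim}=0$ carry the forward direction, and the converse via $a=\Diamond a=0^{\sim}=1$ is fine. One small slip: your parenthetical claim that ``for any $a\neq 0$ we have $a\notin S_{K}(\mathbf{L})$'' fails for $a=1$; fortunately it is never used, since your actual argument works uniformly for all $a\neq 0$. (Also, $0^{\sim}=1$ holds in every nontrivial BZ-lattice, since $1^{\sim}=0$ by Lemma \ref{basics}(ix) and then $1\leq 1^{\sim\sim}=0^{\sim}$, so no collapse is needed for that clause.) Part (ii) via the sharp element $(1,0)$ of a nontrivial product is exactly the right elementary argument and correctly avoids the central-element machinery of Section \ref{centros}. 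For part (iii), observing that $\mathbb{AOL}$ is cut out of the variety $\mathbb{PBZL}^{\ast}$ by the universal first-order condition of part (i) gives the universal-class claim directly (closure under $\mathbf{S}$ and $\mathbf{P}_{u}$ then being automatic), and the failure of closure under squaring --- e.g.\ $\mathbf{D}_{2}\times\mathbf{D}_{2}$ is a four-element Boolean algebra, all of whose elements are sharp --- shows the class is proper, i.e.\ not a variety.
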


The Brouwer complement of Lemma \ref{barabba}.(i) is called\emph{ trivial}.

For all $n\geq1$, the $n$-element Kleene chain with universe $D_{n}%
=\{0,d_{1},d_{2},\ldots,$\linebreak$d_{n-2},1\}$, with $0<d_{1}<d_{2}%
<\ldots<d_{n-2}<1${, is an antiortholattice $\mathbf{D}_{n}$ under the trivial
Brouwer complement.} To avoid notational overloading, the reduct
$(\mathbf{D}_{n})_{l}$ will simply be denoted by $\mathbf{D}_{n}$, as well.
Note that every finite chain is self--dual both as a bounded lattice and as a
Kleene algebra, so the notation $\mathbf{D}_{n}^{d}$ is superfluous in these
cases; the same can be stated about direct products of finite chains, in
particular about Boolean algebras.

The following easy results are observed (sometimes implicitly) in the
literature on BZ-lattices, in particular in \cite{GLP1+} and in \cite{PBZ2}:

\begin{lemma}
\label{klintoaol}

\begin{enumerate}
\item \label{l15(1)} Any pseudo-Kleene algebra, endowed with the trivial
Brouwer complement, becomes a BZ-lattice.

\item \label{l15(2)} Any paraorthomodular pseudo-Kleene algebra which, endowed
with the trivial Brouwer complement, satisfies condition $(\ast)$, becomes an antiortholattice.

\item \label{l15(3)} Any pseudo-Kleene algebra in which $0$ is
meet--irreducible is paraorthomodular and satisfies condition $(\ast)$ when
endowed with the trivial Brouwer complement, whence it becomes an antiortholattice.
\end{enumerate}

\label{l15}
\end{lemma}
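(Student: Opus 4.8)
The statement to prove is Lemma~\ref{klintoaol} with its three parts. The plan is to treat each clause in turn, working from the axioms of a BZ-lattice (Definition~\ref{de:bazar}), the definition of the trivial Brouwer complement (Lemma~\ref{barabba}.(i): $0^{\sim}=1$ and $a^{\sim}=0$ for all $a\neq 0$), the condition $(\ast)$ of Definition~\ref{de:bzlstar}, and the notions of paraorthomodularity (Definition~\ref{de:paraorthomodular}) and antiortholattice ($S_K(\mathbf{L})=\{0,1\}$).

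For part (i), I would simply check the four BZ-lattice conditions (1)--(4) of Definition~\ref{de:bazar}.(2) for the trivial complement on an arbitrary pseudo-Kleene algebra $\mathbf{L}$: condition (1) $a\wedge a^{\sim}=0$ holds since $a^{\sim}$ is either $1$ (only when $a=0$) or $0$; condition (2) $a\leq a^{\sim\sim}$ holds because $a^{\sim\sim}=1$ for $a\neq 0$ and $0^{\sim\sim}=1^{\sim}=0$; condition (3) is monotonicity of $^{\sim}$ reversed, immediate from the two-valued nature of $^{\sim}$ (if $a\leq b$ and $b=0$ then $a=0$); and condition (4) $a^{\sim\prime}=a^{\sim\sim}$: for $a\neq 0$ this reads $1^{\prime}=0^{\sim}$, i.e. $0=1$'s complement, true; for $a=0$ it reads $0^{\prime}=1^{\sim}$, i.e. $1=0$, also fine. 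So $\mathbf{L}$ with trivial $^{\sim}$ is a BZ-lattice.

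For part (ii), I would take a paraorthomodular pseudo-Kleene algebra $\mathbf{L}$ which, with the trivial Brouwer complement, satisfies $(\ast)$; by part (i) it is already a BZ-lattice, and adding $(\ast)$ makes it a BZ$^{\ast}$-lattice, so together with paraorthomodularity it is a PBZ$^{\ast}$-lattice. It remains to see $S_K(\mathbf{L})=\{0,1\}$. Clearly $0,1\in S_K(\mathbf{L})$. Conversely, suppose $a\in S_K(\mathbf{L})$, i.e. $a\wedge a^{\prime}=0$, with $a\notin\{0,1\}$; I want a contradiction. The idea is to use paraorthomodularity applied to the pair $a\wedge a'\le a$: here $(a\wedge a')^{\prime}\wedge a = (a'\vee a'')\wedge a$... hmm, that isn't quite it. Better: since $a$ is Kleene-sharp, $a$ and $a'$ are complements in the lattice, and one should derive that $a$ is a central-like element forcing the structure to split; but the cleanest route is to apply paraorthomodularity to $0\le a'$ together with $0'\wedge a' = 1\wedge a' = a'$, which gives nothing unless $a'=0$. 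Instead apply it with the inequality $a'\wedge a = 0 \le a'$: we need $(a'\wedge a)'\wedge a' = 1 \wedge a' = a'$, again trivial. The working argument: since $a\wedge a'=0$ we have $a'\le$ (pseudo-complement of $a$), and one uses that in a pseudo-Kleene algebra a Kleene-sharp nontrivial element $a$ gives $a\vee a'$; then $(\ast)$ with this $a$ reads $0^{\sim}\le a^{\sim}\vee\square a$, i.e. $1\le a^{\sim}\vee a^{\prime\sim}$. With the trivial complement, $a^{\sim}=0$ (as $a\neq 0$) and $a^{\prime\sim}=0$ iff $a'\neq 0$ iff $a\neq 1$; so the right-hand side is $0\vee 0 = 0$, forcing $1\le 0$, a contradiction. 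Hence no such $a$ exists and $S_K(\mathbf{L})=\{0,1\}$, so $\mathbf{L}\in\mathbb{AOL}$. The main obstacle here is exactly pinning down how $(\ast)$ fires: the key observation is that for a nontrivial Kleene-sharp $a$ one has $a\wedge a'=0$, and evaluating $(\ast)$ at such $a$ with the trivial $^{\sim}$ collapses the right-hand side to $0$ while the left-hand side $(a\wedge a')^{\sim}=0^{\sim}=1$, which is the desired contradiction.

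For part (iii), let $\mathbf{L}$ be a pseudo-Kleene algebra in which $0$ is meet-irreducible, i.e. $a\wedge b=0$ implies $a=0$ or $b=0$. First, $\mathbf{L}$ is paraorthomodular: if $a\le b$ and $a^{\prime}\wedge b=0$, then by meet-irreducibility either $a'=0$ (so $a=1$, whence $b=1=a$) or $b=0$ (so $a=0=b$); either way $a=b$. Second, with the trivial Brouwer complement $(\ast)$ holds: $(a\wedge a')^{\sim}\le a^{\sim}\vee\square a$. If $a\notin\{0,1\}$ then $a\wedge a'\neq 0$ would contradict meet-irreducibility unless... actually for $a$ neither $0$ nor $1$, meet-irreducibility of $0$ forces that $a\wedge a'$ could still be nonzero — wait, it forces $a\wedge a' = 0$ only if we can't conclude; rather, if $a\wedge a'=0$ then $a=0$ or $a'=0$, i.e. $a\in\{0,1\}$. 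So for $a\notin\{0,1\}$ we have $a\wedge a'\neq 0$, hence $(a\wedge a')^{\sim}=0$ and the inequality is trivial; for $a\in\{0,1\}$ we have $a\wedge a'=0$, so $(a\wedge a')^{\sim}=0^{\sim}=1$, and we need $1\le a^{\sim}\vee\square a$, which for $a=0$ gives $0^{\sim}\vee\square 0 = 1\vee\square 0=1$, and for $a=1$ gives $1^{\sim}\vee\square 1 = 0\vee 1^{\prime\sim}=0\vee 0^{\sim}=1$; in both cases it holds. Thus $(\ast)$ is satisfied. By part (ii) (paraorthomodular $+$ trivial complement $+$ $(\ast)$), $\mathbf{L}$ becomes an antiortholattice, completing the proof. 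No serious obstacle arises in part (iii) beyond the careful case split on whether $a\in\{0,1\}$.
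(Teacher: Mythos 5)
Your proof is correct and follows the only natural route: the paper states this lemma without proof as an easy observation drawn from the literature, and your direct verification of the BZ-axioms for the trivial Brouwer complement, the collapse of $(\ast)$ on a hypothetical nontrivial Kleene-sharp element (LHS $=0^{\sim}=1$, RHS $=a^{\sim}\vee\square a=0\vee 0=0$), and the case analysis under meet-irreducibility of $0$ is exactly what is needed. One correction to part (i), axiom (4): since $a^{\sim\prime}$ denotes $(a^{\sim})^{\prime}$, for $a\neq 0$ the identity reads $0^{\prime}=0^{\sim}$ (both sides equal $1$) and for $a=0$ it reads $1^{\prime}=1^{\sim}$ (both sides equal $0$), whereas your instances ``$1^{\prime}=0^{\sim}$'' and ``$0^{\prime}=1^{\sim}$'' are literally the false equations $0=1$ and $1=0$; the axiom does hold, so this is a transcription slip rather than a gap, but it --- together with the exploratory false starts left in parts (ii) and (iii) --- should be cleaned up in a final writeup.
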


We will repeatedly have the occasion to consider the following identities in
the language of BZ--lattices:

\begin{description}
\item[SDM] (the \emph{Strong de Morgan law}) $\left(  x\wedge y\right)
^{\sim}\approx x^{\sim}\vee y^{\sim}$;

\item[WSDM] (\emph{weak SDM}) $\left(  x\wedge y^{\sim}\right)  ^{\sim}\approx
x^{\sim}\vee\Diamond y$;

\item[DIST] $x\wedge\left(  y\vee z\right)  \approx\left(  x\wedge y\right)
\vee\left(  x\wedge z\right)  $;

\item[J2] $x\approx\left(  x\wedge\left(  y\wedge y^{\prime}\right)  ^{\sim
}\right)  \vee\left(  x\wedge\Diamond\left(  y\wedge y^{\prime}\right)
\right)  $;

\item[SK] $x\wedge\Diamond y\leq\square x\vee y$.
\end{description}

Clearly, SDM implies WSDM. Observe that $\mathbb{OML}$ satisfies SDM, J2 and
SK. Trivially, $\mathbb{AOL}$ satisfies WSDM and J2, whence $\mathbb{OML}\vee
V(\mathbb{AOL})$ satisfies these two identities.

We list some useful properties of the variety {$V\left(  \mathbb{AOL}\right)
$ generated by antiortholattices, including an axiomatisation relative to
$\mathbb{PBZL}^{\ast}$.}

\begin{lemma}
{\ \label{arismo}Let $\mathbf{A}\in$}$V\left(  {\mathbb{AOL}}\right)  $. Then
for all $a,b,c\in A$:

\hspace*{-6pt}%
\begin{tabular}
[c]{l}%
(i) $a\wedge\left(  b\vee c^{\sim}\right)  =\left(  a\wedge b\right)
\vee\left(  a\wedge c^{\sim}\right)  $;\\
(ii) $a\vee\left(  b\wedge c^{\sim}\right)  =\left(  a\vee b\right)
\wedge\left(  a\vee c^{\sim}\right)  $;\\
(iii) $a^{\sim}\wedge\left(  b\vee c\right)  =\left(  a^{\sim}\wedge b\right)
\vee\left(  a^{\sim}\wedge c\right)  $;\\
(iv) $a^{\sim}\vee\left(  b\wedge c\right)  =\left(  a^{\sim}\vee b\right)
\wedge\left(  a^{\sim}\vee c\right)  \text{.}$%
\end{tabular}

\end{lemma}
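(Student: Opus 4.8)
The plan is to reduce everything to antiortholattices and then to exploit the very specific arithmetic that holds there, namely that $c^{\sim}\in\{0,1\}$ for every element $c$ of an antiortholattice, and hence that $c^{\sim}\wedge x$ and $c^{\sim}\vee x$ are "clopen-like" in a way that forces distributivity. Since $V(\mathbb{AOL})=\mathbb{HSP}(\mathbb{AOL})$ and all four displayed statements are identities (or, after rewriting $x\leq y$ as $x\wedge y\approx x$, can be made into identities), it suffices to verify them in an arbitrary antiortholattice $\mathbf{L}\in\mathbb{AOL}$; they then persist under subalgebras, homomorphic images and products and so hold in all of $V(\mathbb{AOL})$. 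By Lemma \ref{barabba}.(i), in such an $\mathbf{L}$ we have $0^{\sim}=1$ and $a^{\sim}=0$ for every $a\neq 0$, so for any $c\in L$ the element $c^{\sim}$ is either $0$ or $1$.

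First I would dispatch (i). Fix $a,b,c\in L$. If $c^{\sim}=0$, both sides of (i) collapse to $a\wedge b$ (using $a\wedge 0=0$ and $x\vee 0=x$), and we are done; if $c^{\sim}=1$, the left side is $a\wedge(b\vee 1)=a\wedge 1=a$, while the right side is $(a\wedge b)\vee(a\wedge 1)=(a\wedge b)\vee a=a$, again an equality. Thus (i) holds. Statement (ii) is handled the same way by cases on $c^{\sim}$: when $c^{\sim}=1$ both sides equal $a$, and when $c^{\sim}=0$ the left side is $a\vee(b\wedge 0)=a$ and the right side is $(a\vee b)\wedge(a\vee 0)=(a\vee b)\wedge a=a$. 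For (iii) and (iv) the roles are shifted to $a^{\sim}$: if $a\neq 0$ then $a^{\sim}=0$ and (iii) reads $0=0\vee 0$, (iv) reads $0\vee(b\wedge c)=b\wedge c=(0\vee b)\wedge(0\vee c)$, both trivially true; if $a=0$ then $a^{\sim}=1$ and (iii) reads $1\wedge(b\vee c)=b\vee c=(1\wedge b)\vee(1\wedge c)$, (iv) reads $1\vee(b\wedge c)=1=(1\vee b)\wedge(1\vee c)$, again correct. So all four identities hold in every antiortholattice.

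The one genuinely non-cosmetic point — and the step I expect to require the most care — is the lift from $\mathbb{AOL}$ to $V(\mathbb{AOL})$. The four assertions are not literally equations as stated: (i)--(iv) are equations, but if one prefers to phrase (i.e.\ if the source statement had been phrased with $\leq$) one must first record that $x\leq y$ is equationally definable in any variety with a lattice reduct via $x\wedge y\approx x$, so that each statement is equivalent to a conjunction of identities. Once that observation is in place, the closure of the class of models of a set of identities under $\mathbb{H}$, $\mathbb{S}$, $\mathbb{P}$ (Birkhoff) gives that these identities, holding throughout $\mathbb{AOL}$, hold throughout $\mathbb{HSP}(\mathbb{AOL})=V(\mathbb{AOL})$. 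There is no subtlety about whether $V(\mathbb{AOL})$ contains algebras that are not antiortholattices — it certainly does (for instance direct products of antiortholattices are typically not antiortholattices, and subalgebras of such need not be either) — but that is exactly why we argue via identities rather than by a structural description of the members of $V(\mathbb{AOL})$: the equational content is inherited even though the defining property "$S_K=\{0,1\}$" is not.

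A remark worth inserting after the proof: statement (i) is precisely the instance of the distributive law DIST with the third argument of the form $c^{\sim}$, and dually for (ii); (iii) and (iv) are the instances with the \emph{first} argument of that form. In particular every member of $V(\mathbb{AOL})$ is "distributive as soon as one of the arguments is a $\Diamond$-closed element's dual", which is considerably weaker than full distributivity — $V(\mathbb{AOL})$ is not a variety of distributive lattices, since by \cite[Lm. 5.3]{GLP1+} $\mathbb{PBZL}^{\ast}$, and one can check also $V(\mathbb{AOL})$, satisfies no nontrivial lattice identity — but it is exactly the fragment one needs for the ideal-theoretic simplifications announced in the introduction for the binary discriminator variety $V(\mathbb{AOL})$.
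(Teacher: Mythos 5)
Your proof is correct, and it is the natural argument: all four statements are literally equations, so by Birkhoff it suffices to check them on the generating class $\mathbb{AOL}$, where Lemma \ref{barabba}.(i) reduces everything to the two cases $c^{\sim}=0$ and $c^{\sim}=1$ (resp. $a^{\sim}$), each of which is immediate lattice arithmetic. The paper itself gives no proof of this lemma --- it is recalled from \cite{GLP1+} --- but your verification matches what that proof must be. One trivial slip: in case (ii) with $c^{\sim}=1$ both sides equal $a\vee b$, not $a$; the equality of course still holds, so nothing is affected.
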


\begin{theorem}
\label{caniggia}

\begin{enumerate}
\item An equational basis for $V\left(  \mathbb{AOL}\right)  $ relative to
$\mathbb{PBZL}^{\ast}$ is given by the identities%
\begin{align*}
\text{(AOL1) }  &  \left(  x^{\sim}\vee y^{\sim}\right)  \wedge\left(
\Diamond x\vee z^{\sim}\right)  \approx\left(  \left(  x^{\sim}\vee y\right)
\wedge\left(  \Diamond x\vee z\right)  \right)  ^{\sim}\text{;}\\
\text{(AOL2) }  &  x\approx\left(  x\wedge y^{\sim}\right)  \vee\left(
x\wedge\Diamond y\right)  \text{;}\\
\text{(AOL3) }  &  x\approx\left(  x\vee y^{\sim}\right)  \wedge\left(
x\vee\Diamond y\right)  \text{.}%
\end{align*}

\item Every subdirectly irreducible member of $V(\mathbb{AOL})$ is an antiortholattice.
\end{enumerate}
\end{theorem}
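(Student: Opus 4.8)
The plan is to prove the two parts in tandem, since the standard route to an equational basis like this goes through subdirect irreducibility. I will first establish part (2) — every subdirectly irreducible (SI) member of $V(\mathbb{AOL})$ is an antiortholattice — and then deduce part (1) from it by checking that (AOL1)--(AOL3) are valid in $\mathbb{AOL}$ and, conversely, that any $\mathbf{L}\in\mathbb{PBZL}^{\ast}$ satisfying them which is SI must be an antiortholattice.

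\medskip
\textbf{Step 1: The identities hold in $\mathbb{AOL}$.} By Lemma~\ref{barabba}.(i), in an antiortholattice $a^{\sim}=0$ for $a\neq 0$, $0^{\sim}=1$, and $\Diamond a=a^{\sim\sim}$ is $1$ for $a\neq 0$ and $0$ for $a=0$. So for each of $x,y,z$ the terms $x^{\sim},\Diamond x$, etc.\ take only the values $0,1$, and the displayed identities reduce to trivial lattice facts after a small case split on whether each variable is $0$ or not. Concretely, (AOL2) and (AOL3) are exactly J2 read with $y$ in place of $y\wedge y^{\prime}$ together with the observation that $(y\wedge y^{\prime})^{\sim}$ and $\Diamond(y\wedge y^{\prime})$ are replaced by $y^{\sim},\Diamond y$; in an antiortholattice one of $y^{\sim},\Diamond y$ is $1$ and the other $0$, so the right-hand side collapses to $x$. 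For (AOL1), note $x^{\sim}\vee y^{\sim}\in\{0,1\}$ essentially and $(\cdots)^{\sim}$ on the right maps everything below $1$ to $1$ and $1$ to $0$; the case analysis on $x=0$ versus $x\neq 0$, and then on $y,z$, makes both sides agree. Since $V(\mathbb{AOL})=HSP(\mathbb{AOL})$ and identities are preserved by $H,S,P$, they hold throughout $V(\mathbb{AOL})$.

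\medskip
\textbf{Step 2: SI members satisfying the identities are antiortholattices.} Let $\mathbf{L}\in\mathbb{PBZL}^{\ast}$ be SI and satisfy (AOL1)--(AOL3); I must show $S_{K}(\mathbf{L})=\{0,1\}$. Suppose $c\in S_{K}(\mathbf{L})$. Since $\mathbf{L}$ is a PBZ$^{\ast}$-lattice, $\mathbf{S}_{K}(\mathbf{L})$ is an orthomodular subalgebra, so $c^{\prime}=c^{\sim}$ and $\Diamond c=c^{\prime\prime}=c$, hence $c^{\sim}=c^{\prime}$ and $\Diamond c=c$. Plugging $y:=c$ into (AOL2) gives $x=(x\wedge c^{\prime})\vee(x\wedge c)$ for all $x$, i.e.\ $c$ distributes over everything from below; dually (AOL3) with $y:=c$ gives $x=(x\vee c^{\prime})\wedge(x\vee c)$. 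These two facts say precisely that the principal filter/ideal decomposition at $c$ is a direct product decomposition — equivalently, $c$ is a \emph{central element} of $\mathbf{L}$ (in the Church-variety sense referred to in Section~\ref{centros}); the map $x\mapsto(x\wedge c,\ x\wedge c^{\prime})$ is then an isomorphism $\mathbf{L}\cong \mathbf{L}{\restriction}[0,c]\times\mathbf{L}{\restriction}[0,c^{\prime}]$. Since $\mathbf{L}$ is SI it is directly indecomposable, forcing $c\in\{0,1\}$. The key obstacle is exactly this step: verifying that the pair of distributivity identities really yields a \emph{central} element and not merely a ``neutral'' one — I would lean on the characterisation of central elements in PBZ$^{\ast}$-lattices developed earlier (centrality = commuting with all $a\in L$, and the factor-congruence description for Church varieties), checking that the two one-sided distributive laws plus $c$ sharp are enough to produce complementary factor congruences $\theta_{c},\theta_{c^{\prime}}$ with $\theta_{c}\cap\theta_{c^{\prime}}=\Delta$, $\theta_{c}\vee\theta_{c^{\prime}}=\nabla$.

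\medskip
\textbf{Step 3: Assembling the equational basis.} For part (1), let $\mathbb{W}$ be the subvariety of $\mathbb{PBZL}^{\ast}$ axiomatised by (AOL1)--(AOL3). Step~1 gives $\mathbb{AOL}\subseteq\mathbb{W}$, hence $V(\mathbb{AOL})\subseteq\mathbb{W}$. Conversely, $\mathbb{W}$ is a variety, so it is generated by its SI members; by Step~2 every SI member of $\mathbb{W}$ is an antiortholattice, i.e.\ lies in $\mathbb{AOL}\subseteq V(\mathbb{AOL})$, so $\mathbb{W}\subseteq V(\mathbb{AOL})$. Thus $\mathbb{W}=V(\mathbb{AOL})$, which is the asserted relative equational basis. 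Finally, part (2) follows immediately: an SI member of $V(\mathbb{AOL})=\mathbb{W}$ is, by Step~2, an antiortholattice. I would double-check along the way that (AOL1) is genuinely needed (it encodes the De Morgan/Brouwer interaction that keeps the factor $[0,c^{\prime}]$ from having spurious sharp elements) rather than being derivable from (AOL2)--(AOL3) alone, since otherwise the basis could be trimmed; but for the proof as stated it suffices that all three hold in $\mathbb{AOL}$ and that together they force direct indecomposability into antiortholatticehood.
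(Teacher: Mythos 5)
This theorem is quoted by the paper from \cite{GLP1+} and \cite{PBZ2} without proof, so there is no in-paper argument to compare against; judged on its own terms, your strategy (verify the identities on $\mathbb{AOL}$, then show that subdirectly irreducible algebras satisfying them are antiortholattices, and read off both claims) is the natural one and is the same pattern the paper itself uses for Proposition \ref{board1} and Theorem \ref{cacchiocacchio}. Steps 1 and 3 are fine: the case split $x=0$ versus $x\neq 0$ does verify (AOL1)--(AOL3) on antiortholattices, and the Birkhoff argument in Step 3 is routine.

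The genuine gap is the one you flag yourself in Step 2, and it is not cosmetic. From (AOL2) and (AOL3) with $y:=c$ sharp you obtain only condition C1 of Lemma \ref{freccetta} and its lattice dual, i.e.\ membership of $c$ in $C_{p}(\mathbf{L})$. That is \emph{not} "precisely" a direct product decomposition: the map $x\mapsto(x\wedge c,x\wedge c^{\prime})$ is injective by C1, but surjectivity and the homomorphism property for $\vee$ need the distributive laws C2--C3, and compatibility with $^{\sim}$ needs C4. The paper is careful to distinguish $C_{p}(\mathbf{L})$ from the centre $C(\mathbf{L})=C_{pbz}(\mathbf{L})$ (Theorem \ref{centravanti}), and the difference between the two is exactly the De Morgan-type condition that C4 encodes; your Step 2 never invokes (AOL1), so as written it cannot deliver C4. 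The gap is fillable with the paper's own machinery: C2 and C3 follow from $c\in C_{p}(\mathbf{L})$ via Lemma \ref{goldrush}.(ii) and Lemma \ref{ottolini} (equivalently, from Lemma \ref{arismo}, which the paper notes depends only on AOL2 and AOL3), while C4 is obtained by instantiating (AOL1) at $x:=c$, using $c^{\sim}=c^{\prime}$ and $\Diamond c=c$. With C1--C4 in hand, Lemma \ref{freccetta} gives centrality, direct indecomposability of a subdirectly irreducible algebra forces $S_{K}(\mathbf{L})=\{0,1\}$, and the rest of your argument goes through. Until that verification is written out, the central claim of Step 2 is asserted rather than proved.
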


Clearly $V\left(  \mathbb{AOL}\right)  \cap\mathbb{OML}$ is the variety
{$\mathbb{BA}$ of Boolean algebras}.

The lattice {$\mathbf{L}_{\mathbb{PBZL}^{\ast}}$ }of{ subvarieties of
$\mathbb{PBZL}^{\ast}$ has $\mathbb{BA}$ as a unique atom. It is well-known
that $\mathbb{BA}$ has a single orthomodular cover \cite[Cor. 3.6]{BH}: the
variety $V\left(  \mathbf{MO}_{2}\right)  $, generated by the simple modular
ortholattice with $4$ atoms. Moreover:}

\begin{theorem}
\label{mobutu}There is a single non-orthomodular cover of {$\mathbb{BA}$ in
$\mathbf{L}_{\mathbb{PBZL}^{\ast}}$, the variety }$V\left(  {\mathbf{D}_{3}%
}\right)  $ generated by the $3$-element antiortholattice chain, whose
equational basis relative to $V\left(  \mathbb{AOL}\right)  $ is given by the
identity SK.
\end{theorem}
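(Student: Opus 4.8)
The plan is to show that $V(\mathbf{D}_{3})$ is the unique cover of $\mathbb{BA}$ among the non-orthomodular subvarieties of $\mathbb{PBZL}^{\ast}$, and simultaneously that SK axiomatises it relative to $V(\mathbb{AOL})$. First I would argue that $V(\mathbf{D}_{3})$ is genuinely a non-orthomodular cover of $\mathbb{BA}$: since $\mathbf{D}_{3}$ is an antiortholattice with $S_{K}(\mathbf{D}_{3})=\{0,1\}$ it is not orthomodular, and $V(\mathbf{D}_{3})\supsetneq\mathbb{BA}$; that it is a \emph{cover} follows because any proper subvariety of $V(\mathbf{D}_{3})$ is generated by a proper subclass of its subdirectly irreducibles, and by Theorem \ref{caniggia}(2) together with Jónsson-type arguments the only subdirectly irreducible members of $V(\mathbf{D}_{3})$ are (up to iso) $\mathbf{D}_{2}$ and $\mathbf{D}_{3}$ — here $\mathbf{D}_{2}$ is the two-element Boolean algebra — so the only proper subvariety is $\mathbb{BA}=V(\mathbf{D}_{2})$. (I would double-check that $\mathbf{D}_{3}$ is simple, or at least has no nontrivial proper subalgebra beyond $\mathbf{D}_{2}$ and the congruence structure needed, so that the lattice of subvarieties below $V(\mathbf{D}_{3})$ is the two-element chain.)

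Next I would establish that SK holds in $\mathbf{D}_{3}$ but not in other relevant antiortholattices. In $\mathbf{D}_{3}=\{0,d_{1},1\}$ with the trivial Brouwer complement, $\Diamond x=x^{\sim\sim}$ equals $1$ for $x\neq 0$ and $0$ for $x=0$, and $\square x=x^{\prime\sim}$ equals $0$ unless $x=1$; checking $x\wedge\Diamond y\leq\square x\vee y$ case by case (the only interesting case being $x=d_{1}$, where the left side is at most $d_{1}$ and so $\leq y$ whenever $y\neq 0$, while $y=0$ forces $\Diamond y=0$) confirms SK in $\mathbf{D}_{3}$. Conversely, SK fails in $\mathbf{D}_{4}$ (take $x=d_{1}$, $y=d_{2}$: then $x\wedge\Diamond y=d_{1}$ but $\square x\vee y=0\vee d_{2}=d_{2}\not\geq d_{1}$... wait, $d_{1}\leq d_{2}$, so this particular instance holds; I would instead use $x=d_{2},y=d_{1}$ giving $d_{2}\wedge 1=d_{2}\not\leq 0\vee d_{1}=d_{1}$), so SK genuinely separates $\mathbf{D}_{3}$ from longer chains, and more generally SK fails in any antiortholattice with an interval of "incomparable-enough" elements.

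The core of the argument is then: let $\mathbb{V}$ be the subvariety of $V(\mathbb{AOL})$ axiomatised (relative to $V(\mathbb{AOL})$) by SK; I must show $\mathbb{V}=V(\mathbf{D}_{3})$. The inclusion $V(\mathbf{D}_{3})\subseteq\mathbb{V}$ is the paragraph above. For the reverse, I would invoke Theorem \ref{caniggia}(2): every subdirectly irreducible member $\mathbf{A}$ of $\mathbb{V}$ is an antiortholattice, so $S_{K}(\mathbf{A})=\{0,1\}$ and by Lemma \ref{barabba}(1) the Brouwer complement is trivial, i.e. $a^{\sim}=0$ for all $a\neq 0$ and $0^{\sim}=1$, hence $\Diamond a = 1$ for $a\neq 0$ and $\square a = 0$ for $a\neq 1$. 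Plugging these into SK: for $a,b\in A\setminus\{0\}$ with $b\neq 1$ we get $a=a\wedge 1=a\wedge\Diamond b\leq\square a\vee b$; if also $a\neq 1$ then $\square a = 0$, so $a\leq b$. By symmetry $b\leq a$, so $a=b$. This forces $A\setminus\{0,1\}$ to have at most one element, i.e. $\mathbf{A}$ is (the Kleene/BZ reduct of) $\mathbf{D}_{1}$, $\mathbf{D}_{2}$, or $\mathbf{D}_{3}$ — and each of these lies in $V(\mathbf{D}_{3})$ (the first two being trivial and Boolean). Since $\mathbb{V}$ is generated by its subdirectly irreducibles, $\mathbb{V}\subseteq V(\mathbf{D}_{3})$.

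I expect the main obstacle to be the bookkeeping around \emph{which} antiortholattice chains and reducts actually occur, and making the "at most one element strictly between $0$ and $1$" step airtight — in particular verifying that an antiortholattice whose pseudo-Kleene reduct is not a chain (e.g. has two incomparable atoms) also violates SK, so that subdirect irreducibility plus SK really does collapse things down to $\mathbf{D}_{\leq 3}$. One must use that in an antiortholattice every element $a\notin\{0,1\}$ satisfies $a\not\leq a^{\prime}$ and $a^{\prime}\not\leq a$ (else Lemma \ref{basics}(ix)-type reasoning or the definition of $S_{K}$ would force $a\in\{0,1\}$), together with paraorthomodularity, to pin down the order structure; alternatively one can cite the known classification of subdirectly irreducible antiortholattices if available earlier. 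A secondary technical point is confirming the cover claim from the other side — that no subvariety strictly between $\mathbb{BA}$ and $V(\mathbf{D}_{3})$ exists — which again reduces to the subdirectly-irreducible analysis just performed, now applied inside $V(\mathbf{D}_{3})$ itself.
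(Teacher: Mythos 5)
This theorem is one of the results the paper merely recalls from \cite{GLP1+} and \cite{PBZ2}; there is no in-paper proof to compare against, so I assess your proposal on its own terms. Two of the three claims are handled essentially correctly. The cover argument is sound: congruence distributivity comes for free from the lattice reduct, so J\'{o}nsson's lemma applies, $\mathbf{D}_{3}$ is simple, and its only proper nontrivial subalgebra is the two-element Boolean algebra, whence the only proper nontrivial subvariety of $V(\mathbf{D}_{3})$ is $\mathbb{BA}$. Your SK computation is also complete as it stands: in a subdirectly irreducible member of the subvariety of $V(\mathbb{AOL})$ axiomatised by SK, the Brouwer complement is trivial (Theorem \ref{caniggia}.(ii) together with Lemma \ref{barabba}.(i)), and SK then reads ``$a\leq b$ for all $a\neq 1$ and $b\neq 0$''; this single consequence already forces the universe to be a chain with at most one element strictly between $0$ and $1$, so the worries you voice about incomparable atoms and about which reducts can occur are answered by your own argument and need no extra work.

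The genuine gap is the uniqueness claim. The theorem asserts that $V(\mathbf{D}_{3})$ is the \emph{only} non-orthomodular cover of $\mathbb{BA}$ in $\mathbf{L}_{\mathbb{PBZL}^{\ast}}$. What you call the ``secondary technical point'' --- that nothing lies strictly between $\mathbb{BA}$ and $V(\mathbf{D}_{3})$ --- is just the cover property of $V(\mathbf{D}_{3})$ restated, not uniqueness. Uniqueness needs the following external fact: every subvariety of $\mathbb{PBZL}^{\ast}$ not contained in $\mathbb{OML}$ contains $\mathbf{D}_{3}$. Granting that, any non-orthomodular cover $\mathbb{W}$ of $\mathbb{BA}$ satisfies $\mathbb{BA}\subsetneq V(\mathbf{D}_{3})\subseteq\mathbb{W}$ and hence, being a cover, equals $V(\mathbf{D}_{3})$; without it, nothing rules out a second non-orthomodular cover sitting elsewhere above $\mathbb{BA}$. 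This fact is nontrivial --- one must manufacture a copy of $\mathbf{D}_{3}$ inside $HSP$ of any PBZ$^{\ast}$-lattice possessing a non-Kleene-sharp element $a$, typically by working with $b=a\wedge a^{\prime}$, for which $0<b\leq b^{\prime}$ --- and it is precisely Theorem 5.5 of \cite{GLP1+}, which the present paper itself invokes for the analogous purpose in its final section. Your proposal neither supplies nor flags this step.
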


Two other notable subvarieties of $V\left(  \mathbb{AOL}\right)  $ are the
variety {$\mathbb{DIST}$}, whose equational basis relative to $V\left(
\mathbb{AOL}\right)  $ (or, equivalently, relative to $\mathbb{PBZL}%
^{\mathbb{\ast}}$) is given by the distribution identity DIST, {and the
variety $\mathbb{SAOL}$, whose }equational basis relative to $V\left(
\mathbb{AOL}\right)  $ is given by the Strong De Morgan identity SDM. We have that:

\begin{theorem}
\label{antonello}$V\left(  \mathbf{D}_{5}\right)  =\mathbb{DIST}%
\cap\;\mathbb{SAOL}$.
\end{theorem}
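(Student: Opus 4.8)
The plan is to establish the two inclusions $V(\mathbf{D}_5)\subseteq \mathbb{DIST}\cap\mathbb{SAOL}$ and $\mathbb{DIST}\cap\mathbb{SAOL}\subseteq V(\mathbf{D}_5)$ separately. For the easy inclusion, I would simply verify that the five-element antiortholattice chain $\mathbf{D}_5$ satisfies both DIST and SDM: it is a chain, hence a distributive lattice, so DIST holds; and since its Brouwer complement is trivial, for any $a,b\in D_5$ we have $(a\wedge b)^\sim = 0$ whenever $a\wedge b\neq 0$, while $a^\sim\vee b^\sim=0$ as soon as both $a,b\neq 0$, and the boundary cases ($a=0$ or $b=0$) are immediate — so SDM holds as well. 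Since $\mathbb{DIST}$ and $\mathbb{SAOL}$ are varieties (defined by DIST and SDM relative to $V(\mathbb{AOL})$, and $\mathbf{D}_5\in\mathbb{AOL}$), this gives $V(\mathbf{D}_5)\subseteq\mathbb{DIST}\cap\mathbb{SAOL}$.

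For the reverse inclusion, the strategy is to show that every subdirectly irreducible member of $\mathbb{DIST}\cap\mathbb{SAOL}$ embeds into $\mathbf{D}_5$ (or is a subalgebra of $\mathbf{D}_5$), since then by Birkhoff's subdirect representation theorem every algebra in the variety is a subdirect product of such SI's and hence lies in $V(\mathbf{D}_5)$. Now $\mathbb{DIST}\cap\mathbb{SAOL}\subseteq V(\mathbb{AOL})$, so by Theorem~\ref{caniggia}(2) every SI member is an antiortholattice; moreover it is a distributive pseudo-Kleene algebra (a Kleene algebra) satisfying SDM, with trivial Brouwer complement. So I would classify the subdirectly irreducible Kleene algebras that are antiortholattices and satisfy SDM. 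The key structural fact to exploit is that an antiortholattice has $S_K(\mathbf{L})=\{0,1\}$, which for a distributive (hence "sharp = Boolean") situation forces the lattice to be highly constrained; combined with subdirect irreducibility (which for distributive lattices with involution typically forces a chain or a small lattice) one should be able to pin down the candidates. I expect the SI antiortholattices in $\mathbb{DIST}$ alone to already be among $\mathbf{D}_2,\mathbf{D}_3,\mathbf{D}_4,\mathbf{D}_5,\dots$ and possibly a few non-chain lattices, and then the role of SDM is to cut the list down: SDM fails on chains longer than $\mathbf{D}_5$ because for an interior element $a$ with $a<a'$ one can find a witness where $(a\wedge a')^\sim\neq a^\sim\vee a'^\sim$ — wait, more precisely SDM with a well-chosen instance rules out the presence of a "large" central-less interior. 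I would work out exactly which instance of SDM is violated in $\mathbf{D}_6$ (and longer chains) and which is violated in the relevant non-chain SI antiortholattices, leaving $\mathbf{D}_5$ as the unique maximal survivor, so that all survivors are subalgebras of $\mathbf{D}_5$.

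The main obstacle I anticipate is the classification of subdirectly irreducible members of $\mathbb{DIST}\cap\mathbb{SAOL}$: one must be careful that "distributive antiortholattice" does not a priori force a chain, and one must handle the involution $^\prime$ correctly — a distributive pseudo-Kleene algebra need not be a chain, so I would need to argue that subdirect irreducibility together with the antiortholattice condition $S_K(\mathbf{L})=\{0,1\}$ eliminates the non-chain possibilities (e.g. by exhibiting a nontrivial pseudo-identical congruence, or a factor congruence, on any such non-chain algebra). Once reduced to chains, the remaining work is the concrete computation identifying the precise SDM instance that fails in $\mathbf{D}_n$ for $n\geq 6$; this is routine once set up, since in a Kleene chain the Kleene complement is the unique order-reversing bijection fixing the (possible) midpoint, and SDM becomes a finite inequality one checks directly. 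I would also double-check the borderline small chains $\mathbf{D}_2,\mathbf{D}_3,\mathbf{D}_4$ are genuinely subalgebras of $\mathbf{D}_5$ as Kleene algebras with trivial Brouwer complement, which they are, so the subdirect representation lands entirely inside $V(\mathbf{D}_5)$.
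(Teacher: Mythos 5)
This theorem is one of the results the paper merely recalls from \cite{GLP1+} and \cite{PBZ2}; no proof of it appears in the text, so there is no in-paper argument to compare against. Taken on its own terms, your overall strategy is the standard one (verify the easy inclusion directly, then invoke Birkhoff's subdirect representation theorem together with Theorem \ref{caniggia}.(ii) and classify the subdirectly irreducible members of $\mathbb{DIST}\cap\mathbb{SAOL}$), and your verification that $\mathbf{D}_{5}$ satisfies DIST and SDM is correct.

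The hard inclusion, however, rests on a claim that is false: SDM does not fail in $\mathbf{D}_{6}$, nor in any longer antiortholattice chain. Your own computation for $\mathbf{D}_{5}$ already shows this: in an antiortholattice chain, $a,b\neq 0$ implies $a\wedge b\neq 0$, so both sides of SDM equal $0$ unless one argument is $0$, in which case both equal $1$. Hence every $\mathbf{D}_{n}$ lies in $\mathbb{DIST}\cap\mathbb{SAOL}$, your stated instance $(a\wedge a')^{\sim}$ versus $a^{\sim}\vee a'^{\sim}$ evaluates to $0=0$ for any interior $a$, and the ``well-chosen instance of SDM violated in $\mathbf{D}_{6}$'' that your plan hinges on does not exist. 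The long chains cause no trouble for a different reason: they are not subdirectly irreducible. For example, $\mathbf{D}_{6}=\{0<a<b<b'<a'<1\}$ is a subdirect product of $\mathbf{D}_{5}$ and $\mathbf{D}_{4}$ via the two pseudo-identical congruences with nontrivial blocks $\{b,b'\}$ and $\{a,b\},\{b',a'\}$ respectively, and the same device works for all $n\geq 6$. What SDM actually excludes are the non-chain simple distributive antiortholattices in which $0$ is meet-reducible, such as the algebra of Example \ref{cogotti}: there the two atoms satisfy $(a\wedge b)^{\sim}=0^{\sim}=1\neq 0=a^{\sim}\vee b^{\sim}$. (In an antiortholattice, SDM is equivalent to meet-irreducibility of $0$.) So the real content of the reverse inclusion is the claim that every subdirectly irreducible distributive antiortholattice with $0$ meet-irreducible is a chain of at most five elements; your sketch defers exactly this step to hope (``one should be able to pin down the candidates''), and once the erroneous SDM-failure mechanism is removed, no argument for it remains.
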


A more circumscribed study of $\mathbb{DIST}$ and its subvarieties, also
focussing on the relationship with known classes of algebras (including
Kleene-Stone algebras \L ukasiewicz algebras, Heyting-Wajsberg algebras) is
currently in preparation \cite{GLPf}

\subsection{Subtractive Varieties}

Subtractive varieties were introduced by Ursini \cite{OSV1} to enucleate the
common features of pointed varieties with a good ideal theory, like groups,
rings or Boolean algebras. They were further investigated in \cite{OSV2, OSV3,
OSV4, OSV5}.

\begin{definition}
\label{subtra}Let $\mathbb{V}$ a variety of type $\nu$, and let $0$ be a
nullary term (or equationally definable constant) of type $\nu$. $\mathbb{V}$
is called $0$\emph{-subtractive} if there exists a binary term $s$, also of
type $\nu$, s.t. $\mathbb{V}$ satisfies the identities $s\left(  x,x\right)
\approx0$ and $s\left(  x,0\right)  \approx x.$ A variety of type $\nu$ which
is $0$-subtractive w.r.t. at least one constant $0$ of type $\nu$ is called
\emph{subtractive} tout court.
\end{definition}

It is not hard to see that subtractivity is a congruence property: namely, a
variety $\mathbb{V}$ is $0$-subtractive exactly when in each $\mathbf{A}%
\in\mathbb{V}$ congruences permute at $0$ (meaning that for all $\theta
,\varphi$ in $\mathrm{Con}_{{\mathbb{V}}}\left(  \mathbf{A}\right)  $,
$0^{\mathbf{A}}/(\theta\circ\varphi)=0^{\mathbf{A}}/(\varphi\circ\theta)$).

To investigate ideals in this context, first and foremost, we need a workable
general notion of ideal encompassing all the intended examples mentioned above
(normal subgroups of groups, two-sided ideals of rings, ideals or filters of
Boolean algebras). Ursini's candidate for playing this role is defined below.

\begin{definition}
\label{ideal term}

\begin{enumerate}
\item If $\mathbb{K}$ is a class of similar algebras whose type $\nu$ is as in
Definition \ref{subtra}, then a term $p\left(  \overrightarrow{x}%
,\overrightarrow{y}\right)  $ of type $\nu$ is a $\mathbb{K}$\emph{-ideal
term} in $\overrightarrow{x}$ iff $\mathbb{K\vDash}p\left(
0,...,0,\overrightarrow{y}\right)  \approx0$.

\item A nonempty subset $J$ of the universe of an $\mathbf{A}\in\mathbb{K}$ is
a $\mathbb{K}$\emph{-ideal} of $\mathbf{A}$ (w.r.t. $0$) iff for any
$\mathbb{K}$-ideal term $p\left(  \overrightarrow{x},\overrightarrow{y}%
\right)  $ in $\overrightarrow{x}$ we have that $p^{\mathbf{A}}\left(
\overrightarrow{a},\overrightarrow{b}\right)  \in J$ whenever
$\overrightarrow{a}\in J$ and $\overrightarrow{b}\in A$.
\end{enumerate}
\end{definition}

We will denote by $\mathcal{I}_{\mathbb{K}}\left(  \mathbf{A}\right)  $ the
set (or the lattice) of all $\mathbb{K}$-ideals of $\mathbf{A}$, dropping the
subscript whenever $\mathbb{K}$ can be contextually identified; observe that
$\left\{  0\right\}  ,A\in\mathcal{I}_{\mathbb{K}}\left(  \mathbf{A}\right)
$. The main reason that backs our previous claim to the effect that
subtractive varieties have a good ideal theory is given by the following
result. Let $\mathbb{V}$ be a variety of type $\nu$. Recall that an algebra
$\mathbf{A}$ from ${\mathbb{V}}$ is said to be $0$\emph{-regular} iff the map
sending a congruence $\theta\in\mathrm{Con}\left(  \mathbf{A}\right)  $ to its
$0$-class $0^{\mathbf{A}}/\theta$ is injective. The variety ${\mathbb{V}}$ is
said to be $0$\emph{-regular} iff every $\mathbf{A}\in\mathbb{V}$ is
$0$-regular; this happens exactly when there exists a finite family of binary
$\nu$-terms (called \emph{Fichtner terms}) $\left\{  d_{i}\left(  x,y\right)
\right\}  _{i\leq n}$ such that $\mathbb{V\vDash}d_{1}\left(  x,y\right)
\approx0\&...\&d_{n}\left(  x,y\right)  \approx0\Leftrightarrow x\approx y$.

\begin{theorem}
\label{funziona}

\begin{enumerate}
\item Subtractive varieties have normal ideals. That is, if $\mathbb{V}$ is a
$0$-subtractive variety and $\mathbf{A}\in\mathbb{V}$, then
\[
\mathcal{I}_{\mathbb{V}}\left(  \mathbf{A}\right)  =\left\{  I\subseteq
A:I=0^{\mathbf{A}}/\theta\text{ for some }\theta\in\mathrm{Con}_{{\mathbb{V}}%
}\left(  \mathbf{A}\right)  \right\}  .
\]

\item If $\mathbb{V}$ is a $0$-subtractive and $0$-regular variety, then, for
every $\mathbf{A}\in\mathbb{V}$, $\mathrm{Con}_{{\mathbb{V}}}\left(
\mathbf{A}\right)  $ is isomorphic to $\mathcal{I}_{\mathbb{V}}\left(
\mathbf{A}\right)  $.
\end{enumerate}
\end{theorem}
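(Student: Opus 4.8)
Both statements are known universal-algebraic facts about $0$-subtractive (resp.\ $0$-subtractive and $0$-regular) varieties, due to Ursini and collaborators; the plan is to reconstruct the arguments in the present notation. For part (1), the inclusion from right to left is the easy direction: one shows that every $0$-class $0^{\mathbf{A}}/\theta$ is a $\mathbb{V}$-ideal. Given a $\mathbb{V}$-ideal term $p(\overrightarrow{x},\overrightarrow{y})$ in $\overrightarrow{x}$, so that $\mathbb{V}\vDash p(0,\dots,0,\overrightarrow{y})\approx 0$, and given $\overrightarrow{a}\in 0^{\mathbf{A}}/\theta$ and $\overrightarrow{b}\in A$, we have $a_i\mathrel{\theta}0$ for each $i$, hence by compatibility of $\theta$ with the term operation $p^{\mathbf{A}}$ we get $p^{\mathbf{A}}(\overrightarrow{a},\overrightarrow{b})\mathrel{\theta}p^{\mathbf{A}}(0,\dots,0,\overrightarrow{b})=0^{\mathbf{A}}$, i.e.\ $p^{\mathbf{A}}(\overrightarrow{a},\overrightarrow{b})\in 0^{\mathbf{A}}/\theta$. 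This direction uses no subtractivity at all.

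The reverse inclusion is where subtractivity is essential, and this is the main obstacle. Given a $\mathbb{V}$-ideal $I$ of $\mathbf{A}$, one must produce a congruence $\theta$ with $0^{\mathbf{A}}/\theta=I$. The natural candidate is $\theta=\theta_I:=\{(a,b):s^{\mathbf{A}}(a,b)\in I\text{ and }s^{\mathbf{A}}(b,a)\in I\}$, where $s$ is the subtraction term witnessing $0$-subtractivity. One then checks: reflexivity from $s(x,x)\approx 0$ and $\{0\}\subseteq I$; symmetry is built in; the $0$-class is exactly $I$ using $s(x,0)\approx x$ (if $a\mathrel{\theta_I}0$ then $a=s^{\mathbf{A}}(a,0)\in I$, and conversely if $a\in I$ then $s^{\mathbf{A}}(a,0)=a\in I$ and $s^{\mathbf{A}}(0,a)\in I$ because $\lambda x.\,s(0,x)$... — here one needs $s^{\mathbf{A}}(0,a)\in I$, which follows because $s(0,y)$ is, up to renaming, a $\mathbb{V}$-ideal term in its first group of (empty) variables only if $\mathbb{V}\vDash s(0,y)$... no: one argues directly that $u(y):=s(0,y)$ satisfies $\mathbb{V}\vDash u(0)\approx 0$, so $u$ is a $\mathbb{V}$-ideal term in the empty tuple of distinguished variables, which makes $u^{\mathbf{A}}(a)\in I$ fail in general — instead one must use that $I$ is closed under $\mathbb{V}$-ideal terms in $\overrightarrow{x}$ with $a$ substituted in an $x$-slot). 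The cleanest route is to invoke the general characterisation of $\mathbb{V}$-ideals as precisely the subsets closed under the unary operations $\lambda \overrightarrow{y}.\,p^{\mathbf{A}}(a,0,\dots,0,\overrightarrow{y})$ and then verify transitivity and compatibility of $\theta_I$ with all basic operations by exhibiting, for each such operation $f$, suitable ideal terms that control $s^{\mathbf{A}}(f(\overrightarrow{a}),f(\overrightarrow{b}))$; this compatibility verification is the technical heart and the step I expect to be the main obstacle, since it is where subtractivity interacts with congruence permutability at $0$. I would cite \cite{OSV1,OSV2} for the full details and present the argument in outline.

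For part (2), assume in addition $0$-regularity with Fichtner terms $\{d_i(x,y)\}_{i\le n}$. By part (1) the assignment $\theta\mapsto 0^{\mathbf{A}}/\theta$ is a surjection from $\mathrm{Con}_{\mathbb{V}}(\mathbf{A})$ onto $\mathcal{I}_{\mathbb{V}}(\mathbf{A})$; $0$-regularity says exactly that this map is injective. It remains to see it is an order isomorphism: monotonicity in one direction is immediate ($\theta\subseteq\varphi$ implies $0^{\mathbf{A}}/\theta\subseteq 0^{\mathbf{A}}/\varphi$), and for the converse one uses that $\theta$ is recovered from its $0$-class via $\theta=\{(a,b):d_i^{\mathbf{A}}(a,b)\in 0^{\mathbf{A}}/\theta\text{ for all }i\le n\}$ — a consequence of $0$-subtractivity plus the Fichtner identities — so that $0^{\mathbf{A}}/\theta\subseteq 0^{\mathbf{A}}/\varphi$ forces $\theta\subseteq\varphi$. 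Hence the bijection and its inverse are both order-preserving, giving the claimed lattice isomorphism $\mathrm{Con}_{\mathbb{V}}(\mathbf{A})\cong\mathcal{I}_{\mathbb{V}}(\mathbf{A})$. Throughout, the only nontrivial input beyond routine congruence bookkeeping is the construction and verification of $\theta_I$ in part (1); once that is in hand, part (2) is formal.
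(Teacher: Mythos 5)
The paper does not actually prove this theorem: it is quoted in the Preliminaries as a known result of Ursini's (\cite{OSV1}, \cite{OSV2}), so there is no in-paper argument to compare against and your reconstruction has to stand on its own. Your easy direction of (i) is correct (compatibility of $\theta$ with the term operation $p^{\mathbf{A}}$ gives $p^{\mathbf{A}}(\overrightarrow{a},\overrightarrow{b})\mathrel{\theta}p^{\mathbf{A}}(\overrightarrow{0},\overrightarrow{b})=0$), and part (ii) is correct modulo part (i): surjectivity of $\theta\mapsto 0^{\mathbf{A}}/\theta$ comes from (i), injectivity from $0$-regularity, and the recovery $\theta=\{(a,b):d_i^{\mathbf{A}}(a,b)\in 0^{\mathbf{A}}/\theta\ \forall i\}$ holds because the Fichtner quasi-identity is inherited by $\mathbf{A}/\theta\in\mathbb{V}$, which gives order-preservation of the inverse.

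The genuine gap is in the hard direction of (i), and it is not just a deferred verification: your candidate $\theta_I=\{(a,b):s^{\mathbf{A}}(a,b)\in I\text{ and }s^{\mathbf{A}}(b,a)\in I\}$ is in general \emph{not} a congruence, so the compatibility check you postpone cannot succeed. A counterexample sits inside this very paper's setting: $\mathbb{PBZL}^{\ast}$ is $0$-subtractive via $s(x,y)=x\wedge y^{\sim}$, and in $\mathbf{D}_{3}=\{0<d_{1}<1\}$ with $I=\{0\}$ one has $(1,d_{1})\in\theta_{I}$ (since $1\wedge d_{1}^{\sim}=0=d_{1}\wedge 1^{\sim}$) but $(1^{\prime},d_{1}^{\prime})=(0,d_{1})\notin\theta_{I}$ (since $d_{1}\wedge 0^{\sim}=d_{1}\neq 0$), so $\theta_{I}$ is not compatible with $^{\prime}$, even though $\{0\}=0/\Delta_{D_{3}}$ is of course a $0$-class. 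More structurally, the existence of finitely many binary terms $d_{i}$ such that $\{(a,b):d_{i}^{\mathbf{A}}(a,b)\in I\ \forall i\}$ is a congruence with $0$-class $I$ is essentially the extra property of being \emph{finitely congruential} (Definition \ref{granata}), which the paper treats as strictly stronger than subtractivity. The correct route is to take $\theta=Cg(I\times\{0\})$: the inclusion $I\subseteq 0^{\mathbf{A}}/\theta$ is trivial, and $0^{\mathbf{A}}/\theta\subseteq I$ is proved by downward induction along a Mal'cev chain $u=z_{0},\dots,z_{m}=0$ with $\{z_{j-1},z_{j}\}=\{t_{j}(a_{j},\overrightarrow{d}_{j}),t_{j}(0,\overrightarrow{d}_{j})\}$ and $a_{j}\in I$, using that $s(t(x_{1},\overrightarrow{y}),s(t(0,\overrightarrow{y}),x_{2}))$ and $s(t(0,\overrightarrow{y}),s(t(x_{1},\overrightarrow{y}),x_{2}))$ are ideal terms in $(x_{1},x_{2})$ which return $z_{j-1}$ when evaluated at $(a_{j},z_{j},\overrightarrow{d}_{j})$; this is exactly where subtractivity enters, and it is the step missing from your sketch.
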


Actually, the situation described by the previous theorem can be made more
precise as follows. Let $\mathbf{A}$ be an algebra in a $0$-subtractive
variety $\mathbb{V}$. Then the following maps are well-defined: $\cdot
^{\delta},\cdot^{\varepsilon}:\mathcal{I}_{{\mathbb{V}}}(\mathbf{A}%
)\rightarrow\mathrm{Con}_{{\mathbb{V}}}(\mathbf{A})$, for all $I\in
\mathcal{I}_{{\mathbb{V}}}(\mathbf{A})$,
\[%
\begin{array}
[c]{ll}%
I^{\delta} & =\bigwedge\{\theta\in\mathrm{Con}_{{\mathbb{V}}}\left(
\mathbf{A}\right)  :0^{\mathbf{A}}/\theta=I\},\\
I^{\varepsilon} & =\bigvee\{\theta\in\mathrm{Con}_{{\mathbb{V}}}\left(
\mathbf{A}\right)  :0^{\mathbf{A}}/\theta=I\}.
\end{array}
\]

Henceforth, all unnecessary superscripts will be dropped for the sake of
conciseness. Note that, for all $I\in\mathcal{I}_{{\mathbb{V}}}(\mathbf{A})$,
we have $0/I^{\delta}=0/I^{\varepsilon}=I$, so that $I^{\delta}=\min
\{\theta\in\mathrm{Con}_{{\mathbb{V}}}\left(  \mathbf{A}\right)
:0/\theta=I\}$ and $I^{\varepsilon}=\max\{\theta\in\mathrm{Con}_{{\mathbb{V}}%
}\left(  \mathbf{A}\right)  :0/\theta=I\}$. Moreover, the map $I\mapsto\lbrack
I^{\delta},I^{\varepsilon}]$ is a lattice isomorphism from $\mathcal{I}%
_{{\mathbb{V}}}(\mathbf{A})$ to the following lattice of intervals of
$\mathrm{Con}_{{\mathbb{V}}}(\mathbf{A})$: $\{[\min(C_{\theta}),\max
(C_{\theta})]:\theta\in\mathrm{Con}_{{\mathbb{V}}}(\mathbf{A})\}$, where, for
all $\theta\in\mathrm{Con}_{{\mathbb{V}}}(\mathbf{A})$, $C_{\theta}%
=\{\alpha\in\mathrm{Con}_{{\mathbb{V}}}(\mathbf{A}):0/\alpha=0/\theta
\}=\{\alpha\in\mathrm{Con}_{{\mathbb{V}}}(\mathbf{A}):0/\theta\in A/\alpha\}$,
which is a complete sublattice of $\mathrm{Con}_{{\mathbb{V}}}(\mathbf{A})$.
Clearly, if $\mathbb{V}$ is in addition $0$-regular, then $C_{\theta}%
=\{\theta\}$ for all $\theta\in\mathrm{Con}_{{\mathbb{V}}}(\mathbf{A})$, hence
all intervals of the form $\left[  I^{\delta},I^{\varepsilon}\right]  $ for
some $I\in\mathcal{I}_{{\mathbb{V}}}(\mathbf{A})$ are trivial, and Theorem
\ref{funziona}.(ii) follows as a special case.

\begin{definition}
Let $\mathbf{A}$ be an algebra in a $0$-subtractive variety $\mathbb{V}$.
$\mathbf{A}$ is said to be \emph{reduced} iff $\left\{  0\right\}
^{\varepsilon}=\Delta_{A}$.
\end{definition}

The class of all reduced algebras in $\mathbb{V}$ will be denoted by
$\mathbb{V}_{\varepsilon}$. Clearly, for all $\theta\in\mathrm{Con}%
_{\mathbb{V}}\left(  \mathbf{A}\right)  $, we have $\mathbf{A}/\theta
\in\mathbb{V}_{\varepsilon}$ iff $\theta=I^{\varepsilon}$ for some
$I\in\mathcal{I}_{\mathbb{V}}\left(  \mathbf{A}\right)  $.

For a $0$-subtractive variety, a property that generalises $0$-regularity is
finite congruentiality. Roughly put, a $0$-subtractive variety is finitely
congruential if it has a family of terms that do \textquotedblleft part of the
job\textquotedblright\ usually dispatched by the Fichtner terms for $0$-regularity.

\begin{definition}
\label{granata} A variety $\mathbb{V}$, whose type $\mathcal{\nu}$ is as in
Definition \ref{subtra}, is \emph{finitely congruential} iff there exists a
finite set $\left\{  d_{i}\left(  x,y\right)  \right\}  _{i\leq n}$ of binary
$\mathcal{\nu}$-terms s.t., whenever $\mathbf{A}\in\mathbb{V}$ and
$I\in\mathcal{I}_{\mathbb{V}}(\mathbf{A})$, we have:
\[
I^{\varepsilon}=\{\left(  a,b\right)  :d_{i}^{\mathbf{A}}\left(  a,b\right)
\in I\text{ for all }i\leq n\}.
\]

\end{definition}

The next results establish a significant connection between the theory of
subtractive varieties and \emph{abstract algebraic logic} (for information on
this research area, the reader is referred to \cite{Font}). It turns out that,
for a $0$-subtractive variety $\mathbb{V}$, the properties of the
$0$-assertional logic of $\mathbb{V}$ yield relevant information on the
properties of the class of reduced algebras in $\mathbb{V}$, and conversely.

\begin{theorem}
\label{crnjar} If $\mathbf{A}$ is a member of a $0$-subtractive variety
$\mathbb{V}$, then $\mathcal{I}_{{\mathbb{V}} }\left(  \mathbf{A}\right)  $ is
the class of all deductive filters on $\mathbf{A}$ of the $0$-assertional
logic of $\mathbb{V}$, and for all $I\in\mathcal{I}_{\mathbb{V}}\left(
\mathbf{A}\right)  $, $I^{\varepsilon}=\Omega^{\mathbf{A}}\left(  I\right)  $.
\end{theorem}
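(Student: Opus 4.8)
The plan is to prove the two assertions separately: first, that the $\mathbb{V}$-ideals of $\mathbf{A}$ are exactly the deductive filters of the $0$-assertional logic $\vdash$ of $\mathbb{V}$ --- the logic in the language $\nu$ determined by $\Gamma\vdash\varphi$ iff $\{\gamma\approx 0:\gamma\in\Gamma\}\vDash_{\mathbb{V}}\varphi\approx 0$ --- and second, that for each such $I$ the congruence $I^{\varepsilon}$ coincides with the Leibniz congruence $\Omega^{\mathbf{A}}(I)$, recalling that the latter is, by definition, the largest congruence of $\mathbf{A}$ compatible with $I$ (i.e.\ relating no element of $I$ to any element outside $I$). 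One first remarks that the $0$-assertional logic of a variety is finitary, since a pair belongs to a generated congruence already by virtue of finitely many generators; hence deductive filters need only be tested against finitary rules.

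For the first assertion I would argue both inclusions by unwinding definitions. If $F$ is a deductive filter, then for every $\mathbb{V}$-ideal term $p(\overrightarrow{x},\overrightarrow{y})$ in $\overrightarrow{x}$ the identity $p(\overrightarrow{0},\overrightarrow{y})\approx 0$ holds in $\mathbb{V}$, so the rule $\overrightarrow{x}\vdash p(\overrightarrow{x},\overrightarrow{y})$ is valid; applying closure of $F$ to a homomorphism sending the $\overrightarrow{x}$ into $F$ and the $\overrightarrow{y}$ arbitrarily shows $F$ closed under ideal terms, while the valid rule $\varnothing\vdash 0$ forces $0^{\mathbf{A}}\in F$, so $F\in\mathcal{I}_{\mathbb{V}}(\mathbf{A})$ (this direction uses no subtractivity). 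Conversely, let $F\in\mathcal{I}_{\mathbb{V}}(\mathbf{A})$ and $\{\gamma_{1},\dots,\gamma_{n}\}\vdash\varphi$. Working in a free algebra $\mathbf{F}$ of $\mathbb{V}$, validity of this rule means $(\varphi^{\mathbf{F}},0)\in Cg(\{(\gamma_{i}^{\mathbf{F}},0):i\leq n\})$; now subtractivity enters: by Theorem~\ref{funziona}.(1) the $\mathbb{V}$-ideals of $\mathbf{F}$ are precisely the $0$-classes of congruences, hence the $\mathbb{V}$-ideal generated by a set $X$ is $0^{\mathbf{F}}/Cg(X\times\{0\})$, so $\varphi^{\mathbf{F}}$ lies in the $\mathbb{V}$-ideal of $\mathbf{F}$ generated by $\gamma_{1}^{\mathbf{F}},\dots,\gamma_{n}^{\mathbf{F}}$. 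Therefore there are a $\mathbb{V}$-ideal term $q(\overrightarrow{x},\overrightarrow{w})$ in $\overrightarrow{x}$ and terms $\overrightarrow{w}$ with $\varphi\approx q(\gamma_{1},\dots,\gamma_{n},\overrightarrow{w})$ valid in $\mathbb{V}$; evaluating along any homomorphism into $\mathbf{A}$ sending the $\gamma_{i}$ into $F$ and using closure of $F$ under $q$ shows $F$ is a deductive filter.

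For the second assertion, both inequalities are short. On one side, $I^{\varepsilon}\in\mathrm{Con}_{\mathbb{V}}(\mathbf{A})$ and, as noted after Theorem~\ref{funziona}, $0^{\mathbf{A}}/I^{\varepsilon}=I$; thus if $a\in I$ and $(a,b)\in I^{\varepsilon}$ then $(b,0^{\mathbf{A}})\in I^{\varepsilon}$, so $b\in I$, and symmetrically $b\notin I$ whenever $a\notin I$ --- so $I^{\varepsilon}$ is compatible with $I$, whence $I^{\varepsilon}\leq\Omega^{\mathbf{A}}(I)$. On the other side, $\Omega^{\mathbf{A}}(I)$ is compatible with $I$ and $0^{\mathbf{A}}\in I$, so $0^{\mathbf{A}}/\Omega^{\mathbf{A}}(I)\subseteq I$; combining with $I=0^{\mathbf{A}}/I^{\varepsilon}\subseteq 0^{\mathbf{A}}/\Omega^{\mathbf{A}}(I)$ (from $I^{\varepsilon}\leq\Omega^{\mathbf{A}}(I)$) we get $0^{\mathbf{A}}/\Omega^{\mathbf{A}}(I)=I$, so $\Omega^{\mathbf{A}}(I)\in\{\alpha\in\mathrm{Con}_{\mathbb{V}}(\mathbf{A}):0^{\mathbf{A}}/\alpha=I\}$ and hence $\Omega^{\mathbf{A}}(I)\leq I^{\varepsilon}$. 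The two inequalities yield $I^{\varepsilon}=\Omega^{\mathbf{A}}(I)$.

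The definitional unwindings and the second assertion are routine; the one substantive step --- and the only place where $0$-subtractivity is genuinely needed --- is the passage, in the converse half of the first assertion, from ``$\varphi$ is $\vdash$-derivable from $\gamma_{1},\dots,\gamma_{n}$'' to ``$\varphi$ is obtained from the $\gamma_{i}$ by a $\mathbb{V}$-ideal term.'' This rests precisely on the identification of $\mathbb{V}$-ideals with $0$-congruence classes (Theorem~\ref{funziona}.(1)) and, via that identification, on the description of generated ideals in a free algebra of $\mathbb{V}$; I expect this to be the main point to get exactly right.
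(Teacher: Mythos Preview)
The paper does not supply a proof of this theorem: it appears in the preliminaries subsection on subtractive varieties (Section~\ref{grosw}) as a background result, stated without argument and without an explicit citation, sandwiched between other results attributed to the Aglian\`o--Ursini papers. So there is no ``paper's own proof'' to compare against; the benchmark here is simply correctness.

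Your argument is correct. The first assertion is handled by a clean unwinding: the direction ``deductive filter $\Rightarrow$ $\mathbb{V}$-ideal'' is immediate from the validity of the rules $\overrightarrow{x}\vdash p(\overrightarrow{x},\overrightarrow{y})$ for ideal terms $p$, while the converse correctly isolates the role of subtractivity via Theorem~\ref{funziona}.(i), identifying the $\mathbb{V}$-ideal generated by $X$ in the free algebra with $0/Cg(X\times\{0\})$ and then invoking the standard description of generated ideals as closures under ideal terms. The second assertion is a two-line sandwich argument and is fine as written. One small point worth making explicit in a polished version: the fact that the $\mathbb{V}$-ideal generated by a set $S$ equals $\{p^{\mathbf{A}}(\overrightarrow{s},\overrightarrow{a}):p\text{ a }\mathbb{V}\text{-ideal term in }\overrightarrow{x},\ \overrightarrow{s}\in S,\ \overrightarrow{a}\in A\}$ is being used tacitly when you extract the term $q$; this is standard (it is the one-step closure description of generated ideals), but since it is the hinge of the converse direction you might want to state it rather than leave it implicit.
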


\begin{theorem}
\cite[Thm. 3.12]{OSV3} \label{mistretta}For a $0$-subtractive variety
$\mathbb{V}$ the following are equivalent:

\begin{enumerate}
\item The $0$-assertional logic of $\mathbb{V}$ is equivalential.

\item $\mathbb{V}_{\varepsilon}$ is closed under subalgebras and direct products.
\end{enumerate}
\end{theorem}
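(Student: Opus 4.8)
The plan is to read the statement through abstract algebraic logic, using Theorem~\ref{crnjar} as the dictionary between $\mathbb{V}$-ideals and deductive filters of the $0$-assertional logic $S$ of $\mathbb{V}$. Recall that a \emph{model} of $S$ is a matrix $\langle\mathbf{A},F\rangle$ with $F$ a deductive filter of $S$ on $\mathbf{A}$, that it is \emph{reduced} when $\Omega^{\mathbf{A}}(F)=\Delta_{A}$, and write $\mathrm{Mod}^{*}(S)$ for the class of reduced models and $\mathrm{Alg}^{*}(S)$ for the class of their algebra reducts. I would first establish three facts. (a) $S$ is protoalgebraic: if $s$ witnesses $0$-subtractivity, then $\{s(y,x)\}$ is a set of protoimplication formulas, since $\mathbb{V}\models s(x,x)\approx0$ gives $\vdash_{S}s(x,x)$ and $\mathbb{V}\models s(x,0)\approx x$ gives $\{x,s(y,x)\}\vdash_{S}y$. (b) $\mathrm{Alg}^{*}(S)\subseteq\mathbb{V}$: if $\langle\mathbf{C},G\rangle\in\mathrm{Mod}^{*}(S)$ and $p\approx q$ is an identity of $\mathbb{V}$, then for each formula $\varphi$ the rule $\varphi(p(\vec{x}),\vec{w})\vdash_{S}\varphi(q(\vec{x}),\vec{w})$ and its converse are valid rules of $S$ (they hold on every $\langle\mathbf{A},\{0^{\mathbf{A}}\}\rangle$ with $\mathbf{A}\in\mathbb{V}$, since $\mathbf{A}\models p\approx q$), so $p(\vec{x})$ and $q(\vec{x})$ are $G$-indiscernible in $\mathbf{C}$, whence $(p^{\mathbf{C}}(\vec{c}),q^{\mathbf{C}}(\vec{c}))\in\Omega^{\mathbf{C}}(G)=\Delta_{C}$ for all $\vec{c}$, i.e.\ $\mathbf{C}\models p\approx q$. (c) By Theorem~\ref{crnjar}, for $\mathbf{A}\in\mathbb{V}$ the $S$-filters on $\mathbf{A}$ are exactly the $\mathbb{V}$-ideals and $\Omega^{\mathbf{A}}(I)=I^{\varepsilon}$, so a matrix with algebra reduct in $\mathbb{V}$ is reduced iff it equals $\langle\mathbf{A},\{0^{\mathbf{A}}\}\rangle$ with $\{0^{\mathbf{A}}\}^{\varepsilon}=\Delta_{A}$, i.e.\ with $\mathbf{A}\in\mathbb{V}_{\varepsilon}$. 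Combining (b) and (c), $\mathrm{Mod}^{*}(S)$ is exactly the class of matrices $\langle\mathbf{A},\{0^{\mathbf{A}}\}\rangle$ with $\mathbf{A}\in\mathbb{V}_{\varepsilon}$. The strategy is then to plug this identification into the classical characterisation of equivalential logics due to Czelakowski: a logic is equivalential if and only if its class of reduced models is closed under submatrices and direct products.

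For the implication from (i) to (ii) only the easy half of that characterisation is needed. If $\Delta(x,y)$ is a set of equivalence formulas for $S$, then on any matrix $\langle\mathbf{A},F\rangle$ the Leibniz congruence is the set of pairs $(a,b)$ with $\delta^{\mathbf{A}}(a,b,\vec{c})\in F$ for every $\delta\in\Delta$ and every parameter tuple $\vec{c}$; from this explicit description it is immediate that submatrices and direct products of reduced models are again reduced. Hence, for $\mathbf{B}$ a subalgebra of $\mathbf{A}\in\mathbb{V}_{\varepsilon}$, the matrix $\langle\mathbf{B},\{0^{\mathbf{B}}\}\rangle$ is a submatrix of the reduced matrix $\langle\mathbf{A},\{0^{\mathbf{A}}\}\rangle$, so it is reduced, and since $\mathbb{V}$ is a variety $\mathbf{B}\in\mathbb{V}$, giving $\mathbf{B}\in\mathbb{V}_{\varepsilon}$; the case of a product $\prod_{i}\mathbf{A}_{i}$ of members of $\mathbb{V}_{\varepsilon}$ is identical, using $\prod_{i}\langle\mathbf{A}_{i},\{0^{\mathbf{A}_{i}}\}\rangle=\langle\prod_{i}\mathbf{A}_{i},\{0\}\rangle$ and $\prod_{i}\mathbf{A}_{i}\in\mathbb{V}$. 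Thus $\mathbb{V}_{\varepsilon}$ is closed under subalgebras and direct products.

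For the implication from (ii) to (i) I would run the dictionary backwards. By the identification of $\mathrm{Mod}^{*}(S)$ above, every submatrix of a reduced model of $S$ has the form $\langle\mathbf{B},\{0^{\mathbf{B}}\}\rangle$ with $\mathbf{B}$ a subalgebra of a member of $\mathbb{V}_{\varepsilon}$, hence $\mathbf{B}\in\mathbb{V}_{\varepsilon}$ by hypothesis and the submatrix is again a reduced model; likewise, using closure of $\mathbb{V}_{\varepsilon}$ under direct products, direct products of reduced models of $S$ are reduced models of $S$. So $\mathrm{Mod}^{*}(S)$ is closed under submatrices and direct products, and Czelakowski's theorem (applicable since $S$ is protoalgebraic) yields that $S$ is equivalential. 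I expect the main obstacle to lie precisely here: the nontrivial half of Czelakowski's theorem --- extracting a set of equivalence formulas from closure of $\mathrm{Mod}^{*}(S)$ under submatrices and direct products --- is a genuine piece of general theory that I would cite rather than reprove, whereas everything specific to subtractive varieties (facts (a)--(c) and the matching of closure of $\mathbb{V}_{\varepsilon}$ with closure of $\mathrm{Mod}^{*}(S)$) is routine once Theorem~\ref{crnjar} is available. If one wished to avoid this machinery, one could instead try to check directly that the Leibniz operator of $S$ commutes with inverse images of surjective homomorphisms --- the inclusion $\supseteq$ being automatic from protoalgebraicity and the inclusion $\subseteq$ being the content that closure of $\mathbb{V}_{\varepsilon}$ must deliver --- but I do not see this as any easier than going through the bridge theorem.
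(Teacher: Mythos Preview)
The paper does not prove this theorem: it is stated in the preliminaries with a citation to \cite[Thm.~3.12]{OSV3} (Aglian\`o--Ursini) and no proof is given, so there is nothing in the present paper to compare against. Your sketch is nonetheless sound and is essentially the argument one finds in the original source: identify, via Theorem~\ref{crnjar}, the reduced models of the $0$-assertional logic $S$ with the matrices $\langle\mathbf{A},\{0^{\mathbf{A}}\}\rangle$ for $\mathbf{A}\in\mathbb{V}_{\varepsilon}$, and then invoke the Czelakowski bridge theorem characterising equivalential logics by closure of $\mathrm{Mod}^{*}(S)$ under submatrices and direct products. Your auxiliary facts (a)--(c) are correct, and you are right that the only nontrivial external input is the hard direction of Czelakowski's theorem; everything specific to the subtractive setting is routine once Theorem~\ref{crnjar} is granted. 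One small remark: in (b) it is slightly cleaner to observe that the $0$-assertional logic of $\mathbb{V}$ is always algebraised (in the weak sense of having $\mathbb{V}$ as its intrinsic variety) so that $\mathrm{Alg}^{*}(S)\subseteq\mathbb{V}$ is immediate, but your indiscernibility argument also works.
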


\begin{theorem}
\cite[Thm. 3.16]{OSV3} \label{mistrettone}For a $0$-subtractive variety
$\mathbb{V}$ the following are equivalent:

\begin{enumerate}
\item The $0$-assertional logic of $\mathbb{V}$ is strongly algebraisable with
$\mathbb{V}_{\varepsilon}$ as an equivalent algebraic semantics.

\item $\mathbb{V}_{\varepsilon}$ is a variety.
\end{enumerate}
\end{theorem}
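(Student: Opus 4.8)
The plan is to recast the statement as an instance of the general dictionary between the ideal theory of a $0$-subtractive variety and the abstract algebraic logic of its $0$-assertional logic, and then to read off both implications from facts already recorded. Write $\mathcal{S}$ for the $0$-assertional logic of $\mathbb{V}$. The organising step is to identify the reduced models of $\mathcal{S}$. By Theorem \ref{crnjar}, for every $\mathbf{A}\in\mathbb{V}$ the deductive filters of $\mathcal{S}$ on $\mathbf{A}$ are exactly the members of $\mathcal{I}_{\mathbb{V}}(\mathbf{A})$, and $\Omega^{\mathbf{A}}(I)=I^{\varepsilon}$ for each of them. A matrix $(\mathbf{A},I)$ over such an algebra is therefore reduced iff $I^{\varepsilon}=\Delta_{A}$; but $0^{\mathbf{A}}/I^{\varepsilon}=I$, so this forces $I=\{0^{\mathbf{A}}\}$, and $\{0^{\mathbf{A}}\}^{\varepsilon}=\Delta_{A}$ says precisely that $\mathbf{A}$ is reduced. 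Together with the standard fact that a reduced model of the $0$-assertional logic of a variety always has its algebra reduct in that variety, this shows that the reduced models of $\mathcal{S}$ are exactly the matrices $(\mathbf{A},\{0^{\mathbf{A}}\})$ with $\mathbf{A}\in\mathbb{V}_{\varepsilon}$; in particular $\mathrm{Alg}^{\ast}(\mathcal{S})=\mathbb{V}_{\varepsilon}$.

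Next I would note that $\mathcal{S}$ is \emph{unconditionally} weakly algebraisable. It is protoalgebraic: if $s$ is a binary term witnessing $0$-subtractivity (Definition \ref{subtra}), then $\{s(y,x)\}$ is a protoimplication set, since $\mathbb{V}\models s(x,x)\approx 0$ gives $\vdash_{\mathcal{S}}s(x,x)$, while $\mathbb{V}\models s(x,0)\approx x$ gives $x,\,s(y,x)\vdash_{\mathcal{S}}y$. And it is truth-equational, because by the previous step the truth set of every reduced model $(\mathbf{A},\{0^{\mathbf{A}}\})$ is defined by the single equation $x\approx 0$. (Finitarity of $\mathcal{S}$ is likewise immediate, as in a subtractive variety the ideal generated by a set is the directed union of the ideals generated by its finite subsets.) A protoalgebraic truth-equational logic is weakly algebraisable.

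For $(2)\Rightarrow(1)$: if $\mathbb{V}_{\varepsilon}$ is a variety it is closed under subalgebras and direct products, so Theorem \ref{mistretta} makes $\mathcal{S}$ equivalential; being also weakly algebraisable and finitary, $\mathcal{S}$ is (Blok--Pigozzi) algebraisable, and by the first step its equivalent algebraic semantics is $\mathrm{Alg}^{\ast}(\mathcal{S})=\mathbb{V}_{\varepsilon}$, which is a variety, so $\mathcal{S}$ is strongly algebraisable. For $(1)\Rightarrow(2)$: condition $(1)$ asserts exactly that $\mathbb{V}_{\varepsilon}$ occurs as the equivalent algebraic semantics of a logic that is strongly, not merely, algebraisable; since the first part of this identifies $\mathbb{V}_{\varepsilon}$ with $\mathrm{Alg}^{\ast}(\mathcal{S})$ and the qualifier ``strongly'' means precisely that this class is a variety, $\mathbb{V}_{\varepsilon}$ is a variety.

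The arithmetic here is negligible; the real work is abstract-algebraic-logic bookkeeping. I expect the main obstacle to be making the first paragraph airtight --- in particular, ruling out reduced models of $\mathcal{S}$ sitting over algebras outside $\mathbb{V}$, so that $\mathrm{Alg}^{\ast}(\mathcal{S})$ genuinely coincides with $\mathbb{V}_{\varepsilon}$ and not a larger class --- and, on the side, fixing the precise reading of ``strongly algebraisable'' and quoting the standard packaging of protoalgebraicity, truth-equationality and (finitary) equivalentiality into Blok--Pigozzi algebraisability. With those in hand, both implications run as above.
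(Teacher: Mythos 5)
This is a result the paper imports verbatim from Aglian\`o--Ursini \cite[Thm.\ 3.16]{OSV3} with no proof of its own, so there is nothing internal to compare against; I can only assess your reconstruction on its merits. The overall architecture is sound and is the natural one: protoalgebraicity from the subtraction term (your protoimplication set $\{s(y,x)\}$ checks out, since $s(x,x)\approx 0$ gives reflexivity and $s(y,0)\approx y$ gives detachment), truth-equationality with defining equation $x\approx 0$, equivalentiality from Theorem \ref{mistretta} in the direction $(2)\Rightarrow(1)$, and the package of these three yielding algebraisability; the converse direction is indeed just unwinding the meaning of ``strongly algebraisable with $\mathbb{V}_{\varepsilon}$ as equivalent algebraic semantics''.

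The one genuine soft spot is exactly the one you flag, and the patch you propose for it does not work as stated: it is \emph{not} a standard fact that every reduced model of the $0$-assertional logic of a variety has its algebra reduct in that variety. The logic only encodes the quasi-identities of $\mathbb{V}$ of the form $\&_i\,\gamma_i\approx 0\Rightarrow\varphi\approx 0$, and a strictly larger class of algebras can satisfy all of these, so without further input a reduced model could a priori sit over an algebra outside $\mathbb{V}$; Theorem \ref{crnjar} only describes filters on algebras that are already in $\mathbb{V}$. The gap closes, but only \emph{after} equivalentiality is in hand, which fortunately is available at the point where you need $\mathrm{Alg}^{\ast}(\mathcal{S})=\mathbb{V}_{\varepsilon}$ in the proof of $(2)\Rightarrow(1)$: let $\Delta(x,y)$ be a set of equivalence formulas and $t\approx u$ any identity of $\mathbb{V}$. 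Since $\vdash_{\mathcal{S}}\Delta(x,x)$ means $\mathbb{V}\vDash\delta(x,x)\approx 0$ for each $\delta\in\Delta$, and $\mathbb{V}\vDash\delta(t,u)\approx\delta(t,t)$, every $\delta(t,u)$ is a theorem of $\mathcal{S}$; hence in a reduced model $(\mathbf{B},F)$ one has $\Delta^{\mathbf{B}}\bigl(t^{\mathbf{B}}(\vec{b}),u^{\mathbf{B}}(\vec{b})\bigr)\subseteq F$ for all $\vec{b}$, and reducedness (Leibniz congruence given by $\Delta$) forces $t^{\mathbf{B}}(\vec{b})=u^{\mathbf{B}}(\vec{b})$. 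So $\mathbf{B}\vDash t\approx u$ for every identity of $\mathbb{V}$, i.e.\ $\mathbf{B}\in\mathbb{V}$, and then your computation via Theorem \ref{crnjar} correctly pins down $\mathbf{B}\in\mathbb{V}_{\varepsilon}$. With this substitution for the unjustified ``standard fact'', your argument is complete; the remaining loose end (whether ``algebraisable'' is meant in the finitary Blok--Pigozzi sense or the general one) only affects bookkeeping, not the equivalence being proved.
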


Important examples of subtractive varieties are \emph{binary discriminator
varieties}. Recall that a \emph{discriminator variety} \cite{Werner} is a
variety $\mathbb{V}$ of given type $\nu$ for which there exists a ternary
$\nu$-term $t(x,y,z)$ that realises the ternary discriminator function%
\[
t\left(  a,b,c\right)  =\left\{
\begin{array}
[c]{l}%
c\text{ if }a=b\text{,}\\
a\text{, otherwise}%
\end{array}
\right.
\]
on any subdirectly irreducible member of $\mathbb{V}$ (equivalently, on any
member of some class $\mathbb{K}$ such that $\mathbb{V}=V\left(
\mathbb{K}\right)  $). The introduction of \emph{binary discriminator
varieties} by Chajda, Hala\v{s}, and Rosenberg \cite{CHR} was aimed at
singling out an appropriate weakening of the ternary discriminator that can
vouchsafe some of the strong properties of discriminator varieties (like
congruence distributivity or congruence permutability) only \textquotedblleft
locally\textquotedblright, i.e. at $0$. Binary discriminator varieties are
paramount among subtractive varieties with \emph{equationally definable
principal ideals} \cite{OSV2}; they were thoroughly studied in the unpublished
\cite{BS}.

\begin{definition}
\cite{CHR}\label{zanzibar} Let $A$ be a nonempty set and fix $0\in A$. The
$0$-\emph{binary discriminator} on $A$ is the binary function $b_{0}^{A}$ on
$A$ defined by:%
\[
b_{0}^{A}\left(  a,c\right)  =\left\{
\begin{array}
[c]{l}%
a\text{ if }c=0\text{,}\\
0\text{ otherwise.}%
\end{array}
\right.
\]

An algebra $\mathbf{A}$ with a term definable element $0$ is said to be a
$0$-\emph{binary discriminator algebra} in case the $0$-binary discriminator
$b_{0}^{A}$ on $A$ is a term operation on $\mathbf{A}$. A variety $V\left(
\mathbb{K}\right)  $ is a $0$-\emph{binary discriminator variety} if it is
generated by a class $\mathbb{K}$ of $0$-binary discriminator algebras such
that the property is witnessed by the same terms for all members of
$\mathbb{K}$.
\end{definition}

\section{Central Elements\label{centros}}

One of the most distinctive and far-reaching chapters in the theory of
orthomodular lattices is the study of the commuting relation and of central
elements (see e.g. {\cite[\S \ 2]{BH}). Given an orthomodular lattice
}$\mathbf{L}$ and $a,b\in L$, $a$ is said to \emph{commute} with $b$ in case
$(a\wedge b)\vee(a^{\prime}\wedge b)=b$. Such a relation is reflexive and
symmetric. An element $a\in L$ is said to be \emph{central} in $\mathbf{L}$ in
case it commutes with all elements of $L$. The next celebrated result is one
of the most useful tools for practicioners of the field:

\begin{theorem}
[Foulis-Holland]\label{fuli}\cite[Prop. 2.8]{BH} If $\mathbf{L}$ is an
orthomodular lattice and $a,b,c\in L$ are such that $a$ commutes both with $b$
and with $c$, then the set $\left\{  a,b,c\right\}  $ generates a distributive
sublattice of $\mathbf{L}_{l}$.
\end{theorem}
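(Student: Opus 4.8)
The plan is to reduce the Foulis--Holland theorem to the two basic facts that will be available in this setting: first, that in an orthomodular lattice the commutation relation is symmetric and is preserved under complements (i.e. if $a$ commutes with $b$ then $a$ commutes with $b'$, and hence $b$ commutes with $a$, $a'$ commutes with $b$, etc.), and second, that a pair of commuting elements generates a distributive sublattice — equivalently, that each of the distributive laws among two commuting elements holds. Granting these, proving that $\{a,b,c\}$ generates a distributive sublattice amounts to verifying that all the finitely many distributivity identities among three generators hold, and by the duality furnished by the orthocomplementation (which is a dual lattice automorphism of $\mathbf{L}_l$) it suffices to verify one of them, say
\[
a\wedge(b\vee c)=(a\wedge b)\vee(a\wedge c),
\]
together with the symmetric variants obtained by permuting which element plays the distinguished role.

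First I would record the auxiliary lemmas on the commutation relation: symmetry, closure under $'$, and the fact that if $a$ commutes with $b$ then $\{a,b\}$ generates a distributive (in fact Boolean, since we are inside an orthomodular lattice) sublattice; these are standard and are essentially contained in the cited \S 2 of \cite{BH}. Then, given that $a$ commutes with $b$ and with $c$, I would show $a$ commutes with $b\vee c$ and with $b\wedge c$: using orthomodularity one expands $(a\wedge(b\vee c))\vee(a'\wedge(b\vee c))$ and, exploiting that $a$ commutes with each of $b,c$ and with their complements, manipulates the expression into $b\vee c$. The dual argument (or applying $'$) handles $b\wedge c$.

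Next I would prove the key distributive identity $a\wedge(b\vee c)=(a\wedge b)\vee(a\wedge c)$. The inequality $\geq$ is trivial in any lattice. For $\leq$, set $d=a\wedge(b\vee c)$ and $e=(a\wedge b)\vee(a\wedge c)$; then $e\leq d\leq a$ and $e\leq b\vee c$. Since $a$ commutes with $b$ and with $c$, $a$ commutes with $b\vee c$, and a short computation shows $a$ then commutes with $d$ and with $e$, and also that $d$ commutes with $e$ — so by orthomodularity it is enough to check $d\wedge e'=0$, because then $d=e$ follows from $e\leq d$ and the paraorthomodular/orthomodular cancellation. To see $d\wedge e'=0$: $e'=(a\wedge b)'\wedge(a\wedge c)'=(a'\vee b')\wedge(a'\vee c')$, and since $d\leq a$ we get $d\wedge e'=d\wedge b'\wedge c'$ after absorbing the $a'$-terms using $d\wedge a'=0$ and the commutation relations; but $d\leq b\vee c$ forces $d\wedge b'\wedge c'\leq(b\vee c)\wedge(b\vee c)'=0$ once we know $d$ commutes with $b$ and $c$ (so that $d\wedge b'\wedge c'=d\wedge(b\vee c)'$). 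Assembling these gives the identity; permuting the roles of $a,b,c$ and dualising via $'$ then yields all the remaining distributive laws, so $\{a,b,c\}$ generates a distributive sublattice of $\mathbf{L}_l$.

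The main obstacle is the bookkeeping in the step "$a$ commutes with $b$ and $c$ implies $a$ commutes with $b\vee c$ and $b\wedge c$, and moreover the derived elements $d,e$ commute with everything in sight": this is where orthomodularity is genuinely used (it fails in general ortholattices), and it requires carefully iterated expansions of the defining identity $(x\wedge y)\vee(x'\wedge y)=y$ together with repeated appeals to symmetry and complement-closure of the relation. Once that commutation calculus is in place, the actual distributivity verification is a routine cancellation argument of the kind sketched above, and the reduction to a single identity via the orthocomplementation duality keeps the case analysis short.
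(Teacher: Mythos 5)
The paper does not actually prove this statement: it is imported verbatim from Bruns and Harding \cite[Prop.~2.8]{BH}, so there is no internal argument to compare yours against; your attempt has to stand on its own. On its own terms, it establishes the ``easy half'' correctly but has a genuine gap at the final step. Your derivation of $a\wedge(b\vee c)=(a\wedge b)\vee(a\wedge c)$ is sound: with $e=(a\wedge b)\vee(a\wedge c)\leq d=a\wedge(b\vee c)$, the two-generator lemma gives $d\wedge e'=d\wedge b'\wedge c'=d\wedge(b\vee c)'=0$ (that last equality is just De Morgan, no commutation needed), and orthomodularity in the form ``$e\leq d$ and $e'\wedge d=0$ imply $e=d$'' finishes it; the dual law then follows via the involution as you say.

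The gap is the claim that ``permuting the roles of $a,b,c$ \dots yields all the remaining distributive laws.'' The hypothesis is not symmetric in $a,b,c$: only $a$ is assumed to commute with the other two, and $b$ and $c$ need not commute with each other. Hence $b\wedge(a\vee c)=(b\wedge a)\vee(b\wedge c)$ is \emph{not} an instance of your easy case with the roles permuted --- $b$ commutes with $a$ but possibly not with $c$ --- and this asymmetric case is precisely the nontrivial content of Foulis--Holland (if all three elements pairwise commuted they would generate a Boolean subalgebra and there would be nothing to prove). The missing case needs its own computation, for instance: with $g=(a\wedge b)\vee(c\wedge b)\leq h=(a\vee c)\wedge b$, write $g'=(a'\vee b')\wedge(c'\vee b')$, use the easy law for $b$ (which commutes with $a'$ and $b'$) to get $b\wedge(a'\vee b')=b\wedge a'$, then the easy law for $a'$ (which commutes with $a$ and $c$) to get $a'\wedge(a\vee c)=a'\wedge c$, so that $h\wedge g'\leq b\wedge c\wedge(c'\vee b')=(b\wedge c)\wedge(b\wedge c)'=0$, and conclude by orthomodularity. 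A secondary, smaller omission: even once all six generator-level identities are in hand, passing from them to ``the generated sublattice is distributive'' requires the standard (but not automatic) normal-form argument that every element of the sublattice generated by $\{a,b,c\}$ is a join of meets of generators; you should at least flag that step rather than fold it into ``routine.''
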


Although these investigations were carried out in the special context of
orthomodular lattices, the notion of a central element is deeply rooted in
universal algebra. Recall that, if $\mathbf{A}$ is an algebra in a
double-pointed variety $\mathbb{V}$ with constants $0,1$, an element $e\in A$
\ is \emph{central} in $\mathbf{A}$ in case the congruences $Cg\left(
e,0\right)  $ and $Cg\left(  e,1\right)  $ are complementary factor
congruences{ of }$\mathbf{A}$ {\cite{Sal}}. {By }$C(\mathbf{A})${\ we denote
the \emph{centre}\ of $A$, i.e. the set of central elements of the algebra
$\mathbf{A}$.} In particular, if {$\mathbf{A}$ is a }\emph{Church algebra
}{\cite{Sal}, namely, if }there is an \textquotedblleft
if-then-else\textquotedblright\ term operation $q^{\mathbf{A}}$\ on
$\mathbf{A}$ s.t., for all $a,b\in A$, $q^{\mathbf{A}}\left(  1^{\mathbf{A}%
},a,b\right)  =a\emph{\ }$and$\emph{\ }q^{\mathbf{A}}\left(  0^{\mathbf{A}%
},a,b\right)  =b$, then, b{y defining
\[
x\wedge y=q(x,y,0)\text{, }x\vee y=q(x,1,y)\text{ and }x^{\prime}=q(x,0,1),
\]
we get: }

\begin{theorem}
{\ \label{centrobolla}\cite[Thm. 3.7]{Sal} The algebra $c\left[
\mathbf{A}\right]  =$}$\left(  {C(\mathbf{A}),\wedge,\vee,^{\prime}%
,0,1}\right)  ${ is a Boolean algebra which is isomorphic to the Boolean
algebra of factor congruences of $\mathbf{A}$. }
\end{theorem}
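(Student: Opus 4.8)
The plan is to exhibit an explicit isomorphism between $c[\mathbf{A}]$ and the Boolean algebra $\mathrm{Fac}(\mathbf{A})$ of complementary factor congruence pairs of $\mathbf{A}$. Recall the standard universal‑algebraic fact that, for any algebra $\mathbf{A}$, the pairs $(\theta,\theta^{\ast})$ with $\theta\cap\theta^{\ast}=\Delta_A$ and $\theta\circ\theta^{\ast}=\nabla_A$ form a Boolean algebra $\mathrm{Fac}(\mathbf{A})$ in which $(\theta_1,\theta_1^{\ast})\wedge(\theta_2,\theta_2^{\ast})=(\theta_1\vee\theta_2,\;\theta_1^{\ast}\cap\theta_2^{\ast})$, $(\theta_1,\theta_1^{\ast})\vee(\theta_2,\theta_2^{\ast})=(\theta_1\cap\theta_2,\;\theta_1^{\ast}\vee\theta_2^{\ast})$, the complement of $(\theta,\theta^{\ast})$ is $(\theta^{\ast},\theta)$, the bottom is $(\nabla_A,\Delta_A)$ and the top is $(\Delta_A,\nabla_A)$; in particular a factor congruence has a unique complement when it has one. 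By the very definition of a central element, $\Phi(e)=(Cg(e,1),Cg(e,0))$ defines a map $\Phi\colon C(\mathbf{A})\to\mathrm{Fac}(\mathbf{A})$. The whole argument rests on one remark about the Church term $q$: if $e\in C(\mathbf{A})$, then for all $a,b\in A$ the element $q^{\mathbf{A}}(e,a,b)$ is congruent to $a$ modulo $Cg(e,1)$ and to $b$ modulo $Cg(e,0)$ (apply $q(1,a,b)=a$ and $q(0,a,b)=b$), and since $Cg(e,1)\cap Cg(e,0)=\Delta_A$ it is the \emph{unique} element with this property.

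First I would prove that $\Phi$ is a bijection. For injectivity, if $\Phi(e)=\Phi(f)$ then $f\equiv 1\equiv e\pmod{Cg(e,1)}$ and $f\equiv 0\equiv e\pmod{Cg(e,0)}$, so $e=f$ since the two congruences meet in $\Delta_A$. For surjectivity, take $(\theta,\theta^{\ast})\in\mathrm{Fac}(\mathbf{A})$; from $\theta\circ\theta^{\ast}=\nabla_A$ and $\theta\cap\theta^{\ast}=\Delta_A$ there is a unique $e\in A$ with $e\,\theta\,1$ and $e\,\theta^{\ast}\,0$, whence $Cg(e,1)\subseteq\theta$ and $Cg(e,0)\subseteq\theta^{\ast}$. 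For the reverse inclusions let $a\,\theta\,b$; then $a\equiv q^{\mathbf{A}}(e,a,b)\pmod{Cg(e,1)}$, while $q^{\mathbf{A}}(e,a,b)\,\theta\,q^{\mathbf{A}}(1,a,b)=a\,\theta\,b$ and $q^{\mathbf{A}}(e,a,b)\,\theta^{\ast}\,q^{\mathbf{A}}(0,a,b)=b$, so $q^{\mathbf{A}}(e,a,b)=b$ (apply $\theta\cap\theta^{\ast}=\Delta_A$ to $q^{\mathbf{A}}(e,a,b)$ and $b$); hence $a\,Cg(e,1)\,b$ and $Cg(e,1)=\theta$. Symmetrically $Cg(e,0)=\theta^{\ast}$, so $e$ is central and $\Phi(e)=(\theta,\theta^{\ast})$.

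Next I would show that $\Phi$ transports the Church‑term operations $q(x,y,0)$, $q(x,1,y)$, $q(x,0,1)$ on $C(\mathbf{A})$ to $\wedge$, $\vee$, complement of $\mathrm{Fac}(\mathbf{A})$; this simultaneously yields closure of $C(\mathbf{A})$ under these operations and the fact that $\Phi$ is a Boolean isomorphism. For the complement, with $e$ central and $g=q^{\mathbf{A}}(e,0,1)$ one gets $g\equiv 0\pmod{Cg(e,1)}$ and $g\equiv 1\pmod{Cg(e,0)}$, so $Cg(g,0)\subseteq Cg(e,1)$ and $Cg(g,1)\subseteq Cg(e,0)$; since $(Cg(e,0),Cg(e,1))$ is already a complementary factor pair, uniqueness of complements forces equality, i.e. $g$ is central with $\Phi(g)=\Phi(e)^{\ast}$. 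For the meet, with $e,f$ central and $h=q^{\mathbf{A}}(e,f,0)$: reducing modulo $Cg(e,1)$ gives $h\equiv f$ and chaining with $Cg(f,1)$ gives $h\equiv 1\pmod{Cg(e,1)\vee Cg(f,1)}$, while reducing modulo $Cg(e,0)$ and modulo $Cg(f,0)$ gives $h\equiv 0\pmod{Cg(e,0)\cap Cg(f,0)}$; comparing with the description of $\wedge$ in $\mathrm{Fac}(\mathbf{A})$ and again using uniqueness of complements (componentwise containment of complementary factor pairs forces equality) yields $h$ central with $\Phi(h)=\Phi(e)\wedge\Phi(f)$. The join case is dual (or deduced via De Morgan from the previous two), and $\Phi$ visibly sends $1^{\mathbf{A}}$ to $(\Delta_A,\nabla_A)$ and $0^{\mathbf{A}}$ to $(\nabla_A,\Delta_A)$. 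Hence $c[\mathbf{A}]$ is a Boolean algebra isomorphic, via $\Phi$, to the Boolean algebra of factor congruences of $\mathbf{A}$.

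The main obstacle is the surjectivity step and the operation‑matching in the last paragraph: everything hinges on complementary factor congruences being permutable, so that the "patching" elements $e$ and $q^{\mathbf{A}}(e,a,b)$ exist and are unique, and on uniqueness of complements in $\mathrm{Fac}(\mathbf{A})$, which is precisely what upgrades the easy inclusions $Cg(\cdot,i)\subseteq(\cdot)$ to equalities without further computation. Once the remark on $q$ is in place the remaining work is short congruence bookkeeping; the only genuinely delicate point is keeping track of which component of a factor pair is the "$1$‑side" and which the "$0$‑side" when translating $\wedge$ and $\vee$.
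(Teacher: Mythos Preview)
The paper does not give its own proof of this theorem: it is quoted verbatim from \cite[Thm.~3.7]{Sal} as a background result about Church algebras, so there is nothing in the paper to compare your argument against.

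Your proof is correct and is essentially the standard one. One small wording issue: in the operation-matching step you invoke ``componentwise containment of complementary factor pairs forces equality'' to conclude that $h=q^{\mathbf{A}}(e,f,0)$ is central with $\Phi(h)=\Phi(e)\wedge\Phi(f)$, but at that point you only know $Cg(h,1)\subseteq Cg(e,1)\vee Cg(f,1)$ and $Cg(h,0)\subseteq Cg(e,0)\cap Cg(f,0)$, not that $(Cg(h,1),Cg(h,0))$ is itself a factor pair, so the containment principle does not literally apply. What actually does the work is your surjectivity argument reused: $(Cg(e,1)\vee Cg(f,1),\,Cg(e,0)\cap Cg(f,0))$ is already known to be a factor pair (it is $\Phi(e)\wedge\Phi(f)$ in $\mathrm{Fac}(\mathbf{A})$), $h$ is the unique element congruent to $1$ modulo the first component and to $0$ modulo the second, and your surjectivity step then yields that $h$ is central with $\Phi(h)$ equal to that pair. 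The same remark applies to the complement case. Also, in verifying $h\equiv 0\pmod{Cg(f,0)}$ you implicitly use $q^{\mathbf{A}}(e,0,0)=0$ for central $e$; this follows from your uniqueness remark on $q$, but it is worth making explicit.
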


In Church algebras{, central elements can be equationally characterised
\cite[Prop. 3.6]{Sal}. When studying a specific variety $\mathbb{V}$, however,
more informative descriptions of central elements in members of $\mathbb{V}$
can sometimes be found either in terms of certain properties of intrinsic
interest, or by appropriately streamlining the above-mentioned equational
characterisation. As regards members of }$\mathbb{OML}$, for example, it can
be shown that the two definitions of central element we have given above are
equivalent. For $\mathbb{PBZL}^{\ast}$, on the other hand, a more economical
equational description of central elements\textup{ was provided }in
\textup{\cite[Lm. 5.9]{GLP1+}. }We reproduce this lemma for the reader`s convenience:

\begin{lemma}
\label{freccetta} Let $\mathbf{L}\in\mathbb{PBZL}^{\ast}$. Then $e\in L$ is
central in $\mathbf{L}$ iff it satisfies the following conditions for all
$a,b\in L$:

\begin{description}
\item[C1] $a=(e\wedge a)\vee(e^{\prime}\wedge a)$;

\item[C2] $(a\vee b)\wedge e=(a\wedge e)\vee(b\wedge e)$;

\item[C3] $(a\vee b)\wedge e^{\prime}=(a\wedge e^{\prime})\vee(b\wedge
e^{\prime})$;

\item[C4] $((e\vee b)\wedge(e^{\prime}\vee a))^{\sim}=(e\vee b^{\sim}%
)\wedge(e^{\prime}\vee a^{\sim})$.
\end{description}
\end{lemma}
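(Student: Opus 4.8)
The plan is to prove both directions by relating the four conditions C1--C4 to the equational characterisation of central elements in Church algebras, using the ``if-then-else'' term available in PBZ$^{\ast}$--lattices (namely $q(x,y,z)=(x\wedge y)\vee(x'\wedge z)$, which qualifies since PBZ$^{\ast}$--lattices have a bounded lattice reduct with an involution $'$ sending $0$ to $1$). By Theorem \ref{centrobolla} and the equational characterisation of central elements in Church algebras \cite[Prop. 3.6]{Sal}, $e$ is central iff $e$ is a ``Church-central'' element, which amounts to a handful of identities asserting that $q(e,-,-)$ behaves like a pair of complementary projections: essentially $q(e,a,a)=a$, that $q(e,-,-)$ commutes with every basic operation of the signature, and the idempotency/compatibility conditions. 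I would first write these out explicitly for the type $(2,2,1,1,0,0)$ of PBZ$^{\ast}$--lattices and then show they are equivalent, over PBZ$^{\ast}$, to C1--C4.

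First I would handle the lattice-and-involution fragment, which is already known from the orthomodular/ortholattice literature and from \cite[Lm. 5.9]{GLP1+}: C1 is exactly $q(e,a,a)=a$; C2 and C3 say that $x\mapsto x\wedge e$ and $x\mapsto x\wedge e'$ distribute over $\vee$, which together with C1 is what is needed for $q(e,-,-)$ to preserve $\wedge$ and $\vee$; and compatibility of $q(e,-,-)$ with the Kleene complement $'$ follows automatically because $'$ is a dual automorphism and $e''=e$, so no separate condition for $'$ is needed. This part is essentially a transcription of the orthomodular case and I expect it to go through by the same manipulations (using that $e,e'$ form a complemented pair in the bounded lattice reduct once C1 holds, and the distributivity supplied by C2, C3). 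The constants $0,1$ are trivially preserved.

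The substantive new content is that $q(e,-,-)$ must also commute with the Brouwer complement $^{\sim}$, i.e. $q(e,a,b)^{\sim}=q(e,a^{\sim},b^{\sim})$, which unfolds to $((e\wedge a)\vee(e'\wedge b))^{\sim}=(e\wedge a^{\sim})\vee(e'\wedge b^{\sim})$; I would show this is interderivable, modulo C1--C3, with the more economical C4. One direction uses that under C1--C3 one can substitute $e\vee b$ for the ``second slot'' expression and $e'\vee a$ for the ``first slot'' expression (since $e\wedge(e'\vee a)=e\wedge a$ and $e'\wedge(e\vee b)=e'\wedge b$ when $e,e'$ are complementary and distribute), turning the left-hand side of C4 into $((e'\vee a)\wedge(e\vee b))^{\sim}$ and its right-hand side into the corresponding $^{\sim}$-image; conversely, from C4 one recovers the displayed identity by specialising $a,b$ appropriately and using distributivity again. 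Here I would lean on Lemma \ref{basics}, in particular $(a\vee b)^{\sim}=a^{\sim}\wedge b^{\sim}$, to rewrite the outer $^{\sim}$ of a join.

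The main obstacle I anticipate is the bookkeeping in this last equivalence: showing that C4, which is stated in a compressed ``$(e\vee b)\wedge(e'\vee a)$'' form, really captures the full requirement that $q(e,-,-)$ preserves $^{\sim}$ on all inputs and not merely on inputs of that special shape. The key idea that resolves it is that, once $e$ satisfies C1--C3, every element $c\in L$ decomposes as $c=(e\wedge c)\vee(e'\wedge c)$ with $e\wedge c=e\wedge((e'\vee(e\wedge c)))$ and similarly for $e'$, so any pair of ``slot values'' $(a,b)$ can be encoded by the single pair of elements $e\vee b$ and $e'\vee a$ in the $e$-up-set and $e'$-up-set respectively; thus quantifying over $a,b$ in C4 is already enough to reach all relevant $^{\sim}$-computations. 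After establishing this, the proof closes by assembling the fragments: C1--C4 together say precisely that $q(e,-,-)$ preserves every operation and fixes the constants, i.e. $e$ is Church-central, i.e. $e\in C(\mathbf{L})$. I would present the forward direction (central $\Rightarrow$ C1--C4) first, reading the needed identities off the Church-algebra characterisation and simplifying, and then the converse, building the complementary factor congruences from C1--C4 via the decomposition $x\mapsto(e\wedge x, e'\wedge x)$ and checking it is an isomorphism $\mathbf{L}\cong[0,e]\times[0,e']$ compatible with $^{\sim}$ using C4.
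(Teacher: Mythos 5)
The paper offers no proof of this lemma to compare yours against: it is reproduced verbatim from \cite[Lm.~5.9]{GLP1+} ``for the reader's convenience''. Judged on its own terms, your route --- viewing a PBZ$^{\ast}$--lattice as a Church algebra via $q(x,y,z)=(x\wedge y)\vee(x^{\prime}\wedge z)$ and matching C1--C4 against the equational characterisation of Church-central elements in \cite[Prop.~3.6]{Sal} --- is the natural one and your sketch contains all the essential ideas; I see no step that would fail. The point worth making crisp is the resolution of what you call the main obstacle: once C1 holds (so $e\vee e^{\prime}=1$ and hence $e\wedge e^{\prime}=0$) and C2--C3 supply the requisite distributivity for the pair $e,e^{\prime}$ against arbitrary elements, one has the identity $(e\wedge a)\vee(e^{\prime}\wedge b)=(e\vee b)\wedge(e^{\prime}\vee a)$ for all $a,b$: expand the right-hand side into $(e\wedge a)\vee(e^{\prime}\wedge b)\vee(a\wedge b)$ and absorb $a\wedge b$ using C1 applied to $a\wedge b$. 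With this identity in hand, C4 is \emph{literally} the statement $q(e,a,b)^{\sim}=q(e,a^{\sim},b^{\sim})$, i.e.\ that $q(e,-,-)$ commutes with the Brouwer complement on all inputs and not merely on inputs of the displayed shape; the same identity also gives compatibility with the Kleene complement for free, since $q(e,a,b)^{\prime}=(e^{\prime}\vee a^{\prime})\wedge(e\vee b^{\prime})=q(e,a^{\prime},b^{\prime})$. The only part you leave genuinely implicit is the Foulis--Holland-style bookkeeping showing that C1--C3 justify the distributive expansions used above; this is standard, and it is exactly the machinery the present paper develops in Lemmas \ref{andlatd} and \ref{ottolini} for elements of $C_{p}(\mathbf{L})$ (note that C1 says precisely that $e\in C_{p}(\mathbf{L})$, and $C_{p}(\mathbf{L})\subseteq S_{K}(\mathbf{L})$ puts $e$ in the form $c^{\sim}$ required there), so the gap is one of exposition rather than substance.
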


The aim of the next subsection is to improve on this result, giving a
description of the centre in a generic PBZ$^{\ast}$ --lattice that resembles
as closely as possible the one we have in orthomodular lattices.

\subsection{Central Elements in PBZ$^{\ast}$ --lattices}

Throughout the rest of this subsection, unless mentioned otherwise,
$\mathbf{L}$ will be an arbitrary PBZ$^{\ast}$ --lattice. Observe that
$C(\mathbf{L})\subseteq S_{K}(\mathbf{L})$, by C1 for $a=1$. Since $e^{\prime
}=e^{\sim}$ for all $e\in S_{K}(\mathbf{L})$, it follows that, for all $e\in
L$: $e\in C(\mathbf{L})$ iff $e^{\sim}\in C(\mathbf{L})$. In {\cite[Thm.
5.4]{GLP1+} i}t is also proved{ that central elements in any member
}$\mathbf{L}$ {of }$V\left(  \mathbb{AOL}\right)  $ are exactly the members of
$S_{K}\left(  \mathbf{L}\right)  $. The problem of finding a manageable
description of central elements in PBZ$^{\ast}$ --lattices, that relates in a
perspicuous way to the notions of a centre and of a commutator in orthomodular
lattice, was however left open. We now set about filling this gap. We show
that the property of being central, in a generic PBZ$^{\ast}$ --lattice, is
actually two-sided. In order to belong to $C(\mathbf{L})$, an $e\in L$ should
not only \textquotedblleft commute\textquotedblright\ with any element of $L$,
in a sense of commuting that appropriately generalises the corresponding
(reflexive and symmetric) relation on orthomodular lattices; but it should
also be such that, for any $a\in L$, $(e\wedge a)^{\sim}=e^{\sim}\vee a^{\sim
}$ and $(e^{\prime}\wedge a)^{\sim}=\square e\vee a^{\sim}$. This latter
component is sort of \textquotedblleft hidden\textquotedblright\ in the case
of $\mathbb{OML}$, where the Strong De Morgan identity is satisfied across the board.

We define the following binary relations on $L$:%
\[%
\begin{tabular}
[c]{l}%
$C_{\mathbf{L}}=\{\left(  a,b\right)  \in L^{2}:(a\wedge b)\vee(a^{\prime
}\wedge b)=b\}$;\\
$C_{\text{SDM},\mathbf{L}}=C_{\mathbf{L}}\cap\{\left(  a,b\right)  \in
L^{2}:(a\wedge b)^{\sim}=a^{\sim}\vee b^{\sim}\text{ and }(a^{\prime}\wedge
b)^{\sim}=\square a\vee b^{\sim}\}$.
\end{tabular}
\ \
\]

Both relations are clearly reflexive. Also, for all $a\in L$, $(1,a)\in
C_{\text{SDM},\mathbf{L}}\subseteq C_{\mathbf{L}}$, while the following four
conditions are mutually equivalent: i) $(a,1)\in C_{\text{SDM},\mathbf{L}}$;
ii) $(a,1)\in C_{\mathbf{L}}$; iii) $a\vee a^{\prime}=1$; iv) $a\in
S_{K}({\mathbf{L}})$. Hence, $C_{\text{SDM},\mathbf{L}}$ is symmetric exactly
when $C_{\mathbf{L}}$ is symmetric, which in turn obtains if and only if
${\mathbf{L}}$ is orthomodular --- in which case, apparently, the two
relations coincide.

\begin{definition}
Let {$\mathbf{L}\in\mathbb{PBZL}^{\ast}$}. An element $a\in L$ is said to be
\emph{PBZ$^{\ast}$ --central} iff $\left(  a,b\right)  \in C_{\text{SDM}%
,\mathbf{L}}$ for all $b\in L$.
\end{definition}

We will denote by $C_{pbz}(\mathbf{L})$ the set of all PBZ$^{\ast}$ --central
elements of $\mathbf{L}$:%
\[
C_{pbz}(\mathbf{L})=\{a\in L:(\forall\,b\in L)\,((a,b)\in C_{\text{SDM}%
,\mathbf{L}})\}.
\]
We also consider the following subset of $L$:%
\[
C_{p}(\mathbf{L})=\{a\in L:(\forall\,b\in L)\,((a,b)\in C_{\mathbf{L}})\}.
\]
In virtue of the above,
\[%
\begin{array}
[c]{l}%
\hspace*{-5pt}C_{pbz}(\mathbf{L})=C_{p}(\mathbf{L})\cap\{a\in L:\!(\forall
\,b\in L)\left(  \,(a\wedge b)^{\sim}\!=a^{\sim}\vee b^{\sim},(a^{\prime
}\wedge b)^{\sim}\!=\square a\vee b^{\sim}\right)  \}\\
\hspace*{-5pt}=C_{p}(\mathbf{L})\cap\{a\in S_{K}({\mathbf{L}}):(\forall\,b\in
L)\,\left(  (a\wedge b)^{\sim}=a^{\sim}\vee b^{\sim},(a^{\prime}\wedge
b)^{\sim}=\square a\vee b^{\sim}\right)  \}\\
\hspace*{-5pt}\subseteq C_{p}(\mathbf{L})\subseteq S_{K}({\mathbf{L}})\text{.}%
\end{array}
\]

\begin{lemma}
\label{goldrush}Let ${\mathbf{L}}\in\mathbb{PBZL}^{\ast}$. Then:

\begin{enumerate}
\item \label{goldrush1} $C_{p}(\mathbf{L})=\{a\in S_{K}({\mathbf{L}}%
):(\forall\,b\in L)\,((a\wedge b)\vee(a^{\sim}\wedge b)=b)\}=\{a\in
S_{K}({\mathbf{L}}):(\forall\,b\in L)\,((a\vee b)\wedge(a^{\sim}\vee b)=b)\}$;

\item \label{goldrush2} for all $a\in S_{K}({\mathbf{L}})$, $a\in
C_{p}(\mathbf{L})$ iff $a^{\sim}\in C_{p}(\mathbf{L})$; furthermore, $a\in
C_{pbz}(\mathbf{L})$ iff $a^{\sim}\in C_{pbz}(\mathbf{L})$;

\item \label{goldrush3} if ${\mathbf{L}}$ satisfies WSDM, then $C_{pbz}%
(\mathbf{L})=C_{p}(\mathbf{L})$.
\end{enumerate}
\end{lemma}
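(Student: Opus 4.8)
The plan is to prove the three parts in order, exploiting the fact that, by Lemma~\ref{freccetta} and the preceding observations, $C_p(\mathbf{L})\subseteq S_K(\mathbf{L})$, so every element we touch satisfies $a'=a^\sim$ and $\Box a = a^{\prime\prime\sim}=a^\sim$. Thus inside $C_p(\mathbf{L})$ the conditions defining $C_{\text{SDM},\mathbf{L}}$ simplify: $(a\wedge b)^\sim=a^\sim\vee b^\sim$ and $(a'\wedge b)^\sim=\Box a\vee b^\sim = a^\sim\vee b^\sim$ — wait, this is not quite right since $\Box a = a^{\prime\sim}$, and for $a\in S_K$ we have $a'=a^\sim$ so $\Box a = a^{\sim\sim}=\Diamond a$; and since $a$ is Kleene-sharp, $a^\sim\vee a=1$ need not hold, but $a^{\sim\sim}=a^{\sim\prime}$ by Lemma~\ref{basics} applied to the BZ-structure, hence $\Box a = a^{\sim\sim}$. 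The upshot is that I will first rewrite all the $^\sim$-side conditions for elements of $S_K(\mathbf{L})$ using $a'=a^\sim$ and $\Box a = a^{\sim\sim}$.

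\textbf{Part (i).} I would start from the defining condition $(a\wedge b)\vee(a'\wedge b)=b$ for all $b$, and simply substitute $a'=a^\sim$ (valid since $a\in S_K(\mathbf{L})$) to get the first displayed equality. For the second (the ``join'' form), I would apply the Kleene complement $'$ to both sides of $(a\wedge b)\vee(a^\sim\wedge b)=b$, evaluated at $b'$ in place of $b$: since $'$ is a dual lattice automorphism and $a''=a$, this turns $(a\wedge b')\vee(a^\sim\wedge b')=b'$ into $(a'\vee b)\wedge(a^{\sim\prime}\vee b)=b$; then I use $a'=a^\sim$ again and $a^{\sim\prime}=a^{\sim\sim}$ — but I actually want $(a\vee b)\wedge(a^\sim\vee b)=b$, so I should instead replace $a$ by $a^\sim$ in the first identity. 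This is where Part~(ii) feeds in circularly, so the cleaner route is: given the first identity for all $b$, dualize via $'$ and use $a\in S_K$ twice to get $(a^\sim\vee b)\wedge(a^{\sim\sim}\vee b)=b$, then observe $a^{\sim\sim}$ can be replaced — hmm. The honest plan is to prove the first ``meet'' characterization directly by substitution, then derive the ``join'' characterization by dualizing and separately noting $a\in S_K(\mathbf{L})\iff a^\sim\in S_K(\mathbf{L})$ (clear since $a^{\sim\prime\sim}=a^{\sim\sim\sim\prime}$-type arithmetic from Lemma~\ref{basics}, or just because $S_K=S_\Diamond$ on such elements). I expect this bookkeeping to be the fiddliest part of (i).

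\textbf{Part (ii).} The first assertion is immediate from Part~(i): the ``meet'' characterization of $C_p$ is visibly symmetric in $a$ and $a^\sim$ once one knows $a^{\sim\sim}$ relates back to $a$ appropriately — more precisely, replacing $a$ by $a^\sim$ in $(a\wedge b)\vee(a^\sim\wedge b)=b$ gives $(a^\sim\wedge b)\vee(a^{\sim\sim}\wedge b)=b$, and since $a\in S_K$ forces $a^{\sim\sim}=a$ (because on $S_K$, $a^\sim=a'$ and $a''=a$, combined with $a^{\sim\sim}=a^{\sim\prime}$), this is exactly the original condition. For the $C_{pbz}$ part I would use the description of $C_{pbz}$ as $C_p(\mathbf{L})\cap\{a\in S_K:\ldots\}$ together with Lemma~\ref{goldrush}.\ref{goldrush2}'s first half (just proved) and check that the two $^\sim$-conditions $(a\wedge b)^\sim=a^\sim\vee b^\sim$ and $(a'\wedge b)^\sim=\Box a\vee b^\sim$ are also invariant under $a\mapsto a^\sim$; this again reduces, via $a^{\sim\sim}=a$ on $S_K$, to swapping the roles of the two conditions.

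\textbf{Part (iii).} Here I would argue that WSDM forces the two extra $^\sim$-conditions in the definition of $C_{pbz}$ to hold automatically for every $a\in S_K(\mathbf{L})$ and every $b\in L$, so that $C_{pbz}(\mathbf{L})=C_p(\mathbf{L})$. Given $a\in S_K(\mathbf{L})$, we have $a'=a^\sim$, hence $a=a^{\sim\prime}=\Diamond(a^\sim)$-type expressions let us write $a=\Diamond c$ with $c=a^\sim$; then WSDM, namely $(x\wedge y^\sim)^\sim\approx x^\sim\vee\Diamond y$, applied with $y=a^\sim$ (so $y^\sim=a^{\sim\sim}=a$) and $x=b$ gives $(b\wedge a)^\sim=b^\sim\vee\Diamond(a^\sim)=b^\sim\vee a^{\sim\sim\sim\prime?}$ — I need $\Diamond(a^\sim)=a^{\sim\sim\sim}=a^\sim$ by Lemma~\ref{basics}(i), so $(b\wedge a)^\sim=b^\sim\vee a^\sim$, which is the first condition. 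For the second condition $(a'\wedge b)^\sim=\Box a\vee b^\sim$: since $a'=a^\sim$, the left side is $(a^\sim\wedge b)^\sim$; apply WSDM with $x=b$, $y=a$ (using $a\in S_K$ so $a^\sim$ is what appears) to get $(b\wedge a^\sim)^\sim=b^\sim\vee\Diamond a$, and since $\Box a = a^{\prime\sim}=a^{\sim\sim}=\Diamond a$ (the middle step using $a\in S_K$), this equals $b^\sim\vee\Box a$, as required. \textbf{The main obstacle} I anticipate is getting the WSDM substitutions aligned with the exact shape of the two $C_{pbz}$-conditions — in particular keeping straight which of $a$, $a^\sim$, $a^{\sim\sim}$ appears where, and invoking Lemma~\ref{basics}(i) at precisely the right moments to collapse triple Brouwer complements. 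Once the substitution pattern is fixed, everything reduces to the identities $a'=a^\sim$, $a^{\sim\sim}=a$ on $S_K(\mathbf{L})$, $\Box a=\Diamond a$ there, and $a^{\sim\sim\sim}=a^\sim$ in general.
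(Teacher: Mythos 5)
Your proposal is correct and follows essentially the same route as the paper: it uses $C_{p}(\mathbf{L})\subseteq S_{K}(\mathbf{L})$ together with $a^{\prime}=a^{\sim}$ and $a^{\sim\sim}=a$ on $S_{K}(\mathbf{L})$ for part (i) (deriving the join form by applying the involution and reindexing $b\mapsto b^{\prime}$), the substitution $a\mapsto a^{\sim}$ plus $\Diamond a=a$ for part (ii), and WSDM with $y=a^{\sim}$ resp.\ $y=a$ together with $a^{\sim\sim\sim}=a^{\sim}$ and $\square a=\Diamond a$ on $S_{K}(\mathbf{L})$ for part (iii). The paper's proof is just a terser version of the same argument, so no substantive differences to report.
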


\begin{proof}
Let $a\in S_{K}({\mathbf{L}})$, arbitrary.

(\ref{goldrush1}) The fact that $a^{\prime}=a^{\sim}$ gives us the first
equality. To obtain the second equality, notice that, for all $b\in L$, we
have: $(a,b)\in C_{\mathbf{L}}$ iff $(a\wedge b)\vee(a^{\prime}\wedge b)=b$
iff $(a^{\prime}\vee b^{\prime})\wedge(a\vee b^{\prime})=b^{\prime}$ iff
$(a\vee b^{\prime})\wedge(a^{\sim}\vee b^{\prime})=b^{\prime}$, hence:
$(a,b)\in C_{\mathbf{L}}$ for all $b\in L$ iff $(a\vee b)\wedge(a^{\sim}\vee
b)=b$ for all $b\in L$.

(\ref{goldrush2}) From (\ref{goldrush1}), along with the fact that $a=\Diamond
a$.

(\ref{goldrush3}) From the fact that $C_{p}(\mathbf{L})\subseteq
S_{K}({\mathbf{L}})$.
\end{proof}

Note from this proof that $C_{\mathbf{L}}$ and $C_{\text{SDM},\mathbf{L}}$
preserve the Kleene complement and that, under WSDM, the relation
$C_{\text{SDM},\mathbf{L}}\cap(S_{K}({\mathbf{L}})\times L)$ $=$
$C_{\mathbf{L}}\cap(S_{K}({\mathbf{L}})\times L)$ preserves the Brouwer
complement. Some useful properties of members of $C_{p}(\mathbf{L})$ follow in
the next two lemmas.

\begin{lemma}
\label{andlatd}Let {$\mathbf{L}\in\mathbb{PBZL}^{\ast}$} and let $a\in L$ be
such that $a{^{\sim}}\in C_{p}(\mathbf{L})$. Then, for all $b,c\in L$:

\begin{enumerate}
\item \label{andlatd1} $a^{\sim}\vee b=a^{\sim}\vee(\Diamond a\wedge b)$ and
$a^{\sim}\wedge b=a^{\sim}\wedge(\Diamond a\vee b)$;

\item \label{andlatd2} $b\vee(c\wedge\Diamond a)=(b\vee c\vee a^{\sim}%
)\wedge(b\vee\Diamond a)$ and $b\wedge(c\vee\Diamond a)=(b\wedge c\wedge
a^{\sim})\vee(b\wedge\Diamond a)$;

\item \label{andlatd3} $a^{\sim}\wedge(b\vee c)=a^{\sim}\wedge(b\vee(a^{\sim
}\wedge c))$ and $a^{\sim}\vee(b\wedge c)=a^{\sim}\vee(b\wedge(a^{\sim}\vee
c))$.
\end{enumerate}
\end{lemma}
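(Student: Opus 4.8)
The plan is to work throughout under the hypothesis $a^{\sim}\in C_p(\mathbf{L})$ and to exploit Lemma~\ref{goldrush}.(\ref{goldrush1}) in the form that $a^{\sim}$, being Kleene-sharp, satisfies $(a^{\sim}\wedge b)\vee(\Diamond a\wedge b)=b$ and dually $(a^{\sim}\vee b)\wedge(\Diamond a\vee b)=b$ for every $b\in L$ (here I use $(a^{\sim})^{\sim}=a^{\sim\sim\sim}=a^{\sim}$, so that $\Diamond a=a^{\sim\sim}$ is the Kleene complement of $a^{\sim}$). Since $a^{\sim}\wedge\Diamond a=0$ and $a^{\sim}\vee\Diamond a=1$, the element $a^{\sim}$ behaves, with respect to lattice operations, like a genuine complemented pair together with its De~Morgan dual $\Diamond a$; many of the identities below are then instances of the standard arithmetic of a central/distributive pair in a bounded lattice.

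For part (\ref{andlatd1}), I would prove $a^{\sim}\vee b=a^{\sim}\vee(\Diamond a\wedge b)$ by showing each side dominates the other: $\geq$ is monotonicity, and for $\leq$ write $b=(a^{\sim}\wedge b)\vee(\Diamond a\wedge b)$ via Lemma~\ref{goldrush}.(\ref{goldrush1}), join both sides with $a^{\sim}$, absorb the term $a^{\sim}\wedge b$, and conclude. The second identity $a^{\sim}\wedge b=a^{\sim}\wedge(\Diamond a\vee b)$ is obtained dually, using the meet-form of Lemma~\ref{goldrush}.(\ref{goldrush1}). For part (\ref{andlatd3}), I would combine part (\ref{andlatd1}) with the distribution laws available in $V(\mathbb{AOL})$ — but more cleanly, observe that $a^{\sim}\wedge(b\vee c)$ and $a^{\sim}\wedge(b\vee(a^{\sim}\wedge c))$ differ only in the second disjunct inside the bracket, and $a^{\sim}\wedge(b\vee c)\geq a^{\sim}\wedge c\geq a^{\sim}\wedge(a^{\sim}\wedge c)$ while the reverse inequality follows since $a^{\sim}\wedge(b\vee(a^{\sim}\wedge c))\leq a^{\sim}\wedge(b\vee c)$ trivially and for $\geq$ one applies the ``central'' distribution $a^{\sim}\wedge(b\vee c)=(a^{\sim}\wedge b)\vee(a^{\sim}\wedge c)$, which holds because $(a^{\sim},\,b\vee c)\in C_{\mathbf L}$ forces distributivity of $a^{\sim}$ over that join (this is exactly condition C2/C3 specialised, and can be derived directly from $(x\wedge a^{\sim})\vee(x\wedge\Diamond a)=x$ applied to $x=b\vee c$ together with part (\ref{andlatd1})).

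Part (\ref{andlatd2}) is the one requiring the most care, since it mixes three variables. I would derive $b\vee(c\wedge\Diamond a)=(b\vee c\vee a^{\sim})\wedge(b\vee\Diamond a)$ as follows: first, using that $a^{\sim}$ distributes over meets when it appears as a ``central'' factor, rewrite the right-hand side, or — more safely — prove it by the two-inequality method. For $\leq$: $b\leq$ RHS is clear, and $c\wedge\Diamond a\leq b\vee c\vee a^{\sim}$ and $c\wedge\Diamond a\leq \Diamond a\leq b\vee\Diamond a$, so $c\wedge\Diamond a\leq$ RHS; take the join. For $\geq$: expand $(b\vee c\vee a^{\sim})\wedge(b\vee\Diamond a)$ and push the meet through using $(a^{\sim}\vee b)\wedge(\Diamond a\vee b)=b$; carefully, $(b\vee c\vee a^{\sim})\wedge(b\vee\Diamond a)\leq (b\vee c)\vee(a^{\sim}\wedge(b\vee\Diamond a)) = (b\vee c)\vee(a^{\sim}\wedge b)=b\vee c$ by part (\ref{andlatd1}), and separately this quantity is $\leq b\vee\Diamond a$; then intersecting with the distributivity of the central element $a^{\sim}$ (equivalently $\Diamond a$) over the join $b\vee c$ yields the bound $b\vee(c\wedge\Diamond a)$. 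The dual identity follows by applying $^{\prime}$ and relabelling, since $^{\prime}$ is a dual lattice isomorphism and $(a^{\sim})^{\prime}=\Diamond a$, $(\Diamond a)^{\prime}=a^{\sim}$. The main obstacle I anticipate is precisely keeping the bookkeeping in part (\ref{andlatd2}) honest: one must be sure that every application of ``$a^{\sim}$ distributes'' is licensed by $a^{\sim}\in C_p(\mathbf L)$ (hence by Lemma~\ref{goldrush}) and not by some unavailable global distributivity, and one must use the correct form (join-form vs.\ meet-form) at each step.
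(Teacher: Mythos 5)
Your proof of part (\ref{andlatd1}) is correct and is the natural argument. The problem lies in parts (\ref{andlatd2}) and (\ref{andlatd3}), where you repeatedly invoke distribution laws for $a^{\sim}$ and $\Diamond a$ that are not yet available: the identities $a^{\sim}\wedge(b\vee c)=(a^{\sim}\wedge b)\vee(a^{\sim}\wedge c)$ and $(b\vee c)\wedge(b\vee a^{\sim})=b\vee(c\wedge a^{\sim})$ are precisely the content of Lemma \ref{ottolini}, which the paper derives \emph{from} Lemma \ref{andlatd}, so using them here is circular. Your claimed shortcut --- that $a^{\sim}\wedge(b\vee c)=(a^{\sim}\wedge b)\vee(a^{\sim}\wedge c)$ follows ``directly'' from $(x\wedge a^{\sim})\vee(x\wedge\Diamond a)=x$ with $x=b\vee c$ --- does not work: that identity decomposes $b\vee c$ along the pair $(a^{\sim},\Diamond a)$, not along the pair $(b,c)$, and says nothing about $(a^{\sim}\wedge b)\vee(a^{\sim}\wedge c)$. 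Similarly, in part (\ref{andlatd2}) the step $(b\vee c\vee a^{\sim})\wedge(b\vee\Diamond a)\leq(b\vee c)\vee\bigl(a^{\sim}\wedge(b\vee\Diamond a)\bigr)$ is an instance of $(x\vee y)\wedge z\leq x\vee(y\wedge z)$, which fails in general lattices (take three distinct atoms of $\mathbf{M}_{3}$), and recall that $\mathbb{PBZL}^{\ast}$ satisfies no nontrivial lattice identities. The only thing the hypothesis $a^{\sim}\in C_{p}(\mathbf{L})$ buys you is the pair of identities in Lemma \ref{goldrush}.(\ref{goldrush1}); everything else has to be extracted from those, and the ``standard arithmetic of a central pair'' is exactly what is under construction.

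Both parts can be repaired using only part (\ref{andlatd1}), Lemma \ref{goldrush} and absorption. For (\ref{andlatd2}), apply $(a^{\sim}\vee x)\wedge(\Diamond a\vee x)=x$ to $x=b\vee(c\wedge\Diamond a)$ and simplify the two factors separately: $x\vee\Diamond a=b\vee\Diamond a$ by absorption, while $x\vee a^{\sim}=b\vee a^{\sim}\vee(\Diamond a\wedge c)=b\vee a^{\sim}\vee c$ by the join half of part (\ref{andlatd1}); this gives the first identity outright, and the second is its lattice dual. For (\ref{andlatd3}), first note that $\Diamond a\in C_{p}(\mathbf{L})$ by Lemma \ref{goldrush}.(\ref{goldrush2}), so part (\ref{andlatd1}) also holds with the roles of $a^{\sim}$ and $\Diamond a$ interchanged; then $a^{\sim}\wedge(b\vee(a^{\sim}\wedge c))=a^{\sim}\wedge(\Diamond a\vee b\vee(a^{\sim}\wedge c))=a^{\sim}\wedge(\Diamond a\vee b\vee c)=a^{\sim}\wedge(b\vee c)$, using the meet half of part (\ref{andlatd1}) at the outer level and $\Diamond a\vee(a^{\sim}\wedge c)=\Diamond a\vee c$ in the middle. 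The paper only cites \cite{GLP1+} for this lemma, but its proof of Lemma \ref{ottolini}.(\ref{ottolini1}) shows that this Lemma-\ref{goldrush}-plus-absorption mechanism, rather than any appeal to distributivity, is the intended one.
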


\begin{proof}
By adapting the corresponding proofs in \cite[Lm. 5.10]{GLP1+}.
\end{proof}

\begin{lemma}
\label{ottolini}Let $\mathbf{L}\in\mathbb{PBZL}^{\ast}$, and let $a\in L$ be
such that $a^{\sim}\in C_{p}(\mathbf{L})$. Then the following hold for all
$b,c\in L$:

\begin{enumerate}
\item \label{ottolini1} $b\vee(c\wedge a^{\sim})=(b\vee c)\wedge(b\vee
a^{\sim})$ and $b\wedge(c\vee a^{\sim})=(b\wedge c)\vee(b\wedge a^{\sim})$;

\item \label{ottolini2} $a^{\sim}\wedge(b\vee c)=(a^{\sim}\wedge
b)\vee(a^{\sim}\wedge c)$ and $a^{\sim}\vee(b\wedge c)=(a^{\sim}\vee
b)\wedge(a^{\sim}\vee c)$.
\end{enumerate}
\end{lemma}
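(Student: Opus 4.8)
The plan is to obtain both parts as consequences of Lemma~\ref{andlatd}, exploiting the fact that the hypothesis of that lemma is satisfied not only by $a$ but also by $a^{\sim}$. First I would record that $\Diamond a\in C_{p}(\mathbf{L})$: since $a^{\sim}\in C_{p}(\mathbf{L})\subseteq S_{K}(\mathbf{L})$, Lemma~\ref{goldrush}.(\ref{goldrush2}) gives $\Diamond a=(a^{\sim})^{\sim}\in C_{p}(\mathbf{L})$. Hence Lemma~\ref{andlatd} applies with $a^{\sim}$ in the role of ``$a$''. Feeding $a^{\sim}$ into Lemma~\ref{andlatd}.(\ref{andlatd2}) and using $\Diamond(a^{\sim})=a^{\sim\sim\sim}=a^{\sim}$, I obtain
\[
b\vee(c\wedge a^{\sim})=(b\vee c\vee\Diamond a)\wedge(b\vee a^{\sim}),\qquad b\wedge(c\vee a^{\sim})=(b\wedge c\wedge\Diamond a)\vee(b\wedge a^{\sim}).
\]

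For part~(\ref{ottolini1}) I would then argue purely by absorption in the lattice reduct. From the first displayed identity, $b\vee(c\wedge a^{\sim})\leq(b\vee c)\wedge(b\vee a^{\sim})\leq(b\vee c\vee\Diamond a)\wedge(b\vee a^{\sim})=b\vee(c\wedge a^{\sim})$, which forces $b\vee(c\wedge a^{\sim})=(b\vee c)\wedge(b\vee a^{\sim})$. Dually, from the second identity, $(b\wedge c)\vee(b\wedge a^{\sim})\geq(b\wedge c\wedge\Diamond a)\vee(b\wedge a^{\sim})=b\wedge(c\vee a^{\sim})\geq(b\wedge c)\vee(b\wedge a^{\sim})$, giving $b\wedge(c\vee a^{\sim})=(b\wedge c)\vee(b\wedge a^{\sim})$.

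For part~(\ref{ottolini2}) I would invoke Lemma~\ref{andlatd}.(\ref{andlatd3}) twice. Its first identity gives $a^{\sim}\wedge(b\vee c)=a^{\sim}\wedge(b\vee(a^{\sim}\wedge c))$; applying it again, in the form obtained by commuting the join, with $a^{\sim}\wedge c$ and $b$ in the roles of ``$b$'' and ``$c$'', yields $a^{\sim}\wedge((a^{\sim}\wedge c)\vee b)=a^{\sim}\wedge((a^{\sim}\wedge c)\vee(a^{\sim}\wedge b))$. Since $(a^{\sim}\wedge b)\vee(a^{\sim}\wedge c)\leq a^{\sim}$, the outer meet is redundant and $a^{\sim}\wedge(b\vee c)=(a^{\sim}\wedge b)\vee(a^{\sim}\wedge c)$ follows. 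The identity $a^{\sim}\vee(b\wedge c)=(a^{\sim}\vee b)\wedge(a^{\sim}\vee c)$ is proved symmetrically from the second identity in Lemma~\ref{andlatd}.(\ref{andlatd3}), this time using $(a^{\sim}\vee b)\wedge(a^{\sim}\vee c)\geq a^{\sim}$.

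The one step that needs an idea rather than bookkeeping is the opening observation that $\Diamond a$ is again a member of $C_{p}(\mathbf{L})$, since this is what unlocks Lemma~\ref{andlatd} for $a^{\sim}$ and thereby produces the shape $(b\vee c\vee\Diamond a)\wedge(b\vee a^{\sim})$ whose extra top summand $\Diamond a$ is squeezed out by the chain of inequalities in part~(\ref{ottolini1}). Once that is in place, everything reduces to routine lattice manipulation, so I anticipate no genuine obstacle.
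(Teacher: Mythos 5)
Your proof is correct, and it takes a noticeably different route from the paper's, although both live inside the same toolkit (Lemma~\ref{andlatd} plus lattice absorption). The key move you make --- upgrading the hypothesis to $\Diamond a=(a^{\sim})^{\sim}\in C_{p}(\mathbf{L})$ via Lemma~\ref{goldrush}.(\ref{goldrush2}) so that Lemma~\ref{andlatd} can be applied to $a^{\sim}$ itself --- is sound, since $C_{p}(\mathbf{L})\subseteq S_{K}(\mathbf{L})$ makes that item of Lemma~\ref{goldrush} applicable, and $\Diamond(a^{\sim})=a^{\sim}$, $(a^{\sim})^{\sim}=\Diamond a$ give exactly the displayed identities. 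The subsequent sandwich $b\vee(c\wedge a^{\sim})\leq(b\vee c)\wedge(b\vee a^{\sim})\leq(b\vee c\vee\Diamond a)\wedge(b\vee a^{\sim})=b\vee(c\wedge a^{\sim})$ is clean and replaces the paper's longer chain of absorptions, which instead starts from $x=b\vee(c\wedge a^{\sim})$, expands $x$ as $(x\vee a^{\sim})\wedge(x\vee\Diamond a)$ using Lemma~\ref{goldrush}, and only needs Lemma~\ref{andlatd}.(\ref{andlatd1}) rather than~(\ref{andlatd2}). For part~(\ref{ottolini2}) the divergence is more substantial: the paper deduces it from part~(\ref{ottolini1}) after one application of Lemma~\ref{andlatd}.(\ref{andlatd3}) and an absorption, whereas you apply Lemma~\ref{andlatd}.(\ref{andlatd3}) twice and then kill the outer meet (resp.\ join) by comparability, so your part~(\ref{ottolini2}) is actually independent of part~(\ref{ottolini1}). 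What your version buys is a shorter, more symmetric argument in which duals really are obtained by dualising; what the paper's version buys is economy of hypotheses, using only items~(\ref{andlatd1}) and~(\ref{andlatd3}) of Lemma~\ref{andlatd} and never needing the observation that $C_{p}(\mathbf{L})$ is closed under $^{\sim}$ at this point.
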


\begin{proof}
We prove the first equalities in each pair. The lattice duals are derived similarly.

(\ref{ottolini1}) Let $x=b\vee\left(  c\wedge a^{\sim}\right)  $. Then:%
\[%
\begin{array}
[c]{lll}%
x & =x\wedge\left(  x\vee\left(  c\wedge\left(  x\vee\Diamond a\right)
\right)  \right)  & \text{Absorption}\\
& =\left(  x\vee a^{\sim}\right)  \wedge\left(  x\vee\Diamond a\right)
\wedge\left(  x\vee\left(  c\wedge\left(  x\vee\Diamond a\right)  \right)
\right)  & \text{Lemma \ref{goldrush}}\\
& =\left(  x\vee a^{\sim}\right)  \wedge\left(  x\vee\left(  c\wedge\left(
x\vee\Diamond a\right)  \right)  \right)  & \text{Lattice prop.}\\
& =\left(  b\vee a^{\sim}\right)  \wedge\left(  x\vee\left(  c\wedge\left(
x\vee\Diamond a\right)  \right)  \right)  & \text{Absorption}\\
& =\left(  b\vee a^{\sim}\right)  \wedge\left(  x\vee c\right)  & \\
& =\left(  b\vee c\right)  \wedge\left(  b\vee a^{\sim}\right)  &
\text{Absorption}%
\end{array}
\]

The penultimate equality is obtained by observing that, according to Lemma
\ref{andlatd}.(\ref{andlatd1}), since $\Diamond(a^{\sim})=a^{\sim}$, we have:
$c\leq b\vee c\vee\Diamond a=b\vee\Diamond a\vee(a^{\sim}\wedge c)=x\vee
\Diamond a$.

(\ref{ottolini2})
\[%
\begin{array}
[c]{lll}%
{a^{\sim}\wedge\left(  b\vee c\right)  } & =a^{\sim}\wedge\left(  b\vee\left(
a^{\sim}\wedge c\right)  \right)  & \text{Lemma \ref{andlatd}.(\ref{andlatd3}%
)}\\
& =\left(  \left(  a^{\sim}\wedge c\right)  \vee a^{\sim}\right)
\wedge\left(  b\vee\left(  a^{\sim}\wedge c\right)  \right)  &
\text{Absorption}\\
& =\left(  a^{\sim}\wedge b\right)  \vee\left(  a^{\sim}\wedge c\right)  &
\text{(\ref{ottolini1})}%
\end{array}
\]

\end{proof}

\begin{theorem}
\label{centravanti} Let $\mathbf{L}\in\mathbb{PBZL}^{\ast}$. Then
$C_{pbz}(\mathbf{L})=C(\mathbf{L})$.
\end{theorem}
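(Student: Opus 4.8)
The plan is to establish both inclusions via the equational description of $C(\mathbf{L})$ in Lemma \ref{freccetta}, that is, to show that for $e\in L$ the conjunction of C1--C4 is equivalent to the requirement that $(e,b)\in C_{\text{SDM},\mathbf{L}}$ for all $b\in L$. Throughout I will use that $C_{pbz}(\mathbf{L})\subseteq C_p(\mathbf{L})\subseteq S_K(\mathbf{L})$ and that $C(\mathbf{L})\subseteq C_p(\mathbf{L})$ --- the latter containment being just condition C1 --- so that every member $e$ of either side of the asserted equality lies in $S_K(\mathbf{L})$ and hence satisfies $e^{\sim}=e'$, $\square e=e$ and $\Diamond e=e$; I also record the elementary fact $1^{\sim}=0$ (immediate from $1^{\sim}\leq 1'=0$). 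Finally, a lattice computation needed in both directions: if C1, C2 and C3 hold for $e$, then
\[
(e\vee u)\wedge(e'\vee v)=(e\wedge v)\vee(e'\wedge u)\qquad\text{for all }u,v\in L,
\]
obtained by writing $w$ for the left-hand side, expanding $w=(e\wedge w)\vee(e'\wedge w)$ by C1, and simplifying $e\wedge w$ and $e'\wedge w$ using absorption together with C2 and C3.

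For the inclusion $C_{pbz}(\mathbf{L})\subseteq C(\mathbf{L})$, fix $e\in C_{pbz}(\mathbf{L})$. Condition C1 is exactly the statement that $(e,b)\in C_{\mathbf{L}}$ for all $b$, which holds since $e\in C_p(\mathbf{L})$. Conditions C2 and C3 are the distributivity of $e$, respectively $e'=e^{\sim}$, over binary joins; as $e^{\sim}\in C_p(\mathbf{L})$ by Lemma \ref{goldrush}.(\ref{goldrush2}) and $e=(e^{\sim})^{\sim}$, both are instances of Lemma \ref{ottolini}.(\ref{ottolini2}) (applied to $e^{\sim}$ and to $e$ respectively). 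For C4, I first rewrite $(e\vee u)\wedge(e'\vee v)$ as $(e\wedge v)\vee(e'\wedge u)$ by the displayed identity and then apply $^{\sim}$: the De Morgan law $(x\vee y)^{\sim}=x^{\sim}\wedge y^{\sim}$ gives $((e\vee u)\wedge(e'\vee v))^{\sim}=(e\wedge v)^{\sim}\wedge(e'\wedge u)^{\sim}$, while the two equalities built into the definition of $C_{\text{SDM},\mathbf{L}}$, together with $e^{\sim}=e'$ and $\square e=e$, yield $(e\wedge v)^{\sim}=e'\vee v^{\sim}$ and $(e'\wedge u)^{\sim}=e\vee u^{\sim}$; hence $((e\vee u)\wedge(e'\vee v))^{\sim}=(e\vee u^{\sim})\wedge(e'\vee v^{\sim})$, which is C4. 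Lemma \ref{freccetta} then gives $e\in C(\mathbf{L})$.

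For the converse, fix $e\in C(\mathbf{L})$, so that C1--C4 hold and $e\in S_K(\mathbf{L})$. Besides C1 --- which is precisely the statement that $(e,b)\in C_{\mathbf{L}}$ for all $b$ --- I must show $(e\wedge b)^{\sim}=e^{\sim}\vee b^{\sim}$ and $(e'\wedge b)^{\sim}=\square e\vee b^{\sim}=e\vee b^{\sim}$ for all $b$. Applying $^{\sim}$ to the displayed identity and using C4 and De Morgan gives, for all $u,v\in L$,
\[
(e\wedge v)^{\sim}\wedge(e'\wedge u)^{\sim}=(e\vee u^{\sim})\wedge(e'\vee v^{\sim}).
\]
Setting $u=1$ and using $1^{\sim}=0$, $(e')^{\sim}=e$ and C2, this collapses to $(e\wedge v)^{\sim}\wedge e=e\wedge v^{\sim}$; since moreover $(e\wedge v)^{\sim}\geq e^{\sim}\vee v^{\sim}\geq e'$ by Lemma \ref{basics}(iv), condition C1 applied to $p=(e\wedge v)^{\sim}$ gives $p=(e\wedge p)\vee(e'\wedge p)=(e\wedge v^{\sim})\vee e'$, and since $v^{\sim}=(e\wedge v^{\sim})\vee(e'\wedge v^{\sim})\leq(e\wedge v^{\sim})\vee e'$ again by C1, this equals $v^{\sim}\vee e'=e^{\sim}\vee v^{\sim}$. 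The symmetric choice $v=1$ yields $(e'\wedge u)^{\sim}=e\vee u^{\sim}=\square e\vee u^{\sim}$ in the same fashion. Thus $(e,b)\in C_{\text{SDM},\mathbf{L}}$ for all $b$, i.e.\ $e\in C_{pbz}(\mathbf{L})$, and together with the previous step this proves $C_{pbz}(\mathbf{L})=C(\mathbf{L})$.

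I expect the genuine work to lie in the inclusion $C(\mathbf{L})\subseteq C_{pbz}(\mathbf{L})$: condition C4 does not visibly carry the two one-sided Brouwer--De Morgan identities, and the trick is to extract from it the displayed equality in $u,v$, feed it the constants $u=1$ and $v=1$, and then use that $(e\wedge b)^{\sim}$ necessarily lies above $e^{\sim}$ (and $(e'\wedge b)^{\sim}$ above $e$), so that C1 pins these elements down exactly. The forward inclusion is, by contrast, essentially bookkeeping once the distributivity of $e$ and $e^{\sim}$ is imported from Lemma \ref{ottolini}.
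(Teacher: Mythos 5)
Your proof is correct and follows essentially the same route as the paper's: both directions go through the C1--C4 characterisation of Lemma \ref{freccetta}, with C2 and C3 imported from Lemma \ref{ottolini} (via Lemma \ref{goldrush}) and C4 handled by rewriting $(e\vee u)\wedge(e'\vee v)$ as a join of meets and applying the De Morgan law for $^{\sim}$ together with the two identities built into $C_{\text{SDM},\mathbf{L}}$. The only cosmetic difference is in the converse, where the paper substitutes $b=e^{\sim}$ and $b=\Diamond e$ directly into the two instances of C4 to read off the one-sided De Morgan identities, whereas you specialise $u=1$ and $v=1$ in your combined identity and then pin down $(e\wedge v)^{\sim}$ and $(e'\wedge u)^{\sim}$ using C1 and Lemma \ref{basics}.(iv) --- both work.
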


\begin{proof}
Let $e\in L$.

Assume that $e^{\sim}\in C_{pbz}(\mathbf{L})\subseteq C_{p}(\mathbf{L})$. We
verify all the conditions C1 through C4 in Lemma \ref{freccetta}. C1 holds
because $(e^{\sim},a)\in C_{\text{SDM},\mathbf{L}}\subseteq C_{\mathbf{L}}$
for all $a\in L$. C2 holds by Lemma \ref{ottolini}.(\ref{ottolini2}). C3 holds
by Lemma \ref{ottolini}.(\ref{ottolini2}) and Lemma \ref{goldrush}, which
ensures us that $e^{\sim\prime}=\Diamond e\in C_{p}(\mathbf{L})$. From the
latter fact, the equality $e^{\sim}\vee\Diamond e=1$ and Lemma \ref{ottolini}%
.(i), we obtain:%
\begin{align*}
a\wedge b  &  \leq\!(a\vee b)\wedge(a\vee\Diamond e)\wedge(b\vee e^{\sim})\\
&  =\!(a\vee b)\wedge(e^{\sim}\vee b)\wedge(\Diamond e\vee a)\wedge(e^{\sim
}\vee\Diamond e)\\
&  =(b\vee(a\wedge e^{\sim}))\wedge(\Diamond e\vee(a\wedge e^{\sim}))\\
&  =(a\wedge e^{\sim})\vee(b\wedge\Diamond e),
\end{align*}
whence, also using the equality $e^{\sim}\wedge\Diamond e=0$ and Lemma
\ref{goldrush}.(ii),
\[%
\begin{array}
[c]{lll}%
\left(  \left(  e^{\sim}\vee b\right)  \wedge\left(  \Diamond e\vee a\right)
\right)  ^{\sim} & =\left(  \left(  e^{\sim}\wedge a\right)  \vee\left(
\Diamond e\wedge b\right)  \vee\left(  b\wedge a\right)  \right)  ^{\sim} &
\text{Lemma \ref{ottolini}}\\
& =\left(  \left(  e^{\sim}\wedge a\right)  \vee\left(  \Diamond e\wedge
b\right)  \right)  ^{\sim} & \\
& =\left(  e^{\sim}\wedge a\right)  ^{\sim}\wedge\left(  \Diamond e\wedge
b\right)  ^{\sim} & \\
& =\left(  e^{\sim}\vee b^{\sim}\right)  \wedge\left(  \Diamond e\vee a^{\sim
}\right)  & e^{\sim},\Diamond e\in C_{pbz}(\mathbf{L})
\end{array}
\]

Conversely, if {$e^{\sim}$ is central in $\mathbf{L}$, then by C1 we have that
for all }$a\in L$, $a=\left(  a\wedge e{^{\sim}}\right)  \vee\left(
a\wedge\Diamond e\right)  $, whereby {$e^{\sim}$}$\in C_{p}(\mathbf{L})$. Now,
recall that if $e^{\sim}$ is central, then so is $\Diamond e$, whereby C4
holds for both elements --- i.e., for all $a,b\in L$:%
\begin{align*}
\text{(i) }((e^{\sim}\vee b)\wedge(\Diamond e\vee a))^{\sim}  &  =\left(
e^{\sim}\vee b_{{}}^{\sim}\right)  \wedge(\Diamond e\vee a^{\sim})\text{;}\\
\text{(ii) }\left(  (\Diamond e\vee b)\wedge(e^{\sim}\vee a)\right)  ^{\sim}
&  =\left(  \Diamond e\vee b_{{}}^{\sim}\right)  \wedge\left(  e^{\sim}\vee
a^{\sim}\right)  \text{.}%
\end{align*}
Letting $b=e^{\sim}$ in (i) and using Lemma \ref{ottolini}.(\ref{ottolini2}),
we have that $\left(  e{^{\sim}}\wedge a\right)  ^{\sim}=\left(  \left(
e{^{\sim}}\vee e^{\sim}\right)  \wedge\left(  \Diamond e\vee a\right)
\right)  ^{\sim}=\Diamond e\vee a^{\sim}$. Similarly, letting $b=\Diamond e$
in (ii), we obtain $\left(  \Diamond e\wedge a\right)  ^{\sim}=e{^{\sim}}\vee
a^{\sim}$.
\end{proof}

\subsection{Central Elements in the Variety Generated by Antiortholattices}

I{n any member }$\mathbf{L}$ {of }$V\left(  \mathbb{AOL}\right)  ${ central
elements }are exactly the sharp elements: $C\left(  \mathbf{L}\right)
=S_{K}\left(  \mathbf{L}\right)  $. Therefore, for any $a\in\mathbf{L}$,
$\mathbf{L}$ is decomposable as $\mathbf{L}/Cg\left(  a^{\sim},0\right)
\times\mathbf{L}/Cg\left(  a^{\sim},1\right)  $, or equivalently as
$\mathbf{L}/Cg\left(  a^{\sim},0\right)  \times\mathbf{L}/Cg\left(  \Diamond
a,0\right)  $. However, the mentioned results in {\cite{Sal} do not provide us
with a uniform recipe to obtain an explicit description of the factors in this
decomposition. This situation marks a sharp contrast with the case of
orthomodular lattices, where the following result is available:}

\begin{theorem}
{\cite[Lm. 2.7]{BH} }Let $\mathbf{L}\in\mathbb{OML}$ and let $e\in C\left(
\mathbf{L}\right)  $. Then $\mathbf{L}\simeq\mathbf{L}_{1}\times\mathbf{L}%
_{2}$, where $\mathbf{L}_{1},\mathbf{L}_{2}$ are algebras whose universes are
the intervals $\left[  0,e\right]  $ and $\left[  0,e^{\prime}\right]  $, respectively.
\end{theorem}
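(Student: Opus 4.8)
The plan is to exploit the equational characterisation of central elements in PBZ$^{\ast}$-lattices (Lemma \ref{freccetta}) together with the lattice-theoretic facts already assembled for members of $C_p(\mathbf{L})$, specialising to the orthomodular case where the Brouwer complement collapses onto the Kleene complement and the strong De Morgan law holds on the centre. Given $e\in C(\mathbf{L})$, I first note that $e\in S_K(\mathbf{L})$, so $e^\prime=e^\sim$, and $\Diamond e = e^{\sim\prime}=e^{\prime\prime}=e$; thus C4 in Lemma \ref{freccetta} becomes the familiar statement that $e$ distributes (in the strong De Morgan sense) with joins, and since $\mathbb{OML}$ satisfies SDM globally this is automatic. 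The real content is that $e$ and $e^\prime$ are complementary neutral elements of the bounded lattice reduct $\mathbf{L}_l$: from C1, $e\vee e^\prime=1$ and $e\wedge e^\prime=0$ (the latter by Kleene-sharpness), and the distribution laws C2, C3 applied to $e$ and to $e^\prime$ give that $e$ is a central element of the lattice $\mathbf{L}_l$ in the classical Birkhoff sense.

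Next I would set $\mathbf{L}_1$ to be the algebra with universe $[0,e]$ and $\mathbf{L}_2$ the algebra with universe $[0,e^\prime]$, each equipped with the operations inherited suitably: meet and join are the restrictions of those of $\mathbf{L}$, the bounds are $0,e$ (resp.\ $0,e^\prime$), and the orthocomplement on $[0,e]$ is $x\mapsto x^\prime\wedge e$ (the relative orthocomplement), with the Brouwer complement defined analogously — and here, since we are in $\mathbb{OML}$, it simply coincides with the Kleene one. The map $\varphi\colon \mathbf{L}\to \mathbf{L}_1\times\mathbf{L}_2$ given by $\varphi(a)=(a\wedge e, a\wedge e^\prime)$ is the candidate isomorphism; its inverse is $(x,y)\mapsto x\vee y$. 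That $\varphi$ is a lattice embedding onto $[0,e]\times[0,e^\prime]$ is the classical fact about central/neutral elements of a bounded lattice, for which C1–C3 are exactly what is needed (C1 gives surjectivity and that $x\vee y$ recovers $a$; C2–C3 give that $\varphi$ preserves joins; meets are handled dually using the dual forms, available since $^\prime$ is a dual automorphism). Preservation of the bounds is immediate. Preservation of $^\prime$ reduces to checking $(a\wedge e)^\prime\wedge e = a^\prime\wedge e$, which follows from orthomodularity together with $a\wedge e\leq e$ via the relative orthocomplementation identity, and similarly on the second coordinate. Preservation of $^\sim$ is then free because $^\sim=\,^\prime$ throughout.

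The step I expect to be the main obstacle is verifying that the orthocomplement is preserved coordinatewise, i.e.\ that $(a\wedge e)^\prime$ computed inside $\mathbf{L}_1$ equals $\varphi(a^\prime)$ restricted to the first coordinate, namely $a^\prime\wedge e$. This is where orthomodularity of $\mathbf{L}$ is genuinely used (a mere pseudo-Kleene or bounded involution structure would not suffice): one wants $(a\wedge e)\vee(a^\prime\wedge e)=e$ and $(a\wedge e)\wedge(a^\prime\wedge e)=0$, the first being C2 for the pair $a,a^\prime$ intersected with the sharp element $e$ (using $a\vee a^\prime=1$ as $a$ need not be sharp — wait, rather using C1 to write $e=(a\wedge e)\vee(a^\prime\wedge e)$ only after distributing, which is legitimate because $e$ is central), and the second from $a\wedge a^\prime\wedge e\leq a\wedge a^\prime$, which need not be $0$ for non-sharp $a$; so instead one argues that $x\mapsto x^\prime\wedge e$ is an orthocomplementation on the orthomodular interval $[0,e]$ — a standard fact for intervals below a central element in an OML — and checks it agrees with $\varphi\circ{}^\prime$ on the nose. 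Once this coordinatewise compatibility is in hand, the remaining verifications (associativity-type and absorption-type identities, the orthomodular law on each factor) transfer automatically from $\mathbf{L}$ through the lattice isomorphism, and the proof concludes by invoking that a bounded lattice with a central element decomposes as the product of the two principal ideals it generates.
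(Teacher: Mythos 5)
Your argument is correct and follows essentially the same route the paper itself takes: the theorem is only cited there (from Bruns--Harding), but the paper's proof of the analogous decomposition for $V\left(\mathbb{AOL}\right)$ (Lemma \ref{nilla} and the theorem following it) uses exactly your map $\varphi(a)=(a\wedge e,\,a\wedge e^{\prime})$, the same relative complements $x\mapsto x^{\prime}\wedge e$, and the same verifications via C1--C3. One small point: your worry that $(a\wedge e)\wedge(a^{\prime}\wedge e)$ ``need not be $0$ for non-sharp $a$'' is vacuous here, since in an orthomodular lattice every element is Kleene-sharp, so $a\wedge a^{\prime}=0$ holds throughout and the coordinatewise complementation check goes through directly from C2.
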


{In this subsection, we similarly characterise the factors in these
decompositions in terms of algebras on intervals in }$\mathbf{L}$.

\begin{lemma}
\label{nilla}Let $\mathbf{L}\in V\left(  \mathbb{AOL}\right)  $ and let $a\in
L$. Define:%
\begin{align*}
\mathbf{L}_{1}  &  =\left(  \left[  0,a^{\sim}\right]  ,\wedge,\vee,^{\prime
1},^{\sim1},0,a^{\sim}\right)  \text{;}\\
\mathbf{L}_{2}  &  =\left(  \left[  0,\Diamond a\right]  ,\wedge,\vee
,^{\prime2},^{\sim2},0,\Diamond a\right)  \text{,}%
\end{align*}
where for all $b$ in $\left[  0,a^{\sim}\right]  $, $b^{\prime1}=b^{\prime
}\wedge a^{\sim}$, $b^{\sim1}=b^{\sim}\wedge a^{\sim}$, while for all $c$ in
$\left[  0,\Diamond a\right]  $, $c^{\prime2}=c^{\prime}\wedge\Diamond a$,
$c^{\sim2}=c^{\sim}\wedge\Diamond a$. Then the algebras $\mathbf{L}_{1}$ and
$\mathbf{L}_{2}$ are in $V\left(  \mathbb{AOL}\right)  $.
\end{lemma}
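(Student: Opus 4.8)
The strategy is to show that $\mathbf{L}_1$ and $\mathbf{L}_2$ arise as the two direct factors of the decomposition of $\mathbf{L}$ induced by the central element $a^{\sim}$ (which is central because $C(\mathbf{L})=S_K(\mathbf{L})$ in $V(\mathbb{AOL})$ and $a^{\sim}\in S_K(\mathbf{L})$). Since $V(\mathbb{AOL})$ is a variety, it is closed under direct factors, so it suffices to exhibit an isomorphism $\mathbf{L}\simeq\mathbf{L}_1\times\mathbf{L}_2$ — or, more directly, to check that the maps $\varphi_1\colon L\to[0,a^{\sim}]$, $x\mapsto x\wedge a^{\sim}$ and $\varphi_2\colon L\to[0,\Diamond a]$, $x\mapsto x\wedge\Diamond a$ are surjective homomorphisms onto $\mathbf{L}_1$ and $\mathbf{L}_2$ respectively, whence each factor, being a homomorphic image of $\mathbf{L}\in V(\mathbb{AOL})$, lies in $V(\mathbb{AOL})$. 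Surjectivity is immediate ($\varphi_i$ fixes the elements of the respective interval), so the content is homomorphism preservation.

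First I would record the ambient facts about $e:=a^{\sim}$: it is sharp, so $e^{\prime}=e^{\sim}$; by Lemma~\ref{goldrush} (or directly from C1--C3 via Theorem~\ref{centravanti}, since $e$ is central) we have $x=(x\wedge e)\vee(x\wedge e^{\prime})$, and meet with $e$ (resp.\ $e^{\prime}$) distributes over joins, i.e.\ $(x\vee y)\wedge e=(x\wedge e)\vee(y\wedge e)$, and dually. Note also $\Diamond a=e^{\prime}=e^{\sim}$, so $\mathbf{L}_2$ is the analogous construction at $e^{\prime}$, and by Lemma~\ref{goldrush}.(\ref{goldrush2}) $e^{\prime}$ is central as well. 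With these in hand, preservation of $\wedge,\vee,0$ and the bounds by $\varphi_1$ is the standard Church-algebra computation; the only genuinely new points are the two unary operations $^{\prime 1}$ and $^{\sim 1}$.

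For the Kleene complement: I must check $\varphi_1(x)^{\prime 1}=\varphi_1(x^{\prime})$ for $x\in L$, i.e.\ $(x\wedge e)^{\prime}\wedge e=x^{\prime}\wedge e$. Since $e$ is sharp, $(x\wedge e)^{\prime}=x^{\prime}\vee e^{\prime}$, so the left side is $(x^{\prime}\vee e^{\prime})\wedge e=(x^{\prime}\wedge e)\vee(e^{\prime}\wedge e)=(x^{\prime}\wedge e)\vee 0=x^{\prime}\wedge e$, using distributivity of $\wedge e$ over $\vee$ and $e^{\prime}\wedge e=0$. For the Brouwer complement I need $(x\wedge e)^{\sim}\wedge e=x^{\sim}\wedge e$. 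Here I would use the identities available in $V(\mathbb{AOL})$: by Lemma~\ref{arismo} (items (iii)/(iv) with the central sharp element $e=a^{\sim}$) one gets $e$-distributivity of the relevant De Morgan-type terms, and one also has $(x\wedge e)^{\sim}=x^{\sim}\vee e^{\sim}$ exactly because $e$ is PBZ$^{\ast}$-central (Theorem~\ref{centravanti}, clause $(a\wedge b)^{\sim}=a^{\sim}\vee b^{\sim}$ for $b=e=a^{\sim}$, noting $a^{\sim}=(a^{\sim})^{\sim\sim}$ up to the sharp identity). Then $(x\wedge e)^{\sim}\wedge e=(x^{\sim}\vee e^{\sim})\wedge e=(x^{\sim}\wedge e)\vee(e^{\sim}\wedge e)=x^{\sim}\wedge e$ again since $e^{\sim}=e^{\prime}$ and $e^{\prime}\wedge e=0$. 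The same script with $e$ replaced by $e^{\prime}=\Diamond a$ handles $\mathbf{L}_2$, $^{\prime 2}$ and $^{\sim 2}$; this is legitimate because $e^{\prime}$ is also central and PBZ$^{\ast}$-central.

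\textbf{The main obstacle.} The delicate point is verifying that $^{\sim 1}$ and $^{\sim 2}$ really are the restrictions of Brouwer complements that make $\mathbf{L}_1,\mathbf{L}_2$ into genuine factors — equivalently, that the Brouwer complement of $\mathbf{L}$ splits along the product, $(x)^{\sim}=(x\wedge e)^{\sim 1}\,??\,(x\wedge e^{\prime})^{\sim 2}$ in the obvious coordinatewise sense — and for this one must know $(x\wedge e)^{\sim}=x^{\sim}\vee e^{\sim}$ and $(x\wedge e^{\prime})^{\sim}=\square e\vee x^{\sim}$, which are precisely the two extra clauses defining PBZ$^{\ast}$-centrality. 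So the proof hinges on invoking Theorem~\ref{centravanti} to upgrade "central" to "PBZ$^{\ast}$-central"; once that is done, everything reduces to routine lattice manipulations with the distributivity laws of Lemma~\ref{arismo} and the sharpness identity $e^{\prime}=e^{\sim}$. I would present the $\mathbf{L}_1$ computations in full and dispatch $\mathbf{L}_2$ by the remark that it is the same construction performed at the central sharp element $\Diamond a=a^{\sim\prime}$.
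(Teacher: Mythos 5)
Your proof is correct, but it takes a genuinely different route from the paper's. The paper verifies directly, condition by condition, that $\mathbf{L}_{1}$ is a pseudo-Kleene algebra, paraorthomodular, a BZ-lattice, a BZ$^{\ast}$-lattice, and satisfies (a reformulation of) AOL2 — the last step requiring an ad hoc term $t^{\mathbf{M}}(a,b,c)$ evaluated case-by-case on antiortholattices. You instead observe that $\varphi_{1}\colon x\mapsto x\wedge a^{\sim}$ is a surjective homomorphism from $\mathbf{L}$ onto $\mathbf{L}_{1}$ (the key checks being $(x\vee y)\wedge a^{\sim}=(x\wedge a^{\sim})\vee(y\wedge a^{\sim})$ from Lemma \ref{arismo}, $(x\wedge a^{\sim})'\wedge a^{\sim}=x'\wedge a^{\sim}$ from sharpness of $a^{\sim}$, and $(x\wedge a^{\sim})^{\sim}\wedge a^{\sim}=x^{\sim}\wedge a^{\sim}$ from WSDM, equivalently from PBZ$^{\ast}$-centrality of $a^{\sim}$ via Theorem \ref{centravanti}), and then invoke closure of the variety $V(\mathbb{AOL})$ under homomorphic images. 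This is shorter and entirely sidesteps the direct verification of the AOL axioms in $\mathbf{L}_{1}$; the price is that you must also check preservation of joins and both unary operations up front, which is essentially the content the paper defers to the subsequent isomorphism theorem (whose proof, note, cites Lemma \ref{nilla} only to justify skipping the join check, so there is no circularity in your ordering). Your "main obstacle" paragraph is somewhat more elaborate than needed — once $\varphi_{1}$ is a surjective homomorphism onto the algebra $\mathbf{L}_{1}$ as defined, membership in the variety follows immediately without any discussion of how the Brouwer complement splits across the product.
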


\begin{proof}
Clearly, $\mathbf{L}_{1}$ and $\mathbf{L}_{2}$ are bounded lattices. We now
verify the remaining properties for $\mathbf{L}_{1}$; by replacing $a$ by
$a^{\sim}$, we obtain our claim for $\mathbf{L}_{2}$.

($\mathbf{L}_{1}$ is a pseudo-Kleene algebra). Let $b,c\in\left[  0,a^{\sim
}\right]  $. Then, using Lemma \ref{arismo}.(iii), $b^{\prime1\prime1}=\left(
b^{\prime}\wedge a^{\sim}\right)  ^{\prime}\wedge a^{\sim}=\left(
b\vee\Diamond a\right)  \wedge a^{\sim}=b\wedge a^{\sim}=b$. Moreover,
$\left(  b\wedge c\right)  ^{\prime1}=\left(  b\wedge c\right)  ^{\prime
}\wedge a^{\sim}=\left(  b^{\prime}\vee c^{\prime}\right)  \wedge a^{\sim
}=\left(  b^{\prime}\wedge a^{\sim}\right)  \vee\left(  c^{\prime}\wedge
a^{\sim}\right)  =b^{\prime1}\vee c^{\prime1}$. Finally, since $b\wedge
b^{\prime}\leq$ $c\vee c^{\prime}$ in $\mathbf{L}$, we use Lemma
\ref{arismo}.(ii) and obtain%
\[
b\wedge b^{\prime1}=b\wedge b^{\prime}\wedge a^{\sim}\leq\left(  c\vee
c^{\prime}\right)  \wedge\left(  c\vee a^{\sim}\right)  =c\vee\left(
c^{\prime}\wedge a^{\sim}\right)  =c\vee c^{\prime1}\text{.}%
\]

($\mathbf{L}_{1}$ is paraorthomodular). Let $b,c\in\left[  0,a^{\sim}\right]
$. Suppose that $b\leq c$ and that $b^{\prime1}\wedge c=b^{\prime}\wedge
a^{\sim}\wedge c=0$. Since $c\leq a^{\sim}$, $b^{\prime}\wedge a^{\sim}\wedge
c=b^{\prime}\wedge c$, whence paraorthomodularity of $\mathbf{L}$ yields the
desired result.

($\mathbf{L}_{1}$ is a BZ-lattice). Let $b,c\in\left[  0,a^{\sim}\right]  $.
Clearly, $b\wedge b^{\sim1}=0$. Further, $b^{\sim1\sim1}=\left(  b^{\sim
}\wedge a^{\sim}\right)  ^{\sim}\wedge a^{\sim}$. Since $b^{\sim}$ and
$a^{\sim}$ are sharp elements, $\left(  b^{\sim}\wedge a^{\sim}\right)
^{\sim}=\left(  b^{\sim}\wedge a^{\sim}\right)  ^{\prime}=\Diamond
b\vee\Diamond a$, hence $b^{\sim1\sim1}=\left(  b^{\sim}\wedge a^{\sim
}\right)  ^{\sim}\wedge a^{\sim}=\left(  b^{\sim}\wedge a^{\sim}\right)
^{^{\prime}}\wedge a^{\sim}=b^{\sim1^{\prime}1}$. By Lemma \ref{arismo}.(iii)
$b^{\sim1\sim1}=\left(  \Diamond b\vee\Diamond a\right)  \wedge a^{\sim
}=\Diamond b\wedge a^{\sim}\geq b$. Finally, it is easily seen that if $b\leq
c$, then $c^{\sim1}\leq b^{\sim1}$.

($\mathbf{L}_{1}$ is a BZ*-lattice). Let $b\in\left[  0,a^{\sim}\right]  $.
Then, by Lemma \ref{arismo}.(iii) and WSDM, we have that:%
\[%
\begin{array}
[c]{ll}%
\left(  b\wedge b^{\prime1}\right)  ^{\sim1} & =\left(  b\wedge b^{\prime
}\wedge a^{\sim}\right)  ^{\sim}\wedge a^{\sim}\\
& =\left(  b\wedge b^{\prime}\right)  ^{\sim}\wedge a^{\sim}\\
& =\left(  b^{\sim}\vee\square b\right)  \wedge a^{\sim}\\
& =\left(  b^{\sim}\wedge a^{\sim}\right)  \vee\left(  \square b\wedge
a^{\sim}\right) \\
& =\left(  b^{\sim}\wedge a^{\sim}\right)  \vee\left(  \left(  b^{\prime
}\wedge a^{\sim}\right)  ^{\sim}\wedge a^{\sim}\right) \\
& =b^{\sim1}\vee b^{\prime1\sim1}\text{.}%
\end{array}
\]

($\mathbf{L}_{1}\in V\left(  \mathbb{AOL}\right)  $). By way of example, we
check that the reformulation of AOL2 in terms of the new operations
$^{\sim1\text{ }}$ and $^{\prime1}$ is satisfied in any antiortholattice
$\mathbf{M}$. Thus, for $a,b,c\in\mathbf{M}$, consider the element%
\[
t^{\mathbf{M}}\left(  a,b,c\right)  =\left(  b\wedge c^{\sim}\wedge a^{\sim
}\right)  \vee\left(  b\wedge\left(  c^{\sim}\wedge a^{\sim}\right)  ^{\sim
}\wedge a^{\sim}\right)  \text{.}%
\]

If $c>0$, then $t^{\mathbf{M}}\left(  a,b,c\right)  =0\vee\left(
b\wedge1\wedge a^{\sim}\right)  =b\wedge a^{\sim}$. If $c=0$, then%
\[
t^{\mathbf{M}}\left(  a,b,c\right)  =\left(  b\wedge a^{\sim}\right)
\vee\left(  b\wedge\Diamond a\wedge a^{\sim}\right)  =\left(  b\wedge a^{\sim
}\right)  \vee0=b\wedge a^{\sim}.
\]

Since $\mathbf{L}\in V\left(  \mathbb{AOL}\right)  $, it follows that for all
$a,b,c\in L$, $t^{\mathbf{L}}\left(  a,b,c\right)  =b\wedge a^{\sim}$, which
equals $b$ whenever $b\leq a^{\sim}$, hence $\mathbf{L}_{1}$ satisfies AOL2.
\end{proof}

\begin{theorem}
Let $\mathbf{L}\in V\left(  \mathbb{AOL}\right)  $ and let $a\in L$. Then
$\mathbf{L}\simeq\mathbf{L}_{1}\times\mathbf{L}_{2}$, where $\mathbf{L}%
_{1},\mathbf{L}_{2}$ are defined as in Lemma \ref{nilla}.
\end{theorem}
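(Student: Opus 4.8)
The goal is to exhibit an explicit isomorphism $\mathbf{L}\simeq\mathbf{L}_{1}\times\mathbf{L}_{2}$, and the natural candidate map is $\varphi:L\to L_{1}\times L_{2}$ given by $\varphi(x)=(x\wedge a^{\sim},\,x\wedge\Diamond a)$. First I would note that $a^{\sim}\in S_{K}(\mathbf{L})=C(\mathbf{L})$ (using the cited fact that central elements in members of $V(\mathbb{AOL})$ are exactly the Kleene-sharp elements, and $a^{\sim}$ is always sharp since $a^{\sim}=\Diamond a^{\sim}$), so $a^{\sim}$ induces a factor-congruence decomposition of $\mathbf{L}$; thus the real work is identifying the two factors with $\mathbf{L}_{1}$ and $\mathbf{L}_{2}$ as concrete algebras on intervals. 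Lemma~\ref{nilla} already guarantees $\mathbf{L}_{1},\mathbf{L}_{2}\in V(\mathbb{AOL})$, so I only need to check that $\varphi$ is a homomorphism, is injective, and is surjective.

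The inverse map should be $\psi:(y,z)\mapsto y\vee z$. Injectivity of $\varphi$ and the fact that $\psi\circ\varphi=\mathrm{id}_{L}$ both follow from $x=(x\wedge a^{\sim})\vee(x\wedge\Diamond a)$, which is exactly AOL2 (Theorem~\ref{caniggia}) applied with the parameter chosen so that $y^{\sim}=a^{\sim}$, $\Diamond y=\Diamond a$ — equivalently, it is C1 for the central element $a^{\sim}$. For $\varphi\circ\psi=\mathrm{id}$ I must show that for $y\leq a^{\sim}$ and $z\leq\Diamond a$ one has $(y\vee z)\wedge a^{\sim}=y$ and $(y\vee z)\wedge\Diamond a=z$; this uses the distributivity of joins over the sharp element $a^{\sim}$ (Lemma~\ref{ottolini}.(\ref{ottolini1}), or directly Lemma~\ref{arismo}) together with $z\wedge a^{\sim}\leq\Diamond a\wedge a^{\sim}=0$ and dually. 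That $\varphi$ preserves $\wedge$ and $\vee$ is again an instance of Lemma~\ref{ottolini} / Lemma~\ref{arismo}: e.g. $(x_{1}\vee x_{2})\wedge a^{\sim}=(x_{1}\wedge a^{\sim})\vee(x_{2}\wedge a^{\sim})$, and componentwise meet is immediate by associativity/commutativity of $\wedge$ since $a^{\sim}\wedge a^{\sim}=a^{\sim}$.

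The genuinely delicate point is preservation of the two unary operations, because $^{\prime 1}$ and $^{\sim 1}$ are the \emph{relativised} operations $b^{\prime 1}=b^{\prime}\wedge a^{\sim}$ and $b^{\sim 1}=b^{\sim}\wedge a^{\sim}$ defined in Lemma~\ref{nilla}, not the ambient ones. So I must verify $(x^{\prime})\wedge a^{\sim}=(x\wedge a^{\sim})^{\prime}\wedge a^{\sim}$ and $(x^{\sim})\wedge a^{\sim}=(x\wedge a^{\sim})^{\sim}\wedge a^{\sim}$, and their analogues with $\Diamond a$. For the Kleene complement, $(x\wedge a^{\sim})^{\prime}\wedge a^{\sim}=(x^{\prime}\vee\Diamond a)\wedge a^{\sim}=(x^{\prime}\wedge a^{\sim})\vee(\Diamond a\wedge a^{\sim})=x^{\prime}\wedge a^{\sim}$ using De Morgan, $a^{\sim\prime}=\Diamond a$, Lemma~\ref{arismo}.(iii), and $\Diamond a\wedge a^{\sim}=0$. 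For the Brouwer complement I expect to need WSDM (available throughout $V(\mathbb{AOL})$, since $\mathbb{AOL}$ satisfies WSDM and it is an identity): compute $(x\wedge a^{\sim})^{\sim}$ via WSDM as $x^{\sim}\vee\Diamond a$ — here $a^{\sim}=\Diamond(a^{\sim})$ so $(x\wedge a^{\sim})^{\sim}=(x\wedge\Diamond(a^{\sim}))^{\sim}=x^{\sim}\vee\Diamond(a^{\sim})=x^{\sim}\vee a^{\sim}$ — hmm, one must be careful about which element plays the role of $y$ in WSDM; the safe route is to mimic verbatim the computation already performed inside the proof of Lemma~\ref{nilla} (where $(b\wedge b^{\prime 1})^{\sim 1}$ was expanded), since exactly the same manipulations — Lemma~\ref{arismo}.(iii), WSDM, and the fact that $b^{\sim},a^{\sim}$ are sharp — give $(x\wedge a^{\sim})^{\sim}\wedge a^{\sim}=(x^{\sim}\vee\square x)\wedge\cdots$; I would simply transcribe that chain of equalities. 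I expect the main obstacle to be bookkeeping these relativised-operation identities cleanly rather than any conceptual difficulty: once $\varphi$ is shown to be a bijective lattice homomorphism commuting with $^{\prime}$ and $^{\sim}$ in the relativised sense on each factor, the result follows, and I would close by remarking that under this isomorphism $Cg(a^{\sim},0)$ and $Cg(a^{\sim},1)$ are the kernels of the two projections, reconciling with the factor-congruence description given before the lemma.
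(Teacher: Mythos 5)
Your proposal is correct and follows essentially the same route as the paper: the same map $\varphi(b)=(b\wedge a^{\sim},b\wedge\Diamond a)$, injectivity from AOL2, surjectivity and preservation of the operations from Lemma \ref{arismo}.(iii), and WSDM for the Brouwer complement. Your self-correction on which variable plays the role of $y$ in WSDM resolves in the right direction (the needed instance is $(x\wedge a^{\sim})^{\sim}=x^{\sim}\vee\Diamond a$, then meet with $a^{\sim}$), so no gap remains.
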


\begin{proof}
Let $\varphi:L\rightarrow L_{1}\times L_{2}$ be defined, for any $b\in L$, by
$\varphi\left(  b\right)  =\left(  b\wedge a^{\sim},b\wedge\Diamond a\right)
$. We first show that $\varphi$ is a bijection. If $\varphi\left(  b\right)
=\varphi\left(  c\right)  $ for some $b,c\in L$, then $b\wedge a^{\sim
}=c\wedge a^{\sim}$ and $b\wedge\Diamond a=c\wedge\Diamond a$. Thus, by AOL2,
\[
b=\left(  b\wedge a^{\sim}\right)  \vee\left(  b\wedge\Diamond a\right)
=\left(  c\wedge a^{\sim}\right)  \vee\left(  c\wedge\Diamond a\right)
=c\text{.}%
\]

Now, let $\left(  x,y\right)  \in L_{1}\times L_{2}$. Then $x\leq a^{\sim}$
and $y\leq\Diamond a$, whence $y\wedge a^{\sim}\leq\Diamond a\wedge a^{\sim
}=0$. Let us compute $\varphi\left(  x\vee y\right)  $. Using Lemma
\ref{arismo}.(iii), we obtain%
\[%
\begin{array}
[c]{ll}%
\left(  x\vee y\right)  \wedge a^{\sim} & =\left(  x\wedge a^{\sim}\right)
\vee\left(  y\wedge a^{\sim}\right) \\
& =x\vee0=x\text{.}%
\end{array}
\]

Similarly, $\left(  x\vee y\right)  \wedge\Diamond a=y$ and thus $\varphi$ is onto.

Next, we show that $\varphi$ preserves meets and the unary operations (observe
that this is sufficient in virtue of Lemma \ref{nilla}). For meets,%
\[%
\begin{array}
[c]{ll}%
\varphi\left(  b\wedge^{\mathbf{L}}c\right)  & =\left(  b\wedge c\wedge
a^{\sim},b\wedge c\wedge\Diamond a\right) \\
& =\left(  b\wedge a^{\sim},b\wedge\Diamond a\right)  \wedge^{\mathbf{L}%
_{1}\times\mathbf{L}_{2}}\left(  c\wedge a^{\sim},c\wedge\Diamond a\right) \\
& =\varphi\left(  b\right)  \wedge^{\mathbf{L}_{1}\times\mathbf{L}_{2}}%
\varphi\left(  c\right)  \text{.}%
\end{array}
\]

With regards to Kleene complements, resorting again to Lemma \ref{arismo}%
.(iii),
\[%
\begin{array}
[c]{ll}%
\varphi\left(  b^{\prime\mathbf{L}}\right)  & =\left(  b^{\prime}\wedge
a^{\sim},b^{\prime}\wedge\Diamond a\right) \\
& =\left(  \left(  b^{\prime}\vee\Diamond a\right)  \wedge a^{\sim},\left(
b^{\prime}\vee a^{\sim}\right)  \wedge\Diamond a\right) \\
& =\left(  \left(  b\wedge a^{\sim}\right)  ^{\prime}\wedge a^{\sim},\left(
b\wedge\Diamond a\right)  ^{\prime}\wedge\Diamond a\right) \\
& =\left(  \left(  b\wedge a^{\sim}\right)  ^{\prime1},\left(  b\wedge\Diamond
a\right)  ^{\prime2}\right) \\
& =\varphi\left(  b\right)  ^{\prime\mathbf{L}_{1}\times\mathbf{L}_{2}%
}\text{.}%
\end{array}
\]

For Brouwer complements the computation is similar. It is safely left to the
reader, who is warned that the WSDM identity $\left(  x\wedge y^{\sim}\right)
^{\sim}\approx x^{\sim}\vee\Diamond y$ will be needed somewhere down the line.
\end{proof}

\section{Ideal Theory\label{Scipio}}

It is well-known from the theory of orthomodular lattices that $\mathbb{OML}%
$-ideals admit a manageable characterization in terms of lattice ideals
\emph{closed under perspectivity}, for short \emph{p-ideals}: in other words,
in terms of lattice ideals $I$ of an orthomodular lattice $\mathbf{L}$ such
that, whenever $a\in I$, then also $b\Cap a=b\wedge\left(  b^{\prime}\vee
a\right)  \in I$ for all $b\in L$ \cite[Prop. 4.7]{BH}. The aim of this
section is to generalise this idea within the expanded language of
$\mathbb{PBZL}^{\mathbb{\ast}}$. Unfortunately, in the general case these
p-ideals do not even coincide with $0$-classes of congruences --- and, a
fortiori, no isomorphism result between the lattices of p-ideals and of
congruences can be attained. The situation improves if we restrict ourselves
to the subvariety $\mathbb{SDM}$ of $\mathbb{PBZL}^{\mathbb{\ast}}$,
axiomatised relative to $\mathbb{PBZL}^{\mathbb{\ast}}$ by the Strong De
Morgan identity. In fact, in any $\mathbf{L}\in\mathbb{SDM}$ p-ideals coincide
with Ursini $\mathbb{SDM}$-ideals, hence with $0$-classes of congruences,
given the fact that $\mathbb{PBZL}^{\mathbb{\ast}}$ (and thus, all the more
so, $\mathbb{SDM}$) is a $0$-subtractive variety.

\subsection{Ideals in PBZ$^{\ast}$ --lattices}

We start by defining the notion of a p-ideal for generic PBZ$^{\ast}$ --lattices.

\begin{definition}
\label{cagetti}Let $\mathbf{L}$ be a PBZ$^{\ast}$ --lattice. $I\subseteq L$ is
a \emph{p-ideal} iff it is a lattice ideal of $\mathbf{L}$ s.t. if $a\in I$,
then $\Diamond b\Cap\Diamond a=\Diamond b\wedge\left(  b^{\sim}\vee\Diamond
a\right)  \in I$ for all $b\in L$.
\end{definition}

\begin{lemma}
\label{nicotra}Let $\mathbf{L}$ be a PBZ$^{\ast}$ --lattice, let $I$ be a
p-ideal of $\mathbf{L}$, and let $a,b\in L$. Then: (i) if $a\in I$, then
$\Diamond a\in I$; (ii) $\Diamond a\Cap\Diamond b\in I$ iff $\Diamond
b\Cap\Diamond a\in I$.
\end{lemma}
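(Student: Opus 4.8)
The plan is to derive both claims directly from Definition~\ref{cagetti} together with the arithmetical identities of Lemma~\ref{basics}, in particular $a \leq \Diamond a$, $\Diamond\Diamond a = \Diamond a$ (since $a^{\sim\sim\sim} = a^{\sim}$), and the fact that $\Diamond a = a^{\sim\prime}$ is $\Diamond$-sharp. For (i), suppose $a \in I$. Since $I$ is a p-ideal, taking $b = 1$ in the closure condition gives $\Diamond 1 \Cap \Diamond a = \Diamond 1 \wedge (1^{\sim} \vee \Diamond a) \in I$. Now $1^{\sim} = 0$ and $\Diamond 1 = 1^{\sim\sim} = 0^{\sim}$; I would check that $0^{\sim} \wedge (0 \vee \Diamond a) = 0^{\sim} \wedge \Diamond a = \Diamond a$, using $\Diamond a \leq 0^{\sim}$, which holds because $a^{\sim} \geq 0$ implies, after applying $^{\prime}$ in the dual-isomorphism fashion... actually more directly: $\Diamond a = a^{\sim\prime}$ and $a^{\sim} \leq 1$ gives $a^{\sim\prime} \geq 1^{\prime} = 0$, so I need $\Diamond a \leq \Diamond 1$, i.e. $a \leq 1$ and monotonicity of $\Diamond$ (from Lemma~\ref{basics}, $a \leq b$ implies $a^{\sim\sim} \leq b^{\sim\sim}$). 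Hence $\Diamond a \in I$. Alternatively and more cleanly, I could take $b = a$: the closure condition gives $\Diamond a \wedge (a^{\sim} \vee \Diamond a) = \Diamond a \in I$ immediately, since $a^{\sim} \vee \Diamond a \geq \Diamond a$ by absorption. This second route is the one I would actually write.

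For (ii), the key observation is that $\Diamond a \Cap \Diamond b$ and $\Diamond b \Cap \Diamond a$ are each below the "join of the two" and, more to the point, that the p-ideal closure condition applied with the roles of the parameters swapped converts membership of one into membership of the other, modulo part~(i). Concretely, suppose $\Diamond a \Cap \Diamond b \in I$. Since $\Diamond a \Cap \Diamond b = \Diamond a \wedge (a^{\sim} \vee \Diamond b)$ and this element is $\leq \Diamond b$ (because $\Diamond a \wedge (a^{\sim} \vee \Diamond b) \leq a^{\sim} \vee \Diamond b$ and... no — I need $\leq \Diamond b$, which is false in general). So the containment route must instead go through: the closure condition of $I$ applied to the element $\Diamond a \Cap \Diamond b \in I$ with parameter $\Diamond b$ yields $\Diamond\Diamond b \wedge ((\Diamond b)^{\sim} \vee \Diamond(\Diamond a \Cap \Diamond b)) \in I$. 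Here I would use that $\Diamond b$ is $\Diamond$-sharp so $(\Diamond b)^{\sim} = (\Diamond b)^{\prime} = \square(b^{\prime})^{\prime}\cdots$ — more simply $(\Diamond b)^{\sim} = \Diamond b^{\prime}$ is not quite it; rather since $\Diamond b \in S_{\Diamond}(\mathbf{L})$, $(\Diamond b)^{\sim} = (\Diamond b)^{\prime}$, and $(\Diamond b)^{\prime} \vee \Diamond b = 1$. I then need to show the resulting element dominates, or equals up to the ideal, $\Diamond b \Cap \Diamond a$.

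The main obstacle I anticipate is exactly this last manipulation: showing that $\Diamond b \Cap \Diamond a$ lies below (hence in $I$, since $I$ is a lattice ideal, hence downward closed) something that the closure condition hands us from $\Diamond a \Cap \Diamond b \in I$. The cleanest strategy is to prove the symmetric \emph{inequality} $\Diamond b \Cap \Diamond a \leq \Diamond b \wedge (b^{\sim} \vee (\Diamond a \Cap \Diamond b))$ — unwinding the right side, $b^{\sim} \vee (\Diamond a \wedge (a^{\sim} \vee \Diamond b)) \geq b^{\sim} \vee (\Diamond a \wedge \Diamond b)$ is too weak, so instead I would try to exploit that in the p-ideal definition $b$ ranges over \emph{all} of $L$, so I can feed in $b := a$ type substitutions. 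Given $\Diamond a \Cap \Diamond b \in I$, apply closure with parameter $a$ — no; with parameter chosen so that $\Diamond(\text{parameter}) \Cap \Diamond(\Diamond a \Cap \Diamond b)$ equals or dominates $\Diamond b \Cap \Diamond a$. Failing a slick identity, the fallback is a short explicit lattice computation inside $\mathbf{L}$ using Lemma~\ref{basics}(vii),(viii) (behaviour of $\Diamond$ under $\vee, \wedge$) plus part~(i) to replace $\Diamond(\Diamond a \Cap \Diamond b)$ by $\Diamond a \Cap \Diamond b$ itself (it is already $\Diamond$-sharp, being a meet involving $\Diamond a$? — not obviously, so (i) or idempotency of $\Diamond$ is invoked here). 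I expect the whole lemma to be three or four lines once the right substitution into Definition~\ref{cagetti} is identified; identifying it is the only real work.
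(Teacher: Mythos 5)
Part (i) of your proposal is fine and matches the paper up to the choice of substitution: the paper puts $b=1$ in Definition \ref{cagetti}, and your alternative $b=a$ works equally well since $\Diamond a\wedge(a^{\sim}\vee\Diamond a)=\Diamond a$.

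Part (ii) has a genuine gap. You correctly locate the one substitution that matters: applying the closure condition to the element $\Diamond a\Cap\Diamond b\in I$ with parameter $b$ (equivalently $\Diamond b$, since $\Diamond\Diamond b=\Diamond b$ and $(\Diamond b)^{\sim}=b^{\sim}$) puts $\Diamond b\wedge\left(b^{\sim}\vee\Diamond(\Diamond a\Cap\Diamond b)\right)$ into $I$. But you then stall on the two facts that actually carry the proof, and your proposed fallbacks for both would fail. The first, $\Diamond(\Diamond a\Cap\Diamond b)=\Diamond a\Cap\Diamond b$, is not delivered by part (i) or by idempotency of $\Diamond$ alone; it holds because $\Diamond a,a^{\sim},\Diamond b,b^{\sim}$ are all $\Diamond$-sharp and $S_{\Diamond}(\mathbf{L})$ is closed under $\wedge$ and $\vee$ (Lemma \ref{basics}(vii)--(viii)), or, as the paper has it, because $\Diamond a\Cap\Diamond b$ lies in the subuniverse $S_{K}(\mathbf{L})=S_{\Diamond}(\mathbf{L})$. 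The second and decisive fact is the identity
\[
\Diamond b\wedge\left(b^{\sim}\vee\left(\Diamond a\wedge\left(a^{\sim}\vee\Diamond b\right)\right)\right)=\Diamond b\wedge\left(b^{\sim}\vee\Diamond a\right),
\]
and this cannot be obtained by ``a short explicit lattice computation inside $\mathbf{L}$ using Lemma \ref{basics}'': $\mathbf{L}$ need not be distributive or orthomodular, and general BZ-lattice arithmetic is far too weak. The paper proves it by observing that all four elements involved live in the orthomodular subalgebra $\mathbf{S}_{K}(\mathbf{L})$ and invoking the Foulis--Holland theorem (Theorem \ref{fuli}) there; this is the classical Sasaki-projection computation underlying p-ideals in orthomodular lattices, and orthomodularity of the sharp elements is genuinely used. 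Your plan never identifies this step, so as written the proof of (ii) is incomplete.
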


\begin{proof}
(i) Let $b=1$ in Definition \ref{cagetti}.

(ii) Suppose that $\Diamond b\wedge\left(  b^{\sim}\vee\Diamond a\right)  \in
I$. Then, since $I$ is a p-ideal,%
\[
\Diamond a\wedge\left(  a^{\sim}\vee\Diamond\left(  \Diamond b\wedge\left(
b^{\sim}\vee\Diamond a\right)  \right)  \right)  =\Diamond a\wedge\left(
a^{\sim}\vee\left(  \Diamond b\wedge\left(  b^{\sim}\vee\Diamond a\right)
\right)  \right)  \in I\text{.}%
\]

By Theorem \ref{fuli} applied to $\mathbf{S}_{K}\left(  \mathbf{L}\right)  $,
however,%
\[
\Diamond a\wedge\left(  a^{\sim}\vee\left(  \Diamond b\wedge\left(  b^{\sim
}\vee\Diamond a\right)  \right)  \right)  =\Diamond a\wedge\left(  a^{\sim
}\vee\Diamond b\right)  \text{,}%
\]
whence our conclusion.
\end{proof}

We now aim at defining a customary notion of equivalence between elements in a
PBZ$^{\ast}$ --lattice modulo a given p-ideal. Note that $\left\{  0\right\}
$ is a p-ideal; thus,

\begin{definition}
Let $\mathbf{L}$ be a PBZ$^{\ast}$ --lattice, and let $I$ be a p-ideal of
$\mathbf{L}$. The elements $a,b\in L$ are said to be $I$\emph{-modally
equivalent} iff%
\[
\left(  \Diamond a\right)  ^{\sim}\Cap\Diamond b,\left(  \Diamond b\right)
^{\sim}\Cap\Diamond a,\left(  \square a\right)  ^{\sim}\Cap\square b,\left(
\square b\right)  ^{\sim}\Cap\square a\in I\text{.}%
\]

The elements $a,b\in L$ are said to be \emph{modally equivalent} iff they are
$\left\{  0\right\}  $-modally equivalent, namely iff $\Diamond a=\Diamond b$
and $\square a=\square b$.
\end{definition}

\begin{definition}
Let $\mathbf{L}$ be a PBZ$^{\ast}$ --lattice, and let $I$ be a p-ideal of
$\mathbf{L}$. We define the following binary relation on $L$:
\[
\rho\left(  I\right)  =\{\left(  a,b\right)  \in L:\left(  \Diamond a\right)
^{\sim}\Cap\Diamond b,\left(  \Diamond b\right)  ^{\sim}\Cap\Diamond a,\left(
\square a\right)  ^{\sim}\Cap\square b,\left(  \square b\right)  ^{\sim}%
\Cap\square a\in I\}\text{.}%
\]

\end{definition}

Thus, $\left(  a,b\right)  \in\rho\left(  I\right)  $ iff $a$ and $b$ are
$I$-modally equivalent, and, in particular, $\left(  a,b\right)  \in
\rho\left(  \{0\}\right)  $ iff $a$ and $b$ are modally equivalent. The
relation $\rho\left(  I\right)  $ also admits a less cumbersome description:

\begin{theorem}
\label{balanzone}Let $\mathbf{L}$ be a PBZ$^{\ast}$ --lattice, and let $I$ be
a p-ideal of $\mathbf{L}$. For $a,b\in L$ the following conditions are equivalent:

\begin{enumerate}
\item $\left(  \Diamond a\vee\Diamond b\right)  \wedge\left(  a^{\sim}\vee
b^{\sim}\right)  ,\left(  \left(  \square a\right)  ^{\sim}\vee\left(  \square
b\right)  ^{\sim}\right)  \wedge\left(  \square a\vee\square b\right)  \in I$;

\item there exist $s,t\in I$ such that $\Diamond a\vee s=\Diamond b\vee s$ and
$\square a\vee t=\square b\vee t$;

\item $\left(  a,b\right)  \in\rho\left(  I\right)  $.
\end{enumerate}
\end{theorem}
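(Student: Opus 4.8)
The plan is to prove the three conditions equivalent by establishing the cycle $(3)\Rightarrow(2)\Rightarrow(1)\Rightarrow(3)$, working throughout inside the orthomodular lattice $\mathbf{S}_K(\mathbf{L})$, since all the elements in play ($\Diamond a$, $\Diamond b$, $\square a=(\Diamond(a'))'$, their Kleene complements, etc.) are sharp, and the operation $\Cap$ together with the lattice operations stays within $S_K(\mathbf{L})$. The key background facts are: $\Diamond a, \square a \in S_K(\mathbf{L})$; on sharp elements $x^\sim = x'$; the Foulis--Holland theorem (Theorem \ref{fuli}) applied to $\mathbf{S}_K(\mathbf{L})$; and the standard orthomodular identity $x \Cap y = x\wedge(x'\vee y)$, so that for sharp $x,y$ we have $(x)^\sim \Cap y = x'\wedge(x\vee y)$. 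In particular, Lemma \ref{nicotra} already tells us that membership of the ``$\Cap$-terms'' in $I$ is symmetric in the two arguments, which will streamline the bookkeeping between the $\Diamond$-coordinate and the $\square$-coordinate.

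For $(3)\Rightarrow(2)$: assuming $(a,b)\in\rho(I)$, set $s = ((\Diamond a)^\sim\Cap\Diamond b)\vee((\Diamond b)^\sim\Cap\Diamond a)$ and $t = ((\square a)^\sim\Cap\square b)\vee((\square b)^\sim\Cap\square a)$; these lie in $I$ because $I$ is a lattice ideal closed under the two summands. Then I would verify $\Diamond a\vee s = \Diamond b\vee s$ by a direct orthomodular computation: expanding $\Diamond a\vee s$ and using that $(\Diamond a)'\wedge(\Diamond a\vee\Diamond b) \le \Diamond b$ absorbs appropriately (this is exactly the kind of identity that holds in any OML, provable via orthomodularity or Foulis--Holland with the commuting pairs among $\{\Diamond a,\Diamond b\}$). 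The $\square$-coordinate is handled identically.

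For $(2)\Rightarrow(1)$: given $s,t\in I$ with $\Diamond a\vee s = \Diamond b\vee s$, I want $(\Diamond a\vee\Diamond b)\wedge(a^\sim\vee b^\sim)\in I$. Note $a^\sim = (\Diamond a)^\sim$ and $b^\sim=(\Diamond b)^\sim$ since $x^{\sim\sim\sim}=x^\sim$, so this element is $(\Diamond a\vee\Diamond b)\wedge((\Diamond a)'\vee(\Diamond b)')$ in $S_K(\mathbf{L})$, i.e. the ``symmetric difference'' of the sharp elements $\Diamond a$ and $\Diamond b$. From $\Diamond a\vee s=\Diamond b\vee s$ one gets $\Diamond a\le\Diamond b\vee s$ and $\Diamond b\le\Diamond a\vee s$; meeting the first with $(\Diamond b)'$ and using Foulis--Holland (the element $(\Diamond b)'$ commutes with $\Diamond b$; and we may take $s$ to be sharp after replacing it by $\Diamond s\in I$, which commutes with everything we need after another Foulis--Holland step, or more directly: $\Diamond a\wedge(\Diamond b)' \le s$) shows $\Diamond a\wedge(\Diamond b)'\in I$; symmetrically $\Diamond b\wedge(\Diamond a)'\in I$; their join dominates $(\Diamond a\vee\Diamond b)\wedge((\Diamond a)'\vee(\Diamond b)')$ by a routine OML identity, so it lies in $I$. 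The $\square$-coordinate gives the second conjunct of (1) from $t$.

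For $(1)\Rightarrow(3)$: writing $u = (\Diamond a\vee\Diamond b)\wedge(a^\sim\vee b^\sim)\in I$, I must recover each of the four terms $(\Diamond a)^\sim\Cap\Diamond b$, etc., as lying in $I$. Since $I$ is a lattice ideal it suffices to show each such term is $\le u$ (for the $\Diamond$-pair) or $\le$ the second element of (1) (for the $\square$-pair). Now $(\Diamond a)^\sim\Cap\Diamond b = (\Diamond a)'\wedge(\Diamond a\vee\Diamond b) \le (\Diamond a\vee\Diamond b)\wedge((\Diamond a)'\vee(\Diamond b)') = u$ since $(\Diamond a)'\le(\Diamond a)'\vee(\Diamond b)'$; the other three are analogous. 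This direction is the easy one. The main obstacle is the $(2)\Rightarrow(1)$ step --- specifically, justifying the reduction from an arbitrary witness $s\in I$ (not necessarily sharp) to a sharp witness and cleanly extracting $\Diamond a\wedge(\Diamond b)'\in I$; the safe route is to first apply $\Diamond$ to the hypothesis, or to observe directly that $\Diamond a\wedge(\Diamond b)' \le (\Diamond b\vee s)\wedge(\Diamond b)' = ((\Diamond b)'\wedge\Diamond b)\vee((\Diamond b)'\wedge s)$ is not quite an OML step unless $s$ commutes with $\Diamond b$, so instead I would use $\Diamond a\wedge(\Diamond b)' \le (\Diamond b\vee\Diamond s)\wedge(\Diamond b)'$ and then Foulis--Holland in $\mathbf{S}_K(\mathbf{L})$ with the pair $\Diamond b,(\Diamond b)'$ both commuting with the sharp element $\Diamond s$, yielding $\le\Diamond s\in I$. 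Everything else is routine orthomodular arithmetic plus closure properties of lattice ideals.
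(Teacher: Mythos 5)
Your $(1)\Rightarrow(3)$ is correct (and even simpler than the paper's corresponding step, since it needs only monotonicity), and your $(3)\Rightarrow(2)$ does go through: with your $s=((\Diamond a)^{\sim}\Cap\Diamond b)\vee((\Diamond b)^{\sim}\Cap\Diamond a)$ one gets $\Diamond a\vee s=\Diamond a\vee\Diamond b=\Diamond b\vee s$ directly from the orthomodular law $x\leq y\Rightarrow y=x\vee(x^{\prime}\wedge y)$ applied to $x=\Diamond a$, $y=\Diamond a\vee\Diamond b$ --- although your parenthetical claim $(\Diamond a)^{\prime}\wedge(\Diamond a\vee\Diamond b)\leq\Diamond b$ is false (in $\mathbf{MO}_{2}$ with distinct atoms $p,q$ it reads $p^{\prime}\leq q$) and is not what the absorption actually rests on. The genuine gap is in $(2)\Rightarrow(1)$, where you twice reason as if sharp elements commuted. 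First, you bound $\Diamond a\wedge(\Diamond b)^{\prime}$ by $(\Diamond b\vee\Diamond s)\wedge(\Diamond b)^{\prime}$ and claim this is $\leq\Diamond s$ by Foulis--Holland because $\Diamond b$ and $(\Diamond b)^{\prime}$ "commute with the sharp element $\Diamond s$". Sharpness gives no commutation: in $\mathbf{MO}_{2}$ all elements are sharp and $(q\vee p)\wedge q^{\prime}=q^{\prime}\not\leq p$. What actually rescues this element is that $(\Diamond b\vee\Diamond s)\wedge b^{\sim}$ \emph{is} $(\Diamond b)^{\sim}\Cap\Diamond s$, which lies in $I$ by the defining closure property of a p-ideal (Definition \ref{cagetti} applied to $s\in I$ with $b^{\sim}$ in the role of the free variable); note that your Foulis--Holland justification would make the p-ideal hypothesis redundant here, which is a warning sign. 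Second, and fatally, your concluding "routine OML identity" $(\Diamond a\vee\Diamond b)\wedge((\Diamond a)^{\prime}\vee(\Diamond b)^{\prime})\leq(\Diamond a\wedge(\Diamond b)^{\prime})\vee(\Diamond b\wedge(\Diamond a)^{\prime})$ is false: in $\mathbf{MO}_{2}$ the left side is $1$ and the right side is $0$.

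The correct decomposition of the "symmetric difference" is the Foulis--Holland identity $(\Diamond a\vee\Diamond b)\wedge(a^{\sim}\vee b^{\sim})=(a^{\sim}\wedge(\Diamond a\vee\Diamond b))\vee(b^{\sim}\wedge(\Diamond a\vee\Diamond b))$, which is legitimate because $\Diamond a\vee\Diamond b$ is comparable with $\Diamond a$ and $\Diamond b$ and hence commutes with $a^{\sim}$ and $b^{\sim}$; its two joinands are precisely the $\Cap$-terms $(\Diamond a)^{\sim}\Cap\Diamond b$ and $(\Diamond b)^{\sim}\Cap\Diamond a$ of condition (3), not your naive differences $\Diamond a\wedge(\Diamond b)^{\prime}$ and $\Diamond b\wedge(\Diamond a)^{\prime}$. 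So the workable route from (2) is to establish (3) first, by bounding each $\Cap$-term as in $(\Diamond b)^{\sim}\Cap\Diamond a=b^{\sim}\wedge(\Diamond a\vee\Diamond b)\leq b^{\sim}\wedge(\Diamond b\vee\Diamond s)=(\Diamond b)^{\sim}\Cap\Diamond s\in I$ (using $\Diamond a\vee s=\Diamond b\vee s$ and the p-ideal property), and only then pass to (1) via the displayed Foulis--Holland identity. This is exactly the paper's proof, which runs the cycle in the opposite direction $(1)\Rightarrow(2)\Rightarrow(3)\Rightarrow(1)$; your cycle orientation is viable, but the $(2)\Rightarrow(1)$ leg must be routed through (3) rather than through the one-sided differences.
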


\begin{proof}
(i) implies (ii). Let $s=\left(  \Diamond a\vee\Diamond b\right)
\wedge\left(  a^{\sim}\vee b^{\sim}\right)  $ and $t=\left(  \left(  \square
a\right)  ^{\sim}\vee\left(  \square b\right)  ^{\sim}\right)  \wedge\left(
\square a\vee\square b\right)  $. Then:%
\[%
\begin{array}
[c]{lll}%
\Diamond a\vee s & =\Diamond a\vee\left(  \left(  \Diamond a\vee\Diamond
b\right)  \wedge\left(  a^{\sim}\vee b^{\sim}\right)  \right)  & \\
& =\Diamond a\vee\Diamond b & \text{(Thm. \ref{fuli} in }\mathbf{S}_{K}\left(
\mathbf{L}\right)  \text{)}\\
& =\Diamond b\vee s. &
\end{array}
\]

Similarly, $\square a\vee t=\square b\vee t$.

(ii) implies (iii). Suppose that there exist $s,t\in I$ such that $\Diamond
a\vee s=\Diamond b\vee s$ and $\square a\vee t=\square b\vee t$. First, we
prove $\left(  \Diamond b\right)  ^{\sim}\Cap\Diamond a\in I$.
\[%
\begin{array}
[c]{ll}%
\left(  \Diamond b\right)  ^{\sim}\Cap\Diamond a & =b^{\sim}\wedge\left(
\Diamond b\vee\Diamond a\right) \\
& \leq b^{\sim}\wedge\left(  \Diamond s\vee\Diamond b\vee\Diamond a\right) \\
& =b^{\sim}\wedge\left(  \Diamond\left(  s\vee\Diamond a\right)  \vee\Diamond
b\right) \\
& =b^{\sim}\wedge\left(  \Diamond\left(  s\vee\Diamond b\right)  \vee\Diamond
b\right) \\
& =b^{\sim}\wedge\left(  \Diamond b\vee\Diamond s\right) \\
& =\left(  \Diamond b\right)  ^{\sim}\Cap\Diamond s\text{.}%
\end{array}
\]

Since $I$ is a p-ideal of $\mathbf{L}$ and $s\in I$, we have that $\left(
\Diamond b\right)  ^{\sim}\Cap\Diamond s\in I$ and therefore $\left(  \Diamond
b\right)  ^{\sim}\Cap\Diamond a\in I$. The remaining conditions are proved
similarly and thus $\left(  a,b\right)  \in\rho\left(  I\right)  $.

(iii) implies (i). Suppose $\left(  a,b\right)  \in\rho\left(  I\right)  $; we
prove that $\left(  \Diamond a\vee\Diamond b\right)  \wedge\left(  a^{\sim
}\vee b^{\sim}\right)  \in I$. By assumption,
\[
\left(  a^{\sim}\wedge\left(  \Diamond a\vee\Diamond b\right)  \right)
\vee\left(  b^{\sim}\wedge\left(  \Diamond b\vee\Diamond a\right)  \right)
=\left(  \left(  \Diamond a\right)  ^{\sim}\Cap\Diamond b\right)  \vee\left(
\left(  \Diamond b\right)  ^{\sim}\Cap\Diamond a\right)  \in I.
\]

By Theorem \ref{fuli} applied to $\mathbf{S}_{K}\left(  \mathbf{L}\right)  $,
however,%
\[
\left(  a^{\sim}\wedge\left(  \Diamond a\vee\Diamond b\right)  \right)
\vee\left(  b^{\sim}\wedge\left(  \Diamond b\vee\Diamond a\right)  \right)
=\left(  \Diamond a\vee\Diamond b\right)  \wedge\left(  a^{\sim}\vee b^{\sim
}\right)  ,
\]
whence our claim follows. A similar proof establishes the other claim.
\end{proof}

\begin{theorem}
\label{nillapenna} Let $\mathbf{L}$ be a PBZ$^{\ast}$ --lattice, and let $I$
be a p-ideal of $\mathbf{L}$. Then $\rho\left(  I\right)  $ is an equivalence
relation on $L$ that preserves the operations $^{\prime}$ and $^{\sim}$.
\end{theorem}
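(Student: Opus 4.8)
The plan is to work entirely through the two reformulations of $\rho(I)$ supplied by Theorem~\ref{balanzone}: the ``join'' form~(ii) for transitivity, and the ``difference'' form~(i) for compatibility with the unary operations, since all four conjuncts of~(i) involve only the modal images $\Diamond(-)$ and $\square(-)$ of the elements in question. Reflexivity is immediate from~(ii): take $s=t=0\in I$ (which lies in $I$ because a p-ideal is in particular a lattice ideal), so $\Diamond a\vee 0=\Diamond a\vee 0$ and $\square a\vee 0=\square a\vee 0$. Symmetry is visible directly from the defining clause of $\rho(I)$, since swapping $a$ and $b$ merely permutes its four $\Cap$-conjuncts among themselves. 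For transitivity, given $(a,b),(b,c)\in\rho(I)$ I would choose, by~(ii), witnesses $s_{1},s_{2},t_{1},t_{2}\in I$ with $\Diamond a\vee s_{1}=\Diamond b\vee s_{1}$, $\Diamond b\vee s_{2}=\Diamond c\vee s_{2}$, $\square a\vee t_{1}=\square b\vee t_{1}$, $\square b\vee t_{2}=\square c\vee t_{2}$; setting $s:=s_{1}\vee s_{2}\in I$ and $t:=t_{1}\vee t_{2}\in I$ (using that $I$ is closed under finite joins) and joining both sides of each $\Diamond$-equality with the complementary witness gives $\Diamond a\vee s=\Diamond b\vee s=\Diamond c\vee s$, and likewise $\square a\vee t=\square b\vee t=\square c\vee t$, so $(a,c)\in\rho(I)$ by~(ii) again.

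For compatibility with $^{\prime}$ I would simply unwind the modal terms in Theorem~\ref{balanzone}.(i) using $\Diamond x=x^{\sim\sim}$, $\square x=x^{\prime\sim}$ and $x^{\prime\prime}=x$: the two memberships asserting $(a^{\prime},b^{\prime})\in\rho(I)$ become, after this substitution, $\bigl(a^{\prime\sim\sim}\vee b^{\prime\sim\sim}\bigr)\wedge\bigl(a^{\prime\sim}\vee b^{\prime\sim}\bigr)\in I$ and $\bigl(a^{\sim\sim}\vee b^{\sim\sim}\bigr)\wedge\bigl(a^{\sim}\vee b^{\sim}\bigr)\in I$, which are literally the second, respectively the first, of the two memberships characterising $(a,b)\in\rho(I)$; hence $(a,b)\in\rho(I)$ iff $(a^{\prime},b^{\prime})\in\rho(I)$. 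For compatibility with $^{\sim}$ the same unwinding, now also invoking Lemma~\ref{basics}.(i) ($a^{\sim\sim\sim}=a^{\sim}$) together with the BZ-axiom $a^{\sim\prime}=a^{\sim\sim}$ to simplify $\Diamond(a^{\sim})$ and $\square(a^{\sim})$, collapses both memberships characterising $(a^{\sim},b^{\sim})\in\rho(I)$ to $\bigl(a^{\sim\sim}\vee b^{\sim\sim}\bigr)\wedge\bigl(a^{\sim}\vee b^{\sim}\bigr)\in I$, i.e.\ to the first membership characterising $(a,b)\in\rho(I)$; so $(a,b)\in\rho(I)$ implies $(a^{\sim},b^{\sim})\in\rho(I)$.

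There is no deep obstacle here — the entire argument is arithmetic in the underlying BZ-lattice together with the bookkeeping of joining ideal elements — but the one place that demands care is precisely that bookkeeping: one must keep straight which of $^{\prime}$ and $^{\sim}$ is being composed with which, and recall that $\Diamond$ and $\square$ are interchanged (up to a complement) under $^{\prime}$ and become idempotent on $^{\sim}$-images, since a single miscomposition ruins the ``the clauses defining $(a^{\prime},b^{\prime})\in\rho(I)$ are a reshuffle of the clauses defining $(a,b)\in\rho(I)$'' matching that drives the proof. It is also worth flagging in the write-up that the statement claims only an equivalence relation compatible with the two unary operations and deliberately says nothing about $\wedge$ and $\vee$: $\rho(I)$ need not be a lattice congruence for a general p-ideal, which is exactly why no clause about the binary operations appears here and none should be attempted.
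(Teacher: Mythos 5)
Your proof is correct and follows essentially the same route as the paper: reflexivity and symmetry read off the definition, transitivity via the witness form of Theorem~\ref{balanzone}.(ii) by joining the two witnesses. The only difference is that you spell out the preservation of $^{\prime}$ and $^{\sim}$ (correctly, via the identities $\square(x^{\prime})=x^{\sim}$, $(\square x)^{\sim}=\Diamond(x^{\prime})$ and $\Diamond(x^{\sim})=\square(x^{\sim})=x^{\sim}$), a step the paper dismisses as clear.
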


\begin{proof}
Since, for all $a\in L$, $\left(  \Diamond a\right)  ^{\sim}\Cap\Diamond
a,\left(  \square a\right)  ^{\sim}\Cap\square a=0\in I$, $\rho\left(
I\right)  $ is reflexive. Symmetry is trivial. For transitivity, suppose
$\left(  a,b\right)  ,\left(  b,c\right)  \in\rho\left(  I\right)  $. By
Theorem \ref{balanzone}, there exist:

\begin{itemize}
\item $s_{1},t_{1}\in I$ such that $\Diamond a\vee s_{1}=\Diamond b\vee s_{1}$
and $\square a\vee t_{1}=\square b\vee t_{1}$;

\item $s_{2},t_{2}\in I$ such that $\Diamond b\vee s_{2}=\Diamond c\vee s_{2}$
and $\square b\vee t_{2}=\square c\vee t_{2}$.
\end{itemize}

Thus $s_{1}\vee s_{2}\in I$ and $\Diamond a\vee s_{1}\vee s_{2}=\Diamond b\vee
s_{1}\vee s_{2}=\Diamond c\vee s_{1}\vee s_{2}$, and similarly for the other
condition, whence by Theorem \ref{balanzone} again, $\left(  a,c\right)
\in\rho\left(  I\right)  $. The unary operations are clearly preserved.
\end{proof}

Although $\rho\left(  I\right)  $ is always an equivalence relation, it need
not always be a congruence, as the next example shows.

\begin{example}
\label{cogotti}Consider the distributive antiortholattice whose lattice reduct
is the ordinal sum of $\mathbf{D}_{2}^{2}$ with itself, with atoms $a,b$ and
the fixpoint $c=c^{\prime}$. Observe that $\left(  a,c\right)  \in\rho\left(
\{0\}\right)  $, because $\Diamond a=\Diamond c=1$ and $\square a=\square
c=0$. However, $\Diamond\left(  a\wedge b\right)  =\Diamond0=0$ and
$\Diamond\left(  c\wedge b\right)  =\Diamond b=1$, whence $\rho\left(
\{0\}\right)  $ does not preserve meets.
\end{example}

Our next goal is to tweak the notion of p-ideal in such a way that its
associated equivalence is necessarily a congruence.

\begin{definition}
Let $\mathbf{L}$ be a PBZ$^{\ast}$ --lattice, and let $I$ be a p-ideal of
$\mathbf{L}$. $I$ is a \emph{weak De Morgan ideal} iff for all $a,b\in L$,
whenever $\left(  a,b\right)  \in\rho\left(  I\right)  $, then for all $c\in
L$ it is the case that $\Diamond\left(  a\wedge c\right)  ^{\sim}\Cap
\Diamond\left(  b\wedge c\right)  \in I$.
\end{definition}

\begin{lemma}
\label{miriamagus}Let $\mathbf{L}$ be a PBZ$^{\ast}$ --lattice, and let $I$ be
a p-ideal of $\mathbf{L}$. The following conditions are equivalent:

\begin{enumerate}
\item $\rho\left(  I\right)  $ is a congruence;

\item $I$ is a weak De Morgan ideal.
\end{enumerate}
\end{lemma}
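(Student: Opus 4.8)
The equivalence to be established is: $\rho(I)$ is a congruence if and only if $I$ is a weak De Morgan ideal. The direction (1) $\Rightarrow$ (2) is essentially a specialisation: assuming $\rho(I)$ is a congruence and $(a,b)\in\rho(I)$, we get $(a\wedge c, b\wedge c)\in\rho(I)$ for every $c$, and then reading off the defining clauses of $\rho(I)$ for the pair $(a\wedge c,b\wedge c)$ — in particular the clause $\bigl(\Diamond(a\wedge c)\bigr)^{\sim}\Cap\Diamond(b\wedge c)\in I$ — gives exactly the weak De Morgan condition. So the first paragraph of the proof just unpacks definitions.

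For (2) $\Rightarrow$ (1), the work is to upgrade Theorem~\ref{nillapenna} (which already gives that $\rho(I)$ is an equivalence preserving the unary operations) to full compatibility with $\wedge$ and $\vee$. By symmetry of $\rho(I)$ and the standard fact that a reflexive symmetric transitive relation preserving all operations is a congruence, it suffices to show: if $(a,b)\in\rho(I)$ then $(a\wedge c, b\wedge c)\in\rho(I)$ and $(a\vee c, b\vee c)\in\rho(I)$ for all $c\in L$. The plan is to verify the $\wedge$ case by checking the four membership conditions defining $\rho(I)$ for the pair $(a\wedge c, b\wedge c)$, i.e. that $\bigl(\Diamond(a\wedge c)\bigr)^{\sim}\Cap\Diamond(b\wedge c)$, the symmetric version with $a,b$ swapped, and the two $\square$-analogues, all lie in $I$. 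The first of these is literally the weak De Morgan hypothesis applied to $(a,b)$ and $c$; the second is the weak De Morgan hypothesis applied to $(b,a)$ and $c$ (legitimate since $\rho(I)$ is symmetric). For the $\square$-clauses, I would pass to $a^{\prime}, b^{\prime}$: since $\rho(I)$ preserves $^{\prime}$ we have $(a^{\prime},b^{\prime})\in\rho(I)$, and $\square x = x^{\prime\sim} = \Diamond(x^{\prime})$, so $\square(a\wedge c)$ relates to $\Diamond$ of something — more precisely $\bigl(\square(a\wedge c)\bigr)^{\sim}\Cap\square(b\wedge c)$ should be rewritten, using $\square(a\wedge c) = \Box a\wedge\Box c$ (Lemma~\ref{basics}(vi)) and $\square x = \Diamond(x^{\prime})$, into an instance of the weak De Morgan condition for the pair $(a^{\prime}\vee c^{\prime}, b^{\prime}\vee c^{\prime})$ — which relates because $\rho(I)$ preserves $^{\prime}$ and (once the $\vee$ case is done, or by a direct De Morgan manipulation) respects joins on the $^{\prime}$-images. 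Once $\wedge$ is handled, the $\vee$ case follows by applying $^{\prime}$: $(a\vee c,b\vee c)\in\rho(I)$ iff $(a^{\prime}\wedge c^{\prime}, b^{\prime}\wedge c^{\prime})\in\rho(I)$, which is the $\wedge$ case applied to the $^{\prime}$-images.

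Throughout, the computations hinge on the same two tools used in Theorems~\ref{balanzone} and~\ref{nillapenna}: the alternative descriptions of $\rho(I)$ (the "$\exists s,t\in I$" form, and the "single join-meet element in $I$" form), and the Foulis--Holland theorem applied inside the orthomodular subalgebra $\mathbf{S}_K(\mathbf{L})$, which one invokes to collapse expressions like $\bigl(\Diamond a\vee\Diamond b\bigr)\wedge\bigl(a^{\sim}\vee b^{\sim}\bigr)$ — note $a^{\sim},\Diamond a\in S_K(\mathbf{L})$ always — into more tractable shapes. I expect the translation of the $\square$-clauses into weak De Morgan instances, and the bookkeeping needed to show that $\rho(I)$ actually respects $\vee$ on the $^{\prime}$-images (so that the $\square$-argument closes without circularity), to be the fiddly part; but it is the kind of manipulation that Theorem~\ref{balanzone} is tailored to absorb, and no genuinely new idea beyond "apply the weak De Morgan hypothesis to the right pair, then clean up with Foulis--Holland in $\mathbf{S}_K(\mathbf{L})$" should be required.
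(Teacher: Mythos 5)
Your direction (1) $\Rightarrow$ (2) and the first two of the four clauses in (2) $\Rightarrow$ (1) (the $\Diamond$-clauses for $(a\wedge c,b\wedge c)$, obtained by applying the weak De Morgan hypothesis to $(a,b)$ and to $(b,a)$) are correct and coincide with the paper's argument, as does the reduction of the $\vee$-case to the $\wedge$-case via $^{\prime}$. The gap is exactly where you flag "the fiddly part": the two $\square$-clauses. First, the identity you lean on is false: $\square x=x^{\prime\sim}$ while $\Diamond(x^{\prime})=x^{\prime\sim\sim}=(\square x)^{\sim}$, so $\square x\neq\Diamond(x^{\prime})$ in general. Second, and more seriously, routing the argument through the pair $(a^{\prime}\vee c^{\prime},b^{\prime}\vee c^{\prime})$ is circular: to invoke the weak De Morgan hypothesis for a pair you must first know that pair lies in $\rho(I)$, and membership of $(a^{\prime}\vee c^{\prime},b^{\prime}\vee c^{\prime})$ is precisely an instance of the join-compatibility you are in the middle of proving (its $\Diamond$-half is automatic from Lemma~\ref{basics}.(vii) and Theorem~\ref{balanzone}, but its $\square$-half is not). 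Moreover the weak De Morgan condition only produces elements of the shape $\left(\Diamond(x\wedge z)\right)^{\sim}\Cap\Diamond(y\wedge z)$, i.e.\ it speaks about meets, so a pair of joins does not plug into it in any direct way. You leave this unresolved, merely expecting the bookkeeping to close.

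The non-circular route, which is what the paper does, is to apply the weak De Morgan hypothesis to the pair $(\square a,\square b)$ and the element $\square c$. This pair is already known to lie in $\rho(I)$ by Theorem~\ref{nillapenna}, since $\square$ is the composite of the two unary operations, so no compatibility with $\wedge$ or $\vee$ is presupposed. Then, using $\square(x\wedge y)=\square x\wedge\square y$ (Lemma~\ref{basics}.(vi)) and the fact that $\square x\wedge\square c=(x^{\prime}\vee c^{\prime})^{\sim}$ lies in the range of $^{\sim}$, hence equals its own $\Diamond$, one computes
\[
\left(\square(a\wedge c)\right)^{\sim}\Cap\square(b\wedge c)=\left(\Diamond(\square a\wedge\square c)\right)^{\sim}\Cap\Diamond(\square b\wedge\square c)\in I,
\]
the membership being exactly the weak De Morgan condition for $(\square a,\square b)$ and $\square c$. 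Without this (or an equivalent) device, your sketch does not establish the $\square$-clauses, so as it stands the proof of (2) $\Rightarrow$ (1) is incomplete.
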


\begin{proof}
(i) implies (ii). Suppose that $\rho\left(  I\right)  $ is a congruence and
let $\left(  a,b\right)  \in\rho\left(  I\right)  $, $c\in L$. Then $\left(
\Diamond\left(  a\wedge c\right)  ,\Diamond\left(  b\wedge c\right)  \right)
\in\rho\left(  I\right)  $. It follows that $(\Diamond(a\wedge c)^{\sim}%
\Cap\Diamond(b\wedge c),0)\in\rho(I)$, which implies $\Diamond\left(  a\wedge
c\right)  ^{\sim}\Cap\Diamond\left(  b\wedge c\right)  \in I$.

(ii) implies (i). Let $I$ be a weak De Morgan ideal. By Theorem
\ref{nillapenna}, to attain our conclusion it will suffice to show that
$\rho\left(  I\right)  $ preserves meets. Thus, let $\left(  a,b\right)
\in\rho\left(  I\right)  $ and $c\in L$. In virtue of our assumption,
$\Diamond\left(  a\wedge c\right)  ^{\sim}\Cap\Diamond\left(  b\wedge
c\right)  \in I$ and, taking into account the symmetry of $\rho\left(
I\right)  $, $\Diamond\left(  b\wedge c\right)  ^{\sim}\Cap\Diamond\left(
a\wedge c\right)  \in I$. It remains to show that%
\[
\left(  \square\left(  a\wedge c\right)  \right)  ^{\sim}\Cap\square\left(
b\wedge c\right)  ,\left(  \square\left(  b\wedge c\right)  \right)  ^{\sim
}\Cap\square\left(  a\wedge c\right)  \in I.
\]

However, since $\left(  a,b\right)  \in\rho\left(  I\right)  $ and
$\rho\left(  I\right)  $ preserves the unary operations, $(\square a,\square
b)\in\rho\left(  I\right)  $. Given that $I$ is a weak De Morgan ideal, thus,%
\[%
\begin{array}
[c]{ll}%
\left(  \square\left(  a\wedge c\right)  \right)  ^{\sim}\Cap\square\left(
b\wedge c\right)   & =\Diamond\left(  a^{\prime}\vee c^{\prime}\right)
\Cap\left(  b^{\prime}\vee c^{\prime}\right)  ^{\sim}\\
& =\left(  \square a\wedge\square c\right)  ^{\sim}\Cap\Diamond\left(  \square
b\wedge\square c\right)  \\
& =\Diamond\left(  \square a\wedge\square c\right)  ^{\sim}\Cap\Diamond\left(
\square b\wedge\square c\right)  \in I\text{.}%
\end{array}
\]

Similarly, $\left(  \square\left(  b\wedge c\right)  \right)  ^{\sim}%
\Cap\square\left(  a\wedge c\right)  \in I$.
\end{proof}

\subsection{Ideals in the Strong De Morgan Subvariety}

The subvariety of $\mathbb{PBZL}^{\mathbb{\ast}}$ that is axiomatised relative
to $\mathbb{PBZL}^{\mathbb{\ast}}$ by the Strong De Morgan law SDM, here
labelled $\mathbb{SDM}$, includes $\mathbb{OML}$ and stands out for its smooth
theory of ideals. In fact, we have that:

\begin{lemma}
\label{stellaconte}Let $\mathbf{L}\in\mathbb{SDM}$, and let $I$ be a p-ideal
of $\mathbf{L}$. Then $I$ is a weak De Morgan ideal and therefore $\rho\left(
I\right)  $ is a congruence.
\end{lemma}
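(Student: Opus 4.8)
The plan is to reduce the statement, via Lemma \ref{miriamagus}, to showing that every p-ideal $I$ of an $\mathbf{L}\in\mathbb{SDM}$ is a weak De Morgan ideal; that is, whenever $(a,b)\in\rho(I)$ and $c\in L$, then $\Diamond(a\wedge c)^{\sim}\Cap\Diamond(b\wedge c)\in I$. I would unpack the target term first: by definition $\Diamond(a\wedge c)^{\sim}\Cap\Diamond(b\wedge c)=(\Diamond(a\wedge c))^{\sim}\wedge\bigl((\Diamond(a\wedge c))^{\sim\prime}\vee\Diamond(b\wedge c)\bigr)$, which since $\Diamond(a\wedge c)$ is $\Diamond$-sharp simplifies to $(a\wedge c)^{\sim}\wedge\bigl(\Diamond(a\wedge c)\vee\Diamond(b\wedge c)\bigr)$. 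The key leverage is SDM applied inside $\mathbf{S}_K(\mathbf{L})$: SDM gives $\Diamond(a\wedge c)=\square(a\wedge c)^{\prime}=\dots$, and more directly, applying $^{\sim}$ to $a\wedge c$ and using SDM plus Lemma \ref{basics} lets me rewrite $\Diamond(a\wedge c)=\Diamond a\wedge\Diamond c$ modulo sharpness (recalling Lemma \ref{basics}(viii) gives one inequality always, and SDM should upgrade the relevant direction in the Brouwer image). So the whole target term becomes something like $(a^{\sim}\vee c^{\sim})\wedge(\Diamond a\wedge\Diamond c\ \vee\ \Diamond b\wedge\Diamond c)$, an element of the orthomodular lattice $\mathbf{S}_K(\mathbf{L})$.

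Next I would use the hypothesis $(a,b)\in\rho(I)$. By Theorem \ref{balanzone}, $(a,b)\in\rho(I)$ is equivalent to $(\Diamond a\vee\Diamond b)\wedge(a^{\sim}\vee b^{\sim})\in I$ and the dual condition with $\square$; equivalently there are $s,t\in I$ with $\Diamond a\vee s=\Diamond b\vee s$ and $\square a\vee t=\square b\vee t$. Working in $\mathbf{S}_K(\mathbf{L})$, which is orthomodular, I would meet both sides of $\Diamond a\vee s=\Diamond b\vee s$ with $\Diamond c$ and manipulate; the goal is to show $\Diamond(a\wedge c)^{\sim}\Cap\Diamond(b\wedge c)$ is dominated by an element of the form $(\Diamond b)^{\sim}\Cap\Diamond s'$ for some $s'\in I$ built from $s$ (and $c$), exactly as in the proof of "(ii) implies (iii)" in Theorem \ref{balanzone}. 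Since $I$ is a p-ideal, $(\Diamond b)^{\sim}\Cap\Diamond s'\in I$, and downward closure of the lattice ideal $I$ then delivers the target. The Foulis–Holland theorem (Theorem \ref{fuli}) applied in $\mathbf{S}_K(\mathbf{L})$ will be the tool that lets me commute the relevant meets and joins during these manipulations, since $\Diamond c$, $\Diamond a$, $\Diamond b$, $a^{\sim}$, $b^{\sim}$ need not pairwise commute but the needed triples will, once SDM is in force.

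The main obstacle I anticipate is bookkeeping the interaction between $^{\sim}$ and $\wedge$ on non-sharp elements: $\Diamond(a\wedge c)$ is genuinely smaller than $\Diamond a\wedge\Diamond c$ in general (Lemma \ref{basics}(viii)), and it is precisely SDM that is supposed to repair this — but SDM is stated for $^{\sim}$, not directly for $\Diamond$, so I will need to chase $\Diamond(a\wedge c)=(a\wedge c)^{\sim\sim}$ through SDM twice, or equivalently use that SDM forces $(a\wedge c)^{\sim}=a^{\sim}\vee c^{\sim}$ and hence $\Diamond(a\wedge c)=(a^{\sim}\vee c^{\sim})^{\sim}=\Diamond a\wedge\Diamond c$ by Lemma \ref{basics}(iii). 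Once that identity is in hand the rest is orthomodular-lattice algebra of the same flavour as Theorem \ref{balanzone}'s proof, so I expect no further conceptual difficulty — only the care needed to route every meet/join through a Foulis–Holland-distributive triple and to keep the "error terms" inside $I$.
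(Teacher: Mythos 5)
Your reduction to ``every p-ideal is weak De Morgan'' via Lemma \ref{miriamagus} is the right frame, your unpacking of $\Diamond(a\wedge c)^{\sim}\Cap\Diamond(b\wedge c)$ is correct, and the observation that SDM yields $(a\wedge c)^{\sim}=a^{\sim}\vee c^{\sim}$ and hence $\Diamond(a\wedge c)=\Diamond a\wedge\Diamond c$ is sound. But the core computational step of your plan has a genuine gap: you propose to meet both sides of $\Diamond a\vee s=\Diamond b\vee s$ with $\Diamond c$ and then ``route every meet/join through a Foulis--Holland-distributive triple'', asserting that ``the needed triples will [commute], once SDM is in force''. That assertion is false: SDM imposes no commutation whatsoever on $\mathbf{S}_K(\mathbf{L})$ --- every orthomodular lattice satisfies SDM, and its elements can fail to commute badly. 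So Foulis--Holland is not available for the triples $\{\Diamond c,\Diamond a,s\}$, $\{\Diamond c,\Diamond b,s\}$ without verifying commutation, and in a general orthomodular lattice $(\Diamond a\vee s)\wedge\Diamond c$ does \emph{not} equal $(\Diamond a\wedge\Diamond c)\vee(s\wedge\Diamond c)$; consequently you do not obtain any $s'\in I$ with $\Diamond(a\wedge c)\vee s'=\Diamond(b\wedge c)\vee s'$, which is what your imitation of the ``(ii) implies (iii)'' argument of Theorem \ref{balanzone} would require. The step where the error terms are supposed to land in $I$ is precisely the step that is not carried out, and the mechanism you name for carrying it out does not work.

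The repair is to join rather than meet, which is how the paper proceeds. From $(a,b)\in\rho(I)$, Theorem \ref{nillapenna} gives $(a^{\sim},b^{\sim})\in\rho(I)$, and Theorem \ref{balanzone} (together with $\Diamond(x^{\sim})=\square(x^{\sim})=x^{\sim}$) produces a single $s\in I$ with $a^{\sim}\vee s=b^{\sim}\vee s$. Joining both sides with $c^{\sim}$ is unconditionally valid, and SDM then converts $a^{\sim}\vee c^{\sim}\vee s=b^{\sim}\vee c^{\sim}\vee s$ into $(a\wedge c)^{\sim}\vee s=(b\wedge c)^{\sim}\vee s$. A second application of Theorem \ref{balanzone} gives $\left((a\wedge c)^{\sim},(b\wedge c)^{\sim}\right)\in\rho(I)$, hence $\left(\Diamond(a\wedge c),\Diamond(b\wedge c)\right)\in\rho(I)$, whose definition already contains the membership $\Diamond(a\wedge c)^{\sim}\Cap\Diamond(b\wedge c)\in I$ that you are after --- no Foulis--Holland computation and no commutation hypotheses are needed anywhere.
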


\begin{proof}
If $\left(  a,b\right)  \in\rho\left(  I\right)  $, then $\left(  a^{\sim
},b^{\sim}\right)  \in\rho\left(  I\right)  $ by Theorem \ref{nillapenna}.
Thus, Theorem \ref{balanzone} guarantees that there is $s\in I$ such that
$a^{\sim}\vee s=\Diamond a^{\sim}\vee s=\Diamond b^{\sim}\vee s=b^{\sim}\vee
s$. Then, for an arbitrary $c\in L$, $a^{\sim}\vee c^{\sim}\vee s=b^{\sim}\vee
c^{\sim}\vee s$. Applying SDM, we have that $\square\left(  \left(  a\wedge
c\right)  ^{\sim}\right)  \vee s=\left(  a\wedge c\right)  ^{\sim}\vee
s=\left(  b\wedge c\right)  ^{\sim}\vee s=\square\left(  \left(  b\wedge
c\right)  ^{\sim}\right)  \vee s$. A further recourse to Theorem
\ref{balanzone} yields $\left(  \left(  a\wedge c\right)  ^{\sim},\left(
b\wedge c\right)  ^{\sim}\right)  \in\rho\left(  I\right)  $, whence $\left(
\Diamond\left(  a\wedge c\right)  ,\Diamond\left(  b\wedge c\right)  \right)
\in\rho\left(  I\right)  $, which implies, in particular, that $I$ is weak De
Morgan. Lemma \ref{miriamagus} takes care of the remaining claim.
\end{proof}

We are now in a position to prove that within the boundaries of this
subvariety, p-ideals coincide with ideals in the sense of Ursini.

\begin{theorem}
\label{cattanei}If $\mathbf{L}\in\mathbb{SDM}$, then the class of p-ideals of
$\mathbf{L}$ coincides with $\mathcal{I}_{\mathbb{SDM}}\left(  \mathbf{L}%
\right)  $.
\end{theorem}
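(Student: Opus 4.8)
The strategy is to establish the two inclusions separately. For one direction, given an Ursini $\mathbb{SDM}$-ideal $I$, one shows it is a p-ideal by exhibiting an appropriate $\mathbb{SDM}$-ideal term witnessing the closure under $\Diamond b \Cap \Diamond(-)$; for the other, given a p-ideal $I$, one shows it is closed under all $\mathbb{SDM}$-ideal terms, which by Theorem \ref{funziona}.(i) and the fact that $\mathbb{PBZL}^{\ast}$ is $0$-subtractive amounts to exhibiting $I$ as the $0$-class $0^{\mathbf{L}}/\rho(I)$ of a congruence. The key leverage for the latter is Lemma \ref{stellaconte}, which tells us that in $\mathbb{SDM}$ every p-ideal $I$ has $\rho(I)$ a congruence; so the plan is to verify that $0^{\mathbf{L}}/\rho(I)=I$, and then invoke Theorem \ref{funziona}.(i) to conclude $I\in\mathcal{I}_{\mathbb{SDM}}(\mathbf{L})$.

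\noindent\textbf{Key steps.} First I would recall that $\{0\}$ is a p-ideal and that, by Theorem \ref{funziona}.(i), the $\mathbb{SDM}$-ideals of $\mathbf{L}$ are exactly the $0$-classes of congruences. For the inclusion $\text{p-ideals}\subseteq\mathcal{I}_{\mathbb{SDM}}(\mathbf{L})$: let $I$ be a p-ideal; by Lemma \ref{stellaconte}, $\rho(I)$ is a congruence of $\mathbf{L}$. I would then show $0^{\mathbf{L}}/\rho(I)=I$. The inclusion $0^{\mathbf{L}}/\rho(I)\supseteq I$ is immediate: if $a\in I$, then since $I$ is a p-ideal, $\Diamond a\in I$ (Lemma \ref{nicotra}.(i)) and $\square a=(\Diamond a')'\leq\Diamond a$, also $a^{\sim}\leq a'$ and both $(\Diamond a)^{\sim}\Cap\Diamond 0=0$, $(\Diamond 0)^{\sim}\Cap\Diamond a=1^{\sim}\wedge\Diamond a=0$, etc.; checking the four defining conditions of $\rho(I)$ against $b=0$ using that $I$ is downward closed gives $(a,0)\in\rho(I)$. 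For the reverse inclusion, if $(a,0)\in\rho(I)$, then the defining conditions with $b=0$ give $(\Diamond a)^{\sim}\Cap 0=0$, $1^{\sim}\wedge\Diamond a=0$, but the useful ones are $\Diamond a=(\Diamond 0)^{\sim}\Cap\Diamond a$... actually more carefully: $(a,0)\in\rho(I)$ means (taking $b=0$ in the definition) that $(\square a)^{\sim}\Cap\square 0$, $\Diamond a\wedge((\Diamond 0)^{\sim})$, ... the relevant terms reduce to $\Diamond a\in I$ (since $(\Diamond 0)^{\sim}\Cap\Diamond a=1^{\sim}\vee\Diamond a$ — wait, $\Cap$ is $x\wedge(x'\vee y)$, so $(\Diamond 0)^{\sim}\Cap\Diamond a=(1^{\sim})\Cap\Diamond a=0\Cap\Diamond a=0\wedge(1\vee\Diamond a)=0$) — so instead use $(\Diamond a)^{\sim}\Cap\Diamond 0=a^{\sim}\wedge(\Diamond a\vee 0)=a^{\sim}\wedge\Diamond a=0$, which is vacuous. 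I would instead use Theorem \ref{balanzone}: $(a,0)\in\rho(I)$ iff there are $s,t\in I$ with $\Diamond a\vee s=\Diamond 0\vee s=s$ and $\square a\vee t=t$, i.e. $\Diamond a\leq s\in I$ and $\square a\leq t\in I$; since $I$ is downward closed, $\Diamond a\in I$. Then I would argue $a\in I$: but $a\leq\Diamond a$ (since $a\leq a^{\sim\sim}=\Diamond a$), so $a\in I$ by downward closure. Hence $0^{\mathbf{L}}/\rho(I)=I$, and $I$ is an $\mathbb{SDM}$-ideal.

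\noindent\textbf{The other inclusion and the main obstacle.} For $\mathcal{I}_{\mathbb{SDM}}(\mathbf{L})\subseteq\text{p-ideals}$: let $I=0^{\mathbf{L}}/\theta$ for some congruence $\theta$. I must show $I$ is a lattice ideal (immediate: $\theta$ restricts to a congruence of the bounded-lattice reduct, whose $0$-class is a lattice ideal) and that it is closed under $b\mapsto\Diamond b\Cap\Diamond a$ whenever $a\in I$. Since $a\,\theta\,0$, applying $\Diamond$ gives $\Diamond a\,\theta\,\Diamond 0=0$, and then $\Diamond b\wedge(b^{\sim}\vee\Diamond a)\,\theta\,\Diamond b\wedge(b^{\sim}\vee 0)=\Diamond b\wedge b^{\sim}=0$ (using $b^{\sim}\leq b'$ and $\Diamond b\wedge b^{\sim}\leq\Diamond b\wedge(\Diamond b)'$... more directly, $b^{\sim}=(\Diamond b)^{\sim}$ and $\Diamond b$ is $\Diamond$-sharp so $\Diamond b\wedge b^{\sim}=\Diamond b\wedge(\Diamond b)^{\sim}=0$). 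Hence $\Diamond b\Cap\Diamond a\in 0^{\mathbf{L}}/\theta=I$, so $I$ is a p-ideal. I expect the main obstacle to be bookkeeping around the interplay of $\Diamond$, $\square$, $^{\sim}$ and $\Cap$ in verifying $0^{\mathbf{L}}/\rho(I)=I$ — specifically making sure the four simultaneous membership conditions defining $\rho(I)$ collapse correctly when one argument is $0$, and confirming $a\leq\Diamond a$ suffices to pull $a$ back into $I$ from $\Diamond a$. The $\mathbb{SDM}$ hypothesis is only needed via Lemma \ref{stellaconte} to guarantee $\rho(I)$ is a congruence; everything else goes through in arbitrary $\mathbb{PBZL}^{\ast}$.
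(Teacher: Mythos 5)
Your proposal is correct and follows essentially the same route as the paper's proof: both directions rest on Theorem \ref{funziona}.(i), Lemma \ref{stellaconte}, Lemma \ref{nicotra}.(i) and the characterisation of $\rho(I)$ in Theorem \ref{balanzone}, with the same computation $\Diamond b\Cap 0=\Diamond b\wedge b^{\sim}=0$ for the Ursini-to-p-ideal direction. The only blemish is the abandoned direct check of the four conditions defining $\rho(I)$ at $b=0$, where you briefly miscompute $(\Diamond 0)^{\sim}$ (it equals $1$, since $\Diamond 0=0$); your final argument via Theorem \ref{balanzone} sidesteps this and is exactly the paper's.
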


\begin{proof}
Let $I\in\mathcal{I}_{\mathbb{SDM}}\left(  \mathbf{L}\right)  $, whence by
Theorem \ref{funziona}.(i) $I=0/\theta$ for some $\theta\in\mathrm{Con}%
_{\mathbb{BZL}}\left(  \mathbf{L}\right)  $. Clearly, $I$ is a lattice ideal
of $\mathbf{L}$. Furthermore, if $a\in I$, then $\left(  \Diamond a,0\right)
\in\theta$ and then $\Diamond a\in I$. What remains to show is that, for an
arbitrary $b\in L$, $\Diamond b\Cap\Diamond a\in I$. Since $\Diamond a\in
I=0/\theta$, $\left(  \Diamond b\Cap\Diamond a,0\right)  =\left(  \Diamond
b\Cap\Diamond a,\Diamond b\Cap0\right)  \in\theta$, which means $\Diamond
b\Cap\Diamond a\in I$. Conversely, it will be enough to prove that if $I$ is a
p-ideal of $\mathbf{L}$, then $I=0/\rho\left(  I\right)  $. However, by
Theorem \ref{balanzone},
\begin{align*}
0/\rho\left(  I\right)   &  =\left\{  a\in L:\left(  a,0\right)  \in
\rho\left(  I\right)  \right\} \\
&  =\left\{  a\in L:\Diamond a\leq s,\square a\leq t\text{ for some }s,t\in
I\right\}  \text{.}%
\end{align*}
If $a\in I$, then choose $s=t=\Diamond a\in I$ to obtain $a\in0/\rho\left(
I\right)  $. If $a\in0/\rho\left(  I\right)  $, then there is $s\in I$ such
that $a\leq\Diamond a\leq s$, whence $a\in I$.
\end{proof}

Observe that, by Lemma \ref{stellaconte} and Theorem \ref{cattanei}, whenever
$\mathbf{L}\in\mathbb{SDM}$, all members of $\mathcal{I}_{\mathbb{SDM}}\left(
\mathbf{L}\right)  $ are weak De Morgan ideals.

\begin{theorem}
\label{orsucci} Let $\mathbf{L}\in\mathbb{SDM}$, and let $I\in\mathcal{I}%
_{\mathbb{SDM}}\left(  \mathbf{L}\right)  $. Then $\rho\left(  I\right)
=I^{\varepsilon}$.
\end{theorem}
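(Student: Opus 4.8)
<br>

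The plan is to show both inclusions $\rho(I) \subseteq I^{\varepsilon}$ and $I^{\varepsilon} \subseteq \rho(I)$. Throughout, recall from Lemma \ref{stellaconte} that $\rho(I)$ is a congruence of $\mathbf{L}$, and from Theorem \ref{cattanei} together with Theorem \ref{funziona}.(i) that $I$ is an $\mathbb{SDM}$-ideal with $I = 0/\rho(I)$; moreover, since $\mathbb{PBZL}^{\ast}$ is $0$-subtractive, $I^{\varepsilon} = \max\{\theta \in \mathrm{Con}_{\mathbb{BZL}}(\mathbf{L}) : 0/\theta = I\}$.

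For the inclusion $\rho(I) \subseteq I^{\varepsilon}$: since $\rho(I)$ is a congruence with $0/\rho(I) = I$, it is one of the congruences whose $0$-class is $I$, so by the maximality characterisation of $I^{\varepsilon}$ we get $\rho(I) \subseteq I^{\varepsilon}$ immediately. This direction is essentially free once we invoke Lemma \ref{stellaconte} and Theorem \ref{cattanei}.

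For the reverse inclusion $I^{\varepsilon} \subseteq \rho(I)$: here I would argue that $\rho(I)$ is in fact the \emph{largest} congruence with $0$-class $I$, i.e. that whenever $\theta \in \mathrm{Con}_{\mathbb{BZL}}(\mathbf{L})$ satisfies $0/\theta = I$ and $(a,b) \in \theta$, then $(a,b) \in \rho(I)$. Suppose $(a,b) \in \theta$. Then $(\Diamond a, \Diamond b) \in \theta$ and $(\square a, \square b) \in \theta$ since $\theta$ respects the unary operations. Applying $\theta$ componentwise, $((\Diamond a)^{\sim} \Cap \Diamond b, (\Diamond a)^{\sim} \Cap \Diamond a) = ((\Diamond a)^{\sim} \Cap \Diamond b, 0) \in \theta$ (using that $(\Diamond a)^{\sim} \Cap \Diamond a = 0$), hence $(\Diamond a)^{\sim} \Cap \Diamond b \in 0/\theta = I$; symmetrically for the other three terms $(\Diamond b)^{\sim} \Cap \Diamond a$, $(\square a)^{\sim} \Cap \square b$, $(\square b)^{\sim} \Cap \square a$. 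By the definition of $\rho(I)$, this yields $(a,b) \in \rho(I)$. Therefore every congruence with $0$-class $I$ is contained in $\rho(I)$; in particular $I^{\varepsilon} \subseteq \rho(I)$.

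Combining the two inclusions gives $\rho(I) = I^{\varepsilon}$. The main point to get right is the second direction — specifically, the verification that the four defining conditions of $\rho(I)$ are automatically forced by membership in any congruence whose $0$-class is $I$; this rests on the algebraic identities $x^{\sim} \Cap x = 0$ (equivalently $x^{\sim} \wedge (\Diamond x \vee x) = x^{\sim} \wedge \Diamond x = 0$, using Lemma \ref{basics}) for $x$ ranging over $\Diamond a$ and $\square a$, so that applying the congruence to $(a,b)$ drives the relevant terms into the $0$-class. No essential use of SDM is needed beyond what is already packaged into Lemma \ref{stellaconte} and Theorem \ref{cattanei}, which guarantee that $\rho(I)$ is a congruence with $0$-class exactly $I$.
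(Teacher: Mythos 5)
Your proof is correct and follows essentially the same route as the paper: $\rho(I)\subseteq I^{\varepsilon}$ because $\rho(I)$ is a congruence with $0$-class $I$, and the converse by applying the congruence to terms that vanish on the diagonal so that they land in $0/\theta=I$. The only (harmless) difference is that you verify the four defining terms of $\rho(I)$ directly, whereas the paper applies the congruence to the two terms of Theorem \ref{balanzone}.(i) and then invokes that equivalence.
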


\begin{proof}
By the proof of Theorem \ref{cattanei} $0/\rho\left(  I\right)  =I$, whence
$\rho\left(  I\right)  \subseteq I^{\varepsilon}$. For the converse
inequality, suppose $\left(  a,b\right)  \in I^{\varepsilon}$. Since
$I^{\varepsilon}$ is a congruence,%
\[
\left(  \left(  \Diamond a\vee\Diamond b\right)  \wedge\left(  a^{\sim}\vee
b^{\sim}\right)  ,0\right)  ,\left(  \left(  \square a\vee\square b\right)
\wedge\left(  \left(  \square a\right)  ^{\sim}\vee\left(  \square b\right)
^{\sim}\right)  ,0\right)  \in I^{\varepsilon}\text{.}%
\]
So $\left(  \Diamond a\vee\Diamond b\right)  \wedge\left(  a^{\sim}\vee
b^{\sim}\right)  \in0/I^{\varepsilon}=I$ and $\left(  \square a\vee\square
b\right)  \wedge\left(  \left(  \square a\right)  ^{\sim}\vee\left(  \square
b\right)  ^{\sim}\right)  \in0/I^{\varepsilon}=I$. By Theorem \ref{balanzone},
this means that $\left(  a,b\right)  \in\rho\left(  I\right)  $.
\end{proof}

\begin{corollary}
\label{micaela} $\mathbb{SDM}$ is finitely congruential.
\end{corollary}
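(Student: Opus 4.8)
The plan is to exhibit a pair of explicit binary terms witnessing finite congruentiality and then let the already–proved results do all the work. Recall from Definition \ref{granata} that $\mathbb{SDM}$ is finitely congruential precisely when there is a finite family $\{d_i(x,y)\}_{i\leq n}$ of binary $\nu$-terms such that $I^{\varepsilon}=\{(a,b):d_i^{\mathbf{A}}(a,b)\in I\text{ for all }i\leq n\}$ for every $\mathbf{A}\in\mathbb{SDM}$ and every $I\in\mathcal{I}_{\mathbb{SDM}}(\mathbf{A})$. Since $\Diamond x=x^{\sim\sim}$ and $\square x=x^{\prime\sim}$ are term operations in the language of BZ-lattices, and $0$ is a nullary operation of the signature, the two expressions
\[
d_1(x,y)=(\Diamond x\vee\Diamond y)\wedge(x^{\sim}\vee y^{\sim}),\qquad
d_2(x,y)=((\square x)^{\sim}\vee(\square y)^{\sim})\wedge(\square x\vee\square y)
\]
are genuine binary $\nu$-terms, and these are the candidates I would use, with $n=2$.

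The verification would then proceed by chaining three earlier results. First, by Theorem \ref{cattanei}, for $\mathbf{L}\in\mathbb{SDM}$ every $I\in\mathcal{I}_{\mathbb{SDM}}(\mathbf{L})$ is a p-ideal, so all the p-ideal machinery applies. Second, Theorem \ref{orsucci} gives $I^{\varepsilon}=\rho(I)$ for every such $I$. Third, the equivalence of clauses (i) and (iii) in Theorem \ref{balanzone} states exactly that $(a,b)\in\rho(I)$ if and only if both $d_1^{\mathbf{L}}(a,b)\in I$ and $d_2^{\mathbf{L}}(a,b)\in I$. Putting these together yields $I^{\varepsilon}=\rho(I)=\{(a,b):d_1^{\mathbf{L}}(a,b)\in I\text{ and }d_2^{\mathbf{L}}(a,b)\in I\}$, which is precisely the defining condition of finite congruentiality.

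There is no real obstacle here: the substance of the corollary is entirely front-loaded into Theorems \ref{balanzone}, \ref{cattanei} and \ref{orsucci}, and the argument is just the assembly of those three facts together with the observation that $d_1,d_2$ lie in the signature. The only points deserving a word of care are the purely formal ones — that $d_1,d_2$ are honest terms of type $\nu$ (which holds because $\Diamond$ and $\square$ are compositions of primitive operations) and that the constant $0$ with respect to which $\mathcal{I}_{\mathbb{SDM}}(\cdot)$ and the $\varepsilon$-operator are defined is term-definable (which is immediate, as $0$ is primitive). So in practice the proof is a two-line deduction.
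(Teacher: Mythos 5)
Your proof is correct and follows essentially the same route as the paper: both arguments reduce the claim to $I^{\varepsilon}=\rho(I)$ (Theorem \ref{orsucci}) together with the characterisation of $\rho(I)$ by ideal membership of finitely many binary terms (Theorem \ref{balanzone}), with Theorem \ref{cattanei} guaranteeing that the Ursini ideals are p-ideals so that those results apply. The only difference is cosmetic: you take the two terms from clause (i) of Theorem \ref{balanzone}, whereas the paper takes the four terms $(\Diamond x)^{\sim}\Cap\Diamond y$, $(\Diamond y)^{\sim}\Cap\Diamond x$, $(\square x)^{\sim}\Cap\square y$, $(\square y)^{\sim}\Cap\square x$ from clause (iii); the equivalence of the two clauses makes either choice a valid witness.
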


\begin{proof}
We have to find a finite set of terms $\{d_{i}\left(  x,y\right)  \}_{i\leq
n}$ that witnesses finite congruentiality according to Definition
\ref{granata}. Thus, let%
\[%
\begin{array}
[c]{cc}%
d_{1}\left(  x,y\right)  =\left(  \Diamond x\right)  ^{\sim}\Cap\Diamond y, &
d_{2}\left(  x,y\right)  =\left(  \Diamond y\right)  ^{\sim}\Cap\Diamond x,\\
d_{3}\left(  x,y\right)  =\left(  \square x\right)  ^{\sim}\Cap\square y, &
d_{4}\left(  x,y\right)  =\left(  \square y\right)  ^{\sim}\Cap\square x.
\end{array}
\]

If $\mathbf{L}\in\mathbb{SDM}$ and $I\in\mathcal{I}_{\mathbb{SDM}}\left(
\mathbf{L}\right)  $, then by Theorems \ref{orsucci} and \ref{balanzone}
$\rho\left(  I\right)  =I^{\varepsilon}$. As a result, $\left(  a,b\right)
\in I^{\varepsilon}=\rho\left(  I\right)  $ iff $d_{i}^{\mathbf{A}}\left(
a,b\right)  \in I\text{, for all }i\leq4$.
\end{proof}

\begin{theorem}
The $0$-assertional logic of $\mathbb{PBZL}^{\mathbb{\ast}}$ is not equivalential.
\end{theorem}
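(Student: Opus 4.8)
The plan is to apply Theorem \ref{mistretta}: since $\mathbb{PBZL}^{\ast}$ is $0$-subtractive, its $0$-assertional logic is equivalential if and only if $(\mathbb{PBZL}^{\ast})_{\varepsilon}$ is closed under subalgebras and direct products, so it suffices to exhibit a \emph{reduced} PBZ$^{\ast}$--lattice with a \emph{non-reduced} subalgebra. Recall that a PBZ$^{\ast}$--lattice $\mathbf{M}$ is reduced exactly when $\Delta$ is the only congruence of $\mathbf{M}$ whose $0$-class equals $\{0\}$, that is, when $\{0\}^{\varepsilon}=\Delta$.

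For the enveloping algebra I would take the $7$-element distributive antiortholattice $\mathbf{L}$ of Example \ref{cogotti}, with lattice reduct $0<\{a,b\}<c<\{a^{\prime},b^{\prime}\}<1$, $c=c^{\prime}$, and trivial Brouwer complement. The first step is to check that $\mathbf{L}$ is reduced -- in fact simple -- by a short case analysis over its covering pairs: collapsing any covering pair forces, via compatibility of congruences with $\wedge$, $\vee$ and $^{\prime}$, the identification of $0$ with one of the atoms $a,b$; applying $^{\sim}$ then yields $1=0^{\sim}\equiv a^{\sim}=0$, whence the whole of $L$ collapses. So $\Delta_{L}$ and $\nabla_{L}$ are the only congruences of $\mathbf{L}$, and since $0/\nabla_{L}=L\neq\{0\}$ we get $\{0\}^{\varepsilon}=\Delta_{L}$, i.e.\ $\mathbf{L}\in(\mathbb{PBZL}^{\ast})_{\varepsilon}$. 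The second step is to single out the subalgebra $\mathbf{M}$ carried by the chain $\{0,a,a^{\prime},1\}$ (as $a<c<a^{\prime}$, this is genuinely a $4$-element chain): the set is a sublattice, it is closed under $^{\prime}$ (since $0^{\prime}=1$, $(a^{\prime})^{\prime}=a$) and under the trivial $^{\sim}$ (which sends $0$ to $1$ and every nonzero element to $0$), so $\mathbf{M}$ is a subalgebra of $\mathbf{L}$ isomorphic to $\mathbf{D}_{4}$ (or, just as well, one could use the $\mathbf{D}_{5}$ carried by $\{0,a,c,a^{\prime},1\}$). The third step is to note that $\mathbf{D}_{4}$ is \emph{not} reduced: the relation collapsing its two middle elements is a PBZ$^{\ast}$--lattice congruence -- it respects $\wedge,\vee$ because the reduct is a chain, it respects $^{\prime}$ because $^{\prime}$ swaps those two elements, and it respects the trivial $^{\sim}$ because both are sent to $0$ -- it differs from $\Delta$, and its $0$-class is $\{0\}$; hence $\{0\}^{\varepsilon}\neq\Delta$ and $\mathbf{D}_{4}\notin(\mathbb{PBZL}^{\ast})_{\varepsilon}$.

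Combining these, $(\mathbb{PBZL}^{\ast})_{\varepsilon}$ fails to be closed under subalgebras, so by Theorem \ref{mistretta} the $0$-assertional logic of $\mathbb{PBZL}^{\ast}$ is not equivalential. The step I expect to require the most care is the congruence case analysis establishing that $\mathbf{L}$ is simple; it is precisely the Brouwer complement, through the identity $0^{\sim}=1$, that keeps $\mathbf{L}$ reduced even though $\rho(\{0\})$ is not a congruence on it (Example \ref{cogotti}), whereas in the subchain no element sits below the swapped pair in a position that would drag an atom down to $0$, so the analogous collapse survives and the subchain is not reduced.
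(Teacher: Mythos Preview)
Your proposal is correct and follows essentially the same approach as the paper: the paper also uses the $7$-element antiortholattice of Example \ref{cogotti}, notes that it is simple (hence reduced), exhibits the subalgebra $\{0,a,a^{\prime},1\}\cong\mathbf{D}_{4}$, and observes that its middle congruence is a nonzero pseudo-identical congruence, concluding via Theorem \ref{mistretta}. Your write-up is more detailed on the simplicity verification and the congruence check, but the argument is the same.
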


\begin{proof}
Consider again the antiortholattice of Example \ref{cogotti}. Being simple,
this antiortholattice belongs to $\mathbb{PBZL}_{\varepsilon}^{\mathbb{\ast}}%
$. Moreover, the set $\left\{  0,a,a^{\prime},1\right\}  $ is a subuniverse of
such, isomorphic to $\mathbf{D}_{4}$, and its middle congruence, that
collapses only $a$ and $a^{\prime}$, is a nonzero pseudo-identical congruence.
Our claim follows then from Theorem \ref{mistretta}.
\end{proof}

\begin{lemma}
\label{moriconi}Let $\mathbf{L}\in\mathbb{SDM}$. The following are equivalent:

\begin{enumerate}
\item $\rho\left(  \{0\}\right)  =\Delta_{\mathbf{L}}$.

\item $\mathbf{L}$ satisfies the quasi-identity $\square x\leq\square
y\,\&\,\Diamond x\leq\Diamond y\Rightarrow x\leq y$.

\item $\mathbf{L}$ satisfies the identity SK.
\end{enumerate}
\end{lemma}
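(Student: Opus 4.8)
The plan is to run the cycle of implications (3)$\,\Rightarrow\,$(1)$\,\Rightarrow\,$(2)$\,\Rightarrow\,$(3). First I would record the reduction, already noted in the text, that condition (1) simply says: for all $a,b\in L$, if $\Diamond a=\Diamond b$ and $\square a=\square b$ then $a=b$. The only arithmetic I expect to need is Lemma~\ref{basics} together with four easy observations valid in any BZ$^\ast$-lattice: $a\leq\Diamond a$; $\square a\leq a$ (Lemma~\ref{basics}(ii) applied to $a'$); $\square$ is monotone; and $\square x=(x')^{\sim}\in S_\Diamond(\mathbf L)$, so that $\square x$ is fixed by both $\Diamond$ and $\square$, i.e. $\Diamond\square x=\square x$ and $\square\square x=\square x$. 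In addition, and crucially, I would use that $\mathbf L\in\mathbb{SDM}$ gives $\Diamond(x\wedge y)=\Diamond x\wedge\Diamond y$ (apply $^{\sim}$ to SDM and invoke $(u\vee v)^{\sim}=u^{\sim}\wedge v^{\sim}$), while $\square(x\wedge y)=\square x\wedge\square y$ is free (Lemma~\ref{basics}(vi)).

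For (3)$\,\Rightarrow\,$(1): assume SK and suppose $\Diamond a=\Diamond b$ and $\square a=\square b$. From SK, $a\wedge\Diamond b\leq\square a\vee b$; since $a\leq\Diamond a=\Diamond b$ the left side collapses to $a$, and since $\square a=\square b\leq b$ the right side collapses to $b$, so $a\leq b$. By symmetry $b\leq a$, hence $a=b$, which is (1). For (1)$\,\Rightarrow\,$(2): assume $\rho(\{0\})=\Delta_{\mathbf L}$ and suppose $\square x\leq\square y$ and $\Diamond x\leq\Diamond y$. Then $\square(x\wedge y)=\square x\wedge\square y=\square x$ and, by the $\mathbb{SDM}$ identity, $\Diamond(x\wedge y)=\Diamond x\wedge\Diamond y=\Diamond x$; thus $x\wedge y$ and $x$ are modally equivalent, so $x\wedge y=x$, i.e. $x\leq y$.

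The step I expect to be the main obstacle is (2)$\,\Rightarrow\,$(3), since it is the one that genuinely exploits membership in $\mathbb{SDM}$ and requires some care in the modal bookkeeping. Assume the quasi-identity, fix $x,y\in L$, and set $p=x\wedge\Diamond y$ and $q=\square x\vee y$; I want $p\leq q$, which is exactly SK. It suffices to verify the premise of the quasi-identity for the pair $p,q$. For the $\square$-part: $\square p=\square x\wedge\square\Diamond y\leq\square x=\square\square x\leq\square q$, the last inequality by monotonicity of $\square$ applied to $\square x\leq q$. For the $\Diamond$-part: $\Diamond q=\Diamond\square x\vee\Diamond y=\square x\vee\Diamond y$ by Lemma~\ref{basics}(vii) and $\square x\in S_\Diamond(\mathbf L)$, while $\Diamond p=\Diamond x\wedge\Diamond\Diamond y=\Diamond x\wedge\Diamond y$ by the $\mathbb{SDM}$ identity and $\Diamond\Diamond y=\Diamond y$; hence $\Diamond p=\Diamond x\wedge\Diamond y\leq\Diamond y\leq\square x\vee\Diamond y=\Diamond q$. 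The quasi-identity now yields $p\leq q$.

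The one thing to be vigilant about is keeping straight which of these modal manipulations ($\Diamond$ distributing over $\wedge$, the $S_\Diamond$-idempotencies, monotonicity of $\square$) rest only on the BZ$^\ast$ axioms and which need SDM, and confirming that no step requires more than the displayed inequalities. Notably, no appeal to the Foulis--Holland theorem or to the orthomodular structure of $\mathbf S_K(\mathbf L)$ is needed here, in contrast to the earlier ideal-theoretic arguments.
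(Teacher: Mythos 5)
Your proposal is correct and proves exactly the same three implications as the paper, (iii)$\Rightarrow$(i), (i)$\Rightarrow$(ii), (ii)$\Rightarrow$(iii), with essentially the same computations (the paper likewise sets up the pair $x\wedge\Diamond y$, $\square x\vee y$ and checks the premise of the quasi-identity via SDM). The only cosmetic difference is that in (ii)$\Rightarrow$(iii) the paper computes $\square(\square x\vee y)=\square x\vee\square y$ outright using SDM, whereas you get $\square p\leq\square q$ from monotonicity and idempotence of $\square$; both are fine.
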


\begin{proof}
(i) implies (ii). $\rho\left(  \{0\}\right)  =\Delta_{\mathbf{L}}$ means that
modally equivalent elements of $L$ are identical. Now, let $\square
a\leq\square b$\ and $\Diamond a\leq\Diamond b$. By SDM, this implies that
$a\wedge b$ and $a$ are modally equivalent, whence $a\leq b$.

(ii) implies (iii). Using SDM, we have that for all $a,b\in L$,%
\begin{align*}
\square\left(  a\wedge\Diamond b\right)   &  =\square a\wedge\Diamond
b\leq\square a\vee\square b=\square\left(  \square a\vee b\right)  \text{;}\\
\Diamond\left(  a\wedge\Diamond b\right)   &  =\Diamond a\wedge\Diamond
b\leq\square a\vee\Diamond b=\Diamond\left(  \square a\vee b\right)  \text{.}%
\end{align*}

By our assumption, then, $a\wedge\Diamond b\leq\square a\vee b$.

(iii) implies (i). Suppose $\square a=\square b$ and $\Diamond a=\Diamond b$.
Then%
\[
a=a\wedge\Diamond a=a\wedge\Diamond b\leq\square a\vee b=\square b\vee b=b.
\]

Similarly, $b\leq a$, whence our conclusion.
\end{proof}

Let us call $\mathbb{SK}$ the subvariety of $\mathbb{SDM}$ that is axiomatised
relative to $\mathbb{SDM}$ by the identity SK.

\begin{theorem}
\label{melis} The $0$-assertional logic of $\mathbb{SDM}$ is strongly
algebraisable with equivalent variety semantics $\mathbb{SDM}_{\varepsilon
}=\mathbb{SK}$.
\end{theorem}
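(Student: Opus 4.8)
The plan is to invoke the general machinery of subtractive varieties, specifically Theorem \ref{mistrettone}, which tells us that the $0$-assertional logic of the $0$-subtractive variety $\mathbb{SDM}$ is strongly algebraisable with $\mathbb{SDM}_{\varepsilon}$ as equivalent algebraic semantics if and only if $\mathbb{SDM}_{\varepsilon}$ is a variety. Recall $\mathbb{PBZL}^{\ast}$, and hence $\mathbb{SDM}$, is $0$-subtractive (with subtraction term, e.g., $s(x,y)=x\wedge y^{\prime}$, or whatever witness is fixed earlier). So the entire task reduces to two things: (a) identifying the reduced algebras of $\mathbb{SDM}$ as exactly the members of $\mathbb{SK}$, and (b) noting that $\mathbb{SK}$, being a subvariety defined by the extra identity SK, is of course a variety, which then immediately gives strong algebraisability via Theorem \ref{mistrettone} and pins down the equivalent variety semantics as $\mathbb{SDM}_{\varepsilon}=\mathbb{SK}$.

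For (a), recall that $\mathbf{L}\in\mathbb{SDM}$ is reduced iff $\{0\}^{\varepsilon}=\Delta_{\mathbf{L}}$. The first step is to show $\{0\}^{\varepsilon}=\rho(\{0\})$. Since $\{0\}$ is a p-ideal (noted before Definition of $I$-modal equivalence), Theorem \ref{orsucci} applies with $I=\{0\}$, giving $\rho(\{0\})=\{0\}^{\varepsilon}$ directly. Therefore $\mathbf{L}$ is reduced iff $\rho(\{0\})=\Delta_{\mathbf{L}}$. Now Lemma \ref{moriconi} gives the equivalence of this last condition with $\mathbf{L}\models\mathrm{SK}$. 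Hence $\mathbf{L}\in\mathbb{SDM}_{\varepsilon}$ iff $\mathbf{L}\in\mathbb{SK}$, i.e., $\mathbb{SDM}_{\varepsilon}=\mathbb{SK}$ as classes.

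With that identification in hand, $\mathbb{SDM}_{\varepsilon}=\mathbb{SK}$ is visibly a variety (it is cut out inside the variety $\mathbb{SDM}$ by a single equation). Applying Theorem \ref{mistrettone}, clause (2) holds, so clause (1) holds: the $0$-assertional logic of $\mathbb{SDM}$ is strongly algebraisable with $\mathbb{SDM}_{\varepsilon}$ as an equivalent algebraic semantics, and we have just computed $\mathbb{SDM}_{\varepsilon}=\mathbb{SK}$. This completes the proof.

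The only place requiring genuine care — the "main obstacle" — is making sure the chain $\mathbb{SDM}_{\varepsilon}=\{$ reduced algebras $\}=\{\mathbf{L}:\rho(\{0\})=\Delta\}=\mathbb{SK}$ is airtight, i.e., that Theorem \ref{orsucci} is legitimately applicable to the particular p-ideal $\{0\}$ (it is, since $\{0\}\in\mathcal{I}_{\mathbb{SDM}}(\mathbf{L})$ by Theorem \ref{cattanei} together with the fact that $\{0\}$ is always a p-ideal), and that Lemma \ref{moriconi}'s equivalence of (i) and (iii) is exactly the equivalence we need. Everything else is a direct citation of the already-established results Theorem \ref{mistrettone}, Theorem \ref{orsucci}, and Lemma \ref{moriconi}; no new computation is needed.
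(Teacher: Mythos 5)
Your proposal is correct and follows exactly the paper's own argument: reduce to showing $\mathbb{SDM}_{\varepsilon}=\mathbb{SK}$ via Theorem \ref{mistrettone}, identify $\{0\}^{\varepsilon}$ with $\rho(\{0\})$ by Theorem \ref{orsucci}, and conclude with the equivalence in Lemma \ref{moriconi}. Your extra care in checking that $\{0\}$ is a legitimate $\mathbb{SDM}$-ideal (via Theorem \ref{cattanei}) is a small point the paper leaves implicit, but nothing differs in substance.
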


\begin{proof}
By Theorem \ref{mistrettone}, it suffices to establish that $\mathbb{SDM}%
_{\varepsilon}$ is a variety, which would follow if we were to show that
$\mathbb{SDM}_{\varepsilon}=\mathbb{SK}$. By Theorem \ref{orsucci}, whenever
$\mathbf{L}$ belongs to $\mathbb{SDM}$, $\left\{  0\right\}  ^{\varepsilon
}=\rho\left(  \{0\}\right)  $. Thus $\mathbf{L}\in\mathbb{SDM}_{\varepsilon}$
iff $\rho\left(  \{0\}\right)  =\Delta_{\mathbf{L}}$, and by Lemma
\ref{moriconi}, this happens exactly when $\mathbf{L}\in\mathbb{SK}$.
\end{proof}

By Theorems \ref{crnjar} and \ref{cattanei}, in any member $\mathbf{L}$ of
$\mathbb{SDM}$ the Ursini ideals of $\mathbf{L}$ coincide with its p-ideals
and with the deductive filters on $\mathbf{L}$ of the $0$-assertional logic of
$\mathbb{SDM}$. By {\cite[Thm. 3.58]{Font}, therefore, we obtain:}

\begin{corollary}
Let $\mathbf{L}\in\mathbb{SDM}$. Then the lattice of p-ideals of $\mathbf{L}$
is isomorphic to the lattice of all congruences $\theta$ on $\mathbf{L}$ such
that $\mathbf{L}/\theta\in\mathbb{SK}$.
\end{corollary}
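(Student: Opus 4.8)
The plan is to read off this corollary from the machinery already assembled, specialising to $\mathbb{SDM}$ the standard bridge theorem of abstract algebraic logic that relates the deductive filters of an algebraisable logic to the compatible congruences. Concretely, I would argue along the following chain of identifications. First, by Theorem~\ref{cattanei}, for $\mathbf{L}\in\mathbb{SDM}$ the p-ideals of $\mathbf{L}$ are exactly the members of $\mathcal{I}_{\mathbb{SDM}}\left(\mathbf{L}\right)$, so the lattice of p-ideals (ordered by inclusion) is precisely the ideal lattice $\mathcal{I}_{\mathbb{SDM}}\left(\mathbf{L}\right)$. Next, by Theorem~\ref{crnjar} applied to the $0$-subtractive variety $\mathbb{SDM}$, this same set is the lattice of all deductive filters on $\mathbf{L}$ of the $0$-assertional logic $\vdash_{\mathbb{SDM}}$, and moreover $I^{\varepsilon}=\Omega^{\mathbf{L}}\left(I\right)$ for every $I\in\mathcal{I}_{\mathbb{SDM}}\left(\mathbf{L}\right)$.

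Second, Theorem~\ref{melis} tells us that $\vdash_{\mathbb{SDM}}$ is strongly algebraisable with equivalent variety semantics $\mathbb{SK}=\mathbb{SDM}_{\varepsilon}$. The bridge result for algebraisable logics (\cite[Thm. 3.58]{Font}) then yields, for the fixed algebra $\mathbf{L}$, that the Leibniz operator $F\mapsto\Omega^{\mathbf{L}}\left(F\right)$ is a lattice isomorphism from the lattice of deductive filters of $\vdash_{\mathbb{SDM}}$ on $\mathbf{L}$ onto the lattice $\{\theta\in\mathrm{Con}_{\mathbb{BZL}}\left(\mathbf{L}\right):\mathbf{L}/\theta\in\mathbb{SK}\}$ of $\mathbb{SK}$-compatible congruences, ordered by inclusion. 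Composing with the identifications of the previous paragraph, $I\mapsto\Omega^{\mathbf{L}}\left(I\right)$ is a lattice isomorphism from the lattice of p-ideals of $\mathbf{L}$ onto $\{\theta:\mathbf{L}/\theta\in\mathbb{SK}\}$. For a cleaner internal description one may further note that, since $I^{\varepsilon}=\Omega^{\mathbf{L}}\left(I\right)$ and (by Theorem~\ref{orsucci}) $\rho\left(I\right)=I^{\varepsilon}$, the isomorphism is exactly the map $I\mapsto\rho\left(I\right)$ introduced in this section; thus the congruences $\theta$ with $\mathbf{L}/\theta\in\mathbb{SK}$ are precisely the relations $\rho\left(I\right)$ for $I$ ranging over the p-ideals of $\mathbf{L}$.

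The steps are essentially bookkeeping, so I do not anticipate a genuine obstacle; the only points requiring a little care are: (a) making sure that ``congruence on $\mathbf{L}$'' in the statement is understood with respect to the full BZ-lattice type, i.e.\ as a member of $\mathrm{Con}_{\mathbb{BZL}}\left(\mathbf{L}\right)$, which is the notion to which Theorems~\ref{crnjar} and~\ref{melis} and the AAL bridge theorem all refer; (b) checking that the relevant correspondences are order isomorphisms in both directions, which is part of the content of \cite[Thm. 3.58]{Font} once the logic is known to be algebraisable; and (c) confirming that the image of the Leibniz operator is \emph{all} of $\{\theta:\mathbf{L}/\theta\in\mathbb{SK}\}$ and not merely a subset, which uses precisely that $\mathbb{SK}=\mathbb{SDM}_{\varepsilon}$ is the equivalent algebraic semantics, so that $\mathbf{L}/\theta\in\mathbb{SK}$ holds exactly when $\theta$ is the Leibniz congruence of some deductive filter. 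Since all of (a)--(c) are already secured by Theorems~\ref{crnjar}, \ref{melis} and~\ref{orsucci}, the corollary follows.
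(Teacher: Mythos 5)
Your proposal is correct and follows essentially the same route as the paper, which derives the corollary from Theorem \ref{cattanei} (p-ideals coincide with Ursini $\mathbb{SDM}$-ideals), Theorem \ref{crnjar} (these are the deductive filters of the $0$-assertional logic), Theorem \ref{melis} (strong algebraisability with $\mathbb{SK}$ as equivalent variety semantics), and the bridge theorem \cite[Thm. 3.58]{Font}. Your additional observation that the isomorphism is realised by $I\mapsto\rho\left(I\right)=I^{\varepsilon}$ via Theorem \ref{orsucci} is a welcome clarification, but the argument itself is the paper's.
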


Observe that, although SK implies SDM in the context of $V\left(
\mathbb{AOL}\right)  $ {\cite[Lm. 3.8]{PBZ2}} this is not the case in the more
general context of $\mathbb{PBZL}^{\mathbb{\ast}}$. In fact, consider the
PBZ$^{\ast}$ --lattice $\mathbf{L}$ whose lattice reduct is the $5$-element
modular and non-distributive lattice $\mathbf{M}_{3}$ with atoms $a,a^{\prime
},b$, where $b=b^{\prime}$ and $b^{\sim}=0$. Then $\mathbf{L}$ satisfies SK
but fails SDM - actually, it fails even WSDM because $a\in S_{K}\left(
\mathbf{L}\right)  $ and $\left(  a\wedge b\right)  ^{\sim}=1$ but $a^{\sim
}\vee b^{\sim}=a^{\prime}$. For future reference, we make a note of the fact
that $\mathbf{L}$ satisfies J2.

\subsection{Ideals in $V\left(  \mathbb{AOL}\right)  $}

Another subvariety of $\mathbb{PBZL}^{\mathbb{\ast}}$ where our description of
ideals can be considerably simplified is the variety $V\left(  \mathbb{AOL}%
\right)  $ generated by all antiortholattices. Bignall and Spinks first
observed that the variety of distributive BZ-lattices is a binary
discriminator variety {\cite{BS}}. We extend their observation by noticing
that $V\left(  \mathbb{AOL}\right)  $ is itself a binary discriminator variety.

\begin{proposition}
\label{bischero}$V\left(  \mathbb{AOL}\right)  $ is a $0$-binary discriminator variety.
\end{proposition}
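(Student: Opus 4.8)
The plan is to exhibit a single binary term that realises the $0$-binary discriminator on every antiortholattice, and then to invoke Definition~\ref{zanzibar} together with the fact that $V\left(\mathbb{AOL}\right)$ is, by definition, generated by the class $\mathbb{AOL}$. The term I would use is
\[
t(x,y) = x \wedge y^{\sim},
\]
which is a binary term of the type of BZ-lattices (hence of PBZ$^{\ast}$ --lattices).

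To verify that $t$ works, let $\mathbf{L} \in \mathbb{AOL}$ and $a,c \in L$. If $\mathbf{L}$ is trivial the claim is vacuous, so assume $0^{\mathbf{L}} \neq 1^{\mathbf{L}}$. By Lemma~\ref{barabba}.(i) we have $0^{\sim} = 1$ and $c^{\sim} = 0$ for every $c \in L \setminus \{0\}$. Consequently, if $c = 0$ then $t^{\mathbf{L}}(a,c) = a \wedge 0^{\sim} = a \wedge 1 = a$, while if $c \neq 0$ then $t^{\mathbf{L}}(a,c) = a \wedge c^{\sim} = a \wedge 0 = 0$. This is precisely the defining clause of the $0$-binary discriminator $b_{0}^{L}$ from Definition~\ref{zanzibar}. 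Thus every antiortholattice is a $0$-binary discriminator algebra, and the witnessing term $x \wedge y^{\sim}$ is the same for all of them.

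Since $V\left(\mathbb{AOL}\right) = V\left(\mathbb{K}\right)$ with $\mathbb{K} = \mathbb{AOL}$, and $\mathbb{K}$ is a class of $0$-binary discriminator algebras for which the property is witnessed by one and the same term, Definition~\ref{zanzibar} yields at once that $V\left(\mathbb{AOL}\right)$ is a $0$-binary discriminator variety. There is essentially no obstacle here: the only point requiring a moment's care is that the value of $c^{\sim}$ be computed uniformly across $\mathbb{AOL}$ --- which is exactly the content of Lemma~\ref{barabba}.(i) --- together with the separate (and trivial) treatment of the one-element algebra.
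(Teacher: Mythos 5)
Your proof is correct and coincides with the paper's own argument: both use the term $x\wedge y^{\sim}$, verify via Lemma~\ref{barabba}.(i) that it realises the $0$-binary discriminator on every antiortholattice, and conclude by Definition~\ref{zanzibar} since $V(\mathbb{AOL})$ is generated by $\mathbb{AOL}$. The explicit aside about the trivial algebra is a harmless (and unnecessary) addition.
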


\begin{proof}
Referring to Definition \ref{zanzibar} for notation and terminology, let
$b_{0}^{L}\left(  x,y\right)  =x\wedge y^{\sim}$. Then for any
antiortholattice $\mathbf{L}$ and, for any $a,c\in L$, $b_{0}^{L}\left(
a,0\right)  =a\wedge0^{\sim}=a\wedge1=a$, while, if $c>0$, then $b_{0}%
^{L}\left(  a,c\right)  =a\wedge c^{\sim}=a\wedge0=0$.
\end{proof}

Among the consequences of this remark we have a very slender description of
$V\left(  \mathbb{AOL}\right)  $-ideals. In fact, recall from {\cite{BS} that
if }$\mathbf{A}$ is an algebra in a $0$-binary discriminator variety
$\mathbb{V}$ and $b_{0}^{{}}\left(  x,y\right)  $ is the term witnessing this
property for $\mathbb{V}$, $I\subseteq A$ is a $\mathbb{V}$-ideal of
$\mathbf{A}$ exactly when, for any $a,c\in A$, if $c\in I$ and $b_{0}%
^{\mathbf{A}}\left(  a,c\right)  \in I$, then $a\in I$. Therefore:

\begin{proposition}
Let $\mathbf{L}\in V\left(  \mathbb{AOL}\right)  $. For a lattice ideal
$I\subseteq L$ the following are equivalent:

\begin{enumerate}
\item $I$ is a $V\left(  \mathbb{AOL}\right)  $-ideal.

\item For any $a,b\in L$, if $b\in I$ and $a\wedge b^{\sim}\in I$, then $a\in
I$.

\item $I$ is closed w.r.t. all interpretations in $\mathbf{L}$ of the
$V\left(  \mathbb{AOL}\right)  $-ideal term (in $y,z$):
\[
u\left(  x,y,z\right)  =x\wedge\Diamond\left(  y\vee z\right)  .
\]

\end{enumerate}
\end{proposition}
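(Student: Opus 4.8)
The plan is to derive the equivalence of (1) and (2) immediately from the \cite{BS} characterisation of ideals in $0$-binary discriminator varieties recalled just before the statement, and then to close the loop by showing (1) $\Rightarrow$ (3) $\Rightarrow$ (2); the only implication requiring a genuine idea is (3) $\Rightarrow$ (2), where the equation AOL2 does the work.

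For (1) $\Leftrightarrow$ (2): by Proposition \ref{bischero}, $V(\mathbb{AOL})$ is a $0$-binary discriminator variety witnessed by $b_0^L(x,y) = x \wedge y^\sim$. The cited criterion then states that a subset $I \subseteq L$ is a $V(\mathbb{AOL})$-ideal of $\mathbf{L}$ exactly when, for all $a,b \in L$, $b \in I$ and $a \wedge b^\sim = b_0^L(a,b) \in I$ together force $a \in I$ --- which is word for word condition (2). Since the $V(\mathbb{AOL})$-ideal terms include $x \vee y$ and $x \wedge z$, every $V(\mathbb{AOL})$-ideal is in any case a lattice ideal, so the standing restriction to lattice ideals is harmless.

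For (1) $\Rightarrow$ (3): I would first confirm that $u(x,y,z) = x \wedge \Diamond(y \vee z)$ really is a $V(\mathbb{AOL})$-ideal term in $y,z$, that is, $V(\mathbb{AOL}) \models u(x,0,0) \approx 0$. This comes down to $\Diamond 0 \approx 0$, which holds in every antiortholattice --- there $0^\sim = 1$ by Lemma \ref{barabba}.(i), and $1^\sim = 0$ in any BZ-lattice because $1 \wedge 1^\sim = 0$ --- hence throughout $V(\mathbb{AOL})$; so $u(x,0,0) = x \wedge \Diamond 0 = x \wedge 0 = 0$. By Definition \ref{ideal term}.(2), a $V(\mathbb{AOL})$-ideal is closed under all interpretations of such a term with its two distinguished variables ranging over the ideal, which is precisely (3).

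For (3) $\Rightarrow$ (2): assume the lattice ideal $I$ is closed under all interpretations of $u$ in $\mathbf{L}$, and let $b \in I$ with $a \wedge b^\sim \in I$. The key step is to apply AOL2 (valid in $V(\mathbb{AOL})$ by Theorem \ref{caniggia}.(1)) in the form $a = (a \wedge b^\sim) \vee (a \wedge \Diamond b)$: the first disjunct lies in $I$ by hypothesis, and the second equals $u^{\mathbf{L}}(a,b,0) = a \wedge \Diamond(b \vee 0)$, which lies in $I$ by (3) since $b, 0 \in I$; as $I$ is closed under binary joins, $a \in I$. I expect no real obstacle here: the two points deserving a moment's care are the bookkeeping check that $u$ is an ideal term (the computation $\Diamond 0 = 0$ in $V(\mathbb{AOL})$) and the recognition that AOL2 hands over exactly the two-term decomposition of $a$ needed to finish.
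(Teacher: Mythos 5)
Your proof is correct. The treatment of (1) $\Leftrightarrow$ (2) is exactly the paper's: both simply invoke the \cite{BS} criterion for ideals in a $0$-binary discriminator variety together with Proposition \ref{bischero}. For the remaining implications you take a mildly different route. The paper proves (2) $\Rightarrow$ (3) by applying the criterion of (2) to the pair $\left(c\wedge\Diamond\left(a\vee b\right),\,a\vee b\right)$, using $\Diamond x\wedge x^{\sim}=0$; you instead prove (1) $\Rightarrow$ (3) directly from Definition \ref{ideal term}, after the (necessary and correctly executed) check that $u$ is an ideal term in $y,z$, i.e.\ that $\Diamond 0=0$. For (3) $\Rightarrow$ (2) the paper evaluates $u^{\mathbf{L}}\left(a,b,a\wedge b^{\sim}\right)$ and shows, via WSDM and the distribution laws of Lemma \ref{arismo}, that it equals $a$ outright; you instead use only the single instance $u^{\mathbf{L}}\left(a,b,0\right)=a\wedge\Diamond b$ and then assemble $a=\left(a\wedge b^{\sim}\right)\vee\left(a\wedge\Diamond b\right)$ by AOL2 (available by Theorem \ref{caniggia}) and closure of the lattice ideal under joins. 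Your version is a little more economical --- it avoids the WSDM computation entirely, at the cost of quoting AOL2 --- while the paper's version has the minor virtue of exhibiting a single substitution instance of $u$ that returns $a$ itself. Both arguments are sound; the only points worth keeping an eye on, which you handle correctly, are that $0\in I$ (so that $u^{\mathbf{L}}\left(a,b,0\right)$ is a legitimate instance with both distinguished arguments in $I$) and that the equivalence chain (2) $\Rightarrow$ (1) $\Rightarrow$ (3) $\Rightarrow$ (2) together with (1) $\Leftrightarrow$ (2) indeed closes the cycle.
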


\begin{proof}
The equivalence of (i) and (ii) follows from the remarks immediately preceding
this lemma. Suppose now that (ii) holds, and that $a,b\in I$. Then $a\vee b\in
I$. On the other hand, for any $c$ in $L$,%
\[
0=c\wedge\Diamond\left(  a\vee b\right)  \wedge\left(  a\vee b\right)  ^{\sim
}\in I\text{,}%
\]
whence by our hypothesis $c\wedge\Diamond\left(  a\vee b\right)  \in I$.
Conversely, under the assumption (iii), let $b\in I$ and $a\in L$ be such that
$a\wedge b^{\sim}\in I$. Then, using WSDM and Lemma \ref{arismo} several
times,%
\begin{align*}
u^{\mathbf{A}}\left(  a,b,a\wedge b^{\sim}\right)   &  =a\wedge\Diamond\left(
b\vee\left(  a\wedge b^{\sim}\right)  \right) \\
&  =a\wedge\left(  \Diamond b\vee\Diamond\left(  a\wedge b^{\sim}\right)
\right) \\
&  =a\wedge\left(  \Diamond b\vee\left(  \Diamond a\wedge b^{\sim}\right)
\right) \\
&  =a\wedge\left(  \Diamond b\vee\Diamond a\right) \\
&  =\left(  a\wedge\Diamond b\right)  \vee a=a,
\end{align*}
hence $a\in I$.
\end{proof}

\section{Axiomatic Bases for Some Subvarieties\label{antis}}

The goal of this final section is to simplify the axiomatisation of $V\left(
\mathbb{AOL}\right)  $ given in \cite{GLP1+} and to solve a problem (here
called the Join Problem) posed in \cite{PBZ2}, where it was observed that the
varietal join $\mathbb{OML\vee}V\left(  \mathbb{AOL}\right)  $ in the lattice
of subvarieties of $\mathbb{PBZL}^{\mathbb{\ast}}$ was strictly included in
$\mathbb{PBZL}^{\mathbb{\ast}}$, but no axiomatic basis for such a join was given.

\subsection{A Streamlined Axiomatisation for $V\left(  \mathbb{AOL}\right)  $}

In Theorem \ref{caniggia}.(i) we recalled that an equational basis for
$V\left(  \mathbb{AOL}\right)  $ relative to $\mathbb{PBZL}^{\ast}$ is given
by the identities AOL1-AOL3, here reproduced for the reader's convenience:%
\begin{align*}
\text{(AOL1) }  &  \left(  x^{\sim}\vee y^{\sim}\right)  \wedge\left(
\Diamond x\vee z^{\sim}\right)  \approx\left(  \left(  x^{\sim}\vee y\right)
\wedge\left(  \Diamond x\vee z\right)  \right)  ^{\sim}\text{;}\\
\text{(AOL2) }  &  x\approx\left(  x\wedge y^{\sim}\right)  \vee\left(
x\wedge\Diamond y\right)  \text{;}\\
\text{(AOL3) }  &  x\approx\left(  x\vee y^{\sim}\right)  \wedge\left(
x\vee\Diamond y\right)  \text{.}%
\end{align*}

The aim of this subsection is showing that AOL2 suffices to derive the
remaining two axioms. For a start, we notice that Lemma \ref{arismo} does not
depend on AOL1, whence it holds for any subvariety of $\mathbb{PBZL}^{\ast}$
that satisfies AOL2 and AOL3.

\begin{lemma}
\label{minchione}Let $\mathbf{L}$ be a member of  $\mathbb{PBZL}^{\ast}$ that
satisfies AOL2 and AOL3. Then, for any $a,b,c\in L$: (i) $a\wedge b\leq\left(
a\wedge c^{\sim}\right)  \vee\left(  b\wedge\Diamond c\right)  $; (ii)
$\left(  a\wedge\Diamond b\right)  ^{\sim}\vee\Diamond b=1$; (iii) $\Diamond
a\leq\left(  b\wedge a^{\sim}\right)  ^{\sim}$.
\end{lemma}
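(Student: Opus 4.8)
The plan is to obtain all three items by short order-theoretic computations: items (ii) and (iii) follow from the antitonicity of the Brouwer complement $^{\sim}$ together with the basic BZ-arithmetic of Lemma~\ref{basics}, and only item (i) genuinely needs the identity AOL2. Notably, none of the three requires AOL1 or the distributivity laws of Lemma~\ref{arismo} (which, by the remark preceding the statement, are nonetheless available here).

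For (i), I would instantiate AOL2 at $x:=a\wedge b$ and $y:=c$, which gives
\[
a\wedge b=\big((a\wedge b)\wedge c^{\sim}\big)\vee\big((a\wedge b)\wedge\Diamond c\big).
\]
From $a\wedge b\leq a$ we get $(a\wedge b)\wedge c^{\sim}\leq a\wedge c^{\sim}$, and from $a\wedge b\leq b$ we get $(a\wedge b)\wedge\Diamond c\leq b\wedge\Diamond c$; monotonicity of $\vee$ then delivers $a\wedge b\leq(a\wedge c^{\sim})\vee(b\wedge\Diamond c)$, which is (i).

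For (ii), note $a\wedge\Diamond b\leq\Diamond b$, so antitonicity of $^{\sim}$ yields $(a\wedge\Diamond b)^{\sim}\geq(\Diamond b)^{\sim}=b^{\sim\sim\sim}=b^{\sim}$ by Lemma~\ref{basics}(i); hence $(a\wedge\Diamond b)^{\sim}\vee\Diamond b\geq b^{\sim}\vee\Diamond b$. Since $b^{\sim}$ is a Brouwer complement it belongs to $S_{\Diamond}(\mathbf{L})\subseteq S_{B}(\mathbf{L})$, i.e.\ $b^{\sim}\vee(b^{\sim})^{\sim}=b^{\sim}\vee\Diamond b=1$, so the join in (ii) is squeezed between $1$ and $1$. (Alternatively, $b^{\sim}\leq(a\wedge\Diamond b)^{\sim}$ is immediate from Lemma~\ref{basics}(iv).) For (iii), likewise $b\wedge a^{\sim}\leq a^{\sim}$, so antitonicity gives $(b\wedge a^{\sim})^{\sim}\geq(a^{\sim})^{\sim}=a^{\sim\sim}=\Diamond a$, which is precisely the claim.

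There is no genuine obstacle: the only choices that are not purely mechanical are the substitution $x:=a\wedge b$, $y:=c$ into AOL2 for (i), and the recollection that every Brouwer complement is $\Diamond$-sharp, hence Brouwer-sharp, so $b^{\sim}\vee\Diamond b=1$, for (ii). These three identities are being recorded here because they will be invoked repeatedly in deriving AOL1 and AOL3 from AOL2 in the rest of the subsection.
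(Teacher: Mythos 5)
Your proof is correct, and parts (ii) and (iii) coincide with the paper's argument (antitonicity of $^{\sim}$ plus $(\Diamond b)^{\sim}=b^{\sim}$ and the Brouwer-sharpness of $b^{\sim}$, i.e.\ $b^{\sim}\vee\Diamond b=1$). Where you genuinely diverge is in (i): the paper obtains it from the Foulis--Holland-style partial distributivity of Lemma \ref{arismo}, writing $a\wedge b\leq(a\vee c^{\sim})\wedge(b\vee\Diamond c)\wedge(a\vee b)$ and then collapsing that meet to $(a\wedge c^{\sim})\vee(b\wedge\Diamond c)$ by the distribution laws, whereas you simply instantiate AOL2 at $x:=a\wedge b$, $y:=c$ and weaken each disjunct by monotonicity. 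Your route is more elementary and buys a slightly sharper bookkeeping of hypotheses: item (i) in your version uses only AOL2, not AOL3 or Lemma \ref{arismo} (the paper's version of (i) actually establishes the stronger equality $(a\vee c^{\sim})\wedge(b\vee\Diamond c)\wedge(a\vee b)=(a\wedge c^{\sim})\vee(b\wedge\Diamond c)$, but only the inequality is ever invoked later, e.g.\ in Lemma \ref{buacciolo}, so nothing is lost). Both arguments are sound; yours is arguably preferable in the context of Theorem \ref{bumbum}, where the point is precisely to minimise reliance on AOL1 and AOL3.
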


\begin{proof}
(i) In fact, using Lemma \ref{arismo},
\[
a\wedge b\leq\left(  a\vee c^{\sim}\right)  \wedge\left(  b\vee\Diamond
c\right)  \wedge\left(  a\vee b\right)  =\left(  a\wedge c^{\sim}\right)
\vee\left(  b\wedge\Diamond c\right)  \text{.}%
\]

(ii) Since $a\wedge\Diamond b\leq\Diamond b$, it follows that $b^{\sim}%
\leq\left(  a\wedge\Diamond b\right)  ^{\sim}$, whence $1=b^{\sim}\vee\Diamond
b\leq\left(  a\wedge\Diamond b\right)  ^{\sim}\vee\Diamond b$.

(iii) Since $b\wedge a^{\sim}\leq a^{\sim}$, our conclusion follows.
\end{proof}

\begin{lemma}
\label{buacciolo}Let $\mathbf{L}$ be a member of $\mathbb{PBZL}^{\ast}$ that
satisfies AOL2 and AOL3. Then $\mathbf{L}$ satisfies AOL1 iff it satisfies WSDM.
\end{lemma}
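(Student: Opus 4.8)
The plan is to normalise AOL1 using the distributive laws of Lemma~\ref{arismo} and then read off the equivalence with WSDM. Throughout, the decisive structural fact is that, for every $a\in L$, the elements $a^{\sim}$ and $\Diamond a=(a^{\sim})^{\sim}$ are mutually complementary members of $S_{\Diamond}(\mathbf{L})$: indeed $a^{\sim}\wedge\Diamond a=0$ always, and $a^{\sim}\vee\Diamond a=1$ because $a^{\sim}\in S_{\Diamond}(\mathbf{L})\subseteq S_{B}(\mathbf{L})$. The first step is the absorption identity, valid whenever $\mathbf{L}$ satisfies AOL2 and AOL3: for all $a,b,c\in L$,
\[
(a^{\sim}\vee b)\wedge(\Diamond a\vee c)=(a^{\sim}\wedge c)\vee(\Diamond a\wedge b).
\]
This is proved by expanding the left side with Lemma~\ref{arismo} (legitimate since $a^{\sim}$ and $\Diamond a$ are $^{\sim}$-images), obtaining $(b\wedge c)\vee(a^{\sim}\wedge c)\vee(\Diamond a\wedge b)$, and then observing that $b\wedge c=(b\wedge c)\wedge(a^{\sim}\vee\Diamond a)$ is absorbed into the other two summands by one more application of Lemma~\ref{arismo}. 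Feeding $(b,c)=(y,z)$ into the inner parenthesis of the right-hand side of AOL1, feeding $(b,c)=(y^{\sim},z^{\sim})$ into the left-hand side of AOL1, and using $(u\vee v)^{\sim}=u^{\sim}\wedge v^{\sim}$ (Lemma~\ref{basics}), one sees that AOL1 is equivalent, modulo AOL2 and AOL3, to the identity
\[
(\Diamond x\wedge y^{\sim})\vee(x^{\sim}\wedge z^{\sim})\approx(\Diamond x\wedge y)^{\sim}\wedge(x^{\sim}\wedge z)^{\sim},
\]
which I shall call $(\star)$.

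For the implication WSDM $\Rightarrow$ AOL1 it then suffices to rewrite the right-hand side of $(\star)$: since $x^{\sim}$ and $\Diamond x=(x^{\sim})^{\sim}$ are $^{\sim}$-images, WSDM gives $(\Diamond x\wedge y)^{\sim}=x^{\sim}\vee y^{\sim}$ and $(x^{\sim}\wedge z)^{\sim}=\Diamond x\vee z^{\sim}$, so the right side of $(\star)$ equals $(x^{\sim}\vee y^{\sim})\wedge(\Diamond x\vee z^{\sim})$, which is precisely the left-hand side of the original AOL1; by the absorption identity this in turn equals $(\Diamond x\wedge y^{\sim})\vee(x^{\sim}\wedge z^{\sim})$, the left side of $(\star)$. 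Hence $(\star)$, and so AOL1, holds.

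For the converse AOL1 $\Rightarrow$ WSDM I would extract from $(\star)$ the instance $z\mapsto1$: since $1^{\sim}=0$ and $(x^{\sim})^{\sim}=\Diamond x$, this reads $\Diamond x\wedge y^{\sim}=(\Diamond x\wedge y)^{\sim}\wedge\Diamond x$. Now distribute $(\Diamond x\wedge y)^{\sim}$ over $1=\Diamond x\vee x^{\sim}$ by Lemma~\ref{arismo}: the piece meeting $\Diamond x$ is $\Diamond x\wedge y^{\sim}$ by the displayed equality, while the piece meeting $x^{\sim}$ is just $x^{\sim}$, because $x^{\sim}=(\Diamond x)^{\sim}\leq(\Diamond x\wedge y)^{\sim}$. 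Therefore $(\Diamond x\wedge y)^{\sim}=(\Diamond x\wedge y^{\sim})\vee x^{\sim}\leq y^{\sim}\vee x^{\sim}$; the reverse inequality $x^{\sim}\vee y^{\sim}\leq(\Diamond x\wedge y)^{\sim}$ is immediate from Lemma~\ref{basics}. Thus $(\Diamond x\wedge y)^{\sim}\approx x^{\sim}\vee y^{\sim}$ holds identically, and since every $\Diamond$-sharp element is simultaneously of the form $\Diamond x$ and of the form $v^{\sim}$ (with the accompanying $x^{\sim}$ then equal to $\Diamond v$), this last identity is just a reparametrisation of WSDM, completing the proof.

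The bulk of the work — the absorption identity and the passage to $(\star)$ — is routine bookkeeping, the only subtlety being to track which instances of Lemma~\ref{arismo} are available (they require the element in question to be a $^{\sim}$-image). The one step requiring genuine care is the forward direction: the specialisation $z\mapsto1$ alone yields only $\Diamond x\wedge y^{\sim}=(\Diamond x\wedge y)^{\sim}\wedge\Diamond x$, which is strictly weaker than WSDM, and one must still peel off the trailing factor $\Diamond x$ by re-distributing $(\Diamond x\wedge y)^{\sim}$ across the decomposition $1=\Diamond x\vee x^{\sim}$ — which is exactly where the complementarity $a^{\sim}\vee\Diamond a=1$ is indispensable.
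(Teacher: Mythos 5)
Your proof is correct and follows essentially the same route as the paper's: your ``absorption identity'' is precisely the paper's combination of Lemma \ref{arismo} with Lemma \ref{minchione}.(i), and your argument for WSDM $\Rightarrow$ AOL1 reproduces the paper's computation. The only (harmless) divergence is in the converse direction, where the paper simply sets $y=0$ in AOL1 and applies Lemma \ref{arismo}.(iii), thereby avoiding your extra step of redistributing $(\Diamond x\wedge y)^{\sim}$ over $\Diamond x\vee x^{\sim}$.
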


\begin{proof}
From left to right, WSDM can be obtained by taking $y=0$ and applying Lemma
\ref{arismo}.(iii). Conversely, let $\mathbf{L}$ satisfy WSDM, and let
$a,b,c\in L$. Then:%
\[%
\begin{array}
[c]{lll}%
\left(  \left(  a^{\sim}\vee b\right)  \wedge\left(  \Diamond a\vee c\right)
\right)  ^{\sim} & =\left(  \left(  a^{\sim}\wedge c\right)  \vee\left(
\Diamond a\wedge b\right)  \vee\left(  b\wedge c\right)  \right)  ^{\sim} &
\text{Lm. \ref{arismo}}\\
& =\left(  \left(  a^{\sim}\wedge c\right)  \vee\left(  \Diamond a\wedge
b\right)  \right)  ^{\sim} & \text{Lm. \ref{minchione}.(i)}\\
& =\left(  a^{\sim}\wedge c\right)  ^{\sim}\wedge\left(  \Diamond a\wedge
b\right)  ^{\sim} & \text{Lm. \ref{basics}.(iii)}\\
& =\text{ }\left(  a^{\sim}\vee b^{\sim}\right)  \wedge\left(  \Diamond a\vee
c^{\sim}\right)  & \text{WSDM}%
\end{array}
\]

\end{proof}

\begin{theorem}
\label{bumbum}An equational basis for $V\left(  \mathbb{AOL}\right)  $
relative to $\mathbb{PBZL}^{\ast}$ is given by the single identity AOL2.
\end{theorem}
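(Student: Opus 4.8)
The plan is to show that AOL2 alone implies both AOL3 and AOL1 (modulo $\mathbb{PBZL}^{\ast}$), so that Theorem \ref{caniggia}.(i) collapses to the single axiom AOL2. By Theorem \ref{caniggia}.(i), the identities AOL1–AOL3 together axiomatise $V(\mathbb{AOL})$ relative to $\mathbb{PBZL}^{\ast}$; hence it suffices to derive AOL1 and AOL3 from AOL2 in any $\mathbf{L}\in\mathbb{PBZL}^{\ast}$. Once AOL3 is available, we will have that $\mathbf{L}$ satisfies both AOL2 and AOL3, so Lemma \ref{arismo} applies (the excerpt explicitly notes that Lemma \ref{arismo} does not depend on AOL1), and then Lemma \ref{buacciolo} reduces AOL1 to WSDM. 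So the proof has three movements: (1) derive AOL3 from AOL2; (2) derive WSDM from AOL2 (possibly using AOL3 and Lemma \ref{arismo}); (3) invoke Lemma \ref{buacciolo} to conclude AOL1, and then Theorem \ref{caniggia}.(i) to finish.

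For step (1), I would try to obtain AOL3 by a ``dual'' manipulation of AOL2. The natural move is to apply AOL2 with various substitutions and dualise using the involution $^{\prime}$ together with the de Morgan laws from Lemma \ref{basics}. Concretely, AOL2 says $x=(x\wedge y^{\sim})\vee(x\wedge\Diamond y)$; applying $^{\prime}$ gives $x^{\prime}=(x^{\prime}\vee\Diamond y)\wedge(x^{\prime}\vee(\Diamond y)^{\prime})$ after using $(\Diamond y)^{\prime}=\square y'$ and $\square y' = (\Diamond y')'$ type identities — but the complement appearing is $^{\prime}$, not $^{\sim}$, so a little care is needed. The key observation is that $\Diamond y = y^{\sim\sim}$ is sharp, so $(\Diamond y)^{\prime} = (\Diamond y)^{\sim}$, and one can re-express things in terms of $^{\sim}$. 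I expect that substituting $y^{\prime}$ for $y$ in AOL2 and using that $\Diamond(y^{\prime}) = (\square y)^{\prime}$ and $(y^{\prime})^{\sim} = \square y$ will, after applying $^{\prime}$ throughout and invoking the involution's de Morgan behaviour on $\mathbf{L}_l$, turn AOL2 into AOL3. This is the step I'd budget the most scratch paper for: it is ``routine'' but the bookkeeping between $^{\prime}$ and $^{\sim}$, and between $\Diamond$, $\square$, is where a mistake is easy.

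For step (2), once we have AOL2 and AOL3 we get Lemma \ref{arismo} and Lemma \ref{minchione} for free. To derive WSDM, I would aim to show $(x\wedge y^{\sim})^{\sim} = x^{\sim}\vee\Diamond y$. The inequality $x^{\sim}\vee\Diamond y \leq (x\wedge y^{\sim})^{\sim}$ is immediate: $x\wedge y^{\sim}\leq x$ gives $x^{\sim}\leq(x\wedge y^{\sim})^{\sim}$, and $x\wedge y^{\sim}\leq y^{\sim}$ gives $y^{\sim\sim}=\Diamond y\leq(x\wedge y^{\sim})^{\sim}$ via Lemma \ref{basics}.(iii)/(iv) and antitonicity; actually one uses $(x\wedge y^\sim)^\sim \geq x^\sim \vee (y^\sim)^\sim$ directly from Lemma \ref{basics}.(iv). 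For the reverse inequality $(x\wedge y^{\sim})^{\sim}\leq x^{\sim}\vee\Diamond y$, I would try to apply AOL2 to the element $(x\wedge y^{\sim})^{\sim}$ with an appropriately chosen second argument and then absorb using Lemma \ref{arismo}; alternatively, note that $(x\wedge y^{\sim})^{\sim}$ is sharp, so it equals $\Diamond$ of itself, and try to squeeze it between the two pieces $x^{\sim}$ and $\Diamond y$ using Lemma \ref{minchione}.(i)–(iii), which were tailored for exactly this kind of splitting. The main obstacle in the whole theorem is likely to be finding the right substitution that makes the reverse WSDM inequality fall out — everything downstream (Lemma \ref{buacciolo}, Theorem \ref{caniggia}) is already packaged.

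Finally, having established AOL3 and WSDM from AOL2, I invoke Lemma \ref{buacciolo}: since $\mathbf{L}$ satisfies AOL2, AOL3 and WSDM, it satisfies AOL1. Thus every $\mathbf{L}\in\mathbb{PBZL}^{\ast}$ satisfying AOL2 also satisfies AOL1 and AOL3, so by Theorem \ref{caniggia}.(i) it lies in $V(\mathbb{AOL})$; conversely every member of $V(\mathbb{AOL})$ satisfies AOL2 (again by Theorem \ref{caniggia}.(i)). Hence AOL2 alone is an equational basis for $V(\mathbb{AOL})$ relative to $\mathbb{PBZL}^{\ast}$, which is the claim.
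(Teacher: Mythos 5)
Your overall architecture is exactly the paper's: derive AOL3 from AOL2 by dualizing with the involution, then derive WSDM, then let Lemma \ref{buacciolo} and Theorem \ref{caniggia}.(i) finish. Your step (1) works as you compute it: since $(y^{\sim})'=\Diamond y$ and $(\Diamond y)'=y^{\sim}$, applying $'$ to both sides of AOL2 gives $x'=(x'\vee\Diamond y)\wedge(x'\vee y^{\sim})$, which is AOL3 at $x'$ and hence everywhere because $'$ is surjective (the detour through substituting $y'$ for $y$ is unnecessary). Step (3) is correctly packaged.

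The genuine gap is step (2), which is the only nontrivial computation in the whole proof and which you leave at the level of ``I would try''. Neither of your two candidate tactics, as stated, closes the reverse inequality $(a\wedge b^{\sim})^{\sim}\leq a^{\sim}\vee\Diamond b$. For instance, applying AOL2 to the element $(a\wedge b^{\sim})^{\sim}$ with second argument $b$ reduces the problem to showing $(a\wedge b^{\sim})^{\sim}\wedge b^{\sim}\leq a^{\sim}$, which is not visibly easier, and ``squeezing'' via Lemma \ref{minchione} is not yet an argument. The paper's move is to expand $a$ \emph{inside} $a^{\sim}$ via AOL2 and compute the join with $\Diamond b$:
\[
a^{\sim}\vee\Diamond b=\bigl((a\wedge b^{\sim})\vee(a\wedge\Diamond b)\bigr)^{\sim}\vee\Diamond b=\bigl((a\wedge b^{\sim})^{\sim}\wedge(a\wedge\Diamond b)^{\sim}\bigr)\vee\Diamond b
\]
by Lemma \ref{basics}.(iii); then $\vee\,\Diamond b$ distributes over the meet by Lemma \ref{arismo}.(iv) (legitimate because $\Diamond b$ is of the form $z^{\sim}$), the factor $(a\wedge\Diamond b)^{\sim}\vee\Diamond b$ equals $1$ by Lemma \ref{minchione}.(ii), and $\Diamond b$ is absorbed into $(a\wedge b^{\sim})^{\sim}$ by Lemma \ref{minchione}.(iii), yielding $a^{\sim}\vee\Diamond b=(a\wedge b^{\sim})^{\sim}$, i.e.\ WSDM. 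With that chain inserted, your proof coincides with the paper's.
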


\begin{proof}
By Theorem \ref{caniggia}.(i), an equational basis for $V\left(
\mathbb{AOL}\right)  $ relative to $\mathbb{PBZL}^{\ast}$ is given by the
identities AOL1-AOL3. To attain our conclusion, taking into account Lemma
\ref{buacciolo}, it will suffice to show that: i) any subvariety of
$\mathbb{PBZL}^{\ast}$ that satisfies AOL2 and AOL3 also satisfies WSDM; ii)
any subvariety of $\mathbb{PBZL}^{\ast}$ that satisfies AOL2 also satisfies
AOL3. We establish these claims in reverse order.

i) Let $\mathbf{L}$ belong to any subvariety of $\mathbb{PBZL}^{\ast}$ that
satisfies AOL2, and let $a,b\in L$. Then $a^{\prime}=\left(  a^{\prime}\wedge
b^{\sim}\right)  \vee\left(  a^{\prime}\wedge\Diamond b\right)  $, whence%
\[
a=\left(  \left(  a^{\prime}\wedge b^{\sim}\right)  \vee\left(  a^{\prime
}\wedge\Diamond b\right)  \right)  ^{\prime}=\left(  a^{\prime}\wedge b^{\sim
}\right)  ^{\prime}\wedge\left(  a^{\prime}\wedge\Diamond b\right)  ^{\prime
}=\left(  a\vee\Diamond b\right)  \wedge\left(  a\vee b^{\sim}\right)
\text{.}%
\]

ii) Let $\mathbf{L}$ belong to any subvariety of $\mathbb{PBZL}^{\ast}$ that
satisfies AOL2 (thus also AOL3, by the previous item), and let $a,b\in L$.
Then:%
\[%
\begin{array}
[c]{lll}%
a^{\sim}\vee\Diamond b & =\left(  \left(  a\wedge b^{\sim}\right)  \vee\left(
a\wedge\Diamond b\right)  \right)  ^{\sim}\vee\Diamond b & \text{AOL2}\\
& =\left(  \left(  a\wedge b^{\sim}\right)  ^{\sim}\wedge\left(
a\wedge\Diamond b\right)  ^{\sim}\right)  \vee\Diamond b & \text{Lm.
\ref{basics}.(iii)}\\
& =\left(  \left(  a\wedge b^{\sim}\right)  ^{\sim}\vee\Diamond b\right)
\wedge\left(  \left(  a\wedge\Diamond b\right)  ^{\sim}\vee\Diamond b\right)
& \text{Lm. \ref{arismo}.(iv)}\\
& =\left(  \left(  a\wedge b^{\sim}\right)  ^{\sim}\vee\Diamond b\right)  &
\text{Lm. \ref{minchione}.(ii)}\\
& =\left(  a\wedge b^{\sim}\right)  ^{\sim} & \text{Lm. \ref{minchione}%
.(iii).}%
\end{array}
\]

\end{proof}

Taking into account Lemma \ref{goldrush} and Theorem \ref{centravanti}, we
have that:

\begin{corollary}
\label{feccia}

\begin{enumerate}
\item $V(\mathbb{AOL})$ is the class of all PBZ$^{\ast}$ --lattices
$\mathbf{L}$ such that $C_{p}(\mathbf{L})=S_{K}(\mathbf{L}).$

\item $V(\mathbb{AOL})$ is the class of all PBZ$^{\ast}$ --lattices
$\mathbf{L}$ that satisfy WSDM and are such that $C_{pbz}(\mathbf{L}%
)=S_{K}(\mathbf{L})$.

\item The class of the directly indecomposable members of $V(\mathbb{AOL})$ is
$\mathbb{AOL}$.
\end{enumerate}
\end{corollary}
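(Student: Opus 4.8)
The plan is to deduce all three statements from results already in hand — mainly Lemma~\ref{goldrush}, Theorem~\ref{centravanti} and the new axiomatisation Theorem~\ref{bumbum} — using throughout that $C_{pbz}(\mathbf{L})\subseteq C_p(\mathbf{L})\subseteq S_K(\mathbf{L})$ holds in every PBZ$^{\ast}$ --lattice.

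For~(i) I would prove the two inclusions separately. For the forward one, take $\mathbf{L}\in V(\mathbb{AOL})$; since $C_p(\mathbf{L})\subseteq S_K(\mathbf{L})$ is automatic, it is enough to show $S_K(\mathbf{L})\subseteq C_p(\mathbf{L})$. Fix $a\in S_K(\mathbf{L})$. As $\mathbf{L}$ is a paraorthomodular BZ$^{\ast}$-lattice, Theorem~\ref{co:collassone} gives $a\in S_B(\mathbf{L})$, i.e.\ $a\vee a^{\sim}=1$, and moreover $a^{\sim}=a^{\prime}$. Then, for every $b\in L$,
\[
b=b\wedge(a\vee a^{\sim})=(b\wedge a)\vee(b\wedge a^{\sim}),
\]
the last equality being an instance of Lemma~\ref{arismo}.(i), the right-hand disjunct $a^{\sim}$ playing the role of the distributed Brouwer complement. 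By the first description of $C_p(\mathbf{L})$ in Lemma~\ref{goldrush}.(\ref{goldrush1}), this is exactly the assertion $a\in C_p(\mathbf{L})$. For the reverse inclusion, suppose $\mathbf{L}$ is a PBZ$^{\ast}$ --lattice with $C_p(\mathbf{L})=S_K(\mathbf{L})$; by Theorem~\ref{bumbum} it suffices to verify AOL2 in $\mathbf{L}$. Given $y\in L$, we have $y^{\sim}\in S_{\Diamond}(\mathbf{L})\subseteq S_K(\mathbf{L})=C_p(\mathbf{L})$, so Lemma~\ref{goldrush}.(\ref{goldrush1}) applied to $y^{\sim}$ gives $(x\wedge y^{\sim})\vee(x\wedge(y^{\sim})^{\sim})=x$ for all $x$; since $(y^{\sim})^{\sim}=\Diamond y$, this is precisely AOL2.

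For~(ii): the variety $V(\mathbb{AOL})$ satisfies WSDM (every antiortholattice does and WSDM is an identity), so Lemma~\ref{goldrush}.(\ref{goldrush3}) gives $C_{pbz}(\mathbf{L})=C_p(\mathbf{L})$ for each $\mathbf{L}\in V(\mathbb{AOL})$, and~(i) completes the forward direction. Conversely, if $\mathbf{L}$ is a PBZ$^{\ast}$ --lattice satisfying WSDM with $C_{pbz}(\mathbf{L})=S_K(\mathbf{L})$, then $C_p(\mathbf{L})=C_{pbz}(\mathbf{L})=S_K(\mathbf{L})$ by Lemma~\ref{goldrush}.(\ref{goldrush3}), whence $\mathbf{L}\in V(\mathbb{AOL})$ by~(i). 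For~(iii): every antiortholattice is directly indecomposable (Lemma~\ref{barabba}.(2)) and lies in $V(\mathbb{AOL})$, giving one inclusion. Conversely, let $\mathbf{L}\in V(\mathbb{AOL})$ be directly indecomposable. The only factor congruences of a directly indecomposable algebra are $\Delta_{\mathbf{L}}$ and $\nabla_{\mathbf{L}}$, so if $e$ is central then $Cg(e,0)\in\{\Delta_{\mathbf{L}},\nabla_{\mathbf{L}}\}$, which forces $e\in\{0,1\}$; since $0$ and $1$ are always central, $C(\mathbf{L})=\{0,1\}$ (the one-element algebra, being itself an antiortholattice, poses no problem, as then $0=1$). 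On the other hand Theorem~\ref{centravanti} together with~(ii) gives $C(\mathbf{L})=C_{pbz}(\mathbf{L})=S_K(\mathbf{L})$, whence $S_K(\mathbf{L})=\{0,1\}$, i.e.\ $\mathbf{L}\in\mathbb{AOL}$.

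Almost everything here is bookkeeping over earlier results; the one step calling for a genuine (if modest) idea is the forward inclusion of~(i), where one must recognise that the join $a\vee a^{\sim}$ is $1$ and that $a^{\sim}$, being a Brouwer complement, is exactly the kind of disjunct over which Lemma~\ref{arismo}.(i) distributes. The remaining care is purely organisational: keeping track of which of the several equivalent descriptions of $C_p(\mathbf{L})$, $C_{pbz}(\mathbf{L})$ and $S_K(\mathbf{L})$ is in play at each point, and recalling that $V(\mathbb{AOL})$ satisfies WSDM so that $C_{pbz}$ and $C_p$ coincide on its members.
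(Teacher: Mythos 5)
Your proof is correct and follows essentially the route the paper intends: the paper states this corollary with no proof beyond a pointer to Lemma~\ref{goldrush} and Theorem~\ref{centravanti} (with Theorem~\ref{bumbum} doing the work for the converse inclusions), and your argument fills in exactly those details. The only cosmetic difference is that for the inclusion $S_K(\mathbf{L})\subseteq C_p(\mathbf{L})$ in $V(\mathbb{AOL})$ you give a self-contained derivation via Theorem~\ref{co:collassone} and Lemma~\ref{arismo} instead of citing the already-recorded fact that $C(\mathbf{L})=S_K(\mathbf{L})$ for members of $V(\mathbb{AOL})$ together with Theorem~\ref{centravanti}.
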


\subsection{The Join Problem}

We round off this paper by axiomatising the variety $\mathbb{OML\vee
}V(\mathbb{AOL})$, as well as some of its notable subvarieties. Let:

\begin{itemize}
\item $\mathbb{V}_{1}$ be the variety of PBZ$^{\ast}$ --lattices that is
axiomatised relative to $\mathbb{PBZL}^{\mathbb{\ast}}$ by the identities J2
and WSDM;

\item $\mathbb{V}_{2}$ be the variety of PBZ$^{\ast}$ --lattices that is
axiomatised relative to $\mathbb{PBZL}^{\mathbb{\ast}}$ by the identities J2
and SDM;

\item $\mathbb{V}_{3}$ be the variety of PBZ$^{\ast}$ --lattices that is
axiomatised relative to $\mathbb{PBZL}^{\mathbb{\ast}}$ by the identities J2,
WSDM, and SK.
\end{itemize}

Taking into account the results in \cite{PBZ2} and \cite{GLP1+}, as well as
Theorem \ref{bumbum}, in $V(\mathbb{AOL})$, SK implies SDM, that is AOL2 and
SK imply SDM, while AOL2 and SDM do not imply SK. We observed in Section \ref{Scipio} that J2 and SK do not imply WSDM (all the more so, thus, SDM). The following PBZ$^{\ast}$ --lattice:

\begin{center}\begin{picture}(40,82)(0,0)
\put(-60,70){${\bf H}:$}
\put(20,0){\circle*{3}}
\put(20,40){\circle*{3}}
\put(20,80){\circle*{3}}
\put(20,80){\circle*{3}}
\put(-60,40){\circle*{3}}
\put(100,40){\circle*{3}}
\put(-20,40){\circle*{3}}
\put(60,40){\circle*{3}}
\put(0,60){\circle*{3}}
\put(-20,60){\circle*{3}}
\put(40,60){\circle*{3}}
\put(60,60){\circle*{3}}
\put(0,20){\circle*{3}}
\put(-20,20){\circle*{3}}
\put(40,20){\circle*{3}}
\put(60,20){\circle*{3}}
\put(-27,12){$d$}
\put(4,18){$e$}
\put(4,59){$f^{\prime }$}
\put(30,56){$e^{\prime }$}
\put(-26,61){$g^{\prime }$}
\put(62,60){$d^{\prime }$}
\put(30,16){$f$}
\put(62,17){$g$}
\put(18,-9){$0$}
\put(18,83){$1$}
\put(-50,38){$f^{\sim }\!\!=b$}
\put(26,38){$c=c^{\prime }$}
\put(62,37){$b^{\prime }\!=e^{\sim }$}
\put(103,37){$a^{\prime }\!=d^{\sim }$}
\put(-90,37){$g^{\sim }\!\!=a$}
\put(20,0){\line(-2,1){80}}
\put(20,0){\line(2,1){80}}
\put(20,0){\line(-1,1){40}}
\put(20,0){\line(1,1){40}}
\put(20,80){\line(-2,-1){80}}
\put(20,80){\line(2,-1){80}}
\put(20,80){\line(-1,-1){40}}
\put(20,80){\line(1,-1){40}}
\put(0,60){\line(1,-1){40}}
\put(-20,60){\line(2,-1){80}}
\put(40,60){\line(-1,-1){40}}
\put(60,60){\line(-2,-1){80}}
\end{picture}\end{center}

\noindent satisfies SK and SDM and fails J2. The $4$-element antiortholattice chain
$\mathbf{D}_{4}$ fails SK but satisfies SDM and AOL2, thus also J2. Therefore,
in the sets of axioms $\left\{  \text{J2, SK, SDM}\right\}  $\ and $\left\{
\text{J2, SK, WSDM}\right\}  $, each axiom is independent from the other two.

Given any {$\mathbf{L}\in\mathbb{PBZL}^{\ast}$}, it will be expedient to
denote by $T\left(  {\mathbf{L}}\right)  $ the set $\left\{  x\in L:x{^{\sim
}=0}\right\}  \cup\left\{  0\right\}  $.

\begin{lemma}
\label{totototo}Let {$\mathbf{L}$} be a PBZ$^{\ast}$ --lattice that satisfies
WSDM and such that $S_{K}(\mathbf{L})\cup T\left(  {\mathbf{L}}\right)  =L$.
Then if $b\in S_{K}(\mathbf{L})$ and $c\notin S_{K}(\mathbf{L})$, it follows
that either $b=1$ or $b\leq c$.
\end{lemma}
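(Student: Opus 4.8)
The plan is to pin down the value of $(b\wedge c)^{\sim}$ exactly, and then split according to whether $b\wedge c$ is Kleene-sharp or lies in $T(\mathbf{L})$. First I would unwind the two hypotheses. Since $0\in S_{K}(\mathbf{L})$ and $c\notin S_{K}(\mathbf{L})$, the equality $S_{K}(\mathbf{L})\cup T(\mathbf{L})=L$ forces $c\in T(\mathbf{L})$ with $c\neq 0$, so $c^{\sim}=0$. On the side of $b$: from $b\wedge b'=0$ we get $b'\wedge b''=0$, so $b\in S_{K}(\mathbf{L})$ implies $b'\in S_{K}(\mathbf{L})$ as well; and since $^{\sim}$ agrees with $^{\prime}$ on $S_{K}(\mathbf{L})$, we have $b^{\sim}=b'$ and $(b')^{\sim}=b$. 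More generally, for every $x\in S_{K}(\mathbf{L})$, $\Diamond x=x^{\sim\sim}=(x')^{\sim}=x$, so sharp elements are $\Diamond$-fixed; in particular $0^{\sim}=1$, $1^{\sim}=0$ and hence $\Diamond 0=0$.

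The key step is to invoke WSDM, $(x\wedge y^{\sim})^{\sim}\approx x^{\sim}\vee\Diamond y$, with the substitution $x:=c$ and $y:=b'$. Because $b'\in S_{K}(\mathbf{L})$ we have $(b')^{\sim}=b$, so the left-hand side is $(c\wedge b)^{\sim}$; the right-hand side is $c^{\sim}\vee\Diamond(b')=0\vee (b')^{\sim\sim}=b'$. Hence $(b\wedge c)^{\sim}=b'$, and therefore $\Diamond(b\wedge c)=(b\wedge c)^{\sim\sim}=(b')^{\sim}=b$.

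Finally I would conclude by cases on where $b\wedge c$ lies. If $b\wedge c\in S_{K}(\mathbf{L})$, then it is $\Diamond$-fixed, so $b\wedge c=\Diamond(b\wedge c)=b$, i.e.\ $b\leq c$. If $b\wedge c\in T(\mathbf{L})$, then either $(b\wedge c)^{\sim}=0$, whence $b'=0$ and $b=1$, or $b\wedge c=0$, whence $b=\Diamond(b\wedge c)=\Diamond 0=0\leq c$. In every case $b=1$ or $b\leq c$. The only nonroutine point is spotting the right instantiation of WSDM: $c$ need not be of the form $y^{\sim}$, so the trick is to feed $c$ into the $x$-slot and $b'$ into the $y$-slot, so that $y^{\sim}$ becomes $b$ and the left-hand side collapses precisely to $(b\wedge c)^{\sim}$; once that is seen, the remainder is bookkeeping with the definitions of $S_{K}(\mathbf{L})$ and $T(\mathbf{L})$ and with $0^{\sim}=1$.
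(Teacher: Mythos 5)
Your proposal is correct and follows essentially the same route as the paper: both hinge on instantiating WSDM with the sharp element $b=(b')^{\sim}$ so that $(b\wedge c)^{\sim}$ collapses to $b^{\sim}\vee c^{\sim}=b'$, and then splitting according to whether $b\wedge c$ lies in $S_{K}(\mathbf{L})$ or in $T(\mathbf{L})$. Your explicit handling of the subcase $b\wedge c=0$ (which the paper absorbs into the sharp case, since $0\in S_{K}(\mathbf{L})$) is a minor, harmless refinement.
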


\begin{proof}
If $b\wedge c\in S_{K}(\mathbf{L})$, then%
\[
b\wedge c=\Diamond\left(  b\wedge c\right)  =\Diamond b\wedge\Diamond
c=b\wedge1=b\text{,}%
\]
where WSDM can be applied to obtain the second equality because $b\in
S_{K}(\mathbf{L})$, while the third equality follows from the fact that
$c\notin S_{K}(\mathbf{L})$, whence $c{^{\sim}=0}$. On the other hand, if
$b\wedge c\notin S_{K}(\mathbf{L})$, then we apply again WSDM (as $b\in
S_{K}(\mathbf{L})$) and the assumption that $S_{K}(\mathbf{L})\cup T\left(
{\mathbf{L}}\right)  =L$, obtaining%
\[
0=\left(  b\wedge c\right)  {^{\sim}=b{^{\sim}\vee}c{^{\sim}=}b{^{\sim}\vee
}0=b{^{\sim}}}\text{,}%
\]
whereby $b=1$ since $b\in S_{K}(\mathbf{L})$.
\end{proof}

\begin{proposition}
\label{board1}Any directly indecomposable {$\mathbf{L}\in$}$\mathbb{V}_{1}$ is
either orthomodular or an antiortholattice.
\end{proposition}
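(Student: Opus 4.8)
The plan is to show first that every directly indecomposable $\mathbf{L}\in\mathbb{V}_1$ satisfies $S_K(\mathbf{L})\cup T(\mathbf{L})=L$, and then to combine this with Lemma \ref{totototo}. The crux of the first step is the claim that, for each $a\in L$, the element $e:=(a\wedge a')^{\sim}$ is \emph{central} in $\mathbf{L}$. Rather than checking C1--C4 of Lemma \ref{freccetta} directly, I would invoke Theorem \ref{centravanti} and verify instead that $e\in C_{pbz}(\mathbf{L})$, i.e. that $(e,b)\in C_{\text{SDM},\mathbf{L}}$ for all $b\in L$. Writing $w=a\wedge a'$, so that $e=w^{\sim}$ is Kleene-sharp, $e'=e^{\sim}=\Diamond w$, $\Diamond e=e$ and $\square e=e^{\prime\sim}=e$: the condition $(e,b)\in C_{\mathbf{L}}$, namely $(b\wedge w^{\sim})\vee(b\wedge\Diamond w)=b$, is exactly the instance of J2 obtained by putting $y=a$; the condition $(e\wedge b)^{\sim}=e^{\sim}\vee b^{\sim}$ reads $(b\wedge w^{\sim})^{\sim}=b^{\sim}\vee\Diamond w$, an instance of WSDM; and the condition $(e'\wedge b)^{\sim}=\square e\vee b^{\sim}$ reads $(b\wedge e^{\sim})^{\sim}=b^{\sim}\vee\Diamond e$, again an instance of WSDM. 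Hence $e\in C_{pbz}(\mathbf{L})=C(\mathbf{L})$.

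With centrality of $(a\wedge a')^{\sim}$ in hand, direct indecomposability forces $C(\mathbf{L})=\{0,1\}$ (by Theorem \ref{centrobolla}), so $(a\wedge a')^{\sim}\in\{0,1\}$ for every $a\in L$. If $(a\wedge a')^{\sim}=1$, then the BZ-identity $(a\wedge a')\wedge(a\wedge a')^{\sim}=0$ gives $a\wedge a'=0$, so $a\in S_K(\mathbf{L})$; if $(a\wedge a')^{\sim}=0$, then antitonicity of $^{\sim}$ together with $a\wedge a'\leq a$ gives $a^{\sim}\leq(a\wedge a')^{\sim}=0$, so $a\in T(\mathbf{L})$. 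Thus $S_K(\mathbf{L})\cup T(\mathbf{L})=L$.

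Now suppose $\mathbf{L}$ is not an antiortholattice, so there is some $b\in S_K(\mathbf{L})\setminus\{0,1\}$; I claim $\mathbf{L}$ is orthomodular, i.e. $S_K(\mathbf{L})=L$. Assume toward a contradiction that there is $c\in L\setminus S_K(\mathbf{L})$. Lemma \ref{totototo} applies, since $\mathbf{L}$ satisfies WSDM and $S_K(\mathbf{L})\cup T(\mathbf{L})=L$: from $b\in S_K(\mathbf{L})$, $c\notin S_K(\mathbf{L})$ and $b\neq 1$ we obtain $b\leq c$. Since $b'$ is Kleene-sharp as well and $b'\neq 1$ (because $b\neq 0$), the same lemma yields $b'\leq c$. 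But $b\vee b'=(b\wedge b')'=0'=1$ because $b$ is Kleene-sharp, so $c=1$; as $1\in S_K(\mathbf{L})$, this contradicts $c\notin S_K(\mathbf{L})$. Hence $S_K(\mathbf{L})=L$ and $\mathbf{L}$ is orthomodular, which completes the proof.

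The main obstacle is the centrality claim for $(a\wedge a')^{\sim}$: the clean route is to recognise that the two clauses defining $C_{\text{SDM},\mathbf{L}}$ beyond plain commutation are precisely substitution instances of WSDM, so that J2 and WSDM together suffice and one can quote Theorem \ref{centravanti} instead of grinding through C1--C4 of Lemma \ref{freccetta}. Everything after that is routine bookkeeping with direct indecomposability and Lemma \ref{totototo}.
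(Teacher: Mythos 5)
Your proof is correct, and its first half coincides with the paper's argument: both show that $(a\wedge a')^{\sim}$ (equivalently, $\Diamond(a\wedge a')$) is central by combining J2 with WSDM and Theorem \ref{centravanti}, so that direct indecomposability and Theorem \ref{centrobolla} force it into $\{0,1\}$, which yields $S_{K}(\mathbf{L})\cup T(\mathbf{L})=L$. The only cosmetic difference there is that the paper routes the centrality check through Lemma \ref{goldrush}.(iii) (under WSDM one has $C_{pbz}(\mathbf{L})=C_{p}(\mathbf{L})$, so only the commuting clause, i.e.\ the J2 instance, needs verifying), whereas you verify the two extra clauses of $C_{\text{SDM},\mathbf{L}}$ directly as substitution instances of WSDM; both are sound. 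Where you genuinely diverge is the endgame: the paper assumes $\mathbf{L}$ is neither orthomodular nor an antiortholattice, invokes Corollary \ref{feccia}.(iii) to extract a failure of AOL2, namely some $a>(a\wedge b^{\sim})\vee(a\wedge\Diamond b)$, and refutes it by cases; you instead pick a sharp $b\notin\{0,1\}$ (available precisely because $\mathbf{L}$ is not an antiortholattice), apply Lemma \ref{totototo} to both $b$ and $b'$, and use $b\vee b'=1$ to force any non-sharp $c$ to equal $1$, a contradiction. Your ending is a little more economical, since it bypasses the AOL2 machinery and Corollary \ref{feccia} entirely and rests only on Lemma \ref{totototo} plus the elementary fact that a nontrivial Kleene-sharp element and its complement join to $1$; the paper's version has the minor virtue of making the link with the AOL2 axiomatisation of $V(\mathbb{AOL})$ explicit. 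Both arguments establish the proposition.
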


\begin{proof}
Let {$\mathbf{L}$ be as in the statement of the proposition, and suppose that
$\mathbf{L}$ is directly indecomposable, but is neither orthomodular nor an
antiortholattice. }By Theorem \ref{centravanti}, the only PBZ*-central
elements of $\mathbf{L}$ are $0$ and $1$. By WSDM\ and Lemma \ref{goldrush}%
.(iii) we conclude that $0$ and $1$ are the only sharp elements $a$ such that
$b=\left(  b\wedge a{^{\sim}}\right)  \vee\left(  b\wedge\Diamond a\right)  $
for all $b\in L$.

Now, we want to show that $S_{K}(\mathbf{L})\cup T\left(  {\mathbf{L}}\right)
=L$. Let $x\in L$. The element $\Diamond\left(  x\wedge x^{\prime}\right)  $
is sharp and, by J2, we have that%
\[
b=\left(  b\wedge\left(  x\wedge x^{\prime}\right)  {^{\sim}}\right)
\vee\left(  b\wedge\Diamond\left(  x\wedge x^{\prime}\right)  \right)
=\left(  b\wedge\Diamond\left(  x\wedge x^{\prime}\right)  {^{\sim}}\right)
\vee\left(  b\wedge\Diamond\Diamond\left(  x\wedge x^{\prime}\right)  \right)
\]
for all $b\in L$. So, $\Diamond\left(  x\wedge x^{\prime}\right)  \in\left\{
0,1\right\}  $. If $\Diamond\left(  x\wedge x^{\prime}\right)  =0$, then
$x\wedge x^{\prime}\leq\Diamond\left(  x\wedge x^{\prime}\right)  =0$, whence
$x\in S_{K}(\mathbf{L})$. If $\Diamond\left(  x\wedge x^{\prime}\right)  =1$,
then
\[
{x{^{\sim}\leq}x{^{\sim}\vee\square x=}}\left(  x\wedge x^{\prime}\right)
{^{\sim}=0}\text{,}%
\]
and $x\in T\left(  {\mathbf{L}}\right)  $. Our claim is therefore settled.

Recall that $\mathbf{L}$ is directly indecomposable but fails to be an
antiortholattice --- whence by Corollary \ref{feccia}.(iii) there exist
$a,b\in L$ such that $a>\left(  a\wedge b{^{\sim}}\right)  \vee\left(
a\wedge\Diamond b\right)  $. Also, recall throughout the remainder of this
proof that $S_{K}(\mathbf{L})\cup T\left(  {\mathbf{L}}\right)  =L$. If
$b\notin S_{K}(\mathbf{L})$, then%
\[
a>\left(  a\wedge b{^{\sim}}\right)  \vee\left(  a\wedge\Diamond b\right)
=a\text{,}%
\]
a contradiction. Therefore $b\in S_{K}(\mathbf{L})$ and we can apply Lemma
\ref{totototo}: either $b=1$, or $b\leq x$ for every $x\notin S_{K}%
(\mathbf{L})$. If $b=1$, then $a>a$, a contradiction again. If there is some
$c\notin S_{K}(\mathbf{L})$, then $b\leq c$, whence $b={{\square b\leq\square
c=0}}$, which yields again the contradiction $a>a$. Therefore $L=S_{K}%
(\mathbf{L})$ and $\mathbf{L}$ is orthomodular, against our assumption.
\end{proof}

\begin{theorem}
\label{cacchiocacchio}

\begin{enumerate}
\item $\mathbb{V}_{1}\mathbb{=OML}{{\vee V}}\left(  \mathbb{AOL}\right)  $.

\item $\mathbb{V}_{2}\mathbb{=OML}\vee\mathbb{SAOL}$.

\item $\mathbb{V}_{3}\mathbb{=OML}\vee V\left(  \mathbf{D}_{3}\right)  $.
\end{enumerate}
\end{theorem}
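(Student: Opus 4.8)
The plan is to obtain all three equalities at once from Proposition \ref{board1} together with the subdirect representation theorem, which tells us that each $\mathbb{V}_i$ is generated by its subdirectly irreducible members, and that every subdirectly irreducible algebra is directly indecomposable.

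First I would dispose of the easy inclusions $\mathbb{OML}\vee(\text{second generator})\subseteq\mathbb{V}_i$ by checking that the generators satisfy the defining identities. Since $\mathbb{OML}$ satisfies SDM, J2 and SK, and SDM implies WSDM, $\mathbb{OML}$ lies inside each of $\mathbb{V}_1,\mathbb{V}_2,\mathbb{V}_3$. Next, $V(\mathbb{AOL})$ satisfies J2 and WSDM (recorded in Section \ref{grosw}), giving $V(\mathbb{AOL})\subseteq\mathbb{V}_1$; its subvariety $\mathbb{SAOL}$ additionally satisfies SDM by its very definition, giving $\mathbb{SAOL}\subseteq\mathbb{V}_2$; and $V(\mathbf{D}_3)$, being the subvariety of $V(\mathbb{AOL})$ axiomatised relative to $V(\mathbb{AOL})$ by SK (Theorem \ref{mobutu}), satisfies J2, WSDM and SK, giving $V(\mathbf{D}_3)\subseteq\mathbb{V}_3$. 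Taking varietal joins yields the inclusions $\supseteq$ in all three items.

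For the reverse inclusions I would take a subdirectly irreducible $\mathbf{L}\in\mathbb{V}_i$ — hence directly indecomposable — and, noting $\mathbb{V}_2,\mathbb{V}_3\subseteq\mathbb{V}_1$, apply Proposition \ref{board1} to conclude that $\mathbf{L}$ is either orthomodular or an antiortholattice. For (1) this already places $\mathbf{L}\in\mathbb{OML}\cup\mathbb{AOL}\subseteq\mathbb{OML}\vee V(\mathbb{AOL})$. For (2), in the antiortholattice case $\mathbf{L}$ moreover satisfies SDM, hence (being a member of $\mathbb{AOL}\subseteq V(\mathbb{AOL})$) lies in $\mathbb{SAOL}$; so every subdirectly irreducible member of $\mathbb{V}_2$ is in $\mathbb{OML}\cup\mathbb{SAOL}$. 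For (3) I would run the same argument with SK in place of SDM and $V(\mathbf{D}_3)$ in place of $\mathbb{SAOL}$: an antiortholattice satisfying SK belongs to $V(\mathbf{D}_3)$ by Theorem \ref{mobutu}. In each case, since $\mathbb{V}_i$ is generated by these subdirectly irreducible algebras, the inclusion $\mathbb{V}_i\subseteq\mathbb{OML}\vee(\text{second generator})$ follows.

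The substantive work has already been done in Proposition \ref{board1}; the only points requiring care in the present argument are (a) to invoke subdirect irreducibility, not merely direct indecomposability, so that the subdirect representation theorem recovers the whole variety from these algebras, and (b) the routine observation that an antiortholattice satisfying SDM (resp.\ SK) automatically lies in $\mathbb{SAOL}$ (resp.\ $V(\mathbf{D}_3)$), which is immediate once one recalls that these subvarieties are carved out of $V(\mathbb{AOL})$ — a fortiori out of $\mathbb{AOL}$ — by precisely those identities. So I expect no real obstacle here beyond this bookkeeping, the depth of the result residing entirely in Proposition \ref{board1}.
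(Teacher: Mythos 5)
Your proposal is correct and follows essentially the same route as the paper: both directions rest on checking that the generators satisfy the defining identities (for $\supseteq$) and on applying Proposition \ref{board1} to subdirectly irreducible, hence directly indecomposable, members of $\mathbb{V}_i$ (for $\subseteq$), with the antiortholattice case sorted into $\mathbb{SAOL}$ or $V(\mathbf{D}_3)$ via the relative axiomatisations by SDM and SK. The paper merely leaves the $\supseteq$ inclusions implicit; your bookkeeping is a faithful expansion of its argument.
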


\begin{proof}
(i) It will suffice to show that any subdirectly irreducible $\mathbf{L}%
\in\mathbb{V}_{1}$ is either an orthomodular lattice or an antiortholattice.
However, since $\mathbf{L}$ is directly indecomposable, Proposition
\ref{board1} applies and we obtain our conclusion.

(ii) Any subdirectly irreducible, and thus directly indecomposable, member of
$\mathbb{V}_{2}$ is either orthomodular, or an antiortholattice satisfying
SDM; since $\mathbb{SAOL}$ is generated by such antiortholattices, our claim follows.

(iii) This follows, as above, from the fact that $V\left(  \mathbf{D}%
_{3}\right)  $ is axiomatised by SK relative to ${{V}}\left(  \mathbb{AOL}%
\right)  $ \cite[Cor. 3.3]{PBZ2}.
\end{proof}

An upshot of this theorem is that $\mathbb{V}_{3}\subset\mathbb{V}_{2}%
\subset\mathbb{V}_{1}\subset\mathbb{SDM}\vee{{V}}\left(  \mathbb{AOL}\right)
$, where the last strict inclusion is witnessed by the PBZ$^{\ast}$ --lattice
$\mathbf{H}$ above which satisfies SDM, thus also WSDM,\ but fails J2, thus
showing in passing that $\mathbb{V}_{2}\subset\mathbb{SDM}$. Note, also, that
$\mathbb{V}_{3}$ is the unique cover of $\mathbb{OML}$ in the lattice of
subvarieties of $\mathbb{PBZL}^{\mathbb{\ast}}$, because any member of such
which is not included in $\mathbb{OML}$ contains $\mathbf{D}_{3}$ \cite[Thm.
5.5]{GLP1+}.

\begin{acknowledgement}
All authors gratefully acknowledge the following funding sources: the European
Union's Horizon 2020 research and innovation programme under the Marie
Sklodowska-Curie grant agreement No 689176 (project \textquotedblleft Syntax
Meets Semantics: Methods, Interactions, and Connections in Substructural
logics\textquotedblright); the project \textquotedblleft Propriet\`{a}
d`ordine nella semantica algebrica delle logiche non
classiche\textquotedblright, Regione Autonoma della Sardegna, L. R. 7/2007, n.
7, 2015, CUP: F72F16002920002. Discussions with Antonio Ledda and Matthew
Spinks were extremely helpful in shaping up this paper.
\end{acknowledgement}

\end{document}